\definecolor{darkred}{rgb}{0.5,0,0}
\definecolor{darkgreen}{rgb}{0,0.5,0}
\definecolor{darkblue}{rgb}{0,0,0.5}
\newtheorem{remark}{Remark}
\newtheorem{theorem}{Theorem}
\newtheorem{lemma}{Lemma}
\def\namedproof{\par{\vspace{0.5em}\it Proof} \ignorespaces}
\def\endnamedproof{{\hfill $\square$\\ \vspace{0.5em}}}
\def \R{{\mathbb{R}}}
\def \N{{\mathbb{N}}}
\def \Q{{\mathbb{Q}}}
\def \Z{{\mathbb{Z}}}
\def \T{{\mathbb{T}}}
\def \P{{\mathbb{P}}}
\def \I{{\mathbb{I}}}
\newcommand{\myparallel}{{\mkern3mu\vphantom{\perp}\vrule depth 0pt\mkern2mu\vrule depth 0pt\mkern3mu}}
\newcommand{\vertiii}[1]{{\left\vert\kern-0.25ex\left\vert\kern-0.25ex\left\vert #1 \right\vert\kern-0.25ex\right\vert\kern-0.25ex\right\vert}}
\newcommand{\dsp}[2]{\left\langle\hspace{-#1pt}\left\langle#2\right\rangle\hspace{-#1pt}\right\rangle}
\DeclareMathAlphabet{\mathbbmsl}{U}{bbm}{m}{sl} 
\DeclareMathAlphabet{\mathbbmssit}{U}{bbmss}{m}{it} 
\DeclareSymbolFontAlphabet{\mathbb}{AMSb} 
\DeclareSymbolFontAlphabet{\mathbbl}{bbold} 
\title
      {Singular limits of anistropic weak solutions to compressible magnetohydrodynamics
      }
\author[N. Besse]{Nicolas Besse}
\address[Nicolas Besse]{Laboratoire J.-L. Lagrange,
                        Observatoire de la C\^ote d'Azur,
                        Universit\'e C\^ote d'Azur,
                        Bvd de l'Observatoire CS 34229,
                        06304 Nice Cedex 4, France}
\email{\tt Nicolas.Besse@oca.eu}
\author[C. Cheverry]{Christophe Cheverry}
\address[Christophe Cheverry]{Institut de Recherche Math\'ematique de Rennes,
                              Universit\'e de Rennes,
                              Campus de Beaulieu,
                              263 avenue du G\'en\'eral Leclerc CS 74205,
                              35042 Rennes Cedex, France}
\email{\tt Christophe.Cheverry@univ-rennes.fr}
\begin{document}

\maketitle

\renewcommand{\thefootnote}{\fnsymbol{footnote}}
\footnotetext{{2020 Mathematics Subject Classification.} \\
  {\bf Keywords.} Parabolic systems of conservation laws; Singular limit of nonlinear PDEs; Magnetohydrodynamics; Fusion plasmas;
  Weak solutions of dissipative PDEs; Large magnetic field; Spatial anisotropy. 
}
\renewcommand{\thefootnote}{\arabic{footnote}}

\begin{abstract}
The aim is to justify rigorously the so-called reduced magnetohydrodynamic model (abbreviated as RMHD), which is widely used in fusion, space and astrophysical plasmas. Motivated by physics, the focus is on plasmas that are simultaneously strongly magnetized and anisotropic. We consider conducting fluids that can be described by viscous and resistive barotropic compressible magnetohydrodynamic equations. The purpose is to study the asymptotic behaviour of global weak solutions, which do exist, for strongly anisotropic plasmas such as the large aspect ratio framework. We prove that such anisotropic weak solutions converge to the weak solutions of the RMHD equations. Rigorous justification of this limit is performed both in a periodic domain and in the whole space. It turns out that the resulting  system is incompressible only in the perpendicular direction to the external strong magnetic field, whereas it involves compressible features in the parallel direction. In order to pass to the singular limit in the perpendicular direction we exploit, among others,  tools elaborated for proving the low Mach number limit of compressible fluid flows such as the introduction of a fast oscillatory unitary group associated to the dynamics of  transverse fast magnetosonic waves. In the parallel direction, we bring out compactness arguments and  particular cancellations coming from the structure of our  equations.
\end{abstract}


{\small \parskip=1pt
\setcounter{tocdepth}{1}
\tableofcontents
}

\section{Introduction}
\label{s:intro}
Equations of reduced magnetohydrodynamics, hereafter abbreviated as RMHD, are extensively used in  fusion, space and astrophysical plasmas. They are highly prized by plasma physicists for the following reasons. First, they allow interesting theoretical and analytical developments; second they are the source of numerically tractable models which are used to bring forth codes that are routinely exploited \cite{BNS98,CH08,KHC98,NLB12,OBH15,OMD17,RHJ12}. The RMHD model was introduced in the seventhies \cite{KP74, Str76} in the context of fusion plasmas. It was followed by many systematic studies  and  generalizations including more and more physical effects and refinements \cite{DS12, DA83, Haz83, IMS83, KW97, KHC98, Str77, ZM92, ZM93}. At present, there is a vast literature about formal derivations and applications of RMHD models. The two references \cite{Bis93, SCD09} are good introductions to the subject, with many references. RMHD equations are still a very active research field, including recent progress in extended magnetohydrodynamics \cite{BC1}. At the same time, the work \cite{BC2} has highlighted the importance and the inherent  difficulties of working under anisotropic conditions.


\subsection{The penalized system}
\label{ss:penalized}
Let $\Omega$ be a  three-dimensional domain, which is either the periodic box $\T^3 :=(\R/2 \pi \Z)^3 $ or the whole space $\R^3$. The time evolution on $ \Omega $ of plasmas is basically  described by isentropic compressible magnetohydrodynamics. The unknowns are made of the fluid density $\uprho \in \R^+ $, the fluid velocity $ \text{v} \in \R^3 $ and the magnetic field $ \text{B} \in \R^3 $.  Including viscous ($ \upmu >0 $ and $  \uplambda >0 $) and resistive ($ \upeta >0 $) effects, we consider on $ \mathbb R_+ \times \Omega $ the system of MHD equations
\begin{equation}
\left \{
\begin{aligned}
&\partial_{\text{t}} \uprho + \nabla \cdot ( \uprho \,  \text{v}) = 0 \,, \\
&\partial_{\text{t}} ( \uprho \, \text{v}) + \nabla \cdot (\uprho \, \text{v} \otimes \text{v})
  + \nabla \text{p} +  \text{B} \times (\nabla \times \text{B} ) -\upmu \,  \Delta \text{v} -  \uplambda \, \nabla (\nabla \cdot \text{v}) =0 \,,  \\
&\partial_{\text{t}} \text{B} +  \nabla \times ( \text{B}  \times \text{v})
-  \upeta \, \Delta \text{B} =0\, ,
\end{aligned}
\right.
\label{mhdorigin}
\end{equation}
together with the divergence-free condition $  \nabla \cdot \text{B} =0 $ and the barotropic state law $ \text{p}= \mathfrak{a}\uprho^\gamma $, where $ \mathfrak{a} >0 $ and $ \gamma >1 $. The symbol $\times$ denotes the cross product of vectors of $\R^3$, while the symbol $\otimes $ stands for the tensor product of vectors. Given $ (n,m) \in \N_*^2 $, we take the convention
\[
\big(\nabla \cdot (u \otimes v)\big)_i = \sum_{j=1}^n \partial_j (u_i v_j) \, , \quad u = {}^t (u_1,\cdots,u_m) \in \R^m \, , \quad v = {}^t (v_1,\cdots,v_n) \in \R^n \, .
\]
Following preceding results about  compressible Navier--Stokes equations  \cite{FNP01,Lio98}, the existence of global (in time) weak solutions to system \eqref{mhdorigin} has been obtained in \cite{HW10}. Now, most plasmas are magnetized. This means that $ \text{B} $ is, in a first rough approximation, a given  external non-zero magnetic field $\text{B}_{\mathbbl{e}} $. For simplicity, we assume that $ \text{B}_{\mathbbl{e}} $ is constant. After rotation, it can always be adjusted in such a way that $ \text{B}_{\mathbbl{e}} = \mathbbmss{B} \, e_\myparallel $ with $ \mathbbmss{B} \in \R_+^* $ and $ e_\myparallel :={}^t(0,0,1) $. The direction $e_\myparallel $ is called {\it parallel}. Given a vector field like $ \text{B} $ (or $ \text{v} $), we can decompose $ \text{B} $ into its parallel component $ \text{B}_\myparallel =\text{B}_3 := \text{B} \cdot e_\myparallel \in \R $ and its perpendicular component $ \text{B}_\perp := {}^t (\text{B}_1,\text{B}_2) \in \R^2 $ so that $ \text{B} = {}^t ({}^t \text{B}_\perp, \text{B}_\myparallel )$. We work away from vacuum, near a constant density which can always be put in the form $ \mathbbmss{B}^2 \, \bbrho $ for some $  \bbrho \in \R_+^* $. Observe that $ (\mathbbmss{B}^2 \, \bbrho, 0, \text{B}_{\mathbbl{e}}) $ is a constant solution to \eqref{mhdorigin}. Motivated by physics, particularly  by considerations of large aspect ratio and geometrical optics (see Section~\ref{s:scaling}), we incorporate a strong spatial anisotropy. More precisely, we keep $ \text{x}_\perp := (\text{x}_1,\text{x}_2) = x_\perp = (x_1,x_2) $ and we replace the vertical direction by $ x_\myparallel = x_3 := \varepsilon \, \text{x}_3 $ with $ 0 < \varepsilon \ll 1 $. The above gradient operator becomes
\begin{equation}
  \label{nablaeps}
  \nabla_\varepsilon :=  {}^t ({}^t \nabla_\perp,0) + \varepsilon \,  \nabla_\myparallel \, , \qquad \nabla_\perp := {}^t (\partial_1,\partial_2) \, , \qquad
  \nabla_\myparallel := e_\myparallel \,  \partial_\myparallel
  \, , \qquad \partial_\myparallel \equiv \partial_3 \,.
\end{equation}
Accordingly,  a distinction must be  drawn between $ \Delta_\perp := \partial^2_{11} + \partial^2_{22} $ and $  \Delta_\myparallel := \partial^2_{33} $. We want to study the behavior at large time scales $ t := \varepsilon \, \text{t} \sim 1 $ of small perturbations, of size $ \varepsilon $, of the stationary solution $ ( \mathbbmss{B}^2 \, \bbrho, 0, \text{B}_{\mathbbl{e}}) $. To this end, we seek solutions in the form
\[
( \uprho ,\text{v},\text{B}) (\text{t},\text{x}) = \bigl(   \mathbbmss{B}^2 \, \rho^\varepsilon ,\, \varepsilon \, v^\varepsilon, \, \mathbbmss{B} \, ( e_\myparallel + \varepsilon \, B^\varepsilon ) \bigr) (\varepsilon \text{t},{x}_1,\text{x}_2, \varepsilon \text{x}_3 ) \, .
\]
The unknowns are now $ (\rho^\varepsilon, v^\varepsilon , B^\varepsilon ) (t,x)$, while the pressure is given by $p^\varepsilon = a \, (\rho^\varepsilon)^\gamma $, with $a= \mathbbmss{B}^{2(\gamma-1)}\mathfrak{a}$. Then, the system \eqref{mhdorigin} can be reformulated according to the following equations
\begin{equation}
\left \{
\begin{aligned}
&\partial_t \rho^\varepsilon + \nabla_\varepsilon \cdot (\rho^\varepsilon v^\varepsilon) = 0 \,, \\
&\partial_t (\rho^\varepsilon v^\varepsilon) + \nabla_\varepsilon \cdot (\rho^\varepsilon v^\varepsilon \otimes v^\varepsilon)
  + \frac{1}{\varepsilon^2} \, \nabla_\varepsilon {p}^\varepsilon + \frac{1}{\varepsilon} (e_\myparallel + \varepsilon B^\varepsilon) \times (\nabla_\varepsilon \times B^\varepsilon ) \\
& \qquad\quad \ \, - \mu_\perp^\varepsilon \Delta_\perp v^\varepsilon - \mu_\myparallel^\varepsilon \Delta_\myparallel v^\varepsilon
- \lambda^\varepsilon \nabla_\varepsilon (\nabla_\varepsilon \cdot v^\varepsilon ) =0 \,, \\
&\partial_t B^\varepsilon + \frac{1}{\varepsilon} \nabla_\varepsilon \times \big( (e_\myparallel + \varepsilon B^\varepsilon ) \times v^\varepsilon \big)
- \eta_\perp^\varepsilon \Delta_\perp B^\varepsilon - \eta_\myparallel^\varepsilon \Delta_\myparallel B^\varepsilon =0\, ,
\end{aligned}
\right.
\label{mhdeps}
\end{equation}
together with
\begin{equation}
  \label{divbesp}
 \nabla_\varepsilon \cdot B^\varepsilon = 0 \,.
\end{equation}
Without loss of generality, just to simplify the presentation, we can work with $  \bbrho = 1 $. The second equation (for the momentum $ \rho^\varepsilon v^\varepsilon $) in \eqref{mhdeps} indicates that $ \rho^\varepsilon $ should be like $ \rho^\varepsilon = \bbrho + \mathcal{O}(\varepsilon)$. With this in mind, we can introduce the new state variable $ \varrho^\varepsilon $ as indicated below, and expand ${p}^\varepsilon $ in powers of $ \varepsilon $ to obtain
\begin{equation}
  \label{expanesp}
\rho^\varepsilon = 1 + \varepsilon \, \varrho^\varepsilon \, , \qquad {p}^\varepsilon = a + \varepsilon \, \mathbbmssit{p}^\varepsilon  + \mathcal{O}(\varepsilon^2) \, , \qquad  \mathbbmssit{p}^\varepsilon := b \, \varrho^\varepsilon\, , \qquad b := a \, \gamma \,.
\end{equation}
To see the heuristics which lead to our model, it is instructive to interpret \eqref{mhdeps} in terms of $ (\varrho^\varepsilon, v^\varepsilon , B^\varepsilon ) $, and then to extract the singular part. We find 
\begin{equation}
\left \{
\begin{aligned}
&\partial_t \varrho^\varepsilon + \frac{1}{\varepsilon} \, \nabla_\perp  \cdot v^\varepsilon_\perp = \mathcal{O}(1) \,, \\
&\partial_t v^\varepsilon_\perp + \frac{1}{\varepsilon} \, \nabla_\perp ( \mathbbmssit{p}^\epsilon + B^\varepsilon_\myparallel) = \mathcal{O}(1) \,, \qquad \partial_t v^\varepsilon_\myparallel = \mathcal{O}(1) \,,\\
&\partial_t B^\varepsilon_\myparallel + \frac{1}{\varepsilon} \,  \nabla_\perp \cdot v_\perp^\varepsilon = \mathcal{O}(1) \,, \qquad \qquad \partial_t B^\varepsilon_\perp = \mathcal{O}(1)\, .
\end{aligned}
\right.
\label{mhdepssing}
\end{equation}
According  to the terminology of Schochet \cite{Sch94}, the asymptotic regime is called:
\begin{itemize}
 \item {\it slow}  when the first-order time derivative of the solution remains bounded uniformly with respect to the small paramater $\varepsilon $ (as $\varepsilon \rightarrow 0$). In view of \eqref{mhdepssing}, this means that
 \begin{equation}
\nabla_\perp  \cdot v^\varepsilon_\perp = \mathcal{O}(\varepsilon) \,, \qquad \nabla_\perp ( \mathbbmssit{p}^\epsilon + B^\varepsilon_\myparallel) = \mathcal{O}(\varepsilon) \, .
\label{slowprep}
\end{equation}
\item {\it fast} when it is not slow. In this case, rapid  oscillations with non-vanishing amplitudes can persist on a long time scale, preventing the convergence in a usual strong sense.
Since the singular part involves the sole action of the operator $ \nabla_\perp $, it induces a propagation which can only be achieved with respect to the perpendicular direction. Then, because  Alfv\'en waves do not propagate in the directions orthogonal to the ambiant magnetic field (here $ e_\myparallel $), we are necessarily concerned with transverse fast magnetosonic waves. This claim is justified in Section~\ref{ss:LAR}, where the eigenmodes of the linear (singular) system \eqref{mhdepssing} are investigated.
\end{itemize}
At time $ t=0 $, we start with
\begin{equation}
  \label{initial-data-eps}
  \rho^\varepsilon_{\mid t=0 } =\rho_0^\varepsilon\,, \qquad  v^\varepsilon_{\mid t=0 } =v_0^\varepsilon\,, \qquad  B^\varepsilon_{\mid t=0 }=B_0^\varepsilon \, .
\end{equation}
In coherence with \eqref{divbesp}, we must impose $ \nabla_\varepsilon \cdot B_0 ^\varepsilon = 0 $. The initial data is said to be {\it prepared} when
 \begin{equation}
\nabla_\perp  \cdot v^\varepsilon_{0\perp} = \mathcal{O}(\varepsilon) \,, \qquad \nabla_\perp ( b \, \varrho^\epsilon_0 + B^\varepsilon_{0 \myparallel }) = \mathcal{O}(\varepsilon) \, .
\label{slowprep0}
\end{equation}
 At this stage, it should be noted that the structure of the penalized terms inside \eqref{mhdepssing} and of the subsequent condition \eqref{slowprep} are   different from isotropic  situations \cite{JSX19}: the fluid should be almost incompressible only in the perpendicular direction (the action of the operator $ \nabla_\perp $ appears in place of the full gradient $ \nabla $); the components $  \varrho^\epsilon_0 $ and $ B^\varepsilon_{0 \myparallel } $ must be approximately linked together. To our knowledge, the asymptotic study of systems like \eqref{mhdeps} has not yet been undertaken neither in a smooth context or for weak solutions.

From now on, we assume that the positive  perpendicular and parallel shear viscosities $\mu_\perp^\varepsilon>0$ and $\mu_\myparallel^\varepsilon>0$, as well as the positive bulk viscosity $\lambda^\varepsilon >0 $ are adjusted in such a way that
\begin{equation}
  \label{visco-norm}
  \mu_\perp^\varepsilon \xrightarrow{\quad}  \mu_\perp >0 \,, \quad  \mu_\myparallel^\varepsilon \xrightarrow{\quad}  \mu_\myparallel > 0 \,,
  \quad \lambda^\varepsilon  \xrightarrow{\quad} \lambda > 0 \,, \quad \mbox{as} \quad  \varepsilon \xrightarrow{\quad} 0_+\,.
\end{equation}
Similarly the positive perpendicular and parallel resistivities $\eta_\perp^\varepsilon>0$ and $\eta_\myparallel^\varepsilon>0$ must satisfy
\begin{equation}
  \label{resis-norm}
  \eta_\perp^\varepsilon \xrightarrow{\quad}  \eta_\perp >0 \,, \quad  \eta_\myparallel^\varepsilon \xrightarrow{\quad}  \eta_\myparallel > 0 \,,
 \quad \mbox{as} \quad \varepsilon \xrightarrow{\quad} 0_+\,.
\end{equation}
The system \eqref{mhdeps} is equipped with a conserved energy. We mainly assume that the energy of the initial data $ (\rho_0^\varepsilon,v_0^\varepsilon,B_0^\varepsilon) $ is bounded uniformly with respect to $ \varepsilon $, see \eqref{ineqC0-P} and \eqref{ineqC0-R}. We add technical conditions which are distinct when $ \Omega = \T^3 $ (Subsection  \ref{ss:rT3}) and when $ \Omega = \R^3 $ (Subsection  \ref{ss:rR3}) to guarantee that the difference $ \rho^\varepsilon_0 - 1 $ vanishes in $ L^\gamma_{\rm loc}(\Omega) $ when $ \varepsilon $ goes to zero. Then,
 up to a subsequence and at least in a weak sense (specified further), we have
\begin{equation*}
 \varrho^\epsilon_0 :=\frac{ \rho^\varepsilon_0  - 1}{\varepsilon} \xrightharpoonup{\quad} \varrho_0 \, , \qquad
 v^\varepsilon_0 \xrightharpoonup{\quad} v_0 \in L^2\,,
 \qquad B^\varepsilon_0 \xrightharpoonup{\quad} B_0 \in L^2 \,, \quad \mbox{as} \quad \varepsilon\xrightarrow{\quad} 0_+\, .
\end{equation*}
We will show (Theorems \ref{TH-CV-P} and \ref{TH-CV-R}) that, always up to a subsequence, the difference $ \rho^\varepsilon  - 1 $ vanishes strongly in $ L_{\rm loc}^\infty ( \R_+; L^\gamma_{\rm loc}(\Omega)) $ and that, at least in a weak sense, we have\footnote{In the periodic case, let $
\overline{\rho_0^\varepsilon} $ be the constant (close to $ 1 $) defined by \eqref{ineqC0-P++}. When $ \Omega = \T^3 $,
as stated in Theorem \ref{TH-CV-P}, the definition of $ \varrho^\epsilon $ should be replaced by $ \varrho^\epsilon:= ( \rho^\varepsilon  -
\overline{\rho_0^\varepsilon} )/\varepsilon $.}
\begin{equation*}
   \varrho^\epsilon:=\frac{ \rho^\varepsilon  - 1}{\varepsilon} \xrightharpoonup{\quad} \varrho \,, \qquad
   v^\varepsilon \xrightharpoonup{\quad} v \,,
 \qquad B^\varepsilon  \xrightharpoonup{\quad} B \,,  \quad \mbox{as} \quad \varepsilon\xrightarrow{\quad} 0_+\, .
\end{equation*}
The next stage is to identify the limit $ (\varrho,v,B) $.


\subsection{The RMHD model}\label{ss:RHMD}
The perpendicular component $ B_\perp := {}^t(B_1,B_2) \in \R^2 $ of $ B $ and the perpendicular component $ v_\perp := {}^t(v_1,v_2) \in \R^2 $  of $ v $ can be identified independently by solving the following nonlinear closed system
\begin{equation}
\left \{
\begin{aligned}
  &\partial_t B_\perp -\partial_\myparallel v_\perp + \nabla_\perp \cdot (B_\perp \otimes v_\perp - v_\perp \otimes B_\perp)
  -\eta_\perp \Delta_\perp B_\perp -\eta_\myparallel \Delta_\myparallel B_\perp=0\,, \\
  &\partial_t v_\perp -\partial_\myparallel B_\perp + \nabla_\perp \cdot (v_\perp \otimes v_\perp - B_\perp \otimes B_\perp) + \nabla_\perp \uppi
  -\mu_\perp \Delta_\perp v_\perp -\mu_\myparallel \Delta_\myparallel v_\perp=0\, ,
\end{aligned}
\right.
\label{rmhdincomp}
\end{equation}
together with the (transverse velocity) divergence-free condition
\begin{equation}
  \label{divperp}
 \nabla_\perp \cdot v_\perp =0 \, ,
\end{equation}
and the initial data
\begin{equation}
  \label{inidatperp}
  (B_\perp,v_\perp)_{\mid t=0} = (\P_\perp B_{0 \perp} , \P_\perp v_{0 \perp}) \in L^2(\Omega;\R^4)\, ,
\end{equation}
where the projection $ \P_\perp $ denotes the (two-dimensional) transverse Leray operator. Passing to the weak limit in $ \nabla_\varepsilon \cdot B_0^\varepsilon = 0 $, we can easily infer that $ \nabla_\perp \cdot B_{0\perp} =0$, and therefore $ B_{0 \perp} = \P_\perp B_{0 \perp} $. The same applies concerning $ v_0^\varepsilon $ in the case of prepared data. For unprepared data $ v_0^\varepsilon $, in general,we find that $ v_{0 \perp} \not = \P_\perp v_{0 \perp}$. Still, we will show in Section~\ref{ss:cv-rvperp-P} that  the limit initial condition is $\P_\perp v_{0\perp}$ and not $ v_{0\perp} $. This passage from $ v_{0 \perp} $ to $ \P_\perp v_{0 \perp} $ reveals the underlying presence of a time boundary layer (which may arise in the absence of preparation). In the second equation of \eqref{rmhdincomp}, the pressure $\uppi$ plays the role of a Lagrange multiplier to ensure the transverse incompressibility of the flow. Since
\[
\nabla_\perp \cdot (B_\perp \otimes v_\perp - v_\perp \otimes B_\perp) = {}^t \bigl( \partial_2 (B_1 \, v_2 - v_1 \, B_2) , - \partial_1 (B_1 \, v_2 - v_1 \, B_2)\bigr) \, ,
\]
exploiting \eqref{divperp}, we can assert that
\[
\partial_t (\nabla_\perp \cdot B_\perp) -\eta_\perp \Delta_\perp (\nabla_\perp \cdot B_\perp) -\eta_\myparallel \Delta_\myparallel (\nabla_\perp \cdot B_\perp) =0\, .
\]
It follows that the divergence-free condition on $ B_{0 \perp} $ is propagated. Retain that
\begin{equation}
  \label{divperppourB}\nabla_\perp \cdot B_\perp =0 \, .
\end{equation}
The existence of global-in-time weak solutions to \eqref{rmhdincomp}-\eqref{divperp}-\eqref{inidatperp} can be obtained from classical methods in \cite{DL72, ST83}. Note that it can be deduced indirectly from the existence (for all $\varepsilon >0$) of weak solutions to \eqref{mhdeps}. Indeed, as will be seen, the rigorous justification of the passage to the limit ($\varepsilon \rightarrow 0$) in the system \eqref{mhdeps} provides another way to construct global weak solutions of \eqref{rmhdincomp}-\eqref{divperp}-\eqref{inidatperp}. Observe also that the system is linear in the parallel direction. Thus, these global  existence results remain true even when $ \eta_\myparallel = 0 $ and $ \mu_\myparallel = 0 $.

Now, the treatment of the parallel components $ (B_\myparallel , v_\myparallel ) $ differs completely from what is done  usually. This is due to the (unconventional) anisotropic context which forces to look at $ (B^\varepsilon_\myparallel , v^\varepsilon_\myparallel ) $ separately. On the one hand, $  (B^\varepsilon_\myparallel , v^\varepsilon_\myparallel ) $ appears as a leading order term, and therefore its weak limit $  (B_\myparallel , v_\myparallel ) $ must contribute to the main description of the flow. As such, it must be incorporated in the RMHD  model. On the other hand, to some extent, $  (B^\varepsilon_\myparallel , v^\varepsilon_\myparallel ) $ is (partly) dealt in the equations as a second order term. It follows that the  determination of its weak limit $ (B_\myparallel , v_\myparallel ) $ is decoupled from the one of $ (B_\perp , v_\perp ) $. In fact, knowing the content of $ (B_\perp , v_\perp ) $, with the constant $ \mathbbl{c} := 1 + (1/b) > 1 $, we have access to $ (B_\myparallel , v_\myparallel ) $ through
\begin{equation}
\left \{      
\begin{aligned}
& \mathbbl{c} \bigl( \partial_t B_\myparallel + (v_\perp \cdot \nabla_\perp) B_\myparallel \bigr) - \partial_\myparallel v_\myparallel - (B_\perp \cdot \nabla_\perp) v_\myparallel
  -\eta_\perp \Delta_\perp B_\myparallel -\eta_\myparallel \Delta_\myparallel B_\myparallel=0\, , \\
  &\partial_t v_\myparallel + (v_\perp \cdot \nabla_\perp) v_\myparallel -\partial_\myparallel B_\myparallel  - (B_\perp \cdot \nabla_\perp) B_\myparallel
  -\mu_\perp \Delta_\perp v_\myparallel -\mu_\myparallel \Delta_\myparallel v_\myparallel = 0\, ,
\end{aligned}
\right.
\label{rmhd}
\end{equation}
and the initial data
\begin{equation}
  \label{inidatparal}
  (B_\myparallel,v_\myparallel)_{\mid t=0} = (\mathbbmsl{B}_{0\myparallel} , v_{0\myparallel} ) \, , \qquad
 \mathbbmsl{B}_{0\myparallel} := (B_{0\myparallel} - \varrho_0 )/\mathbbl{c} \, .
\end{equation}
This is the viscous version of a symmetric linear system implying the known  variable coefficients $ v_\perp $ and $ B_\perp $. For smooth data $ v_\perp $ and $ B_\perp $, the global existence is obvious. Moreover, due to \eqref{divperp} and \eqref{divperppourB}, usual energy estimates concerning $ (B_\myparallel,v_\myparallel) $ do apply without consuming any regularity on $ v_\perp $ and $ B_\perp $. It follows that global solutions do exist even when the coefficients $ v_\perp $ and $ B_\perp $ are issued from the weak solution $ (v_\perp , B_\perp ) $ in $ L_{\rm loc}^\infty(\R_+; L^2(\Omega)) \cap L_{\rm loc}^2(\R_+, \dot H^1(\Omega)) $ to  \eqref{rmhdincomp}.

Given $ \varepsilon >0 $, the fluid is slightly compressible since $ \rho^\varepsilon = 1 + \varepsilon \, \varrho^\varepsilon $ with $  \varrho^\varepsilon \sim \varrho $. The expression $ \varrho^\varepsilon $ (and  its weak limit $ \varrho $) plays at the level of \eqref{mhdeps} the part of a one order corrector which keeps track of the original compressibility. Now, looking at \eqref{mhdepssing}, it acts in the equations with the same order as the components $ v^\varepsilon_\perp $ and $  B^\varepsilon_\myparallel $. It is therefore reasonable to find a link between $ \varrho $, $  v_\perp $ and $ B_\myparallel $. In view of the second relation inside  \eqref{slowprep}, we can already infer that $ \mathbbmssit{p} + B_\myparallel = 0 $, where $ \mathbbmssit{p} := b \, \varrho $ is the weak limit of $ \mathbbmssit{p}^\varepsilon $. By this way, $ B_\myparallel $ acquires asymptotically the status of a pressure which can serve to  measure  some  compressibility in the parallel direction. For prepared data, that is when $  B_{0\myparallel} + b \varrho_0 = 0 $, we start with $ \mathbbmsl{B}_{0\myparallel} = B_{0\myparallel} $. Otherwise, for general unprepared data (which is our framework), we find that $ \mathbbmsl{B}_{0\myparallel}\not \equiv B_{0\myparallel} $, see Subsection \ref{ss:cv-Bpara-P}. Again, this is the hallmark of a boundary layer occuring at time $ t = 0 $ concerning the component $ B^\varepsilon_\myparallel $.


\subsection{Global overview}\label{ss:overview} This paper is devoted to the rigorous justification of the convergence of the global weak solutions to \eqref{mhdeps} to those of \eqref{rmhdincomp}-\eqref{inidatparal}. As already mentioned, the nature of the singular limit depends on many factors.


\subsubsection{Preceding results}\label{sss:preceding}
The hyperbolic version of system \eqref{mhdeps}, which is obtained by removing  viscosities and  resistivities, falls into the framework of the theory of singular limits of quasilinear hyperbolic systems with large parameters. This approach is restricted to smooth solutions (say $ H^s $ with $ s $ large enough). It was originally developped by Klainerman and Majda \cite{KM81, Maj84}. In these circumstances, retain that:
\begin{itemize}
\item In the smooth prepared setting, as a corollary of Theorem~3 in  \cite{KM81}, a convergence result does exist \cite{Gui15} concerning \eqref{mhdeps}. It holds as long as the solution of the limit equations remains smooth. In a related framework, namely with a strong constant magnetic field but without spatial anisotropy, the authors of \cite{JSX19} study the singular limit of the local-in-time smooth solution of the ideal MHD on a bounded domain, with convenient boundary conditions and prepared initial data. It turns out that the limit system in  \cite{JSX19} is essentially two-dimensional since the spatial variable $x_3$ plays the role of a label (no differential nor integral operator with respect to $x_3$). In \cite{BC2}, this method is successfully applied to the (more complex) XMHD system. Note also that we can appeal to Theorem~4 in \cite{KM81} (dealing with the diffusive version of Theorem~3 in \cite{KM81}) to justify the strong convergence on a fixed time interval of smooth prepared solutions to \eqref{mhdeps} to those of \eqref{rmhdincomp}-\eqref{inidatparal}. 
  
 \item In the smooth unprepared setting, one possible strategy \cite{Sch94} is first to exhibit a smooth (in its arguments) limit profile with a double number of variables (one set representing slow variations, the other set fast ones) satisfying an appropriate limit equation (called the modulation equation). Second, it is to prove that the smooth solution of the original system converges in a strong sense on a uniform time interval to this profile evaluated at the slow and fast variables.
\end{itemize}

Weak solutions can also be considered, provided that parabolic contributions are incorporated. This allows to relax the regularity conditions, to reach all times, and therefore to reinforce the universality of reduced models. A way to make progress in this direction has been initiated in \cite{LM98} which (for unprepared data) exploits the unitary group method \cite{Gre97,Sch94} and compactness arguments to construct a filtered profile for the irrotational part  of the velocity field. From this filtered profile, the authors of \cite{LM98} construct a sequence of approximations to the limit solution. Then, they exploit this sequence to pass to the limit in the nonlinear terms. By doing so, they observe that  the solenoidal part of the velocity field inherits a strong convergence, while only weak convergence results are available concerning the irrotational part.

The discussion is very sensitive to the type of domain: $\T^3 $ or $\R^3$. In the case of the whole space, the proof of \cite{LM98} has been  simplified in \cite{DG99} by using Strichartz estimates \cite{GV95, KT98}. This allows to improve the convergence result of the irrotational part of the velocity field, which is precisely the part containing the rapid oscillating acoustic waves.  Indeed, the authors of \cite{DG99} remark that this  irrotational part satisfies a  linear (isotropic) wave equation. From there, due to dispersive effects (in all spatial variables), it must asymptotically vanish in a strong sense.


\subsubsection{The anisotropic complications}\label{sss:complications}
We clarify here the important unsolved specificities induced by the implementation of distinct spatial scales. In the smooth prepared context, new problems already arise. For instance, as observed in \cite{BC2}, the anisotropy can  preclude obtaining a complete WKB expansion. Even in the smooth (prepared or not) case, the particularities related to the asymptotic study of \eqref{mhdeps} have not yet been explored. Inspired by \cite{LM98}, our aim is to go directly to weak solutions. We consider viscous and resistive situations vs. (almost) hyperbolic; global weak solutions vs. local strong solutions; $ L^p $ and periodic solutions vs. Sobolev solutions; and general data vs. prepared data. In so doing, the smooth strategies do not help. The good benchmark is \cite{LM98}. But MHD equations are quite different from compressible fluid equations \cite{LM98}. And thus, the discussion  must be adapted to cover the magnetic effects. There are many important challenges to elucidate, especially:
\begin{itemize}
\item The unitary group  method involves $ \Q_\perp v^\varepsilon_\perp $ and $ b \, \varrho^\varepsilon + B^\varepsilon_\myparallel $. It allows to filter out fast oscillating magnetosonic waves propagating in the transverse directions in ways that have not yet been investigated (even in the smooth context). Note in particular that $ b \, \varrho + B_\myparallel = 0 $, instead of simply $ \varrho = 0 $ in \cite{LM98}.
 \item The nonlinear expressions involving $ B^\varepsilon $ are, of course, absent in \cite{LM98}.
 \item Even the tensor product $ \rho^\varepsilon v^\varepsilon \otimes v^\varepsilon $ must be dealt differently. Indeed, in our setting, both $ v^\varepsilon_\myparallel $ and $ \P_\perp v^\varepsilon_\perp $ are left aside by the filtering. Other arguments must be introduced to understand what happens at the level of $ v^\varepsilon_\myparallel $ and $B^\varepsilon_\myparallel $, that is how to recover \eqref{rmhd}. To deal with this issue, we exploit  particular cancellations provided by the structure of system \eqref{mhdeps} that we combine with some compactness results developed in \cite{Lio98} in order to prove the existence of weak solutions to the compressible Navier--Stokes equations. Indeed, in particular from \eqref{mhdepssing}, we observe that $\partial_t(B^\varepsilon_\myparallel- \varrho^\varepsilon ) = \mathcal{O}(1)$. Then, the quantity $(B^\varepsilon_\myparallel- \varrho^\varepsilon )$ will yield the right unknown to prove the limit equation for $ B_\myparallel$. Moreover, in the case of the whole space, in contrast to  \cite{DG99}, we cannot exploit  (isotropic) Strichartz estimates to obtain the strong convergence of $ \Q_\perp v^\varepsilon_\perp $, since the resulting wave equation is posed only in the perpendicular spatial variables $x_\perp$, the parallel spatial variable $x_\myparallel$ being seen as a continuous label. Therefore, a natural and interesting open question arises: could only transverse dispersive effects (and thus some kind of anisotropic Strichartz's estimates) be used to show that $ \Q_\perp v^\varepsilon_\perp $ vanishes strongly ? This question will be addressed in further work.
\end{itemize}


\subsubsection{Plan of the work}\label{sss:plan} The paper is organized as follows. In Section~\ref{s:results}, we  state our main results. In Section~\ref{s:scaling}, we  come back to the physical motivations and to the origin of  our anisotropic scaling. In Sections~\ref{s:AAPD} and \ref{s:AAWS}, we prove the convergence of the compressible MHD equations \eqref{mhdeps}-\eqref{divbesp} to the RMHD equations \eqref{rmhdincomp}-\eqref{divperp}-\eqref{rmhd}. We start in Section~\ref{s:AAPD} with the case of a periodic domain. Then, in Section~\ref{s:AAWS}, we perform this investigation in the whole space. Finally, in Appendix~\ref{s:tools}, we recall functional analysis results which are exploited throughout the paper.

\section{Main results}
\label{s:results}

In Subsection \ref{ss:notations}, we specify some notations. In Subsection \ref{ss:ws}, we
recall the notion of weak solutions. In Subsection \ref{ss:rT3}, we state our main results in the case of $ \T^3 $. In Subsection  \ref{ss:rR3}, we do the same for $ \R^3 $.


\subsection{Notation}\label{ss:notations}
Let $\Omega$ be either the periodic domain $\T^3$ or the whole space $\R^3$. For $s\in \R$ and $1\leq p \leq \infty$, we shall use the  standard non-homogeneous Sobolev spaces
\[
W^{s,p}(\Omega)=({\rm I}-\Delta)^{-s/2}L^p(\Omega) \, , \qquad H^s(\Omega)=W^{s,2}(\Omega) \, ,
\]
and their  homogeneous versions
\[
\dot W^{s,p}(\Omega)=(-\Delta)^{-s/2}L^p(\Omega) \, , \qquad \dot H^s(\Omega)=\dot W^{s,2}(\Omega) \, .
\]
We introduce the transverse Leray projection operator $\P_\perp : W^{s,p}(\Omega; \R^2) \rightarrow  W^{s,p}(\Omega; \R^2)$ onto vector fields which are divergence-free in the perpendicular direction
\[
v_\perp =\P_\perp v_\perp + \Q_\perp v_\perp\,, \qquad \nabla_\perp \cdot (\P_\perp v_\perp ) = 0\,, \qquad \nabla_\perp \times  (\Q_\perp v_\perp) =0\,, \qquad \forall v_\perp \in L^2(\Omega;\R^2)\,,
\]
where $\nabla_\perp \times v_\perp :=  \partial_1  v_2 -  \partial_2 v_1 $. From the Mikhlin--H\"ormander Fourier multipliers theorem, the operators $\P_\perp$ and $\Q_\perp$ are continuous maps from the Sobolev space $W^{s,p}(\Omega; \R^2)$ into itself for  $s\in \R$ and $1<p<\infty$. In addition \cite{LM98}, for all $ \delta >0 $, we have the following continuous  embedding $ \P_\perp (L^1(\Omega)) \hookrightarrow W^{-\delta,1}(\Omega) $. This embedding can be justified simply by observing that on the one hand the operators $\P_\perp$ and $\Q_\perp$ are continuous maps from  $L^1(\Omega; \R^2)$ into the Lorentz space $L^{1,\infty}(\Omega)$ (or weak $L^1(\Omega)$; see, e.g., Theorem 5.3.3 in \cite{Gra14}) and on the other hand the continuous embedding $L^{1,\infty}(\Omega) \hookrightarrow W^{-\delta,1}(\Omega)$ holds. We denote by $\mathscr{C}(0,T; L_{\rm weak}^p(\Omega))$, the space of functions which are continuous with respect to $t\in [0,T]$, with values in $L^p(\Omega)$, with the weak topology. Moreover, we introduce the differential operator $D_\varepsilon \equiv {}^t \nabla_\varepsilon$. The scalar product between two matrices $M_1$ and $M_2$ is  defined as $M_1:M_2 = \sum_{ij} M_{1ij} M_{2ij}$. Moreover, given two vectors $ B_\perp \in \R^3 $ and $ v_\perp \in \R^3 $, we adopt below the convention $ B_\perp \times v_\perp := B_1 \, v_2 - B_2 \, v_1 \in \R $.


\subsection{Weak solutions}
\label{ss:ws} Weak solutions of  RMHD equations will be recovered by passing to the limit ($\varepsilon \rightarrow  0_+$) in the weak formulation associated with \eqref{mhdeps}-\eqref{divbesp}. It is therefore important to specify what is meant by a  weak solution to \eqref{mhdeps}-\eqref{divbesp} and to \eqref{rmhdincomp}-\eqref{divperp}-\eqref{rmhd} when $\Omega=\T^3$ and when $\Omega=\R^3$. Given initial data as in \eqref{initial-data-eps}, with
\begin{equation}
  \label{initial-data-ws}
  \rho_0^\varepsilon \in L_{\rm loc}^1(\Omega)\,, \quad \ \ 
  v_0^\varepsilon\,, \, B_0^\varepsilon, \, \sqrt{\rho_0^\varepsilon} v_0^\varepsilon \in L^2(\Omega)\,,\quad \ \ 
  \nabla_\varepsilon \cdot B_0^\varepsilon =0 \ \, \mbox{in }\  \mathcal{D}'(\Omega)\,,
\end{equation}
a triplet $(\rho^\varepsilon,v^\varepsilon, B^\varepsilon)$ satisfying
\begin{equation}
  \label{reg-triplet}
  \rho^\varepsilon \in L_{\rm loc}^\infty (\R_+; L_{\rm loc}^\gamma(\Omega))\,, \qquad
  (v^\varepsilon ,B^\varepsilon,  \sqrt{\rho^\varepsilon} v^\varepsilon ) \in L_{\rm loc}^\infty (\R_+; L^2(\Omega))\,,
\end{equation}
is said to be a weak solution of \eqref{mhdeps}-\eqref{divbesp} if for all $ \psi={}^t({}^t\psi_\perp,\psi_\myparallel) \in \mathscr{C}_c^\infty(\R_+\times\Omega;\R^3)$ and for all $ \varphi\in \mathscr{C}_c^\infty(\R_+\times\Omega;\R)$ with $ p^\varepsilon = a \, (\rho^\varepsilon)^\gamma $, we have
\begin{equation}
\label{eq:rho-eps}
\int_\Omega dx \rho_0^\varepsilon \varphi(0) +
\int_0^\infty dt \int_\Omega dx \, \rho^\varepsilon\big(\partial_t  \varphi+ v^\varepsilon \cdot \nabla_\varepsilon \varphi\big) =0 \,,
\end{equation}      
\begin{multline}
\label{eq:vperp-eps}
\int_\Omega dx \,\rho_0^\varepsilon v_{0 \perp }^\varepsilon \cdot \psi_\perp(0)
+\int_0^\infty dt \int_\Omega dx\,  \bigg( \rho^\varepsilon v_{\perp }^\varepsilon \cdot \partial_t \psi_\perp
+ \big(\rho^\varepsilon v_{\perp }^\varepsilon \otimes  v^\varepsilon 
- B_{\perp }^\varepsilon \otimes  B^\varepsilon \big): D_\varepsilon \psi_\perp 
\\ + \bigg\{ \frac{1}{\varepsilon^2} p^\varepsilon + \frac{B_\myparallel^\varepsilon}{\varepsilon}
+ \frac{|B^\varepsilon|^2}{2}\bigg\} \nabla_\perp \cdot \psi_\perp
- B_\perp^\varepsilon \cdot \partial_\myparallel \psi_\perp 
+ \mu_\perp v_{\perp }^\varepsilon \cdot \Delta_\perp \psi_\perp + \mu_\myparallel v_{\perp }^\varepsilon \cdot \Delta_\myparallel \psi_\perp 
\bigg) =0 \,,
\end{multline}
\begin{multline}
\label{eq:vparal-eps}
 \int_\Omega dx \,\rho_0^\varepsilon v_{0 \myparallel}^\varepsilon \psi_\myparallel(0)
+ \int_0^\infty dt \int_\Omega dx\,  \bigg( \rho^\varepsilon v_{\myparallel }^\varepsilon \partial_t \psi_\myparallel
+ \big( \rho^\varepsilon v_{\myparallel }^\varepsilon \otimes  v^\varepsilon
- B_{\myparallel }^\varepsilon \otimes  B^\varepsilon \big) : D_\varepsilon \psi_\myparallel 
\\ 
+ \bigg\{ \frac{1}{\varepsilon}p^\varepsilon  + \varepsilon\frac{|B^\varepsilon|^2}{2}\bigg\} \partial_\myparallel  \psi_\myparallel
+ \mu_\perp v_{\myparallel }^\varepsilon  \Delta_\perp \psi_\myparallel + \mu_\myparallel v_{\myparallel }^\varepsilon \Delta_\myparallel \psi_\myparallel
+ \varepsilon \lambda^\varepsilon v^\varepsilon \cdot \nabla_\varepsilon (\partial_\myparallel \psi_\myparallel)  
\bigg) =0 \,,
\end{multline}    
\begin{multline}
\label{eq:Bperp-eps}
 \int_\Omega dx \,B_{0\perp}^\varepsilon \cdot \psi_\perp(0)
+ \int_0^\infty dt \int_\Omega dx\,  \bigg(  B_{\perp }^\varepsilon \cdot \partial_t \psi_\perp
- v_{\perp }^\varepsilon \cdot \partial_\myparallel \psi_\perp - (B_\perp^\varepsilon \times v_\perp^\varepsilon) \nabla_\perp \times \psi_\perp
\\ 
+ \varepsilon v_\myparallel^\varepsilon B_\perp^\varepsilon \cdot \partial_\myparallel \psi_\perp - \varepsilon B_\myparallel^\varepsilon v_\perp^\varepsilon \cdot \partial_\myparallel \psi_\perp
+ \eta_\perp B_{\perp }^\varepsilon \cdot \Delta_\perp \psi_\perp + \eta_\myparallel B_{\perp }^\varepsilon \cdot \Delta_\myparallel \psi_\perp 
\bigg) =0 \,,
\end{multline}
\begin{multline}
\label{eq:Bparal-eps}
 \int_\Omega dx \,  B_{0 \myparallel}^\varepsilon \psi_\myparallel(0)
+ \int_0^\infty dt \int_\Omega dx \bigg( B_{\myparallel }^\varepsilon \partial_t \psi_\myparallel
+ \frac{1}{\varepsilon} v_\perp^\varepsilon \cdot \nabla_\perp \psi_\myparallel 
\\ 
-v_\myparallel^\varepsilon  B_\perp^\varepsilon \cdot \nabla_\perp \psi_\myparallel + B_\myparallel^\varepsilon v_\perp^\varepsilon \cdot \nabla_\perp \psi_\myparallel
+ \eta_\perp B_{\myparallel }^\varepsilon  \Delta_\perp \psi_\myparallel + \eta_\myparallel B_{\myparallel }^\varepsilon \Delta_\myparallel \psi_\myparallel
\bigg) =0 \,,
\end{multline}
\begin{equation}
\label{eq:zero-divB}
\int_\Omega dx\, B^\varepsilon(t) \cdot \nabla_\varepsilon \varphi (t) = 0\,, \quad \forall t\in \R_+\, .
\end{equation}
The notion of weak solution to \eqref{rmhdincomp}-\eqref{divperp} is obtained by testing \eqref{rmhdincomp} against all $ \psi_\perp \in \mathscr{C}_c^\infty(\R_+\times\Omega;\R^2)$ which are such that $\nabla_\perp \cdot \psi_\perp =0$. Concerning \eqref{rmhd}, it suffices to select scalar functions.


\subsection{The periodic case}
\label{ss:rT3} This is when $\Omega=\T^3$. The functional framework is based on \cite{HW10,LM98}. Select initial data satisfying
\begin{equation}
  \label{reg-ic-mhdeps-P}
   \left\{
   \begin{aligned}
     & \rho^\varepsilon_{|_{t=0}}=\rho_0^\varepsilon\in L^\gamma(\T^3)\,, \qquad  \rho_0^\varepsilon \geq 0\,,
     \qquad \sqrt{\rho_0^\varepsilon} v_0^\varepsilon \in L^2(\T^3)\,, \\
     &(\rho^\varepsilon v^\varepsilon)_{|_{t=0}} = m_0^\varepsilon =
     \left\{
     \begin{aligned}
       & \rho_0^\varepsilon v_0^\varepsilon & \mbox{if } \ \rho_0^\varepsilon\neq 0\\
       & 0  & \mbox{if }\ \rho_0^\varepsilon= 0
     \end{aligned}
     \right\}
     \in L^{2\gamma/(\gamma+1)}(\T^3)\,, \\
     & B^\varepsilon_{|_{t=0}}=B_0^\varepsilon \in L^2(\T^3)\, , \qquad \nabla_\varepsilon \cdot B_0^\varepsilon=0 \, , \qquad  \int_{\T^3} dx \, B_0^\varepsilon=0 \, .
   \end{aligned}
  \right.
\end{equation}
We assume that these regularity assumptions are uniform with respect to $\varepsilon$. Furthermore, given a constant $ C_0 $ (not depending on $ \varepsilon $), we impose
\begin{equation}
  \label{ineqC0-P}
 \frac 12 \int_{\T^3} dx\, \big( \rho_0^\varepsilon |v_0^\varepsilon|^2 + |B_0^\varepsilon|^2\big) + \frac{a}{\varepsilon^2(\gamma-1)}
   \int_{\T^3} dx\, \big( (\rho_0^\varepsilon)^\gamma - \gamma  \rho_0^\varepsilon (\overline{\rho^\varepsilon_0})^{\gamma-1}
   + (\gamma -1)(\overline{\rho^\varepsilon_0})^{\gamma}\big) \leq C_0 \, .
\end{equation}
This is completed by
\begin{equation}
  \label{ineqC0-P++}
\overline{\rho_0^\varepsilon} := \frac{1}{|\T^3|} \int_{\T^3} dx\,\rho_0^\varepsilon \, \xrightarrow{\quad} 1\,, \ \mbox{ as } \varepsilon \xrightarrow{\quad} 0_+\,.   
\end{equation}
This bound gives access to weak compactness. Modulo the extraction of subsequences (which are not  specified), we can say that $\sqrt{\rho_0^\varepsilon } v_0^\varepsilon$ and $B_0^\varepsilon$ converge weakly in $L^2(\T^3)$ to $u_0$ and $B_0$ respectively. From \eqref{ineqC0-P}, some information on $\rho_0^\varepsilon $ and $ \varrho_0^\varepsilon :=(\rho_0^\varepsilon-\overline{\varrho_0^\varepsilon})/\varepsilon $ can also be extracted. We will first show (see the proof of Lemma~\ref{lem:cv-rho}) that $\rho_0^\varepsilon \rightarrow 1$ in $L^\gamma(\T^3)$--strong. Then, we will see that  $\varrho_0^\varepsilon \rightharpoonup \varrho_0$ in $L^\kappa(\T^3)$--weak for $\kappa :=\min\{2,\gamma\}$. Using $\sqrt{\rho_0^\varepsilon } v_0^\varepsilon \rightharpoonup u_0$ in  $L^2(\T^3)$--weak and $\rho_0^\varepsilon \rightarrow 1$ in $L^\gamma(\T^3)$--strong, we obtain $\rho_0^\varepsilon  v_0^\varepsilon  \rightharpoonup u_0 = v_0 $ in  $L^{2\gamma/(\gamma +1)}(\T^3)$--weak.

As soon as $\gamma>3/2$, the contribution  \cite{HW10} furnishes a  weak solution to \eqref{mhdeps}-\eqref{divbesp} with
\begin{equation}
  \label{reg-ws-mhdeps-P}
  \left\{
  \begin{aligned}
    &\rho^\varepsilon \in L_{\rm loc}^\infty \bigl(\R_+; L^\gamma(\T^3)\bigr)\,, \quad  v^\varepsilon \in   L_{\rm loc}^2\bigl(\R_+; H^1(\T^3)\bigr)\,, \\
    & \sqrt{\rho^\varepsilon}v^\varepsilon \in  L_{\rm loc}^\infty\bigl(\R_+; L^2(\T^3)\bigr)\,, \quad \rho^\varepsilon v^\varepsilon
    \in  L_{\rm loc}^\infty\bigl(\R_+; L^{2\gamma/(\gamma+1)}(\T^3)\big) \cap\mathscr{C}_{\rm loc}\big(\R_+; L_{\rm weak}^{2\gamma/(\gamma+1)}(\T^3)\big) \,,\\
    & B^\varepsilon \in L_{\rm loc}^\infty\bigl(\R_+; L^2(\T^3)\bigr) \cap  \mathscr{ C}_{\rm loc}\bigl(\R_+; L_{\rm weak}^2(\T^3)\bigr)  \cap  L_{\rm loc}^2\bigl(\R_+; H^1(\T^3)\bigr)\,, \qquad  \int_{\T^3} dx \, B^\varepsilon=0 \,.
  \end{aligned}
  \right.
\end{equation}
The mass is conserved
\[
\overline{\rho^\varepsilon} := \frac{1}{|\T^3|} \int_{\T^3} dx\,\rho^\varepsilon  = \overline{\rho_0^\varepsilon} \,,
\]
and thus, using \eqref{ineqC0-P++}, we deduce that  $\overline{\rho^\varepsilon} \rightarrow 1$, as $\varepsilon \rightarrow 0_+$. Moreover, we have two  energy inequalities
\begin{equation}
\label{nrj-ineq-mhdeps}
\mathbbm{E}^\varepsilon(t) + \int_0^t ds \, \mathbbm{D}^\varepsilon(s) \leq  \mathbbm{E}_0^\varepsilon\,, \ \mbox{ a.e. } t \in [0,+\infty)\,,
\end{equation}
with $ \mathbbm{E} \in \{\mathcal{E}_1,\,\mathcal{E}_2\}$, where for $i=1,2$, 
\begin{equation}
\label{nrj-def}
\mathcal{E}_i^\varepsilon(t)=
\int_{\Omega} dx\, \Big( \tfrac 12 \rho^\varepsilon |v^\varepsilon|^2 + \tfrac 12 |B^\varepsilon|^2 + \Pi_i(\rho^\varepsilon)\Big)\,,
\quad
\mathcal{E}_{i0}^\varepsilon=
 \int_{\Omega} dx\, \Big( \tfrac 12\rho_0^\varepsilon |v_0^\varepsilon|^2 + \tfrac 12|B_0^\varepsilon|^2 + \Pi_i(\rho_0^\varepsilon) \Big)\,, 
\end{equation}
\begin{equation}
\label{dissip-D}
 \mathbbm{D}^\varepsilon=
\int_{\Omega} dx\, \big( \mu_\perp^\varepsilon |\nabla_\perp v^\varepsilon|^2 + \mu_\myparallel^\varepsilon |\partial_\myparallel v^\varepsilon|^2 +
\lambda^\varepsilon  |\nabla_\varepsilon \cdot v^\varepsilon|^2 +
\eta_\perp^\varepsilon |\nabla_\perp B^\varepsilon|^2 + \eta_\myparallel^\varepsilon |\partial_\myparallel B^\varepsilon|^2
\big)\,, 
\end{equation}
and
\begin{equation}
\label{defPI-P}
\Pi_1(\rho^\varepsilon)=\frac{a}{\varepsilon^2(\gamma-1)}(\rho^\varepsilon)^\gamma\,, \quad \
\Pi_2(\rho^\varepsilon)=\frac{a}{\varepsilon^2(\gamma-1)}\big( (\rho^\varepsilon)^\gamma
- \gamma \rho^\varepsilon (\overline{\rho^\varepsilon})^{\gamma -1} +(\gamma -1)(\overline{\rho^\varepsilon})^{\gamma}
\big)\,.
\end{equation}
For $ i=1 $, the inequality \eqref{nrj-ineq-mhdeps} is a consequence of straightforward calculation involving  \eqref{mhdeps}. Using the mass conservation, we can check that the inequality \eqref{nrj-ineq-mhdeps} for $ i= 2 $ is equivalent to \eqref{nrj-ineq-mhdeps} for $ i= 1 $. The case $ i= 2 $ is introduced because it allows a better comparison of $ \rho^\varepsilon $ with $ \overline{\rho^\varepsilon} $.

\begin{theorem}[Convergence of MHD to RMHD on a periodic domain] \label{TH-CV-P} Assume $ \Omega = \T^3 $ and $\gamma>3/2$. Consider a sequence $\{(\rho^\varepsilon,v^\varepsilon, B^\varepsilon)\}_{\varepsilon >0}$ of weak solutions to the compressible MHD system \eqref{mhdeps}-\eqref{divbesp} with initial data  $\{(\rho_0^\varepsilon,v_0^\varepsilon, B_0^\varepsilon)\}_{\varepsilon >0}$ as in \eqref{ineqC0-P}. Let us set $\varrho^\varepsilon:=(\rho^\varepsilon-\overline{\rho^\varepsilon})/\varepsilon$. Then, up to a subsequence, the family $\{(\rho^\varepsilon,\varrho^\varepsilon, v^\varepsilon, B^\varepsilon)\}_{\varepsilon >0}$  converges to $(1,\varrho, v, B)$  as indicated below
\begin{equation*}
  \label{cv-th-P}
  \begin{aligned}
    & \rho^\varepsilon \xrightarrow{\quad}  1 \ \ \mbox{\rm in }\  L_{\rm loc}^\infty(\R_+; L^\gamma(\T^3))\!-\!\mbox{\rm strong} \,,  \\
    & \varrho^\varepsilon \xrightharpoonup{\quad}  \varrho  \ \  \mbox{\rm in } \ L_{\rm loc}^\infty(\R_+; L^\kappa(\T^3))\!-\!\mbox{\rm weak--}\ast\,, \quad \kappa=\min\{2,\gamma\} \,,  \\
    &\P_\perp v_\perp^\varepsilon \xrightarrow{\quad} \P_\perp v_\perp=v_\perp  \ \ \mbox{\rm in }\  L_{\rm loc}^2(\R_+; L^p\cap H^s(\T^3))\!-\!\mbox{\rm strong}\,, \quad 1 \leq p < 6\,, \quad 0\leq s<1\,,  \\
    &\Q_\perp v_\perp^\varepsilon  \xrightharpoonup{\quad}  0  \ \ \mbox{\rm in } \ L_{\rm loc}^2(\R_+; H^1(\T^3))\!-\!\mbox{\rm weak}\,,  \\
    & v_\myparallel^\varepsilon \xrightharpoonup{\quad}  v_\myparallel  \ \ \mbox{\rm in } \ L_{\rm loc}^2(\R_+; H^1(\T^3))\!-\!\mbox{\rm weak}\,,  \\
    &B_\perp^\varepsilon \xrightarrow{\quad} B_\perp  \ \ \mbox{\rm in } \ L_{\rm loc}^r(\R_+; L^2(\T^3))\!-\!\mbox{\rm strong}\,, \quad 1 \leq r < \infty\,,  \\
    &B_\myparallel^\varepsilon \xrightharpoonup{\quad} B_\myparallel\ \ \mbox{\rm in } \ L_{\rm loc}^2(\R_+; H^1(\T^3))\!-\!\mbox{\rm weak} \, \cap \,
     L_{\rm loc}^\infty(\R_+; L^2(\T^3))\!-\!\mbox{\rm weak--}\!\ast\,.
  \end{aligned}
\end{equation*}
The limit point $(v, B)$ is a weak solution to the RMHD equations \eqref{rmhdincomp}-\eqref{divperp}-\eqref{rmhd} with initial data
\[
(B_\perp, v_\perp)_{|_{t=0}} = (B_{0 \perp},\P_\perp v_{0 \perp}) \in L^2(\T^3) \, , \qquad (B_\myparallel, v_\myparallel)_{|_{t=0}} = (\mathbbmsl{B}_{0 \myparallel}, v_{0 \myparallel}) \in L^2(\T^3) \, ,
\]
where $ \mathbbmsl{B}_{0\myparallel} $ is as in \eqref{inidatparal}, and it satisfies the following regularity properties
\[
B\in L_{\rm loc}^\infty (\R_+; L^2(\T^3)) \cap L_{\rm loc}^2 (\R_+; H^1(\T^3)) \, , \qquad
v\in L_{\rm loc}^\infty (\R_+; L^2(\T^3)) \cap L_{\rm loc}^2 (\R_+; H^1(\T^3)) \, .
\]
Moreover, the components $ \varrho $ and $ B_\myparallel $ are linked together by the relation $ b \, \varrho + B_\myparallel = 0 $, for a.e. $(t,x) \in ]0,+\infty[ \times \T^3$.
\end{theorem}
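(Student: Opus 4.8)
The plan is to adapt the filtering and unitary-group strategy of \cite{LM98} to the magnetic and anisotropic structure of \eqref{mhdeps}, treating the transverse and parallel blocks by different mechanisms. I would begin from the energy inequality \eqref{nrj-ineq-mhdeps} in the form $\mathbbm{E}=\mathcal{E}_2$: the dissipation \eqref{dissip-D} gives uniform bounds for $v^\varepsilon$ and $B^\varepsilon$ in $L^2_{\rm loc}(\R_+;H^1(\T^3))$, the kinetic and magnetic parts of \eqref{nrj-def} bound $\sqrt{\rho^\varepsilon}v^\varepsilon$ and $B^\varepsilon$ in $L^\infty_{\rm loc}(\R_+;L^2)$, and the potential $\Pi_2$ of \eqref{defPI-P}, which near $\overline{\rho^\varepsilon}$ behaves like $\varepsilon^{-2}|\rho^\varepsilon-\overline{\rho^\varepsilon}|^2$, forces $\|\rho^\varepsilon-\overline{\rho^\varepsilon}\|=\mathcal O(\varepsilon)$ in the relevant $L^\kappa$/Orlicz norm. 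With \eqref{ineqC0-P++} this yields $\rho^\varepsilon\to 1$ strongly and a weak-$\ast$ limit $\varrho$ for $\varrho^\varepsilon$, while $v^\varepsilon\rightharpoonup v$ and $B^\varepsilon\rightharpoonup B$ in $L^2_{\rm loc}(H^1)$ along a subsequence.

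Next I would read off the constraints from the penalized terms. Multiplying the transverse momentum identity \eqref{eq:vperp-eps} by $\varepsilon$ and inserting the expansion \eqref{expanesp} annihilates the constant $\varepsilon^{-2}$ pressure contribution on $\T^3$ and leaves, in the limit, $\nabla_\perp(b\varrho+B_\myparallel)=0$; the continuity relation likewise gives $\nabla_\perp\cdot v_\perp=0$, i.e.\ \eqref{divperp} and $v_\perp=\P_\perp v_\perp$. Averaging $\nabla_\varepsilon\cdot B^\varepsilon=0$ over $x_\perp$ together with $\int_{\T^3}B^\varepsilon=0$ shows that the transverse mean of $B_\myparallel$ vanishes, and combining this with $\int_{\T^3}\varrho^\varepsilon=0$ and the averaged mass relation upgrades the previous identity to $b\varrho+B_\myparallel=0$, as anticipated in \eqref{slowprep}. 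The decisive structural remark, flagged in the introduction, is that subtracting the $\varrho^\varepsilon$-equation from the $B_\myparallel^\varepsilon$-equation in \eqref{mhdepssing} cancels the singular $\varepsilon^{-1}\nabla_\perp\cdot v_\perp^\varepsilon$, so that $\partial_t(B_\myparallel^\varepsilon-\varrho^\varepsilon)=\mathcal O(1)$ in a negative Sobolev norm.

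The compactness then splits according to how the singular operator acts. For $\P_\perp v_\perp^\varepsilon$ and for $B_\perp^\varepsilon$ the penalized gradient is killed — by applying $\P_\perp$ to \eqref{eq:vperp-eps}, respectively because \eqref{eq:Bperp-eps} carries no $\varepsilon^{-1}$ factor on $B_\perp^\varepsilon$ — so the $H^1$ spatial bound and the resulting $\mathcal O(1)$ time derivative give, through Aubin--Lions--Simon (Appendix~\ref{s:tools}), the strong convergences $\P_\perp v_\perp^\varepsilon\to v_\perp$ and $B_\perp^\varepsilon\to B_\perp$. For the genuinely oscillating pair $(\Q_\perp v_\perp^\varepsilon,\,b\varrho^\varepsilon+B_\myparallel^\varepsilon)$ I would introduce the fast unitary group generated by the transverse magnetosonic operator of the linearized system \eqref{mhdepssing}; the filtered variables have bounded time derivatives and hence converge strongly, whereas $\Q_\perp v_\perp^\varepsilon$ itself only tends to $0$ weakly. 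Finally, the cancellation above provides strong $L^2_{\rm loc}$ compactness of $B_\myparallel^\varepsilon-\varrho^\varepsilon$, and the analogous control of the parallel momentum $\rho^\varepsilon v_\myparallel^\varepsilon$, obtained by grafting in the compactness tools of \cite{Lio98}, yields compactness of $v_\myparallel^\varepsilon$.

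It then remains to pass to the limit in the weak formulation. In the $(B_\perp,v_\perp)$ block the strong convergences of $\P_\perp v_\perp^\varepsilon$ and $B_\perp^\varepsilon$ let the quadratic fluxes of \eqref{eq:vperp-eps}--\eqref{eq:Bperp-eps} converge after projecting onto transverse divergence-free test fields, producing \eqref{rmhdincomp}--\eqref{divperp}, provided the remaining products of purely oscillating factors average to $o(1)$ by non-resonance of the group. For the parallel block I would test the difference of \eqref{eq:Bparal-eps} and \eqref{eq:rho-eps}: the strong limit of $B_\myparallel^\varepsilon-\varrho^\varepsilon$ together with the constraint $b\varrho+B_\myparallel=0$, whence $B_\myparallel-\varrho=\mathbbl{c}\,B_\myparallel$, produces the first equation of \eqref{rmhd} and the datum $\mathbbmsl{B}_{0\myparallel}=(B_{0\myparallel}-\varrho_0)/\mathbbl{c}$ of \eqref{inidatparal}, while the compactness of $\rho^\varepsilon v_\myparallel^\varepsilon$ closes the $v_\myparallel$-equation; the gap between $v_{0\perp}$ and $\P_\perp v_{0\perp}$ records the time boundary layer created by the fast oscillations. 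I expect the main obstacle to be exactly this passage to the limit in the nonlinearities: controlling the quadratic self-interactions of the fast magnetosonic oscillations inside the tensors $\rho^\varepsilon v^\varepsilon\otimes v^\varepsilon$ and $B^\varepsilon\otimes B^\varepsilon$, in which the strongly convergent components $\P_\perp v_\perp^\varepsilon$, $v_\myparallel^\varepsilon$ and the merely weakly convergent $\Q_\perp v_\perp^\varepsilon$, $b\varrho^\varepsilon+B_\myparallel^\varepsilon$ are entangled, so that only the correct averaged contributions survive.
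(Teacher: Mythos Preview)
Your overall architecture is right, but there are two concrete gaps where the compactness you invoke is not available and the paper proceeds differently.

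\textbf{Strong convergence of $\P_\perp v_\perp^\varepsilon$.} Applying $\P_\perp$ to \eqref{eq:vperp-eps} controls $\partial_t\P_\perp(\rho^\varepsilon v_\perp^\varepsilon)$, not $\partial_t\P_\perp v_\perp^\varepsilon$. The \emph{momentum} $\P_\perp(\rho^\varepsilon v_\perp^\varepsilon)$ has the bounded time derivative but only an $L^{6\gamma/(6+\gamma)}$ spatial bound, while the \emph{velocity} $\P_\perp v_\perp^\varepsilon$ has the $H^1$ bound but no direct equation. Aubin--Lions--Simon needs both on the same sequence, so it does not apply. The paper instead uses Lions's product compactness lemma (Lemma~\ref{lem:compactness}, i.e.\ Lemma~5.1 of \cite{Lio98}) on the pair $g^\varepsilon=\P_\perp(\rho^\varepsilon v_\perp^\varepsilon)$, $h^\varepsilon=\P_\perp v_\perp^\varepsilon$ to get $\P_\perp(\rho^\varepsilon v_\perp^\varepsilon)\cdot\P_\perp v_\perp^\varepsilon\rightharpoonup|v_\perp|^2$, and then combines this with $\rho^\varepsilon\to 1$ to obtain $\|\P_\perp v_\perp^\varepsilon\|_{L^2}\to\|v_\perp\|_{L^2}$ and hence strong convergence.

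\textbf{The parallel block.} Your claim of ``strong $L^2_{\rm loc}$ compactness of $B_\myparallel^\varepsilon-\varrho^\varepsilon$'' fails because $\varrho^\varepsilon$ carries no spatial regularity beyond $L^\kappa$: the cancellation $\partial_t(B_\myparallel^\varepsilon-\varrho^\varepsilon)=\mathcal O(1)$ gives a time bound, but without an $H^1$ (or better-than-$L^\kappa$) spatial bound on $\varrho^\varepsilon$, Aubin--Lions only yields compactness in a negative Sobolev space, not in $L^2$. Likewise $v_\myparallel^\varepsilon$ is only shown to converge \emph{weakly} in $L^2_{\rm loc}(H^1)$ in the theorem; no strong compactness is claimed or needed. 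The paper therefore does \emph{not} pass to the limit in the parallel nonlinearities via strong convergence of $\mathbbmsl{B}_\myparallel^\varepsilon=\mathbbl{c}^{-1}(B_\myparallel^\varepsilon-\varrho^\varepsilon/\overline{\rho^\varepsilon})$ or $v_\myparallel^\varepsilon$. Instead it again applies Lemma~\ref{lem:compactness}: once with $g^\varepsilon=\rho^\varepsilon v_\myparallel^\varepsilon$, $h^\varepsilon=v_\perp^\varepsilon$ to close the $v_\myparallel$-equation, and once with $g^\varepsilon=\mathbbmsl{B}_\myparallel^\varepsilon$, $h^\varepsilon=v_\perp^\varepsilon$ to close the $B_\myparallel$-equation. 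In both cases only weak convergence of $g^\varepsilon$ and the spatial equicontinuity of $h^\varepsilon$ (from its $H^1$ bound) are used. Your proposed route through strong parallel compactness would not go through as stated.
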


  
\subsection{The whole space case}
\label{ss:rR3}
This is when  $\Omega=\R^3$. In order to define weak solutions in the whole space, we need to introduce the following special type of Orlicz spaces $L_q^p(\Omega)$ (see Appendix~A of \cite{Lio98} for more details on these spaces),
\begin{equation}
  \label{def_orlicz}
  L_q^p(\Omega) = \big\{
  f\in L_{\rm loc}^1(\Omega) \ \big | \ f\mathbbm{1}_{\{|f|\leq \delta\}} \in L^q(\Omega), \ \  f\mathbbm{1}_{\{|f|> \delta\}} \in L^p(\Omega), \ \delta >0
  \big\}\,,
\end{equation}
where the function $\mathbbm{1}_S$ denotes the indicator function of the set $S$. Obviously $L_p^p(\Omega)\equiv L^p(\Omega)$.

The functional framework is based on \cite{HW10,LM98}. Select initial data satisfying
\begin{equation}
  \label{reg-ic-mhdep-R}
   \left\{
   \begin{aligned}
     & \rho^\varepsilon_{|_{t=0}}=\rho_0^\varepsilon\in L_{\rm loc }^1(\R^3)\,, 
     \quad \rho_0^\varepsilon -1 \in L_2^\gamma(\R^3)\,,  \quad  \gamma>3/2\,,
     \quad \rho_0^\varepsilon \geq 0\,,
     \quad \sqrt{\rho_0^\varepsilon} v_0^\varepsilon \in L^2(\R^3)\,, \\
     &(\rho^\varepsilon v^\varepsilon)_{|_{t=0}} = m_0^\varepsilon =
     \left\{
     \begin{aligned}
       & \rho_0^\varepsilon v_0^\varepsilon & \mbox{if } \ \rho_0^\varepsilon\neq 0\\
       & 0  & \mbox{if }\ \rho_0^\varepsilon= 0
     \end{aligned}
     \right\}
     \in L_{\rm loc}^{1}(\R^3)\,, \\
     & B^\varepsilon_{|_{t=0}}=B_0^\varepsilon \in L^2(\R^3)\,, \qquad \nabla_\varepsilon \cdot B_0^\varepsilon=0 \, , \qquad \int_{\R^3} dx \, B_0^\varepsilon=0 \,, \\
      & \rho_0^\varepsilon \xrightarrow{\quad} 1\,, \quad v_0^\varepsilon \xrightarrow{\quad} 0\,, \quad
    B_0^\varepsilon \xrightarrow{\quad} 0\,, \quad \mbox{as } |x| \xrightarrow{\quad} \infty\,. 
   \end{aligned}
  \right.
\end{equation}
We assume that these regularity assumptions are uniform with respect to $\varepsilon$. Furthermore, given a constant $C_0$ (not depending on $\varepsilon$), we impose 
\begin{equation}
  \label{ineqC0-R}
  \frac 12 \int_{\R^3} dx\, \big( \rho_0^\varepsilon |v_0^\varepsilon|^2 + |B_0^\varepsilon|^2\big) + \frac{a}{\varepsilon^2(\gamma-1)}
  \int_{\R^3} dx\, \big( (\rho_0^\varepsilon)^\gamma - \gamma  \rho_0^\varepsilon 
  + \gamma -1\big) \leq C_0\,.
\end{equation}
This bound gives access to weak compactness. Modulo the extraction of subsequences (which are not specified) we can say that $\sqrt{\rho_0^\varepsilon } v_0^\varepsilon$ and $B_0^\varepsilon$ converge weakly in $L^2(\R^3)$ to $u_0$ and $B_0$ respectively. From \eqref{ineqC0-P}, we will show (see the proof of Lemma~\ref{lem:cv-rho-R}) the subsequent results. First, we will  obtain (uniformly in $\varepsilon$) the bounds $ \rho_0^\varepsilon  \in  L_{\rm loc }^\gamma(\R^3)$, $ \varrho_0^\varepsilon \in  L_2^\kappa \cap L_{\rm loc}^\kappa (\R^3)$, with $\kappa=\min\{2,\gamma\}$, as well as $ \rho_0^\varepsilon v_0^\varepsilon \in L_{\rm loc}^{2\gamma/(\gamma+1)} (\R^3)$. Second, we will obtain $\rho_0^\varepsilon \rightarrow 1$ in $L_2^\gamma\cap L_{\rm loc}^\gamma(\R^3)$--strong, and $\varrho_0^\varepsilon \rightharpoonup \varrho_0$ in $L_{\rm loc}^\kappa(\R^3)$--weak. Moreover, using $\sqrt{\rho_0^\varepsilon } v_0^\varepsilon \rightharpoonup u_0$ in  $L^2(\R^3)$--weak and $\rho_0^\varepsilon \rightarrow 1$ in $L_{\rm loc}^\gamma(\R^3)$--strong, we obtain $\rho_0^\varepsilon v_0^\varepsilon \rightharpoonup u_0 = v_0$ in $ L_{\rm loc}^{2\gamma/(\gamma+1)}(\R^3)$--weak.

As soon as $\gamma>3/2$, the contribution  \cite{HW10} furnishes a  weak solution to \eqref{mhdeps}-\eqref{divbesp} with
\begin{equation}
  \label{reg-ws-mhdeps-R}
  \left\{
  \begin{aligned}
    &\rho^\varepsilon \in L_{\rm loc}^\infty(\R_+; L_{\rm loc}^\gamma(\R^3))\,, \quad \rho^\varepsilon -1 \in L_{\rm loc}^\infty(\R_+; L_2^\gamma(\R^3))\,,
    \quad  \nabla v^\varepsilon \in   L_{\rm loc}^2(\R_+; L^2(\R^3))\,, \\
    & \sqrt{\rho^\varepsilon}v^\varepsilon \in  L_{\rm loc}^\infty(\R_+; L^2(\R^3))\,, \quad \rho^\varepsilon v^\varepsilon
    \in  L_{\rm loc}^\infty\big(\R_+; L_{\rm loc}^{2\gamma/(\gamma+1)}(\R^3)\big) \cap\mathscr{C}_{\rm loc}\big(\R_+; L_{\rm loc \, weak}^{2\gamma/(\gamma+1)}(\R^3)\big) \,,\\
    & B^\varepsilon \in L_{\rm loc}^\infty(\R_+; L^2(\R^3))\cap  \mathscr{C}_{\rm loc}(\R_+; L_{\rm weak}^2(\R^3))  \cap  L_{\rm loc}^2(\R_+; H^1(\T^3))\,, \quad  \int_{\T^3} dx \, B^\varepsilon=0 \,,\\
    & \rho^\varepsilon \xrightarrow{\quad} 1\,, \quad v^\varepsilon \xrightarrow{\quad} 0\,, \quad
    B^\varepsilon \xrightarrow{\quad} 0\,, \quad \mbox{as } |x| \xrightarrow{\quad} \infty\,.
  \end{aligned}
  \right.
\end{equation}
Moreover, we have the energy inequality \eqref{nrj-ineq-mhdeps} with $\mathbbm E^\varepsilon= \mathcal{E}_3^\varepsilon$ given by the formula \eqref{nrj-def} and $\Pi_i=\Pi_3$, where $\Pi_3$ is defined by
\begin{equation}
\label{defPI-R}
\Pi_3(\rho^\varepsilon)=\frac{a}{\varepsilon^2(\gamma-1)}\big( (\rho^\varepsilon)^\gamma
- \gamma \rho^\varepsilon + \gamma - 1\big)\,.
\end{equation}
This energy inequality is the consequence of straightforward calculation involving \eqref{mhdeps}.

\begin{theorem}[Convergence of MHD to RMHD on the whole space] Assume $\Omega=\R^3$ and $\gamma>3/2$. Consider $\{(\rho^\varepsilon,v^\varepsilon, B^\varepsilon)\}_{\varepsilon >0}$ a sequence of weak solutions to the compressible MHD system \eqref{mhdeps}-\eqref{divbesp} with initial data  $\{(\rho_0^\varepsilon,v_0^\varepsilon, B_0^\varepsilon)\}_{\varepsilon >0}$ as in \eqref{ineqC0-R}. Let us set $\varrho^\varepsilon:=(\rho^\varepsilon - 1)/\varepsilon$.  Then, up to a subsequence, the family $\{(\rho^\varepsilon,\varrho^\varepsilon, v^\varepsilon, B^\varepsilon)\}_{\varepsilon >0}$  converge to $(1,\varrho, v, B)$  as indicated below  
  \begin{equation*}
    \label{cv-th-R}
    \begin{aligned}
      & \rho^\varepsilon \xrightarrow{\quad}  1 \ \ \mbox{\rm in }\  L_{\rm loc}^\infty(\R_+; L_2^\gamma \cap L_{\rm loc}^\gamma \cap H^{-\upalpha}(\R^3))\!-\!\mbox{\rm strong} \,,  \quad \upalpha \geq 1/2\,,\\
      &\varrho^\varepsilon \xrightharpoonup{\quad}  \varrho  \ \  \mbox{\rm in } \ L_{\rm loc}^\infty(\R_+;  L_{\rm loc}^\kappa \cap H^{-\upalpha}(\R^3))\!-\!\mbox{\rm weak--}\ast\,, \quad \kappa=\min\{2,\gamma\} \,, \quad \upalpha \geq 1/2\,,  \\
      &\P_\perp v_\perp^\varepsilon \xrightarrow{\quad} \P_\perp v_\perp=v_\perp  \ \ \mbox{\rm in }\  L_{\rm loc}^2(\R_+; L_{\rm loc}^p\cap H_{\rm loc}^s(\R^3))\!-\!\mbox{\rm strong}\,, \quad 1 \leq p < 6\,, \quad 0\leq s <1\,,  \\
      &\Q_\perp v_\perp^\varepsilon  \xrightharpoonup{\quad}  0  \ \ \mbox{\rm in } \ L_{\rm loc}^2(\R_+; H^1(\R^3))\!-\!\mbox{\rm weak}\,, \\
      & v_\myparallel^\varepsilon \xrightharpoonup{\quad}  v_\myparallel  \ \ \mbox{\rm in } \ L_{\rm loc}^2(\R_+; H^1(\R^3))\!-\!\mbox{\rm weak}\,,  \\
      &B_\perp^\varepsilon \xrightarrow{\quad} B_\perp  \ \ \mbox{\rm in } \ L_{\rm loc}^r(\R_+; L_{\rm loc}^2(\R^3))\!-\!\mbox{\rm strong}\,, \quad 1 \leq r < \infty\,,  \\
      &B_\myparallel^\varepsilon \xrightharpoonup{\quad} B_\myparallel  \ \ \mbox{\rm in } \ L_{\rm loc}^2(\R_+; H^1(\R^3))\!-\!\mbox{\rm weak}\, \cap \,
      L_{\rm loc}^\infty(\R_+; L^2(\R^3))\!-\!\mbox{\rm weak--}\!\ast\,. 
    \end{aligned}
  \end{equation*}
  The limit point $(v, B)$ is a weak solution to the RMHD equations \eqref{rmhdincomp}-\eqref{divperp}-\eqref{rmhd} with initial data
  \[
  (B_\perp, v_\perp)_{|_{t=0}} = (\P_\perp B_{0 \perp},\P_\perp v_{0 \perp}) \in L^2(\R^3) \, , \qquad (B_\myparallel, v_\myparallel)_{|_{t=0}} = (
 \mathbbmsl{B}_{0\myparallel}, v_{0 \myparallel}) \in L^2(\R^3) \, ,
  \]
where $ \mathbbmsl{B}_{0\myparallel} $ is as in \eqref{inidatparal}, and it satisfies the following regularity properties
  \[
  B\in L_{\rm loc}^\infty (\R_+; L^2(\R^3)) \cap L_{\rm loc}^2 (\R_+; H^1(\R^3)) \, , \qquad
  v\in L_{\rm loc}^\infty (\R_+; L^2(\T^3)) \cap L_{\rm loc}^2 (\R_+; H^1(\R^3)) \, .
  \]
  Moreover, the components $ \varrho $ and $ B_\myparallel $ are linked together by the relation $ b \, \varrho + B_\myparallel = 0 $, for a.e. $(t,x) \in ]0,+\infty[ \times \R^3$.
  \label{TH-CV-R}
\end{theorem}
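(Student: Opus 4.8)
The plan is to follow the architecture of the periodic proof of Theorem~\ref{TH-CV-P}, adjusting each step to the whole-space functional framework built on the Orlicz spaces $L_q^p(\R^3)$. First I would extract the uniform bounds furnished by the energy inequality \eqref{nrj-ineq-mhdeps} with $\mathbbm{E}^\varepsilon=\mathcal{E}_3^\varepsilon$ and $\Pi_3$ as in \eqref{defPI-R}, starting from \eqref{ineqC0-R}. These yield, uniformly in $\varepsilon$, the bounds $\sqrt{\rho^\varepsilon}v^\varepsilon\in L_{\rm loc}^\infty(\R_+;L^2)$, $B^\varepsilon\in L_{\rm loc}^\infty(\R_+;L^2)\cap L_{\rm loc}^2(\R_+;H^1)$, and, through the dissipation $\mathbbm{D}^\varepsilon$ of \eqref{dissip-D}, the control $\nabla v^\varepsilon\in L_{\rm loc}^2(\R_+;L^2)$. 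The $\varepsilon^{-2}$ weight on the potential energy $\Pi_3$ is what pins $\rho^\varepsilon$ to the constant state: invoking the Orlicz-space analysis of Lemma~\ref{lem:cv-rho-R}, I would deduce $\rho^\varepsilon\to 1$ strongly in $L_{\rm loc}^\infty(\R_+;L_2^\gamma\cap L_{\rm loc}^\gamma\cap H^{-\upalpha})$ for $\upalpha\geq 1/2$, and $\varrho^\varepsilon:=(\rho^\varepsilon-1)/\varepsilon\rightharpoonup\varrho$ weakly, with $\mathbbmssit{p}^\varepsilon=b\,\varrho^\varepsilon\rightharpoonup\mathbbmssit{p}=b\,\varrho$. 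Up to a subsequence one then obtains the weak limits $v^\varepsilon\rightharpoonup v$ and $B^\varepsilon\rightharpoonup B$ in the stated spaces.

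Next I would isolate the singular (fast) dynamics. As the heuristic system \eqref{mhdepssing} shows, the penalized terms act only through $\nabla_\perp$, so the relevant fast unknowns are $\Q_\perp v_\perp^\varepsilon$ and $\sigma^\varepsilon:=b\,\varrho^\varepsilon+B_\myparallel^\varepsilon=\mathbbmssit{p}^\varepsilon+B_\myparallel^\varepsilon$. Combining the $\varrho^\varepsilon$, $v_\perp^\varepsilon$ and $B_\myparallel^\varepsilon$ equations gives, to leading order, the transverse magnetosonic wave system $\partial_t\sigma^\varepsilon+\varepsilon^{-1}(b+1)\,\nabla_\perp\cdot\Q_\perp v_\perp^\varepsilon=\mathcal O(1)$ and $\partial_t\Q_\perp v_\perp^\varepsilon+\varepsilon^{-1}\nabla_\perp\sigma^\varepsilon=\mathcal O(1)$, posed in the perpendicular variables only, with $x_\myparallel$ entering as a continuous label. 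Following the unitary-group method, I would introduce the skew-adjoint generator $L$ of this $\varepsilon^{-1}$-scaled wave propagator and define filtered variables $e^{-tL/\varepsilon}(\Q_\perp v_\perp^\varepsilon,\sigma^\varepsilon)$; the $\mathcal O(1)$ right-hand sides together with the parabolic smoothing make these filtered quantities relatively compact in time, so that the oscillations are entirely encoded by the group while the profiles converge strongly. The constraint $\nabla_\perp\cdot v_\perp=0$ of \eqref{divperp} and the relation $b\,\varrho+B_\myparallel=0$ appear precisely as the statement that the weak limit carries no fast component.

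For the slow part I would appeal to compactness. The equations for $\P_\perp v_\perp^\varepsilon$ and $B_\perp^\varepsilon$ carry no $\varepsilon^{-1}$ penalization (the singular term enters only $\nabla_\perp\cdot v_\perp$), so their time derivatives are bounded in a negative-order space; combined with the $L_{\rm loc}^2(\R_+;H^1)$ bounds, an Aubin--Lions--Simon argument applied locally in space, as dictated by the Orlicz framework, yields the strong convergences $\P_\perp v_\perp^\varepsilon\to v_\perp$ and $B_\perp^\varepsilon\to B_\perp$. The decisive cancellation is that $\partial_t(B_\myparallel^\varepsilon-\varrho^\varepsilon)=\mathcal O(1)$, so that $B_\myparallel^\varepsilon-\varrho^\varepsilon$ is likewise compact; in the limit $b\,\varrho+B_\myparallel=0$ forces $B_\myparallel^\varepsilon-\varrho^\varepsilon\to\mathbbl{c}\,B_\myparallel$, and subtracting the $\varrho^\varepsilon$ equation from the $B_\myparallel^\varepsilon$ equation produces the first line of \eqref{rmhd} with the factor $\mathbbl{c}=1+1/b$, while the $v_\myparallel^\varepsilon$ equation gives the second line. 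This also fixes the parallel initial datum $\mathbbmsl{B}_{0\myparallel}=(B_{0\myparallel}-\varrho_0)/\mathbbl{c}$ of \eqref{inidatparal}, the discrepancy with $B_{0\myparallel}$ recording the time boundary layer of unprepared data (the analogous layer turning $v_{0\perp}$ into $\P_\perp v_{0\perp}$).

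The main obstacle is the passage to the limit in the quadratic terms $\rho^\varepsilon v^\varepsilon\otimes v^\varepsilon$, $B^\varepsilon\otimes B^\varepsilon$ and the induction nonlinearities in the weak formulations \eqref{eq:vperp-eps}--\eqref{eq:Bparal-eps}. Products built from the strongly convergent factors $\P_\perp v_\perp^\varepsilon$ and $B_\perp^\varepsilon$ pass routinely. The difficulty is the self-interaction of the purely oscillating component $\Q_\perp v_\perp^\varepsilon$, which only converges weakly to $0$: unlike \cite{DG99}, I cannot invoke isotropic Strichartz estimates to make it vanish strongly, since the limiting wave operator is degenerate in $x_\myparallel$. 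Instead I would rely entirely on the non-resonance and compensated-compactness structure of the unitary group: after filtering, cross terms between oscillating and slow modes average to zero, and the remaining oscillating self-interactions reduce to perpendicular gradients that are absorbed into the Lagrange multiplier $\uppi$ of \eqref{rmhdincomp}. Verifying this cancellation rigorously within the localized Orlicz setting of $\R^3$, in particular controlling the low-frequency and far-field behaviour that the periodic torus hides, is the heart of the argument; the rest reduces to collecting the limits identified above and checking the weak formulation of \eqref{rmhdincomp}-\eqref{divperp}-\eqref{rmhd}.
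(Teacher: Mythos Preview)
Your overall architecture is correct and matches the paper's: energy bounds via $\Pi_3$ and Lemma~\ref{lem:cv-rho-R}, strong compactness of $\P_\perp v_\perp^\varepsilon$ and $B_\perp^\varepsilon$ via Aubin--Lions plus Lemma~\ref{lem:compactness}, the unitary-group filtering of $(\sigma^\varepsilon,\Q_\perp(\rho^\varepsilon v_\perp^\varepsilon))$, the cancellation $\partial_t(B_\myparallel^\varepsilon-\varrho^\varepsilon)=\mathcal O(1)$ for the parallel system, and the observation that isotropic Strichartz is unavailable. The passage to the limit in all equations except the Reynolds stress proceeds exactly as you outline.

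There is, however, one concrete gap in paragraph two. You write that the filtered variables $e^{-tL/\varepsilon}(\Q_\perp v_\perp^\varepsilon,\sigma^\varepsilon)$ are ``relatively compact in time, so that \ldots\ the profiles converge strongly.'' This is precisely the step \eqref{eqn:GPR} of the periodic proof, and it fails as written on $\R^3$: Aubin--Lions applied to the filtered sequence $\mathcal U^\varepsilon=\mathcal S(-t/\varepsilon)U^\varepsilon$ only yields \emph{local-in-space} strong compactness in $H^{-\sigma}$, but the group $\mathcal S$ is a non-local (Fourier multiplier) operator, so one cannot transfer this local compactness back to $U^\varepsilon=\mathcal S(t/\varepsilon)\mathcal U^\varepsilon$. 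The paper's fix (Lemma~\ref{lem:auxilR}) is to replace $U^\varepsilon$ by a density-truncated version $\widetilde{\mathcal U}^\varepsilon:=\mathcal S(-t/\varepsilon)\bigl(U^\varepsilon\mathbbm 1_{\{\rho^\varepsilon\le 1+\delta\}}\bigr)$, which lies in \emph{global} $L^2$ uniformly in $\varepsilon$ thanks to the Orlicz bound \eqref{eqn:bdze}, and to show $\|U^\varepsilon-\mathcal S(t/\varepsilon)\widetilde{\mathcal U}^\varepsilon\|_{L^\infty_tH^{-\sigma}}\to 0$ by uniform integrability. Even then there is no limiting profile $\mathcal U$ as in \eqref{eqn:GPR}; instead the proof of Lemma~\ref{lem:cv-rvpvp-R} inserts two further layers of mollification, $\mathcal J_{3,\updelta}$ in space and $\mathcal J_{1,\upeta}$ in time, trades the commutator $[\mathcal J_{3,\updelta},D_\perp\psi_\perp]$ for an $O(\updelta)$ error, and only then performs the explicit Fourier computation showing that the self-interaction $\Upphi^\varepsilon\otimes\Upphi^\varepsilon$ splits into a $\nabla_\perp$-gradient plus a term with the non-resonant factor $|\xi_\perp|^2-|\zeta_\perp|^2$ (integrated by parts in $t$ to gain an $\varepsilon$), after an additional low-frequency cut-off $|\xi_\perp|\ge\delta$ to cure the $1/|\xi_\perp|$ singularity that is absent on the torus. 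Your last paragraph correctly flags ``low-frequency and far-field behaviour'' as the heart of the matter, but the specific device you need is this truncation--mollification cascade rather than a direct strong limit of the filtered profile.
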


\section{Physical motivations and scaling}
\label{s:scaling}
The dimensional magnetohydrodynamic equations reads 
\begin{equation}
\left \{      
\begin{aligned}    
&\partial_{\text{t}} \uprho + \nabla \cdot (\uprho \text{v}) = 0 \,, \\
&\partial_{\text{t}} (\uprho \text{v}) + \nabla \cdot (\uprho \text{v} \otimes \text{v})
  + \nabla \text{p} + \text{B} \times (\nabla \times \text{B} ) 
  - \upmu_\perp \Delta_\perp \text{v} - \upmu_\myparallel\Delta_\myparallel \text{v}
- \uplambda\nabla (\nabla \cdot \text{v}) =0 \,, \\
&\partial_{\text{t}} \text{B} + \nabla \times ( \text{B} \times \text{v})
- \upeta_\perp \Delta_\perp \text{B} - \upeta_\myparallel \Delta_\myparallel \text{B} =0\,,
\end{aligned}
\right.
\label{undim-mhd}
\end{equation}
with the divergence-free condition $\nabla \cdot \text{B}=0$, and the barotropic closure $\text{p}=\text{p}(\uprho)=\mathfrak a \uprho^\gamma$, $\gamma>1$. The triplet $ (\uprho,\, \text{v},\, \text{B})=(\uprho,\, \text{v},\, \text{B})(\text{t},\text{x}_\perp,\text{x}_\myparallel)\in \R_+\times \R^3 \times\R^3$ denotes respectively the dimensional fluid density, fluid velocity, and magnetic field. The variable $\text{t}$ represents the dimensional time variable, while the two-dimensional (resp. one-dimensional) variable $\text{x}_\perp$ (resp. $\text{x}_\myparallel$) represents the perpendicular (resp. parallel) dimensional space variable.

\subsection{Large aspect ratio framework}
\label{ss:LAR}
Anisotopic plasmas with a strong background magnetic field are ubiquitus in astrophysical, space and fusion sciences. As an example, for fusion plasmas, the straight rectangular tokamak model involves a very long periodic column, whose section is a small periodic rectangle. The corresponding geometry and scalings are detailed carefully in \cite{Str76}. Another example comes from various astrophysical plasmas such as the solar wind or the magnetosheath for which the underlying RMHD ordering is precisely described in \cite{SCD09}. In order to obtain the dimensionless MHD equations \eqref{mhdeps}, we must first  adimensionalize equations \eqref{undim-mhd}, and then choose a scaling. Putting dimensions into the ``bar'' quantities, we define the  dimensionless unknowns and variables as $\text{t}=\bar{\text t}\, t$, $\text{x}_\perp=\bar{\text{x}}_\perp\,x_\perp$,  $\text{x}_\myparallel=\bar{\text{x}}_\myparallel\,x_\myparallel$, $\upmu_\perp=\bar{\upmu}_\perp\, \mu_\perp$,  $\upmu_\myparallel=\bar{\upmu}_\myparallel\, \mu_\myparallel$, $\uplambda=\bar{\uplambda}\, \lambda$, $\upeta_\perp=\bar{\upeta}_\perp\, \eta_\perp$,  $\upeta_\myparallel=\bar{\upeta}_\myparallel\, \eta_\myparallel$, $\mathfrak a=\bar{\mathfrak a}\, a$, $\uprho = \bar{\uprho}\, \rho$, $\text{v}=\bar{\text{v}}\, v$, and  $\text{B}=\bar{\text{B}}\, B$.  From this, and the barotropic state law, we deduce the dimensionless pressure as $\text{p}=\bar{\text{p}}\, p$ with $\bar{\text{p}}=\bar{\mathfrak a} \bar{\uprho}^\gamma$ and $p=a\rho^\gamma$. We also define important physical quantities such as the Alfv\'en velocity $v_A:=\bar{\text{B}}/\sqrt{\bar{\uprho}}$, the sound velocity $v_s:=\sqrt{\gamma\bar{\text{p}}/ \bar{\uprho}}$, the parameter $\beta:=\bar{\text{p}}/|\bar{\text{B}}|^2=v_s^2/(\gamma v_A^2)$, the Alfv\'en number $\varepsilon_A:=\bar{\text{v}}/v_A$ and  the Mach number $\varepsilon_M:=\bar{\text{v}}/v_s=\varepsilon_A/(\sqrt{\gamma \beta})$. Here, we suppose that the parameter $\beta$ is of order one, and thus we set $\beta=1$. This configuration is called the \textit{high} $\beta$ ordering \cite{Str77}, and often appears in space plasmas \cite{BNS98, KW97, OMD17}. In other situations, such as plasmas of tokamaks \cite{DA83, IMS83, KP74}, the parameter  $\beta$ can be relatively small; this is the so-called \textit{low} $\beta$ regim. Indeed, since $\beta$ measures the ratio of the fluid pressure to the magnetic pressure, a magnetically well-confined plasmas is achieved for low $\beta$. Since here we choose $\beta=1$, we have $v_s=\sqrt{\gamma} v_A \simeq v_A$.

In order to understand which parts of the solution of the MHD equations \eqref{mhdeps} are eliminated in the reduced model \eqref{rmhdincomp}-\eqref{inidatparal}, we now recall the different types of (linear) waves propagating in a plasma governed by the MHD equations \eqref{undim-mhd}. Dropping viscosities and resistivities terms, it is well-known \cite{RB96} that the system \eqref{undim-mhd} is hyperbolic, but not strictly hyperbolic since some eigenvalues may coincide. Linearizing the system \eqref{undim-mhd} around the constant stationary solution $(\bar \uprho,0, \bar{\text{B}}\,\mathbbl{b})$, where $\mathbbl{b}$ is a unit vector, we obtain a linear system whose the Jacobian has real eigenvalues \cite{GP04}. The set of MHD eigenvalues and associated waves can be splitted into three groups. Introducing the unit vector $\mathbbl{n}$ as the direction of propagation of any wave, the sound speed  $V_s:=\sqrt{\gamma \text{p}/\uprho}=\sqrt{a}v_s$ (with $\rho=1$) and the Alfv\'en  velocity $V_A:=|\text{B}|/\sqrt{\uprho}=v_A$ (with $|B|=1$), these three groups are  \cite{GP04}:    
\begin{itemize}
\item Fast magnetosonic waves:
  \[
  \lambda_F^\pm= \pm\, \mathcal C_F\,, \quad \mathcal C_F^2=\frac12 \Big(V_s^2+V_A^2 + \sqrt{(V_s^2+V_A^2)^2 -4V_s^2V_A^2(\mathbbl{b}\cdot\mathbbl{n})^2} \Big)\,.
  \]
\item Alfv\'en waves:
  \[
  \lambda_A^\pm= \pm  \,\mathcal C_A\,, \quad  \mathcal C_A^2=V_A^2(\mathbbl{b}\cdot\mathbbl{n})^2\,.
  \]
\item Slow magnetosonic waves:
  \[
  \lambda_S^\pm= \pm \, \mathcal C_S\,, \quad  \mathcal C_S^2=\frac12 \Big(V_s^2+V_A^2 - \sqrt{(V_s^2+V_A^2)^2 -4V_s^2V_A^2(\mathbbl{b}\cdot\mathbbl{n})^2} \Big)\,.
  \]  
\end{itemize}
Since here $\mathbbl{b}:=e_\myparallel$, Alfv\'en waves cannot propagate in the perpendicular direction to $e_\myparallel$. Indeed, it is well-known \cite{GP04} that Alfv\'en waves propagate mainly along the direction ($\mathbbl{b}:=e_\myparallel$) of the ambiant magnetic field. For a wave propagating in the perpendicular direction to $e_\myparallel$, we obtain  $\lambda_F^\pm=\pm (V_s^2+V_A^2)^{1/2}\simeq \pm V_A \simeq \pm V_s$, whereas   $\lambda_S^\pm=0$. Note that in dimensionless variables we have $\lambda_F^\pm=\pm\sqrt{b+1}$, with $b=a\gamma$. Indeed, normalizing the velocity to the Alfv\'en velocity $v_A$ and taking $\beta=1$  in $\lambda_F^\pm=\pm v_A\sqrt{\beta a \gamma+1}$, we obtain the desired result. In order to understand now the nature of the waves that are filtered out from the singular part of the linear system \eqref{mhdepssing}, we rewrite it in the fast time variable $\text{t}$ to obtain
\[
\partial_{\text{t}} \varrho + \nabla_\perp \cdot v_\perp=0\,, \qquad
\partial_{\text{t}} v_\perp + \nabla_\perp (b\varrho + B_\myparallel)=0\,, \qquad
\partial_{\text{t}} B_\myparallel + \nabla_\perp \cdot v_\perp=0\,.
\]
With $U:={}^t(\varrho,v_1, v_2,  B_\myparallel)$, the previous system can be recast as $\partial_{\text{t}} U + (A_1\partial_{x_1}+A_1\partial_{x_2}) U=0$, where the  matrices $A_i$ have constant coefficients depending on $b$.
With $\mathbbl{n}_\perp={}^t(\mathbbl{n}_1,\mathbbl{n}_2)$ a unit vector in the perpendicular direction, the matrix $\mathcal{A}:=\mathbbl{n}_1 A_1+ \mathbbl{n}_2 A_2$ is diagonalizable with the real eigenvalues $\lambda_0(\mathcal{A})=0$ (of multiplicity two), $\lambda_+(\mathcal{A})=\sqrt{b+1}$, and $\lambda_-(\mathcal{A})=-\sqrt{b+1}$. Then, the waves associated with the singular part of the linear system \eqref{mhdepssing} are the transverse (linear) fast magnetosonic waves.

Therefore, here, we aim at filtering out the fast dynamics associated with the perpendicular fast magnetosonic waves, and keep the dynamics of waves which propagate at a speed slower than the perpendicular fast magnetosonic waves $ \mathcal C_F \simeq v_A$. Defining the time $\uptau_\perp$ as the time needed by a fast magnetosonic waves to cross the device in the perpendicular direction, we then have $\uptau_\perp v_A=\bar{\text{x}}_\perp$. Since we want to describe the dynamics on a time scale longer that $\uptau_\perp$, we set $\bar{\text t}=\uptau_\perp/\varepsilon$, with $\varepsilon \ll 1$. This is equivalent to describe the dynamics of waves which propagate with velocity slower than $ \mathcal C_F$ (or $v_A$). Hence, we have $\bar{\text{v}}=\varepsilon v_A$, $\varepsilon_A=\varepsilon$,  and $\varepsilon_M=\varepsilon/(\sqrt{\gamma \beta})\simeq \varepsilon$. Moreover, we suppose a strong anisotropy between the perpendicular and parallel direction, that is $\bar{\text{x}}_\perp/\bar{\text{x}}_\myparallel=\varepsilon$. In other words the dimensional gradient $\nabla_{\text{x}}$ becomes the anisotropic dimensionless gradient $\nabla_\varepsilon$ of  \eqref{nablaeps}. In addition, we suppose the presence of a strong constant background magnetic field in the parallel direction ($B=e_\myparallel +\varepsilon B^\varepsilon$). Since $\varepsilon$ is present in the resulting dimensionless system, the  velocity field $v$ and the density $\rho$ will depend on $\varepsilon$, hence we set $v=v^\varepsilon$ and $\rho=\rho^\varepsilon$. Finally, it remains to choose some scalings with respect to the small parameter $\varepsilon$ for the dimensionless viscosities and resistivities. We choose $\{\mu_\perp=\varepsilon \mu_\perp^\varepsilon,\,  \mu_\myparallel= \mu_\myparallel^\varepsilon/\varepsilon,\, \lambda=\varepsilon\lambda^\varepsilon\}$, where viscosities $\{\mu_\perp^\varepsilon,\, \mu_\myparallel^\varepsilon,\, \lambda^\varepsilon\}$ satisfy \eqref{visco-norm}, and $\{\eta_\perp=\varepsilon \eta_\perp^\varepsilon,\,  \eta_\myparallel= \eta_\myparallel^\varepsilon/\varepsilon\}$, where resistivities $\{\eta_\perp^\varepsilon,\, \eta_\myparallel^\varepsilon\}$ satisfy \eqref{resis-norm}.

All the above considerations allows us to pass from the dimensional MHD equations \eqref{undim-mhd} to the dimensionless ones \eqref{mhdeps}.

\subsection{Nonlinear optics framework} Conducting fluids are traversed by electromagnetic waves, which can interact with the medium in various ways. These phenomena can be modeled by adjusting the dimensionless parameters to account for special regimes, and by incorporating (high frequency) oscillating source terms or equivalently (high frequency) oscillating initial data into the equations. Here, we choose viscosities and resistivities which accommodate the propagation of oscillating waves with wavelengths approximately $\varepsilon$. For this, we impose viscosities $\{\upmu_\perp=\varepsilon^2 \mu_\perp^\varepsilon,\,  \upmu_\myparallel= \mu_\myparallel^\varepsilon,\, \uplambda=\varepsilon^2 \lambda^\varepsilon\}$, where dimensionless viscosities $\{\mu_\perp^\varepsilon,\, \mu_\myparallel^\varepsilon,\, \lambda^\varepsilon\}$ satisfy \eqref{visco-norm}, and resistivities $\{\upeta_\perp=\varepsilon^2 \eta_\perp^\varepsilon,\,  \upeta_\myparallel= \eta_\myparallel^\varepsilon\}$, where dimensionless resistivities $\{\eta_\perp^\varepsilon,\, \eta_\myparallel^\varepsilon\}$ satisfy \eqref{resis-norm}. We then look for solution like
\begin{align}
  \left(\begin{tabular}{c}
  $\uprho/\bar{\uprho}$\\ 
  $\text{v}/\bar{\text{v}}$\\
  $\text{B}/\bar{\text{B}}$
\end{tabular}
\right)
(\text{t},\text{x}_\perp,\text{x}_\myparallel) 
&= \left(\begin{tabular}{c}
  $\rho^\varepsilon\big(\text{t},{\varepsilon}^{-1}\,\text{x}_\perp,\text{x}_\myparallel\big)$\vspace{2pt}\\  
  $\varepsilon \, \,v^\varepsilon\big(\text{t}, {\varepsilon}^{-1}\,\text{x}_\perp ,\text{x}_\myparallel\big)$\vspace{2pt}\\
  $1+\varepsilon\, B^\varepsilon\big(\text{t},{\varepsilon}^{-1}\,\text{x}_\perp,\text{x}_\myparallel\big)$
\end{tabular}
\right)=
\left(\begin{tabular}{c}
  $ \overline{\rho^\varepsilon}$\\ 
  $ 0$\\
  $ 1$
\end{tabular}
\right)
+ \varepsilon
\left(\begin{tabular}{c}
  $ \varrho^\varepsilon$\\ 
  $ v^\varepsilon$\\
  $B^\varepsilon$
\end{tabular}
\right)\big(\text{t},{\varepsilon}^{-1}\,\text{x}_\perp,\text{x}_\myparallel\big) \nonumber\\
&=
\left(\begin{tabular}{c}
  $\overline{\rho^\varepsilon}$\\ 
  $ 0$\\
  $ 1$
\end{tabular}
\right)
+ \varepsilon
\left(\begin{tabular}{c}
  $\ \varrho^\varepsilon$\\ 
  $v^\varepsilon$\\
  $B^\varepsilon$
\end{tabular}
\right)\big(t,x_\perp,x_\myparallel\big) \,, \label{nlov}
\end{align}  
where $\varrho^\varepsilon =(\rho^\varepsilon -\overline{\rho^\varepsilon})/\varepsilon$. Plugging \eqref{nlov} into \eqref{undim-mhd} leads to \eqref{mhdeps}. Therefore, we investigate the dynamics of a magnetized plasma near a fixed large constant magnetic field  where anisotropic oscillations in space can develop. The first term of the right-hand side of \eqref{nlov}, which is of order of unity, is a stationary solution of \eqref{undim-mhd}. The second term of the right-hand side of \eqref{nlov} is the perturbation, which is of small amplitude ($\varepsilon \ll 1$) and of high frequency ($ \varepsilon^{-1}\gg 1$). Such a framework belongs to the so called weakly nonlinear geometric optics regim.


\section{Asymptotic analysis in a periodic domain}
\label{s:AAPD}
This section is devoted to the proof of Theorem~\ref{TH-CV-P}. First, we obtain some weak compactness properties for the sequences, $\varrho^\varepsilon$,  $B_\myparallel^\varepsilon$, $v^\varepsilon$,  $\rho^\varepsilon v^\varepsilon$, and $\Q_\perp v_\perp^\varepsilon$, and strong ones for the sequences $\rho^\varepsilon$, $B_\perp^\varepsilon$, $\P_\perp v_\perp^\varepsilon$,  $\P_\perp(\rho^\varepsilon v_\perp^\varepsilon)$ and $(\rho^\varepsilon v^\varepsilon - v^\varepsilon)$. Using these compactness results, we justify the passage to the limit, in order, in the equations of $\rho^\varepsilon$, $\rho^\varepsilon v_\perp^\varepsilon$, $\rho^\varepsilon v_\myparallel^\varepsilon$, and $B_\myparallel^\varepsilon$ (or equivalently $\varrho^\varepsilon$). For the equations of  $\rho^\varepsilon v_\perp^\varepsilon$, we use the unitary group method, while for the equations of $\rho^\varepsilon v_\myparallel^\varepsilon$, and $B_\myparallel^\varepsilon$, we use some particular cancellations and a compactness argument (Lemma~\ref{lem:compactness} of Appendix~\ref{s:tools}).


\subsection{Compactness of $\rho^\varepsilon $ and  $\varrho^\varepsilon $ }
Here, we aim at proving the following lemma.
\begin{lemma}
  \label{lem:cv-rho} Assume $ \gamma>3/2$. The sequences ${\rho^\varepsilon}$ and  $\varrho^\varepsilon:=(\rho^\varepsilon -\overline{\rho^\varepsilon})/\varepsilon$  satisfy the following properties.
  \begin{align*}
    & \rho^\varepsilon \xrightarrow{\quad} 1 \ \ \mbox{ \rm in } \  L_{\rm loc}^\infty(\R_+; L^\gamma(\T^3))\!-\!\mbox{\rm strong}\,,  \\
    & \varrho^\varepsilon \xrightharpoonup{\quad} \varrho\ \  \mbox{ \rm in } \  L_{\rm loc}^\infty(\R_+; L^\kappa(\T^3))\!-\!\mbox{\rm weak--}\ast\,, \quad \kappa=\min\{2,\gamma\}\,.
  \end{align*}
\end{lemma}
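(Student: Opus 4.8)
The plan is to extract both convergences from the uniform energy bound \eqref{ineqC0-P} together with the mass conservation identity $\overline{\rho^\varepsilon}=\overline{\rho_0^\varepsilon}\to 1$. The key observation is that the potential energy density in $\Pi_2$, namely
\[
(\rho^\varepsilon)^\gamma - \gamma \rho^\varepsilon (\overline{\rho^\varepsilon})^{\gamma-1} + (\gamma-1)(\overline{\rho^\varepsilon})^\gamma \, ,
\]
is (up to the multiplicative constant $a/[\varepsilon^2(\gamma-1)]$) a nonnegative convex functional of $\rho^\varepsilon$ that vanishes precisely at $\rho^\varepsilon = \overline{\rho^\varepsilon}$ and is strictly convex there. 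Thus the energy inequality \eqref{nrj-ineq-mhdeps} for $i=2$, which is equivalent to the one for $i=1$ by mass conservation, controls (uniformly in $t$ and $\varepsilon$) the quantity $\varepsilon^{-2}$ times this density integrated over $\T^3$. First I would record this as a bound
\[
\sup_{t\in\R_+} \int_{\T^3} dx \, \frac{1}{\varepsilon^2}\,\Phi\!\left(\rho^\varepsilon(t), \overline{\rho^\varepsilon}\right) \leq C_0 \, ,
\]
where $\Phi$ denotes the bracketed density, and then quantify the convexity of $\Phi$ near the (bounded, converging to $1$) constant $\overline{\rho^\varepsilon}$.

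Next I would split the estimate according to whether $\rho^\varepsilon$ is close to or far from $\overline{\rho^\varepsilon}$, following the standard low-Mach-number toolbox (Lions--Feireisl type). On the region where $|\rho^\varepsilon - \overline{\rho^\varepsilon}| \leq 1/2$, strict convexity gives a Taylor lower bound $\Phi(\rho^\varepsilon,\overline{\rho^\varepsilon}) \geq c\,|\rho^\varepsilon-\overline{\rho^\varepsilon}|^2$; on the far region $|\rho^\varepsilon - \overline{\rho^\varepsilon}| > 1/2$, superlinear growth of $s\mapsto s^\gamma$ gives $\Phi(\rho^\varepsilon,\overline{\rho^\varepsilon}) \geq c\,|\rho^\varepsilon-\overline{\rho^\varepsilon}|^\gamma$. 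Combining these two regimes with the uniform bound above yields, for $\kappa=\min\{2,\gamma\}$,
\[
\sup_{t\in\R_+} \left\| \frac{\rho^\varepsilon(t)-\overline{\rho^\varepsilon}}{\varepsilon} \right\|_{L^\kappa(\T^3)} \leq C \, ,
\]
which is exactly the $L^\infty_{\rm loc}(\R_+; L^\kappa)$ bound on $\varrho^\varepsilon$. By Banach--Alaoglu (the dual of $L^1(\R_+; L^{\kappa'})$ being the relevant space, with $L^\kappa$ reflexive since $\kappa>1$) I would extract a subsequence with $\varrho^\varepsilon \xrightharpoonup{} \varrho$ in $L^\infty_{\rm loc}(\R_+; L^\kappa(\T^3))$ weak-$\ast$, giving the second assertion. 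The strong convergence $\rho^\varepsilon \to 1$ in $L^\infty_{\rm loc}(\R_+; L^\gamma)$ then follows because $\rho^\varepsilon - 1 = \varepsilon\,\varrho^\varepsilon + (\overline{\rho^\varepsilon}-1)$: the first term is $\mathcal{O}(\varepsilon)$ in $L^\kappa$, and combined with the uniform $L^\gamma$ bound on $\rho^\varepsilon$ from the far-region estimate plus interpolation (or directly, treating the two density regions separately against the $L^\gamma$ norm), one upgrades to $L^\gamma$ strong convergence to the constant $1$, the mass term $\overline{\rho^\varepsilon}-1$ tending to $0$ by \eqref{ineqC0-P++}.

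The main obstacle I anticipate is the bookkeeping when $\gamma < 2$, where $\kappa=\gamma$ and the two growth regimes of $\Phi$ do not combine into a single clean power: one must argue that the near-region contribution, although quadratic, is controlled in $L^\gamma$ only after using that the measure of $\T^3$ is finite (so $L^2 \hookrightarrow L^\gamma$ locally) — this is where periodicity genuinely helps, and it is precisely the point that differs from the whole-space treatment in Lemma~\ref{lem:cv-rho-R}. A secondary technical care is that the constant $\overline{\rho^\varepsilon}$ is only converging to $1$, not equal to it, so all convexity constants must be taken uniform over $\overline{\rho^\varepsilon}$ ranging in a fixed neighbourhood of $1$; since $\overline{\rho^\varepsilon}\to 1$ this is harmless but must be stated. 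Finally, to pass from the strong $L^\gamma$ convergence of $\rho^\varepsilon$ to the weak-$\ast$ identification of $\varrho^\varepsilon$ as its rescaled limit, I would note that no compactness in time is needed here — both statements are purely at the level of uniform-in-$(t,\varepsilon)$ spatial bounds plus extraction — which keeps this lemma self-contained and independent of the more delicate compactness arguments (the unitary group method and Lemma~\ref{lem:compactness}) used later for the velocity and magnetic field.
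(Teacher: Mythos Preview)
Your proposal is correct and follows essentially the same route as the paper: both rely on the convexity lower bounds for $\Phi(\rho^\varepsilon,\overline{\rho^\varepsilon})$ (the paper packages these as Lemma~\ref{lem:convex}) combined with the energy inequality \eqref{nrj-ineq-mhdeps} for $\Pi_2$, split according to whether $\rho^\varepsilon$ is near or far from $\overline{\rho^\varepsilon}$, with $\overline{\rho^\varepsilon}\in(1/2,3/2)$ for small $\varepsilon$. The only notable difference is the order of the two conclusions: the paper first proves $\rho^\varepsilon\to 1$ in $L^\gamma$ via a two-parameter limit (bounding the near-$\delta$ region by $|\T^3|\delta^\gamma$ and the far region by $C_0\varepsilon^2/\eta_\delta$, then $\varepsilon\to 0$ followed by $\delta\to 0$), and only afterwards derives the $L^\kappa$ bound on $\varrho^\varepsilon$; your sketch reverses this, obtaining the uniform $L^\kappa$ bound on $\varrho^\varepsilon$ first and reading off the strong convergence of $\rho^\varepsilon$ from $\rho^\varepsilon-1=\varepsilon\varrho^\varepsilon+(\overline{\rho^\varepsilon}-1)$, which is arguably more direct and avoids the double limit.
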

\begin{proof}
  Let us start with $\rho^\varepsilon$. From energy inequality \eqref{nrj-ineq-mhdeps}-\eqref{defPI-P} with the pressure term $\Pi_1$, we already know that $\Pi_1(\rho^\varepsilon)$ is  bounded in $ L_{\rm loc}^\infty(\R_+; L^1(\T^3))$, uniformly with respect to $\varepsilon$. Thus, by weak compactness, we have  $\rho^\varepsilon \rightarrow 1$ in $L_{\rm loc}^\infty(\R_+; L^\gamma(\T^3))\!-\!\mbox{\rm weak--}\ast$. In addition, since $\overline{\rho^\varepsilon}=\overline{\rho_0^\varepsilon}\rightarrow 1$ as $\varepsilon\rightarrow 0$, for $\varepsilon$ small enough we have $\overline{\rho^\varepsilon}\in (1/2,3/2)$. Then, using Lemma~\ref{lem:convex}, we claim that there exists $\eta=\eta_\delta=\eta(\gamma,\delta) >0$, such that for $|x-\overline{\rho^\varepsilon}|\geq \delta$ and $x\geq 0$, we have
  \begin{equation}
    \label{eqn:cvx-1}
    x^\gamma  - \gamma x\bar{x}^{\gamma -1} + (\gamma-1)\bar{x}^\gamma \geq \eta_\delta \big |x-\overline{\rho^\varepsilon}\big |^\gamma.   
  \end{equation}
  Indeed, using  Lemma~\ref{lem:convex} with  $\bar x = \overline{\rho^\varepsilon} \in  (1/2,3/2)$, we obtain, $\eta=\nu_3$, for $1<\gamma < 2$, and $3/2<R<x$;  $\eta=\delta^{2-\gamma}\nu_2$,  for $1< \gamma < 2$, and $x\leq R$; and  $\eta=\nu_1\sup_{x\in\T^3}|x-\overline{\rho^\varepsilon}|^{2-\gamma}>0$,  for $\gamma \geq 2$, and $x\geq 0$. Therefore, using \eqref{eqn:cvx-1}, inequality $(a/2 +b/2)^\gamma \leq (a^\gamma + b^\gamma)/2$ (by convexity of $x\mapsto x^\gamma$), and energy inequality \eqref{nrj-ineq-mhdeps}-\eqref{defPI-P} with the pressure term $\Pi_2$, we obtain
  \begin{multline*}
    \sup_{t\geq 0}\int_{\T^3} dx\, |\rho^\varepsilon-1|^\gamma \leq 2^{\gamma-1}\big|\T^3\big|\big |\overline{\rho^\varepsilon}-1\big|^\gamma+
    2^{\gamma-1} \sup_{t\geq 0}\bigg\{ \int_{|\rho^\varepsilon-\overline{\rho^\varepsilon}| \leq \delta} dx
    +\int_{|\rho^\varepsilon-\overline{\rho^\varepsilon}| > \delta} dx \bigg\}
    \big |\rho^\varepsilon-\overline{\rho^\varepsilon}\big|^\gamma
    \\ \leq  2^{\gamma-1}\bigg\{\big|\T^3\big| \big |\overline{\rho^\varepsilon}-1\big|^\gamma+ \big|\T^3\big|\delta^\gamma +  \frac{C_0 \varepsilon^2}{\eta_\delta} \bigg\}.
  \end{multline*}
  In the previous estimate, taking first $\varepsilon \rightarrow 0$, and then  $\delta\rightarrow 0$, lead to the convergence of $\rho^\varepsilon$ as stated in Lemma~\ref{lem:cv-rho}. We continue with   $\varrho^\varepsilon:=(\rho^\varepsilon -\overline{\rho^\varepsilon})/\varepsilon$. Using  Lemma~\ref{lem:convex} with  $\bar x = \overline{\rho^\varepsilon} \in  (1/2,3/2)$ and $x=\rho^\varepsilon$, and using  energy inequality \eqref{nrj-ineq-mhdeps}-\eqref{defPI-P} with the pressure term $\Pi_2$, there exists a constant $C$ independent of $\varepsilon$ such that
  \begin{equation}
    \label{eqn:varrhoeps-est}\left \{
    \begin{aligned}
    &\mbox{if } \ \gamma \geq 2,  \ \ \|\varrho^\varepsilon\|_{L_{\rm loc}^\infty(\R_+;L^2(\T^3))} \leq C\,,  \\
    & \mbox{if } \ \gamma < 2,\ \forall R\in\big(\tfrac 3 2,+\infty\big),\ \|\varrho^\varepsilon \mathbbm{1}_{\rho^\varepsilon<R}\|_{L_{\rm loc}^\infty(\R_+;L^2(\T^3))} \leq C\,, \
    \|\varrho^\varepsilon \mathbbm{1}_{\rho^\varepsilon\geq R}\|_{L_{\rm loc}^\infty(\R_+;L^\gamma(\T^3))}\leq C\varepsilon^{(2/\gamma)-1}\,.
    \end{aligned}
    \right.
  \end{equation}
  Using this last estimate, $\varrho^\varepsilon$ is bounded, uniformly with respect to $\varepsilon$, in $L_{\rm loc}^\infty(\R_+;L^\kappa(\T^3))$, with $\kappa=\min\{2,\gamma\}$. Hence, weak compactness leads to the convergence of $\varrho^\varepsilon$ stated in Lemma~\ref{lem:cv-rho}.
\end{proof}

  
\subsection{Compactness of $B^\varepsilon $}
Here, we aim at proving the following lemma.
\begin{lemma}
  \label{lem:cv-Bperp}
  The sequence ${B^\varepsilon}$ satisfies the following properties.
  \begin{align*}
    & B^\varepsilon \xrightharpoonup{\quad} B \ \ \mbox{ \rm in } \  L_{\rm loc}^2(\R_+;L^6\cap H^1(\T^3))\!-\!\mbox{\rm weak} \cap  L_{\rm loc}^\infty(\R_+ ;L^2(\T^3))\!-\!\mbox{\rm weak--}\ast\,,\\
    & \nabla_\varepsilon \cdot B^\varepsilon   \xrightharpoonup{\quad}\nabla_\perp \cdot B_\perp =0 \ \ \mbox{ \rm in } \  L_{\rm loc}^2(\R_+; L^2(\T^3))\!-\!\mbox{\rm weak}\,,\\
    &  B_\perp^\varepsilon \xrightarrow{\quad} B_\perp \ \ \mbox{ \rm in } \  L_{\rm loc}^r(\R_+; L^2(\T^3))\!-\!\mbox{\rm strong}\,, \quad 1\leq r < +\infty \,.
  \end{align*}
\end{lemma}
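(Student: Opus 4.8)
The plan is to read off uniform a priori bounds from the energy inequality, extract weak limits by compactness, and then obtain the strong convergence of the transverse component by an Aubin--Lions--Simon argument whose crux is a structural cancellation of the singular term in the perpendicular direction.

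First I would collect the uniform estimates. The energy inequality \eqref{nrj-ineq-mhdeps} with dissipation \eqref{dissip-D} bounds $\tfrac12\|B^\varepsilon(t)\|_{L^2(\T^3)}^2$ by $C_0$ and controls $\eta_\perp^\varepsilon|\nabla_\perp B^\varepsilon|^2+\eta_\myparallel^\varepsilon|\partial_\myparallel B^\varepsilon|^2$ in $L^1_{\rm loc}(\R_+\times\T^3)$. By \eqref{resis-norm} the coefficients $\eta_\perp^\varepsilon,\eta_\myparallel^\varepsilon$ are bounded below by a positive constant for $\varepsilon$ small, so $\nabla B^\varepsilon$ is bounded in $L^2_{\rm loc}(\R_+;L^2)$; thus $B^\varepsilon$ is bounded in $L^2_{\rm loc}(\R_+;H^1)$ and, through the Sobolev embedding $H^1(\T^3)\hookrightarrow L^6(\T^3)$, in $L^2_{\rm loc}(\R_+;L^6)$, while being bounded in $L^\infty_{\rm loc}(\R_+;L^2)$. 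Banach--Alaoglu and reflexivity then yield, up to a subsequence, a limit $B$ with $B^\varepsilon\rightharpoonup B$ in $L^2_{\rm loc}(\R_+;L^6\cap H^1)$ weak and in $L^\infty_{\rm loc}(\R_+;L^2)$ weak-$\ast$, which is the first assertion. For the second, I would use that \eqref{divbesp} gives $0=\nabla_\varepsilon\cdot B^\varepsilon=\nabla_\perp\cdot B_\perp^\varepsilon+\varepsilon\,\partial_\myparallel B_\myparallel^\varepsilon$; since $\partial_\myparallel B_\myparallel^\varepsilon$ is bounded in $L^2_{\rm loc}(\R_+;L^2)$, the term $\varepsilon\,\partial_\myparallel B_\myparallel^\varepsilon$ vanishes strongly, whereas $\nabla_\perp\cdot B_\perp^\varepsilon\rightharpoonup\nabla_\perp\cdot B_\perp$ weakly, and passing to the limit gives $\nabla_\perp\cdot B_\perp=0$.

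The heart of the matter is the strong convergence of $B_\perp^\varepsilon$, for which I would apply the compactness result of Lemma~\ref{lem:compactness}: I already possess the spatial bound $B_\perp^\varepsilon$ in $L^2_{\rm loc}(\R_+;H^1)$, and it remains to produce a uniform-in-$\varepsilon$ bound on $\partial_t B_\perp^\varepsilon$ in a fixed negative-order space. This is where the anisotropic structure is decisive: inspecting the transverse induction equation through its weak form \eqref{eq:Bperp-eps}, the a priori singular contribution $\tfrac1\varepsilon\,\nabla_\varepsilon\times(e_\myparallel\times v^\varepsilon)$ reduces, in its perpendicular components, to $-\partial_\myparallel v_\perp^\varepsilon$ (it appears as the $\mathcal{O}(1)$ term $-v_\perp^\varepsilon\cdot\partial_\myparallel\psi_\perp$ in \eqref{eq:Bperp-eps}), so no factor $1/\varepsilon$ survives. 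Consequently $\partial_t B_\perp^\varepsilon=\partial_\myparallel v_\perp^\varepsilon+\eta_\perp^\varepsilon\Delta_\perp B_\perp^\varepsilon+\eta_\myparallel^\varepsilon\Delta_\myparallel B_\perp^\varepsilon-\nabla_\perp\cdot(B_\perp^\varepsilon\otimes v_\perp^\varepsilon-v_\perp^\varepsilon\otimes B_\perp^\varepsilon)+\mathcal{O}(\varepsilon)$, and each term is bounded uniformly in $\varepsilon$: $\partial_\myparallel v_\perp^\varepsilon$ in $L^2_{\rm loc}(\R_+;L^2)$, the diffusion terms in $L^2_{\rm loc}(\R_+;H^{-1})$, the quadratic expression $B_\perp^\varepsilon\otimes v_\perp^\varepsilon-v_\perp^\varepsilon\otimes B_\perp^\varepsilon$ being controlled in $L^1_{\rm loc}(\R_+;L^3)$ through the $L^2_{\rm loc}(\R_+;L^6)$ bounds on $v^\varepsilon$ and $B^\varepsilon$ so that its transverse divergence lies in $L^1_{\rm loc}(\R_+;W^{-1,3})$, and the remaining terms carry an explicit factor $\varepsilon$. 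Hence $\partial_t B_\perp^\varepsilon$ is bounded, uniformly in $\varepsilon$, in $L^1_{\rm loc}(\R_+;H^{-s})$ for $s$ large enough. With the chain $H^1(\T^3)\hookrightarrow\hookrightarrow L^2(\T^3)\hookrightarrow H^{-s}(\T^3)$, Lemma~\ref{lem:compactness} gives relative compactness of $B_\perp^\varepsilon$ in $L^2_{\rm loc}(\R_+;L^2)$, and by uniqueness of the weak limit the whole subsequence satisfies $B_\perp^\varepsilon\to B_\perp$ strongly in $L^2_{\rm loc}(\R_+;L^2)$.

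Finally I would upgrade the time exponent: interpolating the strong $L^2_{\rm loc}(\R_+;L^2)$ convergence against the uniform $L^\infty_{\rm loc}(\R_+;L^2)$ bound gives, on every interval $[0,T]$ and every $2\le r<\infty$, the estimate $\|B_\perp^\varepsilon-B_\perp\|_{L^r(0,T;L^2)}\le\|B_\perp^\varepsilon-B_\perp\|_{L^2(0,T;L^2)}^{2/r}\,\|B_\perp^\varepsilon-B_\perp\|_{L^\infty(0,T;L^2)}^{1-2/r}\to0$, while the case $1\le r<2$ follows from H\"older's inequality on the bounded interval, establishing the third assertion. The main obstacle is the uniform time-derivative estimate of the previous paragraph, and in particular the perpendicular cancellation of the $1/\varepsilon$ term: it is precisely this mechanism that is \emph{absent} for the parallel component $B_\myparallel^\varepsilon$, whose equation retains a genuinely singular $\tfrac1\varepsilon\,\nabla_\perp\cdot v_\perp^\varepsilon$ (cf.\ \eqref{mhdepssing} and \eqref{eq:Bparal-eps}), which is the reason why only $B_\perp^\varepsilon$, and not the full field $B^\varepsilon$, converges strongly.
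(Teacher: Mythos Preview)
Your argument is correct and coincides with the paper's own proof: energy bounds plus Sobolev embedding for the first assertion, the identity $\nabla_\perp\cdot B_\perp^\varepsilon=-\varepsilon\,\partial_\myparallel B_\myparallel^\varepsilon$ for the second, and Aubin--Lions--Simon compactness fed by a uniform $\partial_t B_\perp^\varepsilon$ bound (the key structural point being that no $1/\varepsilon$ survives in \eqref{eq:Bperp-eps}) for the third. Two minor remarks: the compactness tool you invoke should be Lemma~\ref{lem:ALL} (Simon), not Lemma~\ref{lem:compactness} (Lions' product lemma), though the chain $H^1\hookrightarrow\hookrightarrow L^2\hookrightarrow H^{-s}$ you write makes clear you mean the former; and the paper obtains the full range $1\le r<\infty$ directly from Lemma~\ref{lem:ALL} with $q=\infty$, whereas your interpolation against the $L^\infty_{\rm loc}(\R_+;L^2)$ bound is an equally valid alternative.
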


\begin{proof}
  The first limit of Lemma~\ref{lem:cv-Bperp}, comes on the one hand from weak compactness, and on the other hand from  energy inequality \eqref{nrj-ineq-mhdeps}-\eqref{defPI-P} with the pressure term $\Pi_2$, and the continuous Sobolev embeddings $H^1(\T^3) \hookrightarrow L^6(\T^3)$, which implies that $B^\varepsilon$ is  bounded in $L_{\rm loc}^2(\R_+;  H^1\cap L^6(\T^3)) $ and in $ L_{\rm loc}^\infty(\R_+; L^2(\T^3))$, uniformly with respect to $\varepsilon$.\\
We continue with the second assertion. Since $B^\varepsilon$ is uniformly bounded in $L_{\rm loc}^2(\R_+; H^1(\T^3))$ and $\nabla_\perp \cdot B_\perp^\varepsilon=-\varepsilon \partial_\myparallel B_\myparallel^\varepsilon$ in $\mathcal D'$, we obtain $\|\nabla_\perp \cdot B_\perp^\varepsilon \|_{L_{\rm loc}^2(\R_+; L^2(\T^3))} \leq \varepsilon \|\partial_\myparallel B_\myparallel^\varepsilon \|_{L_{\rm loc}^2(\R_+; L^2(\T^3))} \leq C_0\varepsilon$. On the one hand $ \nabla_\perp \cdot B_\perp^\varepsilon $ is bounded in $ L_{\rm loc}^2(\R_+; L^2(\T^3))$ and goes to $ \nabla_\perp \cdot B_\perp \in \mathcal D'$. On the other hand, it must vanish as $\varepsilon \rightarrow 0$. Hence, the second line.\\
For the third assertion, we apply  Lemma~\ref{lem:ALL} of Appendix~\ref{s:tools} with  $\mathfrak B_0 = H^1(\T^3)$,  $\mathfrak B = L^2(\T^3)$, $\mathfrak B_1 = H^{-1}(\T^3)$,  $p=r$, and $q=\infty$. To this end, we have to check the corresponding hypotheses. Below (and after when there is no possible ambiguity), {\it bounded} means ``uniformly bounded with respect to $\varepsilon$''.
\begin{itemize}
 \item From energy inequality \eqref{nrj-ineq-mhdeps}-\eqref{defPI-P} and the continuous Sobolev embedding $H^1(\T^3) \hookrightarrow L^6(\T^3)$, we obtain that $B^\varepsilon$ is bounded in $L_{\rm loc}^\infty(\R_+; L^2(\T^3)) \cap L_{\rm loc}^2(\R_+;   H^1\cap L^6 (\T^3))$.
  \item Obviously,  $B_\perp^\varepsilon$ is bounded in $L_{\rm loc}^\infty(\R_+; L^2(\T^3))  \cap L_{\rm loc}^1(\R_+; H^1(\T^3)) $.
  \item The final step is to estimate $\partial_t B_\perp^\varepsilon$. We can exploit equation \eqref{mhdorigin} to express this time derivative. Observe that, as can be seen at the level of \eqref{eq:Bperp-eps}, there is no singular term in $ \varepsilon $. The Laplacian parts are clearly bounded in $ L_{\rm loc}^1(\R_+; H^{-1} (\T^3)) $. Let us consider the products of components of $ B^\varepsilon $ and $ v^\varepsilon $. We refer to  (the proof of) Lemma \ref{lem:cv-vperp} which guarantees that $ v^\varepsilon $ is bounded in $L_{\rm loc}^2(\R_+; H^1\cap L^6 (\T^3))$. Hence,
by H\"older inequality, these products are bounded in $ L_{\rm loc}^1(\R_+; L^3 (\T^3)) $. Since $L^3(\T^3) \hookrightarrow L^2(\T^3) $, after spatial derivation, we find as required a bound in $ L_{\rm loc}^1(\R_+; H^{-1} (\T^3)) $ for  $\partial_t B_\perp^\varepsilon$.
\end{itemize}
\end{proof}  


\subsection{Compactness of $v^\varepsilon $ and $\rho^\varepsilon v^\varepsilon $}
Here we aim at proving the following lemma.
\begin{lemma}
  \label{lem:cv-vperp} Assume $\gamma > 3/2$.
  Let $\mathfrak s := \max\{1/2, 3/\gamma-1\} \in [1/2,1)$. The sequences $v^\varepsilon$ and $\rho^\varepsilon v^\varepsilon $ satisfy the following properties.
    \begin{align*}
      & v^\varepsilon \xrightharpoonup{\quad} v \ \ \mbox{ \rm in } \  L_{\rm loc}^2(\R_+; L^6\cap H^1(\T^3))\!-\!\mbox{\rm weak}\,,\\
      & \nabla_\varepsilon \cdot v^\varepsilon   \xrightharpoonup{\quad}\nabla_\perp \cdot v_\perp =0 \ \ \mbox{ \rm in } \  L_{\rm loc}^2(\R_+; L^2(\T^3))\!-\!\mbox{\rm weak}\,,\\
      & \P_\perp v_\perp^\varepsilon \xrightharpoonup{\quad} \P_\perp v_\perp \ \mbox{ \textrm{and} } \ \Q_\perp v_\perp^\varepsilon \xrightharpoonup{\quad} \Q_\perp v_\perp = 0 \ \ \mbox{ \rm in } \  L_{\rm loc}^2(\R_+; L^6\cap H^{1}(\T^3))\!-\!\mbox{\rm weak}\,, \\
      & \P_\perp v_\perp^\varepsilon \xrightarrow{\quad} \P_\perp v_\perp=  v_\perp \ \ \mbox{ \rm in } \  L_{\rm loc}^2(\R_+; L^p\cap H^{s}(\T^3))\!-\!\mbox{\rm strong}\,, \quad 1\leq p <6\,, \quad 0\leq s <1\,, \\
      & \rho^\varepsilon v^\varepsilon  \xrightharpoonup {\quad} v
      \ \ \mbox{ \rm in } \  L_{\rm loc}^2(\R_+; L^q\cap H^{-\upsigma}(\T^3))\!-\!\mbox{\rm weak}\,, \quad \forall\, \upsigma \geq \mathfrak s := \max\Bigl\{\frac{1}{2},\frac{3}{\gamma-1} \Bigr\}\,, \quad  q=\frac{6\gamma}{6+\gamma} \,,\\
      & \rho^\varepsilon v^\varepsilon-v^\varepsilon  \xrightarrow{\quad} 0
      \ \ \mbox{ \rm in } \  L_{\rm loc}^2(\R_+; L^q(\T^3))\!-\!\mbox{\rm strong}\,, \quad   q={6\gamma}/({6+\gamma})\,, \\
      & \P_\perp (\rho^\varepsilon v_\perp^\varepsilon)  \xrightarrow{\quad} \P_\perp v_\perp=  v_\perp
      \ \ \mbox{ \rm in } \  L_{\rm loc}^2(\R_+; L^q(\T^3))\!-\!\mbox{\rm strong}\,, \quad   q={6\gamma}/({6+\gamma})\,.
    \end{align*}
\end{lemma}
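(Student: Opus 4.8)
The plan is to extract everything from the uniform bounds furnished by the energy inequality \eqref{nrj-ineq-mhdeps}--\eqref{dissip-D} together with the density compactness already established in Lemma~\ref{lem:cv-rho}. First I would record that, since $\mu_\perp^\varepsilon\to\mu_\perp>0$ and $\mu_\myparallel^\varepsilon\to\mu_\myparallel>0$ by \eqref{visco-norm}, the dissipation controls $\nabla_\perp v^\varepsilon$ and $\partial_\myparallel v^\varepsilon$ in $L_{\rm loc}^2(\R_+;L^2(\T^3))$ uniformly in $\varepsilon$, while $\sqrt{\rho^\varepsilon}\,v^\varepsilon$ stays bounded in $L_{\rm loc}^\infty(\R_+;L^2(\T^3))$. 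Combining the full gradient bound with a Poincaré--Wirtinger inequality and the control of the density away from vacuum encoded in Lemma~\ref{lem:cv-rho}, one checks that $v^\varepsilon$ is bounded in $L_{\rm loc}^2(\R_+;H^1(\T^3))$, hence in $L_{\rm loc}^2(\R_+;L^6\cap H^1(\T^3))$ by Sobolev embedding; weak compactness then yields the first assertion. The identity $\nabla_\varepsilon\cdot v^\varepsilon=\nabla_\perp\cdot v_\perp^\varepsilon+\varepsilon\,\partial_\myparallel v_\myparallel^\varepsilon$ and the bound on $\partial_\myparallel v_\myparallel^\varepsilon$ give $\nabla_\varepsilon\cdot v^\varepsilon\rightharpoonup\nabla_\perp\cdot v_\perp$, the vanishing of this limit being addressed below.

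Next I would prove the strong convergence $\rho^\varepsilon v^\varepsilon-v^\varepsilon=(\rho^\varepsilon-1)v^\varepsilon\to0$ in $L_{\rm loc}^2(\R_+;L^q(\T^3))$ directly from Hölder's inequality with $1/q=1/\gamma+1/6$: the factor $\rho^\varepsilon-1$ tends to $0$ in $L_{\rm loc}^\infty(\R_+;L^\gamma(\T^3))$ by Lemma~\ref{lem:cv-rho}, while $v^\varepsilon$ is bounded in $L_{\rm loc}^2(\R_+;L^6(\T^3))$. Writing $\rho^\varepsilon v^\varepsilon=v^\varepsilon+(\rho^\varepsilon-1)v^\varepsilon$, the weak limit of $\rho^\varepsilon v^\varepsilon$ coincides with that of $v^\varepsilon$, namely $v$; the boundedness of $\rho^\varepsilon v^\varepsilon$ in $L_{\rm loc}^2(\R_+;L^q)$ and the Sobolev embedding $L^q(\T^3)\hookrightarrow H^{-\sigma}(\T^3)$, which holds for $\sigma\ge\mathfrak s$, give the stated weak convergence in $L_{\rm loc}^2(\R_+;L^q\cap H^{-\sigma})$. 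To close the divergence condition I would use the continuity equation \eqref{eq:rho-eps}: since $\rho^\varepsilon\to1$ strongly, $\partial_t\rho^\varepsilon\to0$ in $\mathcal D'$, whence $\nabla_\varepsilon\cdot(\rho^\varepsilon v^\varepsilon)\to0$ and, discarding the $O(\varepsilon)$ parallel contribution, $\nabla_\perp\cdot(\rho^\varepsilon v_\perp^\varepsilon)\to0$; combined with $\rho^\varepsilon v_\perp^\varepsilon\rightharpoonup v_\perp$ this forces $\nabla_\perp\cdot v_\perp=0$. Therefore $\Q_\perp v_\perp=0$ and $\P_\perp v_\perp=v_\perp$, and the continuity of $\P_\perp,\Q_\perp$ on $L_{\rm loc}^2(\R_+;L^6\cap H^1)$ yields $\P_\perp v_\perp^\varepsilon\rightharpoonup v_\perp$ and $\Q_\perp v_\perp^\varepsilon\rightharpoonup0$.

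The \emph{heart of the matter} is the strong convergence of $\P_\perp v_\perp^\varepsilon$. The decisive structural fact is that the two-dimensional Leray projection $\P_\perp$ annihilates the singular gradient $\varepsilon^{-2}\nabla_\perp p^\varepsilon+\varepsilon^{-1}\nabla_\perp B_\myparallel^\varepsilon$ of the perpendicular momentum balance \eqref{eq:vperp-eps}, since that term is tested against $\nabla_\perp\cdot\psi_\perp$, which vanishes for divergence-free $\psi_\perp$. Consequently the projected equation for $\P_\perp(\rho^\varepsilon v_\perp^\varepsilon)$ carries no $\varepsilon$-singular contribution, and the remaining terms — the convective tensor $\rho^\varepsilon v_\perp^\varepsilon\otimes v^\varepsilon-B_\perp^\varepsilon\otimes B^\varepsilon$, the term $\partial_\myparallel B_\perp^\varepsilon$, and the viscous terms — are bounded in $L_{\rm loc}^1(\R_+;H^{-N}(\T^3))$ for $N$ large enough, so that $\partial_t\P_\perp(\rho^\varepsilon v_\perp^\varepsilon)$ is bounded there. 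Since $\P_\perp(\rho^\varepsilon v_\perp^\varepsilon)$ is also bounded in $L_{\rm loc}^2(\R_+;L^q(\T^3))$ with $L^q(\T^3)\hookrightarrow\hookrightarrow H^{-\sigma}(\T^3)$ compactly for $\sigma\ge\mathfrak s$, the compactness lemma (Lemma~\ref{lem:compactness}) yields relative compactness of $\P_\perp(\rho^\varepsilon v_\perp^\varepsilon)$ in $L_{\rm loc}^2(\R_+;H^{-\sigma}(\T^3))$, with limit $v_\perp$ by the previous step. Adding $\P_\perp((\rho^\varepsilon-1)v_\perp^\varepsilon)\to0$ (from the second paragraph and continuity of $\P_\perp$ on $L^q$), we get $\P_\perp v_\perp^\varepsilon\to v_\perp$ strongly in $L_{\rm loc}^2(\R_+;H^{-\sigma})$. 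Interpolating this against the uniform bound in $L_{\rm loc}^2(\R_+;H^1)$ upgrades the convergence to $L_{\rm loc}^2(\R_+;H^s)$ for every $0\le s<1$, hence to $L_{\rm loc}^2(\R_+;L^p)$ for $1\le p<6$ by Sobolev embedding. Finally, writing $\P_\perp(\rho^\varepsilon v_\perp^\varepsilon)=\P_\perp v_\perp^\varepsilon+\P_\perp((\rho^\varepsilon-1)v_\perp^\varepsilon)$ and choosing $p\ge q$ gives the last assertion.

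The main obstacle is the time-derivative bound in the third step: one must verify that, after projection by $\P_\perp$, the quantity $\partial_t\P_\perp(\rho^\varepsilon v_\perp^\varepsilon)$ is genuinely bounded in a negative Sobolev space uniformly in $\varepsilon$. This demands both the cancellation of the singular pressure/magnetic term and a careful Hölder bookkeeping of the quadratic terms $\rho^\varepsilon v_\perp^\varepsilon\otimes v^\varepsilon$ and $B_\perp^\varepsilon\otimes B^\varepsilon$ using only the available bounds, namely $v^\varepsilon,B^\varepsilon\in L_{\rm loc}^2(\R_+;L^6\cap H^1(\T^3))$ and $\rho^\varepsilon\in L_{\rm loc}^\infty(\R_+;L^\gamma(\T^3))$. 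It is precisely the interplay of these integrability thresholds that dictates the calibration $\mathfrak s=\max\{1/2,3/\gamma-1\}$, ensuring that both the compact embedding $L^q\hookrightarrow\hookrightarrow H^{-\sigma}$ and the subsequent interpolation are available for $\gamma>3/2$.
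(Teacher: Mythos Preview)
Your proof is correct and rests on the same structural observation as the paper — namely that applying $\P_\perp$ to the perpendicular momentum equation kills the singular $\varepsilon^{-2}\nabla_\perp p^\varepsilon+\varepsilon^{-1}\nabla_\perp B_\myparallel^\varepsilon$ term, so that $\partial_t\P_\perp(\rho^\varepsilon v_\perp^\varepsilon)$ is bounded uniformly in a negative space. The treatment of the first, second, third, fifth, sixth and seventh assertions is essentially the same as the paper's.

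The genuine difference lies in how the strong convergence of $\P_\perp v_\perp^\varepsilon$ is extracted. The paper applies the Lions product-compactness lemma (Lemma~\ref{lem:compactness}) with $g^\varepsilon=\rho^\varepsilon v_\perp^\varepsilon$ and $h^\varepsilon=\P_\perp v_\perp^\varepsilon$ to obtain $\P_\perp(\rho^\varepsilon v_\perp^\varepsilon)\cdot\P_\perp v_\perp^\varepsilon\rightharpoonup|v_\perp|^2$ in $\mathcal D'$, then turns this into $\limsup\|\P_\perp v_\perp^\varepsilon\|_{L^2(L^2)}\le\|v_\perp\|_{L^2(L^2)}$ via a norm argument, and finally iterates Gagliardo--Nirenberg to climb from $L^2$ to $L^p$ and $H^s$. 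You instead invoke an Aubin--Lions argument directly on $\P_\perp(\rho^\varepsilon v_\perp^\varepsilon)$ to get strong convergence in $L^2_{\rm loc}(\R_+;H^{-\sigma})$, subtract the vanishing correction $\P_\perp((\rho^\varepsilon-1)v_\perp^\varepsilon)$, and interpolate once between $H^{-\sigma}$ and $H^1$. Your route is shorter and avoids both the product-convergence step and the Gagliardo--Nirenberg iteration; the paper's route, on the other hand, lands first in $L^2(L^2)$ rather than in a negative space, which is sometimes convenient downstream.

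Two small points. First, the lemma you cite as ``Lemma~\ref{lem:compactness}'' is the Lions \emph{product} lemma; what you actually use is the Aubin--Lions--Simon lemma, which in the paper is Lemma~\ref{lem:ALL}. Second, the compact embedding $L^q(\T^3)\hookrightarrow\hookrightarrow H^{-\sigma}(\T^3)$ requires $\sigma>\mathfrak s$ strictly (equality gives only a continuous embedding), and to recover compactness in $L^2$ in time via Lemma~\ref{lem:ALL} you should also record that $\P_\perp(\rho^\varepsilon v_\perp^\varepsilon)$ is bounded in $L^\infty_{\rm loc}(\R_+;L^{2\gamma/(\gamma+1)})\hookrightarrow L^\infty_{\rm loc}(\R_+;H^{-\sigma})$ (from $\sqrt{\rho^\varepsilon}v^\varepsilon\in L^\infty_tL^2_x$ and $\rho^\varepsilon\in L^\infty_tL^\gamma_x$), so that the time exponent in the lemma can be taken $q=\infty$.
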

\begin{proof}
  We start with the first statement of Lemma~\ref{lem:cv-vperp}. From energy inequality \eqref{nrj-ineq-mhdeps}-\eqref{defPI-P} with the pressure term $\Pi_2$,  we obtain that  $v^\varepsilon$ is  bounded in $L_{\rm loc}^2(\R_+; \dot H^1 (\T^3))$. Let us show that $v^\varepsilon$ is  bounded in $L_{\rm loc}^2(\R_+; H^1 (\T^3))$. From  Poincar\'e--Wirtinger inequality and H\"older inequality, it is easy to show that
 \[
 \|\cdot \|_{H^1(\T^3)}^2 \sim | \overline{(\cdot)}  |^2 + \| \cdot \|_{\dot H^1(\T^3)}^2 \, , \quad \mbox{ with } \quad \overline{(\cdot)} \equiv |\T^3|^{-1}\int_{\T^3} dx\,  \cdot\, .
 \]
 There remains to control the mean value $ \overline{v^\varepsilon} $. Using H\"older inequality, the embedding $L^6(\T^3) \hookrightarrow L^{2\gamma/(\gamma-1)}(\T^3)$ for $\gamma>3/2$, the continuous embedding $ H_0^1(\T^3) \hookrightarrow L^6(\T^3)$ (with $H_0^1(\T^3)$ the set of zero-average functions in $\dot H^1(\T^3)$), we obtain for any $T>0$,
 \begin{equation}\label{fromestimatee}
   \begin{array}{rl} \displaystyle \int_0^Tdt \int_{\T^3} dx \, \rho^\varepsilon \big|v^\varepsilon - \overline{v^\varepsilon} \big|^2 \! \! \!
     & \displaystyle \leq    \int_0^Tdt \,\| \rho^\varepsilon\|_{L^\gamma(\T^3)}  \big \|v^\varepsilon - \overline{v^\varepsilon}  \big\|_{L^{2\gamma/(\gamma-1)}(\T^3)}^2
     \\
     & \displaystyle \leq       \int_0^Tdt \, \| \rho^\varepsilon\|_{L^\gamma(\T^3)}  \big \|v^\varepsilon - \overline{v^\varepsilon}  \big\|_{L^6(\T^3)}^2 \\
     & \displaystyle \leq  \| \rho^\varepsilon\|_{L_{\rm loc}^\infty(\R_+;L^\gamma(\T^3))} \| \nabla v^\varepsilon \|_{L_{\rm loc}^2(\R_+;L^2(\T^3))}^2 \leq  { C} < \infty\,.
   \end{array}
 \end{equation}
 Using inequality $b^2/2 \leq (a-b)^2+ a^2$ and the mass conservation law, we obtain
 \[
 \int_{\T^3} dx \, \rho^\varepsilon (\overline{ v^\varepsilon})^2 = \| \rho_0^\varepsilon\|_{L^1(\T^3)} (\overline{ v^\varepsilon})^2 \leq 2 \int_{\T^3} dx \, \rho^\varepsilon (v^\varepsilon - \overline{ v^\varepsilon})^2 + 2 \int_{\T^3} dx \, \rho^\varepsilon (v^\varepsilon)^2 .
 \]
 With \eqref{fromestimatee}, this implies (uniformly in $ \varepsilon $)
 \begin{equation*}
   (\overline{ v^\varepsilon})^2 \leq 2 \, T^{-1} \, \| \rho_0^\varepsilon\|_{L^1(\T^3))}^{-1}
   \big( { C}  + T \, \|\sqrt{ \rho^\varepsilon}v^\varepsilon \|_{L^\infty([0,T];L^2(\T^3))}^2\big) \leq \widetilde C \, .
 \end{equation*}
 This information combined with the bound of  $v^\varepsilon$ in $L_{\rm loc}^2(\R_+; \dot H^1 (\T^3))$ indicates that $v^\varepsilon$ is  bounded in $L_{\rm loc}^2(\R_+; H^1 (\T^3))$.\\
 We continue with the second statement. To this end, we look at the mass conservation law \eqref{eq:rho-eps}. Since $\rho^\varepsilon \rightarrow 1$ in $L_{\rm loc}^\infty(\R_+; L^\gamma(\T^3))$--strong (Lemma~\ref{lem:cv-rho}),   $v^\varepsilon \rightharpoonup v $ in $L_{\rm loc}^2(\R_+; L^6(\T^3))$--weak and $\rho_0^\varepsilon \rightarrow 1$ in $L^\gamma(\T^3)$--strong (from \eqref{ineqC0-P} and by following the proof of Lemma~\ref{lem:cv-rho}), it is easy to pass to the limit in the distributional sense in the linear terms. For the nonlinear term, we writte $ \rho^\varepsilon \, v^\varepsilon =  (\rho^\varepsilon -1) \, v^\varepsilon +  v^\varepsilon $. Since $1/\gamma + 1/6 < 1$ (recall that $\gamma>3/2$), the first term vanishes strongly in $L^2(\R_+;L^{6\gamma/(6+\gamma)}(\T^3))$. At the limit, we recover for any test function $ \varphi $ that
    \[
    \int_\Omega dx\, \varphi(0) +
    \int_0^\infty dt \int_\Omega dx \,\big(\partial_t  \varphi+ v_\perp \cdot \nabla_\perp \varphi\big)
    = \int_0^\infty dt \int_\Omega dx \, v_\perp \cdot \nabla_\perp \varphi = 0 \,,
    \]
    which means that $\nabla_\perp \cdot v_\perp =0\,$ in $\,\mathcal D'(\R_+^*\times \T^3)$.\\
    The third assertion of Lemma~\ref{lem:cv-vperp} is a  consequence of the first and second statements of Lemma~\ref{lem:cv-vperp}, of the Helmhotz--Hodge decomposition $v_\perp^\varepsilon = \P_\perp v_\perp^\varepsilon + \Q_\perp v_\perp^\varepsilon$ and of the (weak) continuity properties of $ \P_\perp $ and $ \Q_\perp $.\\
    The fourth assertion exploits some Gagliardo--Nirenberg interpolation inequalities together with delicate equicontinuity properties in time that require to already control the product $\rho^\varepsilon \, v^\varepsilon $. The proof is postponed  to a later stage.\\
    We pursue with the proof of fifth statement of Lemma~\ref{lem:cv-vperp}. On the one hand, from the uniform bounds  $v^\varepsilon \in L_{\rm loc}^2(\R_+; L^6(\T^3))$ and $\rho^\varepsilon \in L_{\rm loc}^\infty(\R_+; L^\gamma(\T^3))$, and on the other hand, from H\"older inequality, we obtain $\rho^\varepsilon v^\varepsilon \in L_{\rm loc}^2(\R_+; L^{6\gamma/(6+\gamma)}(\T^3))$ uniformly with respect to $\varepsilon$, which gives,  by weak compactness, the weak convergence of this sequence in this space. Moreover, from the first assertion of Lemma~\ref{lem:cv-rho}~and~\ref{lem:cv-vperp}, the  product of $\rho^\varepsilon$ and  $v^\varepsilon$ weakly converges to the limit point $v$ in $L_{\rm loc}^2(\R_+; L^q(\T^3))$--weak with $1/q= 1/\gamma + 1/6$. The Sobolev embedding $ H^{\mathfrak s}(\T^3) \hookrightarrow L^{q'}(\T^3)$, with $1/q'=1-1/q=(5\gamma -6)/(6\gamma)$ and $\mathfrak s \geq \max\{0,3/\gamma-1\}$, implies by duality that $L^q (\T^3) \hookrightarrow H^{-\mathfrak s}(\T^3)$. Without loss of generality and in order to avoid further the multiplication of regularity indices we restrict $\mathfrak s$ such that  $\mathfrak s \geq \max\{1/2,3/\gamma-1\}$. We then have $\rho^\varepsilon v^\varepsilon \in L_{\rm loc}^{2}(\R_+; H^{- \mathfrak s}(\T^3))$. We conclude by using the Sobolev embedding $H^{\upsigma}(\T^3) \hookrightarrow H^{\mathfrak s}(\T^3) $ for $\upsigma \geq \mathfrak s$, and duality.\\    
From H\"older inequality, we have
\[
\|\rho^\varepsilon v^\varepsilon -v^\varepsilon \|_{L_{\rm loc}^2(\R_+;L^q(\T^3))} \leq
\|\rho^\varepsilon -1 \|_{L_{\rm loc}^\infty(\R_+;L^\gamma(\T^3))} \|v^\varepsilon\|_{L_{\rm loc}^2(\R_+;L^6(\T^3))}
\, , \qquad 1/q=1/\gamma +1/6 \, .
\]
Then, exploiting the first assertion of Lemmas~\ref{lem:cv-rho}~and~\ref{lem:cv-vperp}, we obtain the sixth statement of Lemma~\ref{lem:cv-vperp}.\\
We can now come back to  the $L_{\rm loc}^2 (\R_+ ;L^p (\T^3)) $--strong convergence in the fourth assertion of Lemma~\ref{lem:cv-vperp}. For this, we use Lemma~\ref{lem:compactness} of Appendix~\ref{s:tools}, for which we show below that its hypotheses, splitted in three points, are satisfied. 1)  From the fifth statement of Lemma~\ref{lem:cv-vperp}, the selfadjointness of $\P_\perp$ for the scalar product of $L^2(\Omega; \R^2)$, and the continuity of $\P_\perp$ in $L^\alpha(\Omega; \R^2)$, for $1<\alpha < \infty$,  we obtain $ \P_\perp(\rho^\varepsilon v_\perp^\varepsilon) \rightharpoonup \P_\perp v_\perp =v_\perp$ in $L_{\rm loc}^2(\R_+; L^{6\gamma/(6+\gamma)}(\T^3))$--weak. 2) The bound $\P_\perp v_\perp^\varepsilon \in  L_{\rm loc}^2(\R_+; L^6(\T^3))$ and Lemma~4.3 in \cite{Bre11} implies $\|\P_\perp v_\perp^\varepsilon (t,\cdot +h )-\P_\perp v_\perp^\varepsilon (t,\cdot )\|_{L_{\rm loc}^2(\R_+; L^6(\T^3))} \rightarrow 0$, as $|h|\rightarrow 0$, uniformly with respect to $\varepsilon$. 3) Applying the Leray projector $\P_\perp$ to equation \eqref{eq:vperp-eps} for $\rho^\varepsilon v_\perp^\varepsilon$, we obtain
    \begin{equation}
      \label{eqn:ptPpvp}
      \partial_t(\P_\perp(\rho^\varepsilon v_\perp^\varepsilon))=-\nabla_\varepsilon \cdot \P_\perp(\rho^\varepsilon v_\perp^\varepsilon \otimes  v^\varepsilon )
      + \partial_\myparallel \P_\perp B^\varepsilon - \nabla_\varepsilon \cdot \P_\perp(B_\perp^\varepsilon \otimes  B^\varepsilon )
      + \mu_\perp^\varepsilon \Delta_\perp \P_\perp v_\perp^\varepsilon + \mu_\myparallel^\varepsilon \Delta_\myparallel \P_\perp v_\perp^\varepsilon\,.
    \end{equation}
    Using the bounds $\rho^\varepsilon |v^\varepsilon|^2 \in L_{\rm loc}^\infty(\R_+; L^1(\T^3))$,  $v^\varepsilon\in L_{\rm loc}^2(\R_+; H^1(\T^3))$, and $B^\varepsilon \in L_{\rm loc}^\infty(\R_+ ;L^2(\T^3))$, and the following properties of the projector $\P_\perp$, $\P_\perp(H^\alpha(\T^3)) \hookrightarrow H^\alpha(\T^3)$, with $\alpha \geq 0$, and  $\P_\perp(L^1(\T^3))  \hookrightarrow W^{-\delta,1}(\T^3)$, with $\delta >0$, we obtain from \eqref{eqn:ptPpvp}, $ \partial_t(\P_\perp(\rho^\varepsilon v_\perp^\varepsilon)) \in L_{\rm loc}^2(\R_+; (W^{-\delta -1,1}+ H^{-1}+L^2)(\T^3)) \hookrightarrow  L_{\rm loc}^1(\R_+; W^{-\delta -1,1}(\T^3))$. Gathering points 1) to 3), we can apply  Lemma~\ref{lem:compactness} of Appendix~\ref{s:tools} with $g^\varepsilon =\rho^\varepsilon v_\perp^\varepsilon$,  $h^\varepsilon =\P_\perp v_\perp^\varepsilon$, $p_1=q_1=2$ ($1/p_1+1/q_1=1$), $p_2=6\gamma/(6+\gamma)$, and $q_2=6$ ($1/p_2+1/q_2=1/\gamma+1/3 < 1$, for $\gamma>3/2$), to deduce that
    \begin{equation}
      \label{eqn:scv-Ppvp}
      \P_\perp(\rho^\varepsilon v_\perp^\varepsilon) \cdot  \P_\perp v_\perp^\varepsilon \xrightharpoonup{\quad}
      |\P_\perp v_\perp|^2= | v_\perp|^2 \ \  \mbox{ \rm in } \  \ \mathcal D'(\R_+^*\times \T^3)\,.
    \end{equation}
    This limits leads to the strong convergence of $\P_\perp v_\perp^\varepsilon$ to $v_\perp$ in $L_{\rm loc}^2(\R_+; L^2(\T^3))$. Indeed, using \eqref{eqn:scv-Ppvp} and H\"older inequality, we obtain for any $T>0$,
    \begin{align*}
      \limsup_{\varepsilon \rightarrow 0} \| \P_\perp v_\perp^\varepsilon\|_{L_{\rm loc}^2(\R_+; L^2(\T^3))}^2 -
      \| \P_\perp v_\perp\|_{L_{\rm loc}^2(\R_+; L^2(\T^3))}^2 & \\
      & \hspace{-8.5cm}=
      \limsup_{\varepsilon \rightarrow 0} \int_0^Tdt \int_{\T^3} dx\, \big(| \P_\perp v_\perp^\varepsilon|^2-   \P_\perp(\rho^\varepsilon v_\perp^\varepsilon) \cdot \P_\perp v_\perp^\varepsilon \big)
      = \limsup_{\varepsilon \rightarrow 0} \int_0^Tdt \int_{\T^3} dx\,  \P_\perp v_\perp^\varepsilon \cdot  v_\perp^\varepsilon (1-\rho^\varepsilon) \\
      &\hspace{-8.55cm}\leq \limsup_{\varepsilon \rightarrow 0} \|\rho^\varepsilon-1\|_{L_{\rm loc}^\infty(\R_+; L^\gamma(\T^3))} \|v_\perp^\varepsilon\|_{L_{\rm loc}^2(\R_+; L^{2\theta}(\T^3))}^2\,,
    \end{align*}  
    with $1/\gamma +1/\theta=1$. Since $\gamma >3/2$, we obtain $2\theta <6$. Therefore, using the first statement of Lemmas~\ref{lem:cv-rho}~and~\ref{lem:cv-vperp}, the right-hand side of the previous inequality vanishes, which leads to $\limsup_{\varepsilon \rightarrow 0} \| \P_\perp v_\perp^\varepsilon\|_{L_{\rm loc}^2(\R_+; L^2(\T^3))}  \leq \| \P_\perp v_\perp\|_{L_{\rm loc}^2(\R_+; L^2(\T^3))} \geq \liminf_{\varepsilon \rightarrow 0} \| \P_\perp v_\perp^\varepsilon\|_{L_{\rm loc}^2(\R_+; L^2(\T^3))}$ and to $\P_\perp v_\perp^\varepsilon \rightarrow \P_\perp v_\perp=  v_\perp$ in  $L_{\rm loc}^2(\R_+; L^2(\T^3))$--strong, by Proposition~3.32 in \cite{Bre11}. This strong convergence in $L^2$ allows to prove the fourth statement of Lemma~\ref{lem:cv-vperp} by using some interpolation inequalities. Indeed, using Gagliardo--Nirenberg inequality in space  and Cauchy-Schwarz inequality in time, we obtain
    \begin{equation}
      \label{eqn:GNCSI-1}
      \|  \P_\perp v_\perp^\varepsilon -v_\perp\|_{L_{\rm loc}^2(\R_+; L^{p_1}(\T^3))}
      \lesssim  \|  \P_\perp v_\perp^\varepsilon -v_\perp\|_{L_{\rm loc}^2(\R_+; \dot H^1(\T^3))}^{1/2}
      \|  \P_\perp v_\perp^\varepsilon -v_\perp\|_{L_{\rm loc}^2(\R_+; L^{p_0}(\T^3))}^{1/2} \,, 
    \end{equation}
    with $1/p_1=1/12+1/(2p_0)$ and $\|  \P_\perp v_\perp^\varepsilon -v_\perp\|_{L_{\rm loc}^2(\R_+; \dot H^1(\T^3))} <\infty$. Iterating inequality \eqref{eqn:GNCSI-1}, we obtain a sequence of indices $p_n$ such that $1/p_{n+1}=1/12+ 1/(2p_n)$, with $p_0=2$, hence its limit $p_\infty=6$. This justifies the first part (strong convergence in $L^p$) of the fourth statement of Lemma~\ref{lem:cv-vperp}. For the second part of this statement, again, using Gagliardo--Nirenberg inequality in space   (with not necessarily integers \cite{BM18, BM19}) and  Cauchy-Schwarz inequality in time, we obtain
    \begin{equation}
      \label{eqn:GNCSI-2}
      \|  \P_\perp v_\perp^\varepsilon -v_\perp\|_{L_{\rm loc}^2(\R_+;  H^{s_1}(\T^3))}
      \lesssim  \|  \P_\perp v_\perp^\varepsilon -v_\perp\|_{L_{\rm loc}^2(\R_+;  H^1(\T^3))}^{1/2}
      \|  \P_\perp v_\perp^\varepsilon -v_\perp\|_{L_{\rm loc}^2(\R_+;  H^{s_0}(\T^3))}^{1/2}\,,
    \end{equation}
    with $s_1=1/2+ s_0/2$ and $\|  \P_\perp v_\perp^\varepsilon -v_\perp\|_{L_{\rm loc}^2(\R_+; H^1(\T^3))} <\infty$. Iterating inequality \eqref{eqn:GNCSI-2}, we obtain a sequence of indices $s_n$ such that $s_{n+1}=1/2+ s_n/2$, with $s_0=0$, hence its limit $s_\infty=1$. This justifies the second part (strong convergence in $H^s$) of the fourth statement of Lemma~\ref{lem:cv-vperp}.\\
    We finish with the proof the seventh statement. By triangular inequality and continuity of $\P_\perp$ in $L^\alpha(\Omega; \R^2)$, for $1<\alpha < \infty$, we obtain
    \[
    \|  \P_\perp (\rho^\varepsilon v_\perp^\varepsilon -v_\perp)\|_{L_{\rm loc}^2(\R_+; L^q(\T^3))} \leq
    \|  \P_\perp v_\perp^\varepsilon -v_\perp \|_{L_{\rm loc}^2(\R_+;L^q(\T^3))}+ \| \rho^\varepsilon v_\perp^\varepsilon -v_\perp^\varepsilon\|_{L_{\rm loc}^2(\R_+; L^q(\T^3))} .
    \]
    Using the fourth and  sixth assertions of Lemma~\ref{lem:cv-vperp}, and since $6/5<q<6$ (because $1/q=1/\gamma+1/6$, and $3/2<\gamma<\infty$) this inequality allows to conclude.
\end{proof}


\subsection{Passage to the limit in the equation for $B_\perp^\varepsilon $}
\label{ss:cv-Bperp-P}
Here, we justify the passage to the limit in \eqref{eq:Bperp-eps} for $B_\perp^\varepsilon$. Let us start with the initial condition term.
From the discussion about the properties of sequences of initial conditions in Section~\ref{ss:rT3} (in particular the uniform bound \eqref{ineqC0-P} and the resulting convergences),
we can pass to the limit, in the distributional sense, in the initial condition term of \eqref{eq:Bperp-eps} to obtain the limit initial condition $B_{0 \perp}$.
Next, using on the one hand the third statement of Lemma~\ref{lem:cv-Bperp}, and on the other hand the first and the second statements of Lemma~\ref{lem:cv-vperp}, we can pass to the limit, in the distributional sense, in all linear and nonlinear terms of equations  \eqref{eq:Bperp-eps} and \eqref{eq:zero-divB} to obtain the first equation of \eqref{rmhdincomp}
and equation \eqref{divperppourB} in the sense of distributions.


\subsection{Passage to the limit in the equation for $\rho^\varepsilon v_\perp^\varepsilon $}
\label{ss:cv-rvperp-P}
Here, we justify the passage to the limit in equation \eqref{eq:vperp-eps} for $\rho^\varepsilon v_\perp^\varepsilon$, in several steps.
We start by recalling some basic tools. With $\Omega$ being either $\T^3$ or $\R^3$, we introduce the linear differential operator
\begin{equation}
  \label{eqn:LDOL}
  \mathcal L := -\left (\begin{array}{cc}
0 &   c^\varepsilon \, \nabla_\perp \cdot \\
{}^t \nabla_\perp & 0
  \end{array}
  \right),
\end{equation}
defined on $\mathcal D_0^\prime(\Omega;\R) \times \mathcal D^\prime(\Omega;\R^2) $, where $ \mathcal D_0^\prime(\Omega;\R)= \{\phi \in \mathcal D^\prime(\Omega;\R) \ | \ \int_{\Omega}dx\, \phi=0 \}$, and such that $\mathcal{L} U = -\,{}^t(c^\varepsilon \nabla_\perp \cdot  \Phi,\, {}^t\nabla_\perp \phi)$, with $ U\equiv {}^t(\phi,{}^t\Phi) \in \mathcal D_0^\prime(\Omega;\R) \times \mathcal D^\prime(\Omega;\R^2) $. Here $c^\varepsilon:=b^\varepsilon + 1/\overline{\rho^\varepsilon}$, with $b^\varepsilon:=b (\overline{\rho^\varepsilon})^{\gamma-1}$. Since for $\varepsilon$ small enough $\overline{\rho^\varepsilon} \in (1/2,3/2)$, there exist constants $0<\underline {c} \leq \bar {c} <\infty$, such that $\underline {c} \leq c^\varepsilon \leq \bar {c}$, and $c^\varepsilon \rightarrow c = 1+ b = 1+ a \gamma$.

We claim that $\mathcal L$ generates a one-parameter group of isometry $\{ \mathcal{S}(\tau):=\exp(\tau \mathcal L);\, \tau \in \R \}$, from $H^\alpha(\Omega;\R) \times H^\alpha(\Omega;\R^2)$ into itself with the norm $\vertiii{ U }_{H^\alpha(\Omega)}^2 := \|\phi \|_{H^\alpha(\Omega)}^2 +c^\varepsilon\| \Phi \|_{H^\alpha(\Omega)}^2$, for all $\alpha\in \R$. Indeed, this comes from the fact that the operator $\mathcal L$ is skew-adjoint for the scalar product $ \dsp{2.5}{ \cdot\, , \, \cdot  }$ of $L^2(\Omega;\R) \times L^2(\Omega;\R^2)$, defined by $ \dsp{2.5}{ U, \, V  } =\langle \phi, \psi\rangle_{L^2(\Omega)} + c^\varepsilon \langle \Phi,\Psi \rangle_{L^2(\Omega)}$, where $U\equiv {}^t(\phi,{}^t\Phi)$, $V\equiv {}^t(\psi,{}^t\Psi)$, and the notation $\langle \cdot, \cdot\rangle_{L^2(\Omega)}$ stands for the standard scalar product in $L^2(\Omega)$ for scalar or vector valued functions. This isometry group can also be directly verified from the $H^\alpha$-energy estimates of the solutions $U(\tau) = \mathcal{S}(\tau)\, U_0$, satisfying the equation $\partial_\tau \,U(\tau)=   \mathcal L \, U(\tau) $, i.e.,
\begin{equation}
\label{eqn:first-order-linear-wave-eq}
\partial_\tau \phi + c^\varepsilon \,\nabla_\perp\cdot \Phi = 0\,, \quad
\partial_\tau \Phi + \nabla_\perp \phi = 0\,.
\end{equation}
Indeed, we first set $\Lambda = ({\rm I}-\Delta)^{1/2}$ and recall that $H^\alpha (\Omega) = \Lambda^{-\alpha} L^2(\Omega)$. Applying $\Lambda^{\alpha}$ to equations \eqref{eqn:first-order-linear-wave-eq}, and taking the $L^2(\Omega)$ scalar product of the result with $\Lambda^{\alpha}\phi$ (resp.  $c^\varepsilon \Lambda^{\alpha}\Phi$) for the first (resp. second) equation of \eqref{eqn:first-order-linear-wave-eq}, we obtain
\begin{align*}
  \frac{d}{dt} \vertiii{ U }_{H^\alpha(\Omega)}^2 & =  \frac{d}{dt}  \dsp{2.5}{\Lambda^{\alpha} U , \Lambda^{\alpha} U}= \frac{d}{dt}\big(\|\Lambda^{\alpha}\phi\|_{L^2(\Omega)}^2
  + c^\varepsilon\|\Lambda^{\alpha}\Phi\|_{L^2(\Omega)}^2 \big) \\
  &= \int_\Omega dx\, c^\varepsilon 
  \big(
  \Lambda^{\alpha}\phi \nabla_\perp\cdot \Lambda^{\alpha}\Phi + \Lambda^{\alpha}\Phi\cdot  \nabla_\perp\Lambda^{\alpha}\phi 
  \big)=0\,.
\end{align*}
In order to understand some properties of the group $\mathcal S(\tau)$, we denote by $\mathcal S_1(\tau) \in \R $ and $\mathcal S_2(\tau) \in\R^2$ the components of  $\mathcal S(\tau)$.  Observe that $\int_{\Omega} dx\,\mathcal  S_1(\tau)\, U$ and $\P_\perp \mathcal S_2(\tau) \,U$ are independent of $\tau\in\R$. In particular $\mathcal S(\tau) \,U$ is independent of $\tau$, if $\P_\perp \Phi =0$ (i.e., $\nabla_\perp\cdot \Phi =0$) and if $\phi$ is constant. From \eqref{eqn:first-order-linear-wave-eq} the operator $\mathcal L$ is equivalent to the transverse wave operator. Indeed, the equation $\partial_\tau \, U = \mathcal{L}\, U$ is equivalent to the scalar wave equations $(\partial_\tau^2 -c^\varepsilon \Delta_\perp)\phi=0$, and $(\partial_\tau^2 -c^\varepsilon \Delta_\perp)\varphi=0$, where we have used the Helmholtz--Hodge decomposition $\Phi= \P_\perp \Phi + \nabla_\perp \varphi$, with $\int_{\Omega} dx\, \varphi =0$, $\forall \, \tau\in\R$, and observing that $\P_\perp \Phi$ is a constant determined by the initial conditions (since $\partial_\tau \P_\perp \Phi=0$ from the second equation of \eqref{eqn:first-order-linear-wave-eq}).\newline

The key to justify the limit of the equation for $\rho^\varepsilon v_\perp^\varepsilon $,
is to construct an approximate solution to the MHD equations \eqref{eq:rho-eps}-\eqref{eq:zero-divB}, which allows
us to pass to the limit in both singular terms and nonlinear terms. Such a construction is given by the following lemma.

\begin{lemma}
\label{lem:UG}
Let us define  $U^\varepsilon:={}^t(\phi^\varepsilon, {}^t\Phi^\varepsilon)$, where $\phi^\varepsilon := b^\varepsilon \varrho^\varepsilon + B_\myparallel^\varepsilon,\,$ and $\ \Phi^\varepsilon:=\Q_\perp(\rho^\varepsilon v_\perp^\varepsilon),\, $ with $\ \varrho^\varepsilon:= (\rho^\varepsilon-\overline{\rho^\varepsilon})/\varepsilon,\,$  and $\ b^\varepsilon:= b (\overline{\rho^\varepsilon})^{\gamma-1}$. Let $\mathfrak s$ be the same positive real number as in Lemma~\ref {lem:cv-vperp}, i.e., $\mathfrak s := \max\{1/2, 3/\gamma-1\} \in [1/2,1)$. Then, 
  \begin{itemize}
  \item[1.] There exist functions $\, \mathcal U={}^t(\psi, {}^t\Psi) \in L_{\rm loc}^2(\R_+;L^2(\T^3;\R^3))\,,$  and
    $\ \mathcal R^\varepsilon \in L_{\rm loc}^2(\R_+;H^{-\sigma}(\T^3,\R^3))$, with  $\ \mathfrak s <\sigma< (5/2)^+ $, such that
    \begin{equation}
      \label{eqn:GPR}
      U^\varepsilon= \mathcal S (t/\varepsilon)\, \mathcal U
      \, + \, \mathcal R^\varepsilon,
      \ \mbox{ with} \ \ \mathcal R^\varepsilon  \xrightarrow{\quad} 0 \ \
      \mbox{ \rm in } \  L_{\rm loc}^2(\R_+; H^{-\sigma}(\T^3))\!-\!\mbox{\rm strong}\,.
    \end{equation}
  \item[2.] There exists a function $\uppi_0 := \Delta_\perp^{-1} \nabla_\perp\cdot\, \Q_\perp u_{0 \perp} \in L^2(\T_\myparallel; H^1(\T_\perp^2))$, as well as a function $\uppi_1 \in  L_{\rm loc}^2(\R_+;H^{-r}(\T^3;\R))$, with $r> \big(\tfrac52 \big)^+$, such that
    \begin{equation}
      \label{eqn:existpi1}
            {\phi^\varepsilon}/{\varepsilon} \xrightharpoonup{\quad}   \delta_{0}(t) \otimes \uppi_0 \, + \,  \uppi_1 
            \ \ \mbox{ \rm in } \  H^{-1}(\R_+; H^{-r}(\T^3))\!-\!\mbox{\rm weak}\,.
    \end{equation}
  \item[3.] The limit point $(B_\myparallel, \varrho)$ satisfies the relation  $B_\myparallel + b\varrho =0$, for a.e. $(t,x) \in ]0,+\infty[ \times \T^3$.
  \end{itemize} 
\end{lemma}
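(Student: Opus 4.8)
The plan is to run the unitary group (filtering) method tuned to the transverse magnetosonic operator $\mathcal L$ of \eqref{eqn:LDOL}.

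\smallskip
\noindent\textbf{Step 1 (a closed singular evolution for $U^\varepsilon$).} First I would derive, from the weak formulations \eqref{eq:rho-eps}, \eqref{eq:vperp-eps} and \eqref{eq:Bparal-eps}, that $U^\varepsilon={}^t(\phi^\varepsilon,{}^t\Phi^\varepsilon)$ solves
\[
\partial_t U^\varepsilon-\tfrac1\varepsilon\,\mathcal L\,U^\varepsilon=F^\varepsilon,\qquad F^\varepsilon={}^t(F_1^\varepsilon,{}^tF_2^\varepsilon),
\]
with $F^\varepsilon$ bounded, uniformly in $\varepsilon$, in $L^2_{\rm loc}(\R_+;H^{-\sigma_0}(\T^3))$ for some $\sigma_0<(5/2)^+$. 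The two cancellations that turn the naively $\mathcal O(1/\varepsilon)$ source into $\mathcal O(1)$ are: (i) from \eqref{expanesp} and $(\rho^\varepsilon)^{\gamma-1}=(\overline{\rho^\varepsilon})^{\gamma-1}+\mathcal O(\varepsilon)$ one gets $\varepsilon^{-2}\nabla_\perp p^\varepsilon=\varepsilon^{-1}b^\varepsilon\nabla_\perp\varrho^\varepsilon+\mathcal O(1)$, which after applying $\Q_\perp$ to the momentum equation reconstructs $\varepsilon^{-1}\nabla_\perp\phi^\varepsilon$ (using $\Q_\perp\nabla_\perp\phi^\varepsilon=\nabla_\perp\phi^\varepsilon$); (ii) combining the continuity equation with the $B_\myparallel^\varepsilon$ equation gives $\partial_t\phi^\varepsilon=-\varepsilon^{-1}b^\varepsilon\nabla_\perp\!\cdot(\rho^\varepsilon v_\perp^\varepsilon)-\varepsilon^{-1}\nabla_\perp\!\cdot v_\perp^\varepsilon+\mathcal O(1)$, and the identity $\varepsilon^{-1}\big((\overline{\rho^\varepsilon})^{-1}\nabla_\perp\!\cdot(\rho^\varepsilon v_\perp^\varepsilon)-\nabla_\perp\!\cdot v_\perp^\varepsilon\big)=(\overline{\rho^\varepsilon})^{-1}\nabla_\perp\!\cdot(\varrho^\varepsilon v_\perp^\varepsilon)$ is bounded and forces the coefficient $c^\varepsilon=b^\varepsilon+1/\overline{\rho^\varepsilon}$. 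Bounding $F^\varepsilon$ then reduces to controlling the fluxes $\rho^\varepsilon v_\perp^\varepsilon\otimes v^\varepsilon$, $B_\perp^\varepsilon\otimes B^\varepsilon$, $\varrho^\varepsilon v_\perp^\varepsilon$, and the viscous/resistive and $\partial_\myparallel$ terms in negative Sobolev norms via Lemmas~\ref{lem:cv-rho}, \ref{lem:cv-Bperp}, \ref{lem:cv-vperp} and H\"older/Sobolev embeddings; the threshold $\sigma_0\approx 5/2$ is dictated by $L^1_{\rm loc}$-in-space fluxes such as $\rho^\varepsilon|v^\varepsilon|^2$, since $L^1\hookrightarrow H^{-3/2-}$ followed by one derivative is the bottleneck.

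\smallskip
\noindent\textbf{Step 2 (filtering and compactness).} Setting $V^\varepsilon:=\mathcal S(-t/\varepsilon)U^\varepsilon$ and using that $\mathcal S$ commutes with $\mathcal L$ and is an isometry of every $H^\alpha$, one gets $\partial_t V^\varepsilon=\mathcal S(-t/\varepsilon)F^\varepsilon$, so $\partial_t V^\varepsilon$ is bounded in $L^2_{\rm loc}(\R_+;H^{-\sigma_0})$. The sequence $U^\varepsilon$ itself is bounded in $L^2_{\rm loc}(\R_+;H^{-\mathfrak s})$: $\varrho^\varepsilon$ is controlled by \eqref{eqn:varrhoeps-est} and $\Phi^\varepsilon=\Q_\perp(\rho^\varepsilon v_\perp^\varepsilon)$ by the fifth statement of Lemma~\ref{lem:cv-vperp}, together with the embeddings $L^\kappa,L^q\hookrightarrow H^{-\mathfrak s}$. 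An Aubin--Lions--Simon argument (Lemma~\ref{lem:ALL}) based on $H^{-\mathfrak s}\hookrightarrow\hookrightarrow H^{-\sigma}\hookrightarrow H^{-\sigma_0}$ then yields relative compactness of $V^\varepsilon$ in $L^2_{\rm loc}(\R_+;H^{-\sigma})$ for every $\mathfrak s<\sigma<(5/2)^+$. Moreover, decomposing $\rho^\varepsilon v_\perp^\varepsilon=v_\perp^\varepsilon+(\rho^\varepsilon-1)v_\perp^\varepsilon$ (sixth statement of Lemma~\ref{lem:cv-vperp}) and splitting $\varrho^\varepsilon$ as in \eqref{eqn:varrhoeps-est} exhibits $U^\varepsilon$, hence $V^\varepsilon$, as bounded in $L^2_{\rm loc}(L^2)$ up to a remainder vanishing in a weaker norm, so the weak limit $\mathcal U={}^t(\psi,{}^t\Psi)$ lies in $L^2$. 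Extracting $V^\varepsilon\to\mathcal U$ and setting $\mathcal R^\varepsilon:=U^\varepsilon-\mathcal S(t/\varepsilon)\mathcal U=\mathcal S(t/\varepsilon)(V^\varepsilon-\mathcal U)$, the isometry property gives $\|\mathcal R^\varepsilon\|_{H^{-\sigma}}=\|V^\varepsilon-\mathcal U\|_{H^{-\sigma}}\to 0$, which is \eqref{eqn:GPR}.

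\smallskip
\noindent\textbf{Step 3 (the pressure $\phi^\varepsilon/\varepsilon$, its boundary layer, and the constraint).} Taking $\nabla_\perp\!\cdot$ of the second component of the evolution in Step~1 gives $\varepsilon^{-1}\Delta_\perp\phi^\varepsilon=\nabla_\perp\!\cdot F_2^\varepsilon-\partial_t\nabla_\perp\!\cdot(\rho^\varepsilon v_\perp^\varepsilon)$. Inverting $\Delta_\perp$ on transverse-mean-free modes and writing $\theta^\varepsilon:=\Delta_\perp^{-1}\nabla_\perp\!\cdot(\rho^\varepsilon v_\perp^\varepsilon)$, one obtains $\varepsilon^{-1}\phi^\varepsilon=\Delta_\perp^{-1}\nabla_\perp\!\cdot F_2^\varepsilon-\partial_t\theta^\varepsilon$ modulo the transverse average (absorbed into $\uppi_1$). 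Since $\rho^\varepsilon v_\perp^\varepsilon\rightharpoonup v_\perp$ with $\nabla_\perp\!\cdot v_\perp=0$, the interior weak limit of $\theta^\varepsilon$ is $0$, whereas $\theta^\varepsilon(0)=\Delta_\perp^{-1}\nabla_\perp\!\cdot(\rho_0^\varepsilon v_{0\perp}^\varepsilon)\to\Delta_\perp^{-1}\nabla_\perp\!\cdot\Q_\perp u_{0\perp}=\uppi_0$; testing $-\partial_t\theta^\varepsilon$ against $\chi(t)$ and integrating by parts in time extracts precisely the Dirac contribution $\delta_0(t)\otimes\uppi_0$, the remaining regular-in-time part defining $\uppi_1\in L^2_{\rm loc}(\R_+;H^{-r})$ with $r>(5/2)^+$ (again the worst nonlinear flux in $F_2^\varepsilon$ after $\Delta_\perp^{-1}\nabla_\perp\!\cdot$). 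This is \eqref{eqn:existpi1}. The third item then follows at once: \eqref{eqn:existpi1} shows $\phi^\varepsilon/\varepsilon$ bounded in $H^{-1}(\R_+;H^{-r})$, so $\phi^\varepsilon=\varepsilon\,(\phi^\varepsilon/\varepsilon)\to 0$ there, while $\phi^\varepsilon=b^\varepsilon\varrho^\varepsilon+B_\myparallel^\varepsilon\rightharpoonup b\varrho+B_\myparallel$ by Lemmas~\ref{lem:cv-rho}, \ref{lem:cv-Bperp} and $b^\varepsilon\to b$; uniqueness of the limit in $\mathcal D'$ forces $b\varrho+B_\myparallel=0$ a.e.

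\smallskip
\noindent\textbf{Main obstacle.} I expect Step~3 to be the delicate point: isolating the temporal boundary layer requires proving simultaneously that $\theta^\varepsilon\rightharpoonup 0$ on $\{t>0\}$ and that its trace at $t=0$ survives, so that the only residual contribution of $\partial_t\theta^\varepsilon$ is $\delta_0(t)\otimes\uppi_0$; this is exactly where the lack of preparation ($\Q_\perp u_{0\perp}\neq 0$) enters, and where the functional setting $H^{-1}(\R_+;H^{-r})$ with $r>(5/2)^+$ must be pinned down. A close second is the uniform negative-Sobolev estimate on $F^\varepsilon$ in Step~1, whose regularity loss $\sigma_0\approx 5/2$ fixes the admissible window $\mathfrak s<\sigma<(5/2)^+$ in \eqref{eqn:GPR}.
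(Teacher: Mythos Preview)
Your proposal is correct and follows essentially the same route as the paper: derive $\partial_tU^\varepsilon-\varepsilon^{-1}\mathcal L U^\varepsilon=F^\varepsilon$ with the two algebraic cancellations you identify (the paper's $F^\varepsilon$ in \eqref{eqn:Feps} is precisely this), filter by $\mathcal S(-t/\varepsilon)$ and apply Aubin--Lions in the chain $H^{-\mathfrak s}\hookrightarrow\hookrightarrow H^{-\sigma}\hookrightarrow H^{-r}$, then upgrade $\mathcal U$ to $L^2$ via the truncation \eqref{eqn:varrhoeps-est} and $\rho^\varepsilon v_\perp^\varepsilon=v_\perp^\varepsilon+(\rho^\varepsilon-1)v_\perp^\varepsilon$. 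Your Step~3 is the operator-theoretic rephrasing of the paper's weak formulation \eqref{eqn:SPT}: writing $\theta^\varepsilon=\Delta_\perp^{-1}\nabla_\perp\!\cdot(\rho^\varepsilon v_\perp^\varepsilon)$ and reading off $\varepsilon^{-1}\phi^\varepsilon=f_2^\varepsilon-\partial_t\theta^\varepsilon$ is exactly what the paper obtains by testing the $\Phi^\varepsilon$-equation against $\psi_\perp$ and integrating by parts in time (their $T_0^\varepsilon$ is your trace term producing $\delta_0(t)\otimes\uppi_0$, their $T_1^\varepsilon+T_2^\varepsilon$ is your $\int\theta^\varepsilon\chi'$, their $T_3^\varepsilon$ defines $\uppi_1$).
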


\begin{proof}
  On the one hand, applying the projector $\Q_\perp$ to the perpendicular component of the second equation of \eqref{mhdeps} (the one for $\rho^\varepsilon v_\perp^\varepsilon$) to form an equation for $\Phi^\varepsilon:=\Q_\perp(\rho^\varepsilon v_\perp^\varepsilon)$ and on the other hand combining the first equation of \eqref{mhdeps} (the one for $\rho^\varepsilon$) and the parallel component of the second equation of \eqref{mhdeps} (the one for $B_\myparallel^\varepsilon$) to form an equation for $\phi^\varepsilon := b^\varepsilon \varrho^\varepsilon + B_\myparallel^\varepsilon$, we obtain the following equation for $U^\varepsilon:={}^t(\phi^\varepsilon, {}^t\Phi^\varepsilon)$,
  \begin{equation}
    \label{eqn:Ueps}
      \partial_t U^\varepsilon - \frac{1}{\varepsilon} \mathcal L  U^\varepsilon = F^\varepsilon\,,
  \end{equation}
  where
  \begin{equation}
    \label{eqn:Feps}
    F^\varepsilon:=
    \left (\begin{aligned}
     F_1^\varepsilon\\ F_2^\varepsilon
  \end{aligned}
    \right) =
    \left\{
  \begin{aligned}
    F_1^\varepsilon =& -b^\varepsilon\partial_\myparallel (\rho^\varepsilon v_\myparallel^\varepsilon) + (\overline{\rho^\varepsilon})^{-1}
    \nabla_\perp \cdot \Q_\perp(\varrho^\varepsilon v_\perp^\varepsilon) -B_\myparallel^\varepsilon  \nabla_\varepsilon \cdot v^\varepsilon \\
      & - (v^\varepsilon\cdot\nabla_\varepsilon)B_\myparallel^\varepsilon 
    +(B^\varepsilon\cdot\nabla_\varepsilon)v_\myparallel^\varepsilon + (\eta_\perp^\varepsilon\Delta_\perp+\, \eta_\myparallel^\varepsilon\Delta_\myparallel)B_\myparallel^\varepsilon\,,\\
     F_2^\varepsilon =& -\Q_\perp\nabla_\varepsilon \cdot (\rho^\varepsilon v_\perp^\varepsilon \otimes v^\varepsilon)
    -(\gamma-1)\nabla_\perp\Pi_2(\rho^\varepsilon) -\tfrac12 \nabla_\perp(|B^\varepsilon|^2) + \partial_\myparallel \Q_\perp B_\perp^\varepsilon \\
    & + \Q_\perp\nabla_\varepsilon \cdot (B_\perp^\varepsilon \otimes B^\varepsilon) + \mu_\perp^\varepsilon \nabla_\perp(\nabla_\perp \cdot v_\perp^\varepsilon)
    + \mu_\myparallel^\varepsilon \Delta_\myparallel \Q_\perp v_\perp^\varepsilon  +\lambda^\varepsilon\nabla_\perp(\nabla_\varepsilon \cdot v^\varepsilon)\,.
  \end{aligned}
  \right.
  \end{equation}

  We start with point $1$ of Lemma~\ref{lem:UG}. We aim at showing \eqref{eqn:GPR} in two steps. The first step concerns the existence of a filtered profile $\, \mathcal U={}^t(\psi, {}^t\Psi) $, while the second one establishes its regularity in $L^2$.
  
  \underline{Step 1}. Here, we show that the filtered solution ${}^t(\psi^\varepsilon, {}^t\Psi^\varepsilon) \equiv \, \mathcal U^\varepsilon:=\mathcal S(-t/\varepsilon)U^\varepsilon $ is relatively compact in $L_{\rm loc}^{2}(\R_+; H^{-\sigma}(\T^3))$ for $\sigma> \mathfrak s$. For this, we first show uniform bounds for  $U^\varepsilon$ in suitable functional spaces. Second, we use the fact that the group  $\mathcal S$ is an isometry in $H^{\alpha}$ ($\alpha\in \R$) in order to obtain similar bounds on $(\, \mathcal U^\varepsilon, \, \partial_t \, \mathcal U^\varepsilon)$. Next, we invoke an Aubin--Lions theorem to obtain compactness of the sequence $\, \mathcal U^\varepsilon$ and the existence of the filtered profile $\, \mathcal U$.  Finally, using  the isometry $\mathcal S$ and the averaged profile $\, \mathcal U$, we construct an approximation to $U^\varepsilon$, with an error estimate which converges strongly as indicated in \eqref{eqn:GPR}. In this way, we bypass the singularity in $1/\varepsilon$ in \eqref{eqn:Ueps}. \\
  Recall that $\kappa =\min\{2,\gamma\}$. Since $\varrho^\varepsilon \in L_{\rm loc}^{\infty}(\R_+; L^{\kappa}(\T^3))$ and $B_\myparallel^\varepsilon \in  L_{\rm loc}^{\infty}(\R_+; L^2(\T^3))$, we can assert that $\phi^\varepsilon \in  L_{\rm loc}^{\infty}(\R_+; (L^{\kappa}+L^2)(\T^3))$. From Sobolev embeddings and a duality argument, we obtain $L^{3/2}(\T^3) \hookrightarrow H^{-\upalpha}(\T^3)$, with $\upalpha \geq 1/2$. Since $\kappa >3/2$, then $L^{\kappa}(\T^3) \hookrightarrow H^{-\upalpha}(\T^3)$ and  $(L^{\kappa}+L^2)(\T^3) \hookrightarrow H^{-\upalpha}(\T^3)$. Since $\upalpha$ can be chosen such that $\upalpha \leq \mathfrak s$, then  $(L^{\kappa}+L^2)(\T^3) \hookrightarrow H^{-\mathfrak s}(\T^3)$ and $\phi^\varepsilon \in  L_{\rm loc}^{\infty}(\R_+; H^{-\mathfrak s}(\T^3)) \hookrightarrow  L_{\rm loc}^{2}(\R_+; H^{-\mathfrak s}(\T^3))$. From the fifth statement of Lemma~\ref{lem:cv-vperp}, $\rho^\varepsilon v_\perp^\varepsilon \in L_{\rm loc}^{2}(\R_+; H^{-\mathfrak s}(\T^3))$. Since $\Q_\perp$ (resp. $\mathcal S$) is a continuous map (resp. an isometry) in $H^{\alpha}$, with $\alpha\in \R$, then $\, \mathcal U^\varepsilon \in L_{\rm loc}^{2}(\R_+; H^{-\mathfrak s}(\T^3))$. \\
  We are now going to obtain a bound for $\partial_t \, \mathcal U^\varepsilon $. Using \eqref{eqn:Ueps}, a straightforward computation shows that $\partial_t \, \mathcal U^\varepsilon =\mathcal S(-t/\varepsilon)F^\varepsilon $. Let us show that $F^\varepsilon \in  L_{\rm loc}^{2}(\R_+; H^{-r}(\T^3))$ for $r>5/2+\delta$, and any $\delta >0$. We will then  obtain $ \partial_t \, \mathcal U^\varepsilon \in  L_{\rm loc}^{2}(\R_+; H^{-r}(\T^3))$, because $\mathcal S$ is an unitary group in $H^{\alpha}$, with $\alpha\in \R$. \\
  Let us start with $F_1^\varepsilon$. First, observe that
  $B_\myparallel^\varepsilon  \nabla_\varepsilon \cdot v^\varepsilon + (v^\varepsilon\cdot\nabla_\varepsilon)B_\myparallel^\varepsilon - (B^\varepsilon\cdot\nabla_\varepsilon) v_\myparallel^\varepsilon=[\nabla_\varepsilon \times (B_\myparallel^\varepsilon \times v^\varepsilon)]_{\myparallel} \in L_{\rm loc}^{2}(\R_+; W^{-1,3/2}(\T^3))$ by using H\"older inequality and the energy estimate. Obviously, from the energy estimate, the last two diffusive terms of $F_1^\varepsilon$ belong to $L_{\rm loc}^{2}(\R_+; H^{-1}(\T^3))$. Using $U^\varepsilon \in L_{\rm loc}^{2}(\R_+; H^{-\mathfrak s}(\T^3))$, the first term of $F_1^\varepsilon$ is in $L_{\rm loc}^{2}(\R_+; H^{-\mathfrak s-1}(\T^3))$. It remains to bound the second term of $F_1^\varepsilon$. Using H\"older inequality we obtain $\varrho^\varepsilon v^\varepsilon \in L_{\rm loc}^2(\R_+; L^{\mathfrak q}(\T^3))$ with $1/\mathfrak q =1/\kappa + 1/6$. Observe that $\mathfrak q \in (6/5, 3/2]$ since $\kappa \in (3/2,2]$. The Sobolev embedding $ H^{ \tilde{\mathfrak{s}}}(\T^3) \hookrightarrow L^{\mathfrak q'}(\T^3)$, with $1/\mathfrak q'=1-1/\mathfrak q=(5\kappa -6)/(6\kappa)$ and $\tilde{\mathfrak s} \geq 3/\kappa-1\in [1/2,1)$, implies by duality that $L^{\mathfrak q} (\T^3) \hookrightarrow H^{-\tilde{\mathfrak s}}(\T^3)$. Since  $  3/\gamma-1\geq 3/\kappa-1 $, we can choose $\tilde{\mathfrak s} = \mathfrak s$. Therefore, the second term of $F_1^\varepsilon$ is bounded in $L_{\rm loc}^{2}(\R_+; H^{-\mathfrak s-1}(\T^3))$, and we obtain $F_1^\varepsilon \in  L_{\rm loc}^{2}(\R_+; (H^{-\mathfrak s-1}+ W^{-1,3/2} + H^{-1})(\T^3)) \hookrightarrow  L_{\rm loc}^{2}(\R_+; H^{-\mathfrak s-1}(\T^3))$, where the previous injection results from Sobolev embedding.\\
    We continue with an  estimate for  $F_2^\varepsilon$. From the continuous embedding $\Q_\perp (L^1(\T^3)) \hookrightarrow W^{-\delta,1}(\T^3)$ which works for all $ \delta >0$, and the energy estimate, the first and fifth terms of $F_2^\varepsilon$ are uniformly bounded in $L_{\rm loc}^{2}(\R_+; W^{-1-\delta,1}(\T^3))$. From the energy estimate, the second and third terms of $F_2^\varepsilon$ are uniformly bounded in $L_{\rm loc}^{2}(\R_+; W^{-1,1}(\T^3))$. From the following continuous embedding,  $\forall \alpha\geq 0$, $\Q_\perp (H^{\alpha}(\T^3)) \hookrightarrow H^{\alpha}(\T^3)$, and the energy estimate, the fourth term of $F_2^\varepsilon$ is uniformly bounded in $L_{\rm loc}^{2}(\R_+;L^2(\T^3))$. Obviously, from the energy estimate, the last three diffusive terms of $F_2^\varepsilon$ belong to $L_{\rm loc}^{2}(\R_+; H^{-1}(\T^3))$. Therefore, we obtain $F_2^\varepsilon \in  L_{\rm loc}^{2}(\R_+; (W^{-1-\delta,1}+W^{-1,1}+H^{-1} + L^2)(\T^3)) \hookrightarrow  L_{\rm loc}^{2}(\R_+; H^{-r}(\T^3)) $, with $r>5/2+\delta $, by using Sobolev embeddings. \\
    Now, using Lemma~\ref{lem:ALL} of Appendix~\ref{s:tools}, with $\mathfrak B_0 = H^{- \mathfrak s}(\T^3)$,  $\mathfrak B = H^{-\sigma}(\T^3)$,  $\mathfrak B_1 = H^{-r}(\T^3)$,  $ \mathfrak s <\sigma < r $, and $p=q=2$, we obtain that $\, \mathcal U^\varepsilon$ is compact in $ L_{\rm loc}^{2}(\R_+; H^{-\sigma}(\T^3))$. We deduce that there exists $\, \mathcal U\equiv {}^t(\psi, {}^t\Psi) \in L_{\rm loc}^{2}(\R_+; H^{-\sigma}(\T^3))$ such that $\, \mathcal U^\varepsilon$ converges strongly to  $\, \mathcal U$ in $ L_{\rm loc}^{2}(\R_+; H^{-\sigma}(\T^3))$. Since $\P_\perp \Psi^\varepsilon =0$ (resp. $\int_{\T^3} dx\, \psi^\varepsilon=0$), $\forall \varepsilon \geq 0$, then $\P_\perp \Psi =0$ (resp. $\int_{\T^3} dx\, \psi=0$). Since  the group $\mathcal S$ is an isometry in $H^{\alpha}$ ($\alpha\in \R$), we finally obtain \eqref{eqn:GPR}.

 \underline{Step 2}. To show that $ \mathcal U \in  L_{\rm loc}^{2}(\R_+;  L^2(\T^3;\R^3))$, we use the auxiliary variable $\widetilde U^\varepsilon:={}^t(\phi^\varepsilon, {}^t\Q_\perp v_\perp^\varepsilon)$. We first establish two points which allow us to deal with a truncated version of $\widetilde U^\varepsilon$ instead of $\widetilde U^\varepsilon$ itself.\\
1) From the first statement of Lemma~\ref{lem:cv-rho} and estimates \eqref{eqn:varrhoeps-est} (which imply, via the De la Vall\'ee Poussin criterion \cite{FL07}, that   $\varrho^\varepsilon$ is spatially uniformly integrable in $L^{3/2}(\T^3)$, uniformly in time on any compact time interval), we obtain $\ \|\varrho^\varepsilon- \varrho^\varepsilon \mathbbm{1}_{\rho^\varepsilon\leq R}\|_{L_{\rm loc}^\infty(\R_+;L^\kappa(\T^3))} \rightarrow 0$, as $\varepsilon \rightarrow 0$, where $R=+\infty$ and $\kappa=2$, if $\gamma\geq 2$; and where $R\in(3/2,+\infty)$ with $R$ fixed, and $\kappa=\gamma$, if $\gamma <2$. Moreover, since $B^\varepsilon$ is $2$-uniformly integrable in space-time,  we obtain, from the first statement of Lemma~\ref{lem:cv-rho}, $\ \|  B_\myparallel^\varepsilon-  B_\myparallel^\varepsilon \mathbbm{1}_{\rho^\varepsilon\leq R}\|_{L_{\rm loc}^2(\R_+;L^2(\T^3))} \rightarrow 0$, as $\varepsilon \rightarrow 0$, for any $R\in (1,+\infty]$. Indeed,  the $2$-uniform integrability comes from the Gagliardo--Nirenberg interpolation inequality $\|B^\varepsilon \|_{L^{10/3}(\R_+ \times \T^3)} \leq \|B^\varepsilon \|_{L^{\infty}(\R_+,L^2(\T^3))}^{2/5}\|\nabla B^\varepsilon \|_{L^{2}(\R_+,L^2(\T^3))}^{3/5} < \infty$ (from uniform bounds of Lemma~\ref{lem:cv-Bperp}) and the De la Vall\'ee Poussin criterion.\\
2) From the sixth statement of Lemma~\ref{lem:cv-vperp}, we obtain $\|\rho^\varepsilon v_\perp^\varepsilon -v_\perp^\varepsilon \|_{L_{\rm loc}^2(\R_+;L^q(\T^3))} \rightarrow 0$, as $\varepsilon \rightarrow 0$, for $1/q=1/\gamma+1/6$. \\
We now set $\widetilde U_R^\varepsilon:={}^t(\phi_R^\varepsilon, {}^t\Q_\perp v_\perp^\varepsilon)$,   where $\phi_R^\varepsilon := (b^\varepsilon \varrho^\varepsilon + B_\myparallel^\varepsilon)\mathbbm{1}_{\rho^\varepsilon\leq R}$. Then, from Step 1, and the points 1) to 2) of Step 2, we obtain $\mathcal S(-t/\varepsilon) \widetilde U_R^\varepsilon \rightarrow \, \mathcal U$ in  $ L_{\rm loc}^{2}(\R_+; H^{-\sigma}(\T^3))$--strong. Indeed, we have
\begin{equation}
    \label{eqn:regU}
    S(-t/\varepsilon) \widetilde U_R^\varepsilon=\, \mathcal U + S(-t/\varepsilon) \,{}^t\big ( [\phi_R^\varepsilon -\phi^\varepsilon],
    {}^t\Q_\perp[(1-\rho^\varepsilon) v_\perp^\varepsilon]\big) + \mathcal S(-t/\varepsilon) \mathcal R^\varepsilon.
\end{equation}
Since  the group $\mathcal S$ is an isometry in $H^{\alpha}$ ($\alpha\in \R$), from the points 1) to 2) of Step 2, the second term of the right-hand side of \eqref{eqn:regU} vanishes as $\varepsilon \rightarrow 0$ in $L_{\rm loc}^{2}(\R_+; (L^\kappa + L^2)(\T^3;\R)\times L^q(\T^3;\R^2))$--strong, and thus in $L_{\rm loc}^{2}(\R_+; H^{-\sigma}(\T^3))$--strong since $ (L^\kappa + L^2)(\T^3) \hookrightarrow H^{-\upalpha}(\T^3)$ with $\kappa >3/2$, and $L^q(\T^3) \hookrightarrow H^{-\upsigma}(\T^3)$ with $1/q=1/\gamma+1/6$, and where $(\upalpha, \upsigma)$ can be chosen such that $1/2 \leq \upalpha \leq \mathfrak s \leq \upsigma < \sigma $. From \eqref{eqn:GPR}, the third term of the right-hand side of \eqref{eqn:regU} vanishes as $\varepsilon \rightarrow 0$ in  $ L_{\rm loc}^{2}(\R_+; H^{-\sigma}(\T^3))$--strong.\\
Finally, the first assertions of Lemma~\ref{lem:cv-Bperp} and  Lemma~\ref{lem:cv-vperp}, and inequality \eqref{eqn:varrhoeps-est} show that $ \widetilde U_R^\varepsilon$ is bounded in $L_{\rm loc}^{2}(\R_+;  L^2(\T^3;\R^3))$, uniformly with respect to $\varepsilon$. Since  the group $\mathcal S$ is an isometry in $H^{\alpha}$ ($\alpha\in \R$), we deduce that $\, \mathcal U \in  L_{\rm loc}^{2}(\R_+;  L^2(\T^3;\R^3))$, which ends the proof of the first point of Lemma~\ref{lem:UG}.

We now turn to the proof of the point 2 of Lemma~\ref{lem:UG}. Using the equation \eqref{eqn:Ueps} on the component $ \Phi^\varepsilon$, we can see that $\phi^\varepsilon/\varepsilon$ satisfies, $\forall \,\psi_\perp \in H_c^1(\R_+; H^r(\T^3;\R^2))$, with $r>0$ large enough (specified further),
\begin{multline}
  \label{eqn:SPT}
  \int_{\R_+} dt \int_{\T^3}dx \, \frac{\phi^\varepsilon}{\varepsilon} \nabla_\perp \cdot \psi_\perp
  =  -\int_{\T^3}dx \, \Q_\perp(\rho_0^\varepsilon v_{0\perp}^\varepsilon)\cdot \psi_\perp(0)
  -\int_{\R_+}  dt \int_{\T^3}dx \, (\rho^\varepsilon -1)v_\perp^\varepsilon \cdot \Q_\perp \partial_t\psi_\perp
   \\ 
   - \int_{\R_+}  dt \int_{\T^3}dx \, v_\perp^\varepsilon \cdot \Q_\perp \partial_t \psi_\perp
  - \int_{\R_+}  dt \int_{\T^3}dx \, F_2^\varepsilon \cdot  \psi_\perp 
  \ = \ T_0^\varepsilon + T_1^\varepsilon +T_2^\varepsilon +T_3^\varepsilon\,. 
\end{multline}
From properties of initial conditions (see Section~\ref{ss:rT3})  we have $\Q_\perp(\rho_0^\varepsilon v_{0\perp}^\varepsilon)  \rightharpoonup \Q_\perp u_{0\perp} = \Q_\perp v_{0\perp}$ in $ L^{2\gamma/(\gamma+1)}(\T^3)$--weak, and $u_{0\perp}=v_{0\perp} \in L^2(\T^3)$. Defining $\uppi_0 := \Delta_\perp^{-1} \nabla_\perp\cdot\, \Q_\perp u_{0 \perp} \in L^2(\T_\myparallel; H^1(\T_\perp^2))$, we then have $\nabla_\perp \uppi_0= \Q_\perp u_{0\perp} \in L^2(\T^3)$. From  H\"older inequality we obtain  $|T_0^\varepsilon| \leq \| \rho_0^\varepsilon v_{0\perp}^\varepsilon\|_{L^{2\gamma/(\gamma+1)}(\T^3)} \!\! $ $ \|  \psi_\perp(0)\|_{L^{2\gamma/(\gamma-1)}(\T^3)}$. This and continuous Sobolev embeddings, imply that there exists a constant $\mathcal C_0$ (uniform in $\varepsilon$) such that $|T_0^\varepsilon| \leq \mathcal C_0 \|\psi_\perp\|_{H_c^1(\R_+; H^r(\T^3))}$, with $r>(5/2)^+$. From H\"older inequality we obtain $|T_1^\varepsilon|\leq \|\rho^\varepsilon -1 \|_{L_{\rm loc}^\infty(\R_+;L^\gamma(\T^3))} \|v_\perp^\varepsilon\|_{L_{\rm loc}^2(\R_+;L^6(\T^3))} \| \partial_t\Q_\perp \psi_\perp\|_{L^2(\R_+;L^{q'}(\T^3))}$, with $1/q'=1-1/\gamma -1/6$. Then first, from Lemmas~\ref{lem:cv-rho}~and~\ref{lem:cv-vperp}, we obtain $T_1^\varepsilon\rightarrow 0$, as $\varepsilon \rightarrow 0$. Second, using  continuous Sobolev embeddings, there exists a constant $\mathcal C_1$ (uniform in $\varepsilon$) such that $|T_1^\varepsilon| \leq \mathcal C_1 \|\psi_\perp\|_{H_c^1(\R_+; H^r(\T^3))}$, with $r>(5/2)^+$.  Using Lemma~\ref{lem:cv-vperp} (in particular $\Q_\perp v_\perp=0$), we obtain $T_2^\varepsilon\rightarrow 0$, as $\varepsilon \rightarrow 0$. From H\"older inequality, we obtain  $ |T_2^\varepsilon|\leq \|v_\perp^\varepsilon\|_{L_{\rm loc}^2(\R_+;L^2(\T^3))} \| \partial_t \psi_\perp\|_{L^2(\R_+;L^{2}(\T^3))}$,  which implies, together with  Lemma~\ref{lem:cv-vperp} and  continuous Sobolev embeddings, that there exists a constant $\mathcal C_2$ (uniform in $\varepsilon$) such that $|T_2^\varepsilon| \leq \mathcal C_2 \|\psi_\perp\|_{H_c^1(\R_+; H^r(\T^3))}$, with $r>(5/2)^+$. From the Step 1 of this proof, we know that $F_2^\varepsilon \in L_{\rm loc}^{2}(\R_+; H^{-r}(\T^3))$, for $r>5/2+\delta $, and any $\delta >0$. Since $\nabla_\perp \times F_2^\varepsilon=0$ in $\mathcal D'(\R_+^*\times \T^3)$, there exists $f_2^\varepsilon = \Delta_\perp^{-1}\nabla_\perp \cdot F_2^\varepsilon \in  L_{\rm loc}^{2}(\R_+; H^{-r}(\T^3)) $   (uniformly in  $\varepsilon$) such that $F_2^\varepsilon=\nabla_\perp f_2^\varepsilon$. Therefore, there exists a function $\uppi_1 \in  L_{\rm loc}^{2}(\R_+; H^{-r}(\T^3)) $ such that $f_2^\varepsilon \rightharpoonup \uppi_1 $ in $ L_{\rm loc}^{2}(\R_+; H^{-r}(\T^3))$--weak. Moreover, we deduce that there exists a constant $\mathcal C_3$ (uniform in $\varepsilon$) such that $|T_3^\varepsilon| \leq \mathcal C_3 \|\psi_\perp\|_{H_c^1(\R_+; H^r(\T^3))}$, with $r>(5/2)^+$. In summary, we have shown that there exists a constant $\mathcal C:=\sum_{i=0,\ldots,3}\mathcal C_i$, uniform in $\varepsilon$, such that for $r>(5/2)^+$,
\[
\bigg|\int_{\R_+}  dt \int_{\T^3}dx \, \nabla_\perp \Big(\frac{\phi^\varepsilon}{\varepsilon}\Big) \cdot \psi_\perp \bigg|
=\bigg|\int_{\R_+}  dt \int_{\T^3}dx \, \frac{\phi^\varepsilon}{\varepsilon} \nabla_\perp \cdot \psi_\perp \bigg|
\leq  \mathcal C \|\psi_\perp\|_{H_c^1(\R_+; H^r(\T^3))} \,,
\]
which means that $\nabla_\perp (\phi^\varepsilon/\varepsilon)$ and a fortiori $\phi^\varepsilon/\varepsilon$ belong to $H^{-1}(\R_+; H^{-r}(\T^3))$, uniformly in $\varepsilon$. Moreover, we have proved that
\[
\int_{\R_+}  dt \int_{\T^3}dx \, \frac{\phi^\varepsilon}{\varepsilon} \nabla_\perp \cdot \psi_\perp
\xrightarrow{\quad}  \int_{\R_+}  dt \int_{\T^3}dx \,  \uppi_1\nabla_\perp \cdot \psi_\perp\, +
\int_{\T^3}dx \,  \uppi_0\nabla_\perp \cdot \psi_\perp(0)\,.
\]
These two properties establish the point 2 of Lemma~\ref{lem:UG}. Using \eqref{eqn:existpi1}, and Lemmas~\ref{lem:cv-rho}~and~\ref{lem:cv-Bperp},  we obtain that $\phi^\varepsilon \rightharpoonup \phi=b \varrho + B_\myparallel =0$ in  $\mathcal D'(\R_+^* \times \T^3)$, and $b \varrho + B_\myparallel =0$ in $L_{\rm loc}^\infty(\R_+;L^\kappa(\T^3))$, with $\kappa >3/2$, which justify the point 3 of Lemma~\ref{lem:UG}.
\end{proof}  

We are now able to justify the passage to the limit in equation \eqref{eq:vperp-eps} for $\rho^\varepsilon v_\perp^\varepsilon$. Let us start with the initial condition term. From the discussion about the properties of sequences of initial conditions in Section~\ref{ss:rT3} (in particular the uniform bound \eqref{ineqC0-P} and the resulting convergences), we can pass to the limit, in the distributional sense, in the initial condition term of \eqref{eq:vperp-eps} to obtain the limit point  $u_{0 \perp } =v_{0 \perp }=\P_\perp u_{0 \perp } + \Q_\perp u_{0 \perp }$. Using \eqref{eqn:existpi1} in equation \eqref{eq:vperp-eps}, we observe that the term $-\nabla_\perp \uppi_0 = - \Q_\perp u_{0 \perp }$ (coming from the weak limit of $\phi^\varepsilon/\varepsilon$) cancels the irrotational par $\Q_\perp u_{0 \perp }$ of the previous limit point  $u_{0 \perp }$, so that the limit initial condition is simply $\P_\perp u_{0 \perp }=\P_\perp v_{0 \perp }$. This is consistent with the fact that in the limit equation the test function $\psi_\perp$ can be chosen divergence-free, i.e., $\P_\perp \psi_\perp =\psi_\perp$. Moreover, according to \eqref{mhdepssing}, the two conditions $ \nabla_\perp \cdot v_{0\perp} = 0 $ (or  $\nabla_\perp \cdot u_{0\perp} = 0 $, since $u_{0\perp} = v_{0\perp} $) and $ B_{0\myparallel} + b \varrho_0 = 0 $ are related to a preparation of the initial data to avoid fast time oscillations. Since the limit initial condition  $\P_\perp u_{0 \perp }$ comes naturally without any preparation, then in our framework, we can deal with general data satisfying $ \nabla_\perp \cdot u_{0 \perp} =\nabla_\perp \cdot v_{0 \perp} \not \equiv  0$.

We next deal with the linear terms of \eqref{eq:vperp-eps}. Using weak convergence of $v_\perp^\varepsilon$ (resp. $B^\varepsilon$) yielded by the first statement of Lemma~\ref{lem:cv-vperp} (resp. Lemma~\ref{lem:cv-Bperp}) we can pass to the limit, in the distributional sense, in all linear diffusive terms (resp. the linear advection term $B_\perp^\varepsilon \cdot \partial_\myparallel \psi_\perp$) of \eqref{eq:vperp-eps}. Using weak convergence for $B^\varepsilon$ and strong convergence for $B_\perp^\varepsilon$, which are supplied by Lemma~\ref{lem:cv-Bperp}, we can pass to the limit in the quadratic nonlinear term $B_\perp^\varepsilon \otimes B^\varepsilon :D_\varepsilon \psi_\perp$ to obtain the term $B_\perp \otimes B_\perp :D_\perp \psi_\perp$. Using the identity $\I_\perp=\P_\perp+\Q_\perp$, for the term involving time derivative in \eqref{eq:vperp-eps}, we obtain, $\forall \,\psi_\perp \in \mathscr{C}_c^\infty(\R_+\times \T^3;\R^2)$, 
\begin{equation}
  \label{eqn:ptrv}
\int_{\R_+}  \int_{\T^3}dx \, \rho^\varepsilon v_\perp^\varepsilon \cdot\partial_t \psi_\perp
=-T_1^\varepsilon -T_2^\varepsilon + \int_{\R_+}  dt \int_{\T^3}dx \,\P_\perp v_\perp^\varepsilon \cdot \partial_t \psi_\perp \,,
\end{equation}
where the terms $T_1^\varepsilon$ and $T_2^\varepsilon$ are the same as in \eqref{eqn:SPT}. For the same reasons than the ones invoked in the proof of the point 2 of  Lemma~\ref{lem:UG}, $T_1^\varepsilon$ and $T_2^\varepsilon$ vanish as $\varepsilon \rightarrow 0$. Therefore, using \eqref{eqn:ptrv} and Lemma~\ref{lem:cv-Bperp} (in particular $\P_\perp v_\perp = v_\perp$), we obtain that $\partial_t(\rho^\varepsilon v_\perp^\varepsilon) \rightharpoonup \partial_t v_\perp$ in $\mathcal D'(\R_+^*\times \T^3)$. Now in equation \eqref{eq:vperp-eps}, we simultaneously deal with the magnetic pressure term $|B^\varepsilon|^2/2$, the singular fluid pressure term $ p^\varepsilon /\varepsilon^2$ (with $p^\varepsilon=a (\rho^\varepsilon)^\gamma$),  and the singular magnetic term $B_\myparallel^\varepsilon/\varepsilon$. Setting $\mathfrak p^\varepsilon = p^\varepsilon /\varepsilon^2 + B_\myparallel^\varepsilon/\varepsilon + |B^\varepsilon|^2/2$, this term can be rewritten as $ \mathfrak p^\varepsilon =\phi^\varepsilon/\varepsilon  + a (\overline{\rho^\varepsilon})^\gamma/\varepsilon^2 + \uppi_2^\varepsilon\,,$ with $\, \uppi_2^\varepsilon = (\gamma-1)\Pi_2(\rho^\varepsilon) + |B^\varepsilon|^2/2$. In $\mathfrak p^\varepsilon $, the constant term $ a (\overline{\rho^\varepsilon})^\gamma/\varepsilon^2$ is irrelevant because it disappears by spatial integration in \eqref{eq:vperp-eps}. From the point 2 of  Lemma~\ref{lem:UG}, we obtain  $\phi^\varepsilon/\varepsilon \rightharpoonup \uppi_1$ in $\mathcal D'(\R_+^* \times \T^3)$. In fact, from \eqref{eqn:existpi1} we have $\phi^\varepsilon/\varepsilon \rightharpoonup  \delta_0(t) \otimes \uppi_0  + \uppi_1 $ in $H^{-1}(\R_+, H^{-r}( \T^3))$, but as already mentioned above, the term $ \delta_0(t) \otimes \uppi_0 $ cancels the irrotational part  $\Q_\perp u_{0 \perp }$ of the limit term $u_{0 \perp }$, so that  the limit initial condition is $\P_\perp u_{0 \perp }=\P_\perp v_{0 \perp }$. From energy inequality \eqref{nrj-ineq-mhdeps}-\eqref{defPI-P} with the pressure term $\Pi_2$, we obtain  $\uppi_2^\varepsilon \in L_{\rm loc}^\infty(\R_+;L^1(\T^3))$, uniformly with respect to $\varepsilon$. Then, by weak compactness, there exists a function $\uppi_2 \in  L_{\rm loc}^\infty(\R_+;L^1(\T^3))$ such that $\uppi_2^\varepsilon \rightharpoonup \uppi_2$ in $ L_{\rm loc}^\infty(\R_+;L^1(\T^3))$--weak$-*$. Therefore, we obtain $ \mathfrak p^\varepsilon \rightharpoonup (\uppi_1 + \uppi_2)$ in $\mathcal D'(\R_+^* \times \T^3)$.

\begin{remark}
  Even if we have the strong convergence of $B_\perp^\varepsilon$ in $L_{\rm loc}^2(\R_+; L^2(\T^3)))$ and a uniform bound in $L_{\rm loc }^2(\R_+;H^1(\T^3))$ for $B^\varepsilon$, we do not have $|B^\varepsilon|^2\rightharpoonup |B|^2$ in $\mathcal D'(\R_+^*\times\T^3)$. The reason of this lack of convergence comes from  the fact that $\partial_t B_\myparallel $ is not bounded, uniformly with respect to $\varepsilon$, in some suitable functional spaces. Indeed, equation \eqref{eq:Bparal-eps} for $B_\myparallel^\varepsilon$ contains a singular term in $1/\varepsilon$, which prevents such a boundedness. In other words, this is  fast time oscillations in the parallel direction that prevent time compactness and thus, such a convergence. 
\end{remark}

It remains to pass to the limit in the nonlinear term $\rho^\varepsilon v_\perp^\varepsilon \otimes v^\varepsilon $ in \eqref{eq:vperp-eps}. This is the purpose of the following lemma.
\begin{lemma}
  \label{lem:RST} There exists a distribution $\uppi_3 \in \mathcal D'(\R_+^* \times \T^3)$, such that
  \begin{equation*}
    \nabla_\varepsilon \cdot (\rho^\varepsilon v_\perp^\varepsilon \otimes v^\varepsilon) \xrightharpoonup{\quad}
    \nabla_\perp \cdot ( v_\perp \otimes v_\perp)  \ + \ \nabla_\perp \uppi_3 \ \ \mbox{ in } \
    \mathcal{D}'(\R_+^*\times \T^3)\,.
   \end{equation*}
\end{lemma}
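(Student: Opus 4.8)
The plan is to reduce the full anisotropic flux to a single genuinely oscillatory piece and to show that piece is asymptotically a gradient. First I would split the anisotropic divergence,
\[
\nabla_\varepsilon\cdot(\rho^\varepsilon v_\perp^\varepsilon\otimes v^\varepsilon)=\nabla_\perp\cdot(\rho^\varepsilon v_\perp^\varepsilon\otimes v_\perp^\varepsilon)+\varepsilon\,\partial_\myparallel(\rho^\varepsilon v_\perp^\varepsilon v_\myparallel^\varepsilon).
\]
The parallel contribution carries an explicit factor $\varepsilon$ and $\rho^\varepsilon v_\perp^\varepsilon v_\myparallel^\varepsilon$ is bounded in $L^1_{\rm loc}(\R_+;L^{r})$ with $1/r=1/\gamma+1/3<1$ (from $\rho^\varepsilon\in L^\infty_{\rm loc}L^\gamma$ and $v^\varepsilon\in L^2_{\rm loc}L^6$), so it vanishes in $\mathcal D'$. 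In the perpendicular term I would use the sixth statement of Lemma~\ref{lem:cv-vperp} to replace $\rho^\varepsilon v_\perp^\varepsilon$ by $v_\perp^\varepsilon$ (the difference $(\rho^\varepsilon-1)v_\perp^\varepsilon$ is strongly null in $L^2_{\rm loc}L^q$), then Helmholtz-decompose $v_\perp^\varepsilon=\P_\perp v_\perp^\varepsilon+\Q_\perp v_\perp^\varepsilon$. The $\P_\perp\otimes\P_\perp$ block converges strongly to $v_\perp\otimes v_\perp$ (fourth statement of Lemma~\ref{lem:cv-vperp}), and the two mixed blocks vanish by the strong$\,\times\,$weak pairing (since $\P_\perp v_\perp^\varepsilon\to v_\perp$ strongly and $\Q_\perp v_\perp^\varepsilon\rightharpoonup0$ weakly). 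This leaves the acoustic self-interaction $\nabla_\perp\cdot(\Q_\perp v_\perp^\varepsilon\otimes\Q_\perp v_\perp^\varepsilon)$ as the only term to analyse.

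Since $\Q_\perp v_\perp^\varepsilon$ is curl-free, I would next exploit the identity
\[
\nabla_\perp\cdot(\Q_\perp v_\perp^\varepsilon\otimes\Q_\perp v_\perp^\varepsilon)=\tfrac12\nabla_\perp|\Q_\perp v_\perp^\varepsilon|^2+(\nabla_\perp\cdot v_\perp^\varepsilon)\,\Q_\perp v_\perp^\varepsilon .
\]
The first term is manifestly a gradient: $|\Q_\perp v_\perp^\varepsilon|^2$ is bounded in $L^1_{\rm loc}(\R_+;L^3)$ (because $\Q_\perp v_\perp^\varepsilon\in L^2_{\rm loc}H^1\hookrightarrow L^2_{\rm loc}L^6$), so up to a subsequence it converges in $\mathcal D'$ and contributes $\tfrac12\nabla_\perp$ of its limit to $\uppi_3$. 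The whole difficulty is concentrated in the weak$\,\times\,$weak product $(\nabla_\perp\cdot v_\perp^\varepsilon)\Q_\perp v_\perp^\varepsilon$, where both factors tend weakly to $0$; its weak limit is the nonvanishing quadratic resonance of the transverse magnetosonic oscillations, and I must show it too is a gradient.

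For this I would bring in the singular structure through the linearized system of Lemma~\ref{lem:UG}. The equation \eqref{eq:Bparal-eps}/\eqref{mhdepssing} yields $\nabla_\perp\cdot v_\perp^\varepsilon=-\varepsilon\,\partial_t B_\myparallel^\varepsilon+\varepsilon\,G^\varepsilon$ with $G^\varepsilon$ bounded in a negative Sobolev space, while \eqref{eqn:Ueps}--\eqref{eqn:Feps} give $\partial_t\Phi^\varepsilon=F_2^\varepsilon-\tfrac1\varepsilon\nabla_\perp\phi^\varepsilon$. Substituting the first relation and integrating by parts in time converts $(\nabla_\perp\cdot v_\perp^\varepsilon)\Q_\perp v_\perp^\varepsilon$ into a total time derivative $\varepsilon\,\partial_t(\cdots)$, terms carrying an explicit prefactor $\varepsilon$, and one $O(1)$ survivor of the form $B_\myparallel^\varepsilon\nabla_\perp\phi^\varepsilon$. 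Using $\phi^\varepsilon=b^\varepsilon\varrho^\varepsilon+B_\myparallel^\varepsilon$ together with the relation $b\varrho+B_\myparallel=0$ (point~3 of Lemma~\ref{lem:UG}), and the fact that $c^\varepsilon$ is a spatial constant that commutes with $\nabla_\perp$, this survivor reorganizes (modulo further $\varepsilon$- and $\varepsilon\partial_t$-terms) into a pure gradient $\tfrac{1}{2}\nabla_\perp\big((\phi^\varepsilon)^2/c^\varepsilon\big)$. Every $\varepsilon$-prefactored term and every $\varepsilon\partial_t(\text{bounded})$ then vanishes in $\mathcal D'$, and collecting all gradient contributions defines $\uppi_3:=\Delta_\perp^{-1}\nabla_\perp\cdot(\text{the residual gradient part})\in\mathcal D'$.

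The main obstacle is making these bilinear manipulations rigorous in the present low-regularity setting: because the density is controlled only in $L^\gamma$, the acoustic momentum $\Phi^\varepsilon$ lies merely in $L^2_{\rm loc}L^q$ and $\phi^\varepsilon$ only in $L^\infty_{\rm loc}L^\kappa$ with $\kappa=\min\{2,\gamma\}$, so products such as $(\phi^\varepsilon)^2$ and $\phi^\varepsilon\Phi^\varepsilon$ need not even be locally integrable when $\gamma<2$. I expect to overcome this exactly as in Step~2 of Lemma~\ref{lem:UG}: truncate $\phi^\varepsilon$ by $\mathbbm{1}_{\rho^\varepsilon\le R}$ to obtain the $L^2_{\rm loc}L^2$ bound on $\phi_R^\varepsilon$, control the truncation error by the uniform integrability of $\varrho^\varepsilon$ and $B^\varepsilon$, absorb the velocity/momentum discrepancy $\Q_\perp\big((\rho^\varepsilon-1)v_\perp^\varepsilon\big)$ via its strong null convergence, and use the $H^\alpha$-isometry of the group $\mathcal S$ together with the filtered profile $\mathcal U\in L^2_{\rm loc}L^2$ to justify the time integration by parts. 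This quadratic interaction of the transverse fast magnetosonic waves — not any of the earlier strong/weak reductions — is the genuine crux of the lemma.
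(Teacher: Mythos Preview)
Your overall strategy is sound and is in fact a genuinely different route from the paper's periodic-case proof. The paper does \emph{not} use the curl-free identity and time integration by parts here; instead, after the same initial reductions as yours, it replaces $\Q_\perp(\rho^\varepsilon v_\perp^\varepsilon)$ and $\Q_\perp v_\perp^\varepsilon$ by the filtered approximation $\Upphi^\varepsilon=\mathcal S_2(t/\varepsilon)\,\mathcal U$ (via \eqref{eqn:GPR}) and then computes $\Upphi^\varepsilon\otimes\Upphi^\varepsilon$ explicitly by Fourier series. The double sum over $(k,l)\in\Z^3\times\Z^3$ is split into the non-resonant set $\{|k_\perp|\neq|l_\perp|\}$, which vanishes by Riemann--Lebesgue, and the resonant set $\{|k_\perp|=|l_\perp|\}$, where the algebraic identity $|k_\perp|=|n_\perp-k_\perp|\Rightarrow |n_\perp|^2=2\,n_\perp\!\cdot k_\perp$ forces the divergence to be a pure perpendicular gradient. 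Your PDE/integration-by-parts argument is precisely the approach the paper adopts later for $\R^3$ (Lemma~\ref{lem:cv-rvpvp-R}), where discrete Fourier series are unavailable.

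There are, however, two concrete gaps in your execution. First, working with $B_\myparallel^\varepsilon$ rather than $\phi^\varepsilon$ yields a survivor $-B_\myparallel^\varepsilon\nabla_\perp\phi^\varepsilon=-\tfrac12\nabla_\perp(\phi^\varepsilon)^2+b^\varepsilon\varrho^\varepsilon\nabla_\perp\phi^\varepsilon$, and the cross term $\varrho^\varepsilon\nabla_\perp\phi^\varepsilon$ is neither a gradient nor an $\varepsilon$- or $\varepsilon\partial_t$-term; you should use the $(\phi^\varepsilon,\Phi^\varepsilon)$ system from the outset so that the clean identity $\nabla_\perp\!\cdot(\Phi^\varepsilon\otimes\Phi^\varepsilon)=\nabla_\perp\bigl(\tfrac12|\Phi^\varepsilon|^2-\tfrac{1}{2c^\varepsilon}(\phi^\varepsilon)^2\bigr)-\tfrac{\varepsilon}{c^\varepsilon}\partial_t(\phi^\varepsilon\Phi^\varepsilon)$ emerges directly, modulo the forcing. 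Second, and more seriously, the ``$\varepsilon$-prefactored'' remainders you dismiss contain products such as $\varepsilon\,\phi^\varepsilon F_2^\varepsilon$ and $\varepsilon\,F_1^\varepsilon\,\Q_\perp v_\perp^\varepsilon$. Since $F_2^\varepsilon$ contains $\Q_\perp\nabla_\varepsilon\!\cdot(\rho^\varepsilon v_\perp^\varepsilon\otimes v^\varepsilon)$ with $\rho^\varepsilon v_\perp^\varepsilon\otimes v^\varepsilon$ only in $L^1_{\rm loc}L^r$, $1/r=1/\gamma+1/3$, the product with $\phi^\varepsilon\in L^\infty_{\rm loc}L^\kappa$ fails to be even locally integrable whenever $1/\kappa+1/\gamma+1/3>1$, i.e.\ for all $\gamma<6$. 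Your truncation $\phi^\varepsilon\mathbbm 1_{\rho^\varepsilon\le R}$ fixes the integrability of $(\phi^\varepsilon)^2$ but does nothing for these forcing products. The correct repair---exactly what the paper does in Section~\ref{s:AAWS}---is to first pass to the filtered profile $\Upphi^\varepsilon=\mathcal S_2(t/\varepsilon)\,\mathcal U$ (using \eqref{eqn:GPR}) and then replace $\mathcal U\in L^2_{\rm loc}L^2$ by a mollified $\mathcal U_\eta\in\mathscr C^\infty$; the residual forcing then becomes $\mathcal S(t/\varepsilon)\partial_t\mathcal U_\eta$, which is smooth, all bilinear terms make sense, and the mollification error is an $O(\eta)$ that is removed after $\varepsilon\to0$.
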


Using Lemma~\ref{lem:RST}, we can complete the justification of the passage to the limit, in the distributional sense, in equations  \eqref{eq:vperp-eps} to obtain the second equation of \eqref{rmhd} in the sense of distributions.

\begin{remark}
  In \eqref{rmhdincomp} the pressure term $\uppi$, which can be seen as a Lagrange multiplier ensuring the constraint $\nabla_\perp \cdot v_\perp=0$, comes from the following three contributions, $\uppi_1$, $\uppi_2$ and  $\uppi_3$. The pressure  term $\uppi_1$ comes from the singular fluid pressure term and the singular  magnetic term. The pressure $\uppi_2$ comes from the non-singular fluid and magnetic pressure terms. The pressure term $\uppi_3$ comes from the Reynolds stress tensor.
\end{remark}

\namedproof{\textit{of Lemma~\ref{lem:RST}}}.
Observe first the following decomposition, $\forall \,\psi_\perp \in \mathscr{C}_c^\infty(\R_+\times \T^3;\R^2)$, 
\begin{equation}
  \label{eq:RST-1}
  \begin{aligned}
  \int_{\R_+}  dt \int_{\T^3}dx \, \rho^\varepsilon v_\perp^\varepsilon \otimes v^\varepsilon : D_\varepsilon\psi_\perp
  &= \int_{\R_+}  dt \int_{\T^3}dx \, \rho^\varepsilon v_\perp^\varepsilon \otimes v_\perp^\varepsilon : D_\perp\psi_\perp \\
  &\quad + \varepsilon\int_{\R_+}  dt \int_{\T^3}dx \, \rho^\varepsilon v_\myparallel^\varepsilon v_\perp^\varepsilon \cdot \partial_\myparallel\psi_\perp 
  \ =  \bar{\Gamma}_{1}^\varepsilon + \bar \Gamma_{2}^\varepsilon\,. 
  \end{aligned}
\end{equation}  
Using H\"older  inequality, we obtain $ |\bar \Gamma_{2}^\varepsilon| \leq \varepsilon \|\rho^\varepsilon |v^\varepsilon|^2 \|_{L_{\rm loc}^\infty(\R_+;L^1(\T^3))} \|\partial_\myparallel \psi_\perp \|_{L^1(\R_+;L^\infty(\T^3))}$. Therefore, exploiting the energy estimate, we have $\bar \Gamma_{2}^\varepsilon \rightarrow 0$, as $\varepsilon \rightarrow 0$. \\
To deal with the term $\bar \Gamma_{1}^\varepsilon$, we follow the spirit of the proof of the convergence result in the part III of \cite{LM98}. For this we consider the following decomposition
\begin{equation}
  \label{eq:RST-2}
  \begin{aligned}
    \Gamma_{1}^\varepsilon & :=\nabla _\perp \cdot (\rho^\varepsilon v_\perp^\varepsilon \otimes v_\perp^\varepsilon ) =  \sum_{i=1}^5 \Gamma_{1i}^\varepsilon
     = \nabla _\perp \cdot \big (\,\rho^\varepsilon v_\perp^\varepsilon \otimes\P_\perp v_\perp^\varepsilon
     \, +\,  \P_\perp(\rho^\varepsilon v_\perp^\varepsilon) \otimes\Q_\perp v_\perp^\varepsilon \\
     &\quad \, + \, 
    (\Q_\perp[\rho^\varepsilon v_\perp^\varepsilon]-\Upphi^\varepsilon) \otimes \Q_\perp v_\perp^\varepsilon 
    \, +\,  \Upphi^\varepsilon\otimes (\Q_\perp v_\perp^\varepsilon-\Upphi^\varepsilon) + \Upphi^\varepsilon\otimes\Upphi^\varepsilon\, \bigr) \,,
\end{aligned}
\end{equation}
where we set $\Upphi^\varepsilon:= \mathcal S_2(t/\varepsilon) \,\mathcal U$. We successively deal with the terms $\Gamma_{1i}^\varepsilon $,  for $i\in\{1, \ldots, 5  \}$. We start with  $\Gamma_{11}^\varepsilon $. Using the fourth and fifth assertions of Lemma~\ref{lem:cv-vperp} with $s=\upsigma \in [\mathfrak s , 1)$, we obtain $\Gamma_{11}^\varepsilon \rightharpoonup \nabla_\perp \cdot (v_\perp \otimes v_\perp)$ in $\mathcal D'(\R_+^* \times \T^3)$, which gives the first part of the limit in Lemma~\ref{lem:RST}. We continue with the term $\Gamma_{12}^\varepsilon$.  Using the third and seventh assertions of Lemma~\ref{lem:cv-vperp}, since $1/q+1/6< 1$,  we obtain $\Gamma_{12}^\varepsilon \rightharpoonup 0 $ in $\mathcal D'(\R_+^* \times \T^3)$. For the term $\Gamma_{13}^\varepsilon$, we observe that $\Q_\perp[\rho^\varepsilon v_\perp^\varepsilon]-\Upphi^\varepsilon=\mathcal R_2^\varepsilon$, where the term $\mathcal R_2^\varepsilon \in \R^2$ is the second component of the error term $\mathcal R^\varepsilon ={}^t(\mathcal R_1^\varepsilon,{}^t\mathcal R_2^\varepsilon)$ involved in equation \eqref{eqn:GPR} of Lemma~\ref{lem:UG}. Using \eqref{eqn:GPR} with $\sigma=1$ and the first or the third statement of Lemma~\ref{lem:cv-vperp}, $\Gamma_{13}^\varepsilon \rightharpoonup 0 $ in $\mathcal D'(\R_+^* \times \T^3)$. We pursue with the term $\Gamma_{14}^\varepsilon$. Let $\Upphi_\eta^\varepsilon$ be a regularization of $\Upphi^\varepsilon$ obtained by the following way. Using the fact that $\mathscr{C}_c^\infty$ is dense in $L^2$, we define $\Upphi_\eta^\varepsilon:=\mathcal S_2(t/\varepsilon)\, \mathcal U_\eta$, where $ \, \mathcal U_\eta \in \mathscr{C}_c^\infty(\R_+\times \T^3)$ is such that $\|\, \mathcal U_\eta -\, \mathcal U \|_{L_{\rm loc}^2(\R_+;L^2(\T^3))} \leq \eta$, with $0\leq \eta \ll 1$. We consider the decomposition
  \begin{equation}
    \label{eqn:RST-3}
    \Upphi^\varepsilon \otimes (\Q_\perp v_\perp^\varepsilon-\Upphi^\varepsilon) = (\Upphi_\eta^\varepsilon-\Upphi^\varepsilon)
     \otimes (\Q_\perp v_\perp^\varepsilon-\Upphi^\varepsilon)  + \Upphi_\eta^\varepsilon\otimes  (\Q_\perp v_\perp^\varepsilon-\Upphi^\varepsilon) =: R_{1\eta}^\varepsilon + R_{2\eta}^\varepsilon\,.  
  \end{equation}
  For the term $ R_{1\eta}^\varepsilon$, using the isometry property of $\mathcal S$, there exists a constant $ C=  C(\| \, \mathcal U\|_{L_{\rm loc}^2(\R_+;L^2(\T^3))})$ such that $\| R_{1\eta}^\varepsilon\|_{L_{\rm loc}^1(\R_+;L^1(\T^3))}  \leq  C \eta $. For the term $ R_{2\eta}^\varepsilon$,  using the isometry of $\mathcal S$, we first observe that $\Upphi_\eta^\varepsilon \in L_{\rm loc}^2(\R_+;H^m(\T^3)) $, with $m\geq 0$, and for all $\eta >0$. Second, we claim that $\Q_\perp v_\perp^\varepsilon-\Upphi^\varepsilon \rightarrow 0$ in $L_{\rm loc}^2(\R_+;H^{-\sigma}(\T^3))$--strong, for $\mathfrak s <\sigma < (5/2)^+$. Indeed,  $\Q_\perp v_\perp^\varepsilon-\Upphi^\varepsilon= \mathcal R_2^\varepsilon + \Q_\perp ((1-\rho^\varepsilon) v_\perp^\varepsilon)$, where, using \eqref{eqn:GPR}, we have $\mathcal R_2^\varepsilon \rightarrow 0$ in $L_{\rm loc}^2(\R_+;H^{-\sigma}(\T^3))$--strong. From the sixth statement  of Lemma~\ref{lem:cv-vperp}, the continuity of $\Q_\perp$ in $L^q$ with $1/q=1/\gamma+1/6$, and the embedding $L^q(\T^3) \hookrightarrow H^{-\upsigma}(\T^3)$ for $\upsigma \geq \mathfrak s$ (cf. proof of  Lemma~\ref{lem:cv-vperp}), we obtain $\Q_\perp ((1-\rho^\varepsilon) v_\perp^\varepsilon)\rightarrow 0$ in $L_{\rm loc}^2(\R_+;H^{-\sigma}(\T^3))$--strong, for all $\sigma\geq \upsigma$. Therefore, in the right-hand side of \eqref {eqn:RST-3}, taking first the limit $\varepsilon \rightarrow 0$ and then the limit $\eta \rightarrow 0$, we obtain $\Gamma_{14}^\varepsilon \rightharpoonup 0 $ in $\mathcal D'(\R_+^* \times \T^3)$. It remains to show that  $\Gamma_{15}^\varepsilon \rightharpoonup \uppi_3 \,$ in $\mathcal D'(\R_+^*\times \T^3)$. For this, using Fourier series, we can  compute explicitly the term $\Gamma_{15}^\varepsilon$. We set $\mathfrak U^\varepsilon \equiv {}^t(\upphi^\varepsilon, {}^t \Upphi^\varepsilon) := {}^t(\mathcal S_1(t/\varepsilon) \, \mathcal U,{}^t\mathcal S_2(t/\varepsilon) \, \mathcal U) =  \mathcal S(t/\varepsilon) \, \mathcal U $, with $\, \mathcal U={}^t(\psi,{}^t \Psi)\in L_{\rm loc}^2(\R_+; L^2(\T^3))$, and such that $\int_{\T^3} dx\,\psi=0$, and  $\P_\perp\Psi=0 $. Since $\P_\perp\Psi=0$, then $\Psi =\nabla_\perp \uppsi$, with $\uppsi = \Delta_\perp^{-1}\nabla_\perp \cdot \Psi \in L_{\rm loc}^2(\R_+; L^2(\T_\myparallel ;H^1(\T_\perp^2)))$. This regularity is deduced from the $L^2$ regularity of $\, \mathcal U$. Similarly, since $\mathcal S$ and $\P_\perp$ commute, we obtain $\int_{\T^3} dx\,\upphi^\varepsilon =0 $, and  $\P_\perp\Upphi^\varepsilon=0 $, so that $\Upphi^\varepsilon =\nabla_\perp \upvarphi^\varepsilon$, with $\upvarphi^\varepsilon = \Delta_\perp^{-1}\nabla_\perp \cdot \Upphi^\varepsilon$. We introduce the Fourier series
  \[
  \psi=\sum_{k\in\Z^3} \psi_k(t) e^{{\rm i}k\cdot x}\,, \quad  \Psi={\rm i}\sum_{k\in\Z^3} k_\perp\uppsi_k(t) e^{{\rm i}k\cdot x}\,, 
  \]
  with $\psi_0(t)=0$, and
  \begin{equation}
    \label{eqn:N0}
    \| \{ \psi_k\}_k \|_{L_{\rm loc}^2(\R_+; \ell^2(\Z^3))} + \| \{\uppsi_k |k_\perp|\}_k \|_{L_{\rm loc}^2(\R_+; \ell^2(\Z^3))} =:\mathcal N_0 <\infty\,,
  \end{equation} 
  where the last estimate comes from the $L^2$ regularity of $\, \mathcal U$ stated in Lemma~\ref{lem:UG}.  We  denote by $\mathfrak U_k^\varepsilon={}^t(\upphi_k^\varepsilon, {}^t \Upphi_k^\varepsilon = {\rm i}\,{}^t  k_\perp\upvarphi_k^\varepsilon)$ the Fourier coefficients of $\mathfrak U^\varepsilon \equiv {}^t(\upphi^\varepsilon, {}^t \Upphi^\varepsilon=\nabla_\perp \upvarphi^\varepsilon)$. Inserting  the Fourier series  of $\mathfrak U^\varepsilon $ in the linear equation $\partial_t\,\mathfrak U^\varepsilon = \mathcal L\, \mathfrak U^\varepsilon/\varepsilon $, we are  led to solve linear second-order ODEs in time for the Fourier coefficients $\upphi_k^\varepsilon(t)$ and  $\upvarphi_k^\varepsilon(t)$, with the inital conditions $\mathfrak U_k ^\varepsilon(0)= \, \mathcal U_k(t)$  and  $\partial_t\,\mathfrak U_k^\varepsilon(0)= \mathcal L_k \,\mathcal U_k(t)/\varepsilon $, where $\mathcal L_k = {\rm i}\, {}^t(-c^\varepsilon  k_\perp \cdot\, ,\, {}^t k_\perp)$. Solving these linear ODEs, we obtain for $\Upphi^\varepsilon$,
  \begin{equation*}
    \label{Upphieps}
    \Upphi^\varepsilon= \nabla_\perp \upvarphi^\varepsilon = {\rm i}
    \sum_{k\in\Z^3} e^{{\rm i}k\cdot x} \, k_\perp \, \Big \{
    \uppsi_k(t) \cos\big(\sqrt{c^\varepsilon}|k_\perp|\tfrac t \varepsilon\big) -
    \tfrac{1}{\sqrt{c^\varepsilon}|k_\perp|} \psi_k(t)\sin\big(\sqrt{c^\varepsilon}|k_\perp|\tfrac t \varepsilon\big)
    \Big \}\,.
  \end{equation*}  
  We then obtain
  \begin{equation}
    \label{eqn:upup}
  \Upphi^\varepsilon\otimes \Upphi^\varepsilon
  = -\sum_{k,l \in \Z^3} e^{{\rm i}(k+l)\cdot x} \, \theta_k^\varepsilon(t)  \theta_l^\varepsilon(t) ( k_\perp \otimes l_\perp)  = -\big(S_1^\varepsilon(t,x) + S_2^\varepsilon(t,x)\big)\,, 
  \end{equation}
  with $\theta_k^\varepsilon(t)=\uppsi_k(t) \cos(\sqrt{c^\varepsilon}|k_\perp| t /\varepsilon) - \bigl( {\psi_k(t)}/({\sqrt{c^\varepsilon}|k_\perp|}) \bigr) \sin (\sqrt{c^\varepsilon}|k_\perp| t /\varepsilon) \in L_{\rm loc}^2(\R_+)$, $\forall \, k\in \Z^3$, and where $S_1^\varepsilon$ (resp. $S_2^\varepsilon$) is the sum of $(k,l)$ on the subset $\Lambda_1=\{(k,l)\in \Z^3\times \Z^3 \ ; \ |k_\perp| \neq |l_\perp|\}$ (resp.  $\Lambda_2=\{(k,l) \in \Z^3\times \Z^3 \ ; \ |k_\perp| = |l_\perp|\}$). We next show that $S_1^\varepsilon \rightharpoonup 0$ in $\mathcal D'(\R_+^* \times \T^3)$, and that $\nabla_\perp \cdot S_2^\varepsilon $ is a perpendicular gradient. We first deal with  $S_1^\varepsilon$. Let $\varphi(t,x) = \chi(t) \uplambda(x) \in \mathscr{C}_c^\infty(\R_+\times\T^3)$. Then we obtain,
  \begin{equation}
    \label{eqn:S1eps}
    \langle S_1^\varepsilon,\varphi \rangle 
    \, =\, |\T^3|\!\!\! \sum_{(k,l) \in \Lambda_1} \!\!\!\! \uplambda_{k+l}\, (k_\perp \otimes \,l_\perp) \!\! \int_{\R_+} \!\!\! \! dt \,  \theta_k^\varepsilon(t)  \theta_l^\varepsilon(t) \chi(t) 
    \, =\!\!\!\!\sum_{(k,l) \in \Lambda_1}\!\!\!\! \mathcal H_{kl}^\varepsilon
    \,, 
  \end{equation}
  where $\uplambda_{k}$ is the Fourier coefficients of $\uplambda$. Using Cauchy--Scwharz inequality in time, we obtain $| \mathcal H _{kl}^\varepsilon| \leq |\T^3|\|\chi\|_{L^\infty(\R_+)} d_k d_l |\uplambda_{k+l}|$, with $d_k:=(|k_\perp|\| \uppsi_k\|_{L_{\rm loc}^2(\R_+)} + \| \psi_k\|_{L_{\rm loc}^2(\R_+)} ) \max\{1,1/\sqrt{\underline{c}}\} \geq |k_\perp| \,\| \theta_k^\varepsilon\|_{L_{\rm loc}^2(\R_+)}$. Using this estimate, bound \eqref{eqn:N0}, and Cauchy--Schwarz inequality for one of the infinite sums in \eqref{eqn:S1eps}, we obtain $|\langle S_1^\varepsilon,\varphi \rangle| \leq 2 \mathcal N_0^2 |\T^3|  \,\|\chi\|_{L^\infty(\R_+)}   \max\{1,1/\sqrt{\underline{c}}\}^2 \sum_{k\in \Z^3}|\uplambda_{k}|$. This last sum converges because, from the regularity of $\uplambda$, the Fourier coefficients $\uplambda_k$ decrease enough with respect to $k$. Then $S_1^\varepsilon$ is summable in $\mathcal D'(\R_+^* \times \T^3)$. Now, using the Riemann--Lebesgue theorem and the condition $ |k_\perp| \neq |l_\perp| $ for $ (k,l) \in \Lambda_1 $, recombining the oscillating products implying $ \cos $ and $ \sin$, we easily show that $\theta_k^\varepsilon\theta_l^\varepsilon \rightharpoonup 0$ in $L_{\rm loc}^1(\R_+)$--weak. Using this vanishing limit and the summability of $S_1^\varepsilon$, we obtain  $S_1^\varepsilon \rightharpoonup 0$, in $\mathcal D'(\R_+^* \times \T^3)$. We next deal with $S_2^\varepsilon $. With the same arguments as those used for $S_1^\varepsilon $, $S_2^\varepsilon$ is summable in $\mathcal D'(\R_+^* \times \T^3)$. It remains to show that $\nabla_\perp \cdot S_2^\varepsilon$ is a perpendicular gradient. Symmetrizing the sum $S_2^\varepsilon$ in $(k,l)$ (such that the expression of the general term remains invariant by exchanging $k$ and $l$), using the change of variable $l=n-k$ in $S_2^\varepsilon$, and applying the divergence operator $\nabla_\perp \cdot\,$ to $S_2^\varepsilon$, we obtain
  \[
  \nabla_\perp \cdot S_2^\varepsilon = \frac{\rm i}{2}\sum_{n\in\Z^3} e^{{\rm i}n\cdot x} \!\! \sum_{|k_\perp|=|n_\perp - \,k_\perp|}
  \{(k_\perp \otimes [n_\perp-k_\perp])+( [n_\perp-k_\perp] \otimes k_\perp)\} \, n_\perp \theta_n^\varepsilon \theta_{n-k}^\varepsilon\,.
  \]
  To simplify this expression, we first observe that $|k_\perp|=|n_\perp -k_\perp|$ implies $|n_\perp|^2=2n_\perp\cdot k_\perp$. Using this identity, we  obtain  $(k_\perp \otimes [n_\perp-k_\perp])+( [n_\perp-k_\perp] \otimes k_\perp)= n_\perp(n_\perp\cdot k_\perp)=n_\perp|n_\perp|^2/2$ so that
  \[
  \nabla_\perp \cdot S_2^\varepsilon =\nabla_\perp \Big(\tfrac 14\sum_{n\in\Z^3}
  |n_\perp|^2 e^{{\rm i}n\cdot x}  \sum_{k\in\Z^3} \theta_n^\varepsilon \theta_{n-k}^\varepsilon
  \Big) =: -\nabla_\perp \uppi_3^\varepsilon \xrightharpoonup{\quad}  -\nabla_\perp \uppi_3 \ \ \  \mbox{ in} \ \ \mathcal D'(\R_+^* \times \T^3)\,.
  \]
  This ends the proof of Lemma~\ref{lem:RST}.
\endnamedproof


\subsection{Passage to the limit in the equation for $\rho^\varepsilon v_{\raisebox{0.5pt}{$\scriptscriptstyle\parallel$}}^\varepsilon $}
\label{ss:cv-rvpara-P}
Here,  we justify the passage to the limit in equation \eqref{eq:vparal-eps} for $\rho^\varepsilon v_\myparallel^\varepsilon$. Let us start with the initial condition term. From the discussion about the properties of sequences of initial conditions in Section~\ref{ss:rT3} (in particular the uniform bound \eqref{ineqC0-P} and the resulting convergences), we can pass to the limit, in the distributional sense, in the initial condition term of \eqref{eq:vparal-eps} to obtain the limit initial condition $u_{0 \myparallel}=v_{0 \myparallel}$ (since $u_0=v_0$). Next, using weak convergence of $v_\myparallel^\varepsilon$  yielded by the first statement of Lemma~\ref{lem:cv-vperp},  we can pass to the limit, in the distributional sense, in all linear diffusive terms of \eqref{eq:vparal-eps}. Using the same arguments as those used to deal with \eqref{eqn:ptrv} and show that $\partial_t(\rho^\varepsilon v_\perp^\varepsilon) \rightharpoonup \partial_t v_\perp$ in $\mathcal D'(\R_+^* \times \T^3)$, we obtain $\partial_t(\rho^\varepsilon v_\myparallel^\varepsilon) \rightharpoonup \partial_t v_\myparallel$ in $\mathcal D'(\R_+^*\times \T^3)$. Regarding the terms $\varepsilon \lambda^\varepsilon v^\varepsilon \cdot \nabla_\varepsilon \partial_\myparallel \psi_\myparallel $ and $\varepsilon (|B^\varepsilon|/2) \partial_\myparallel \psi_\myparallel$, uniform bounds in $L_{\rm loc}^2(\R_+;L^2(\T^3))$ for $v^\varepsilon $ and $B^\varepsilon$, given by the energy estimate, show that the terms $\varepsilon\lambda^\varepsilon\partial_\myparallel \nabla_\varepsilon\cdot v^\varepsilon $ and $\varepsilon \partial_\myparallel (|B^\varepsilon|/2)$ vanish  in  $\mathcal D'(\R_+^* \times \T^3)$, as $\varepsilon \rightarrow 0$. For the term $B_\myparallel^\varepsilon \otimes B^\varepsilon : D_\varepsilon\psi_\myparallel$, we consider the following decomposition, $\forall \,\psi_\myparallel \in \mathscr{C}_c^\infty(\R_+\times \T^3;\R)$, 
\begin{equation}
  \label{eq:rvp-1}
  \int_{\R_+}  dt \int_{\T^3}dx \, B_\myparallel^\varepsilon  \otimes B^\varepsilon : D_\varepsilon\psi_\myparallel
  = \int_{\R_+}  dt \int_{\T^3}dx \, B_\myparallel^\varepsilon  B_\perp^\varepsilon \cdot\nabla_\perp \psi_\myparallel
  \ + \  \varepsilon\int_{\R_+}  dt\,  |B_\myparallel^\varepsilon|^2 \partial_\myparallel\psi_\myparallel\,.
\end{equation}  
Using the uniform bound in $L_{\rm loc}^2(\R_+;L^2(\T^3))$ for  $B^\varepsilon$, given by the energy estimate, the second term of \eqref{eq:rvp-1} vanishes as $\varepsilon \rightarrow 0$. Using weak convergence of  $B_\myparallel^\varepsilon$ and strong convergence of $B_\perp^\varepsilon $, given respectively by the first and third statements of Lemma~\ref{lem:cv-Bperp}, we obtain $\nabla_\perp \cdot (B_\myparallel^\varepsilon B_\perp^\varepsilon) \rightharpoonup \nabla_\perp \cdot (B_\myparallel B_\perp)$ in  $\mathcal D'(\R_+^* \times \T^3)$, which ends the treatment of the first term of \eqref{eq:rvp-1}. Therefore, we obtain $\nabla_\varepsilon \cdot (B_\myparallel^\varepsilon \otimes B^\varepsilon) \rightharpoonup \nabla_\perp \cdot (B_\myparallel B_\perp)$ in  $\mathcal D'(\R_+^* \times \T^3)$. For the singular fluid pressure term  $  p^\varepsilon /\varepsilon$, we rewrite this term as $ p^\varepsilon /\varepsilon= \varepsilon (\gamma-1)\Pi_2(\rho^\varepsilon) + b^\varepsilon\varrho^\varepsilon + a (\overline{\rho^\varepsilon})^\gamma/\varepsilon$. The constant term $ a (\overline{\rho^\varepsilon})^\gamma/\varepsilon$ is irrelevant because it disappears by spatial integration in \eqref{eq:vparal-eps}. From energy inequality \eqref{nrj-ineq-mhdeps}-\eqref{defPI-P} with the pressure term $\Pi_2$, we obtain $\varepsilon (\gamma-1)\Pi_2(\rho^\varepsilon) \rightarrow 0$ in  $L_{\rm loc}^\infty(\R_+;L^1(\T^3))$--strong. From the second statement of Lemma~\ref{lem:cv-rho}, we obtain $ b^\varepsilon\varrho^\varepsilon \rightharpoonup b \varrho$ in $L_{\rm loc}^\infty(\R_+;L^\kappa(\T^3))$--weak$-*$. Therefore, we obtain $\partial_\myparallel ( p^\varepsilon /\varepsilon) \rightharpoonup b \partial_\myparallel\varrho=-\partial_\myparallel B_\myparallel$ in $\mathcal D'(\R_+^* \times \T^3)$, where for the last equality we have used the point 3 of Lemma~\ref{lem:UG}. It remains to deal with the term $\rho^\varepsilon v_\myparallel^\varepsilon \otimes v^\varepsilon : D_\varepsilon\psi_\myparallel$, for which we consider the following decomposition, $\forall \,\psi_\myparallel \in \mathscr{C}_c^\infty(\R_+\times \T^3;\R)$, 
\begin{equation}
  \label{eq:rvp-2}
    \int_{\R_+}  dt \int_{\T^3}dx \, \rho^\varepsilon v_\myparallel^\varepsilon \otimes v^\varepsilon : D_\varepsilon\psi_\myparallel
  = \int_{\R_+}  dt \int_{\T^3}dx \, \rho^\varepsilon v_\myparallel^\varepsilon  v_\perp^\varepsilon \cdot \nabla_\perp\psi_\myparallel 
  \ +  \ \varepsilon\int_{\R_+}  dt \int_{\T^3}dx \, \rho^\varepsilon |v_\myparallel^\varepsilon|^2 \partial_\myparallel\psi_\myparallel\,. 
\end{equation}  
Since from the energy estimate, $\|\rho^\varepsilon |v^\varepsilon|^2 \|_{L_{\rm loc}^\infty(\R_+;L^1(\T^3))}$ is bounded uniformly with respect to $\varepsilon$, the second term of \eqref{eq:rvp-2} vanishes as $\varepsilon \rightarrow 0$. Finally, it  remains to deal with the first term of \eqref{eq:rvp-2}. This term is the most delicate, because we have only weak compactness for $\rho^\varepsilon v_\myparallel^\varepsilon$  and $v_\perp^\varepsilon$. Indeed, even if $\P_\perp v_\perp^\varepsilon$ converge strongly,  $\Q_\perp v_\perp^\varepsilon$ converge weakly (to zero, see Lemma~\ref{lem:cv-vperp}). Therefore, to pass to the limit in this term, we will use  Lemma~\ref{lem:compactness} of Appendix~\ref{s:tools}, for which we show below that its hypotheses, splitted in three points, are satisfied. 1) From the fifth statement of Lemma~\ref{lem:cv-vperp}, we obtain $ \rho^\varepsilon v_\myparallel^\varepsilon \rightharpoonup  v_\myparallel $ in $L_{\rm loc}^2(\R_+; L^{6\gamma/(6+\gamma)}(\T^3))$--weak. 2) The uniform bound $v_\perp^\varepsilon \in  L_{\rm loc}^2(\R_+; L^6(\T^3))$ and Lemma~4.3 in \cite{Bre11} imply $\|v_\perp^\varepsilon (t,\cdot +h )- v_\perp^\varepsilon (t,\cdot )\|_{L_{\rm loc}^2(\R_+; L^6(\T^3))} \rightarrow 0$, as $|h|\rightarrow 0$, uniformly with respect to $\varepsilon$. 3) From equation \eqref{eq:vparal-eps} for $\rho^\varepsilon v_\myparallel^\varepsilon$, we obtain in $\mathcal D'(\R_+^*\times \T^3)$,
\begin{equation}
  \label{eqn:ptvpa}
  \partial_t(\rho^\varepsilon v_\myparallel^\varepsilon)
  =-\nabla_\varepsilon \cdot (\rho^\varepsilon v_\myparallel^\varepsilon \otimes  v^\varepsilon )
  + \partial_\myparallel \big(\tfrac 1 \varepsilon p^\varepsilon + \tfrac{\varepsilon}2 |B^\varepsilon|^2\big)+
  \nabla_\varepsilon \cdot(B_\myparallel^\varepsilon \otimes  B^\varepsilon )
  + \mu_\perp^\varepsilon \Delta_\perp  v_\myparallel^\varepsilon + \mu_\myparallel^\varepsilon \Delta_\myparallel v_\myparallel^\varepsilon
  +\varepsilon \lambda^\varepsilon \partial_\myparallel\nabla_\varepsilon \cdot v^\varepsilon
  \,.
\end{equation}
Using the energy estimate and the preceding bound for the pressure term $p^\varepsilon/\varepsilon$ (already used in this section), we obtain $ \partial_t(\rho^\varepsilon v_\myparallel^\varepsilon) \in  L_{\rm loc}^{\infty}(\R_+; (W^{-1,1} +W^{-1,\kappa}) (\T^3)) +  L_{\rm loc}^{2}(\R_+; H^{-1}(\T^3)) \hookrightarrow  L_{\rm loc}^{1}(\R_+; W^{-1,1}(\T^3))$. Gathering points 1) to 3), we can apply  Lemma~\ref{lem:compactness} of Appendix~\ref{s:tools} with $g^\varepsilon =\rho^\varepsilon v_\myparallel^\varepsilon$,  $h^\varepsilon = v_\perp^\varepsilon$, $p_1=q_1=2$ ($1/p_1+1/q_1=1$), $p_2=6\gamma/(6+\gamma)$, and $q_2=6$ ($1/p_2+1/q_2=1/\gamma+1/3 < 1$, for $\gamma>3/2$), to obtain $ \nabla_\perp \cdot (\rho^\varepsilon v_\myparallel^\varepsilon v_\perp^\varepsilon)  \rightharpoonup \nabla_\perp \cdot ( v_\myparallel v_\perp)$ in $\mathcal D'(\R_+^* \times \T^3)$. In conclusion, we have shown that the weak formulation \eqref{eq:vparal-eps} converges to the second equation of \eqref{rmhd} in the sense of distributions.


\subsection{Passage to the limit in a combination of the equations for $ \varrho^\varepsilon $ and  $B_{\raisebox{0.5pt}{$\scriptscriptstyle\parallel$}}^\varepsilon $}
\label{ss:cv-Bpara-P}
The passage to the limit in the equation of $B_\myparallel^\varepsilon$ is more delicate, because, unlike what is done to treat the asymptotic limit in the perpendicular direction, here, we can not use the unitary group method to deal with the singular term in $1/\varepsilon$ in equation \eqref{eq:Bparal-eps} for $B_\myparallel^\varepsilon$. From the study of the asymptotic limit in the perpendicular direction, more precisely the point 3 of the Lemma~\ref{lem:UG}, we observe a relation between $B_\myparallel$ and $\varrho$, namely, $B_\myparallel + b \varrho = B_\myparallel + \mathbbmssit{p} =0$, where we define $ \mathbbmssit{p} =a \gamma \varrho$. From this relation, the idea is to cancel the   $1/\varepsilon$-singularity in equation \eqref{eq:Bparal-eps} for $B_\myparallel^\varepsilon$ with the  $1/\varepsilon$-singularity coming from the equation of $\varrho^\varepsilon$, this latter equation being obtained from equation \eqref{eq:rho-eps} for $\rho^\varepsilon$. Indeed, from equation \eqref{eq:rho-eps}, we construct the following equation for $\varrho^\varepsilon/\overline{\varrho^\varepsilon} = (\rho^\varepsilon/\overline{\varrho^\varepsilon}-1)/\varepsilon$, $\forall \varphi \in \mathscr{C}_c^\infty(\R_+\times \T^3; \R)$,
\begin{equation}
  \label{eq:varrho-eps}
  \int_\Omega dx \,\frac{\varrho_0^\varepsilon} {\overline{\rho_0^\varepsilon}}\varphi(0) +
  \int_0^\infty dt \int_\Omega dx \, \bigg (\frac{\varrho^\varepsilon}{\overline{\rho^\varepsilon}} \big(
  \partial_t  + v_\perp^\varepsilon \cdot \nabla_\perp \big) \varphi + \frac{1}{\varepsilon}v_\perp^\varepsilon \cdot \nabla_\perp \varphi
  + \frac{\rho^\varepsilon}{\overline{\rho^\varepsilon}}v_\myparallel^\varepsilon \partial_\myparallel \varphi \,\bigg) 
  =0\,.
\end{equation}
Let us define the auxilliary component
\[
\mathbbmsl{B}_\myparallel^\varepsilon := \frac{1}{\mathbbl{c}} \ \Bigl(B_\myparallel^\varepsilon -\frac{\varrho^\varepsilon}{\overline{\rho^\varepsilon}}\Bigr) \, , \qquad \mathbbmsl{ B}_{0\myparallel}^\varepsilon := \frac{1}{\mathbbl{c}} \ \Bigl(B_{0\myparallel}^\varepsilon -\frac{\varrho^\varepsilon}{\overline{\rho^\varepsilon}}\Bigr) \, , \qquad \mathbbl{c} = 1 + \frac{1}{b} \, .
\]
Taking $ \varphi = \psi_\myparallel $ in \eqref{eq:varrho-eps}, where $ \psi_\myparallel $ is the same test function as the one used in equation \eqref{eq:Bparal-eps}, and substracting to \eqref{eq:Bparal-eps}, we obtain
\begin{multline}
\label{eq:q-eps} \mathbbl{c} 
\int_\Omega dx \, \mathbbmsl{B}_{0\myparallel}^\varepsilon \,  \psi_\myparallel(0) + \mathbbl{c} 
\int_0^\infty dt \int_\Omega dx \, \big(
\mathbbmsl{B}_\myparallel^\varepsilon \,  (\partial_t  + v_\perp^\varepsilon \cdot \nabla_\perp ) \psi_\myparallel
\\
- v_\myparallel^\varepsilon \,  B_\perp^\varepsilon \cdot \nabla_\perp \psi_\myparallel
- \frac{\rho^\varepsilon}{\overline{\rho^\varepsilon}}\, v_\myparallel^\varepsilon \, \partial_\myparallel  \psi_\myparallel
+ \eta_\perp B_{\myparallel }^\varepsilon \, \Delta_\perp \psi_\myparallel + \eta_\myparallel B_{\myparallel }^\varepsilon \, \Delta_\myparallel \psi_\myparallel
\big) 
=0\, .
\end{multline}
In the sense of distributions, this reveals  slow dynamics on $ \mathbbmsl{B}_\myparallel^\varepsilon $. Since $ B_\myparallel + b \varrho=0$, the weak limit of $ \mathbbmsl{B}_\myparallel^\varepsilon $ is $ \mathbbmsl{B}_\myparallel = (B_\myparallel - \varrho) / \mathbbl{c} =  B_\myparallel $. This means that, after a boundary layer near $ t = 0 $, the asymptotic behavior  of $ \mathbbmsl{B}_\myparallel^\varepsilon $ is similar to the one of $ B_\myparallel $. And, because $ B_\myparallel + b \varrho=0$, the time evolution of $ B_\myparallel $ provides information on the first-order pressure $ \mathbbmssit{p} = b \varrho$ or on the first-order density $\varrho $. Now, to exhibit the equation satisfied by $ B_\myparallel $, we pass to the limit in \eqref{eq:q-eps}. Let us start with the initial condition term. From the assumptions and the  discussion about the properties of sequences of initial conditions in Section~\ref{ss:rT3}, we have $\varrho_0^\varepsilon \rightharpoonup \varrho_0$ in $L^\kappa(\T^3)$--weak with $\kappa=\min\{2,\gamma\}$, $ \overline{\rho^\varepsilon} \rightarrow 1 $ and $ B_{0\myparallel}^\varepsilon \rightharpoonup B_{0\myparallel} $ in $L^2(\T^3)$--weak. It follows that $ \mathbbmsl{B}_{0\myparallel}^\varepsilon \rightharpoonup \mathbbmsl{B}_{0\myparallel} := (B_{0\myparallel} - \varrho_0 )/\mathbbl{c} $. Observe that $ \mathbbmsl{B}_{0\myparallel} =  B_{0\myparallel} $ if and only if $ B_{0\myparallel} + b \varrho_0 = 0 $. In view of \eqref{mhdepssing}, the two conditions $ \nabla_\perp \cdot v_{0\perp} = 0 $ and $ B_{0\myparallel} + b \varrho_0 = 0 $ are related to a preparation of the initial data to avoid fast time oscillations. Still, in our framework, we can incorporate general data satisfying $ \mathbbmsl{B}_{0\myparallel} \not \equiv  B_{0\myparallel} $ (and also  $ \nabla_\perp \cdot v_{0\perp} \not \equiv 0$, see Section~\ref{ss:cv-rvperp-P}).

It is obvious that $ \partial_t \mathbbmsl{ B}_\myparallel^\varepsilon  \rightharpoonup  \partial_t \mathbbmsl{B}_\myparallel = \partial_t B_\myparallel $ in $\mathcal D'(\R_+^* \times \T^3)$. We next deal with the linear terms of \eqref{eq:q-eps}. Using weak convergence of $B_\myparallel^\varepsilon$, yielded by the first statement of Lemma~\ref{lem:cv-Bperp}, we can pass to the limit, in the distributional sense, in all linear diffusive terms of \eqref{eq:q-eps}. Using the fifth statement of  Lemma~\ref{lem:cv-vperp}, we obtain $\partial_\myparallel (\rho^\varepsilon v_\myparallel^\varepsilon) \rightharpoonup \partial_\myparallel v_\myparallel$ in $\mathcal D'(\R_+^* \times \T^3)$. Using weak convergence of $ v_\myparallel^\varepsilon$ and the strong convergence of $B_\perp^\varepsilon$, given  by respectively the first statement of Lemma~\ref{lem:cv-vperp} and the third statement of  Lemma~\ref{lem:cv-Bperp}, we obtain $\nabla_\perp \cdot(v_\myparallel^\varepsilon B_\perp^\varepsilon) \rightharpoonup \nabla_\perp \cdot(v_\myparallel B_\perp)$ in  $\mathcal D'(\R_+^* \times \T^3)$. Finally it remains to pass in the limit in the term $\nabla_\perp \cdot(\mathbbmsl{B}_\myparallel^\varepsilon  v_\perp^\varepsilon)$. On the one hand, we have weak compactness for $\mathbbmsl{B}_\myparallel^\varepsilon $, and on the other hand, we have only weak compactness for $v_\perp^\varepsilon$, because, despite the strong compactness of solenoidal part $\P_\perp v_\perp^\varepsilon$, the irrotational part $\Q_\perp v_\perp^\varepsilon$ converges only weakly (see Lemma~\ref{lem:cv-vperp}). Therefore, to pass to the limit in this term, we will use  Lemma~\ref{lem:compactness} of Appendix~\ref{s:tools}, for which we show below that its hypotheses, splitted in three points, are satisfied. 1) We first recall that $ \mathbbl{c}\mathbbmsl{B}_\myparallel^\varepsilon = B_\myparallel^\varepsilon - ( \varrho^\varepsilon/\overline{\rho^\varepsilon} ) $. Using the second statement of Lemma~\ref{lem:cv-rho} together with the first statement of Lemma~\ref{lem:cv-Bperp}, for $\kappa=\min\{2,\gamma\}$, we obtain that $ \mathbbmsl{B}_\myparallel^\varepsilon \in L_{\rm loc}^\infty(\R_+; (L^\kappa+L^2)(\T^3)) \hookrightarrow L_{\rm loc}^\infty(\R_+;L^\kappa(\T^3))$. Therefore, by weak compactness, we obtain $ \mathbbl{c}  \mathbbmsl{B}_\myparallel^\varepsilon \rightharpoonup  \mathbbl{c} \mathbbmsl{B}_\myparallel = B_\myparallel  - \varrho $
in $L_{\rm loc}^\infty(\R_+;L^\kappa(\T^3))$--weak$-*$. 2) The uniform bound $v_\perp^\varepsilon \in  L_{\rm loc}^2(\R_+; L^6(\T^3))$ and Lemma~4.3 in \cite{Bre11} imply $\|v_\perp^\varepsilon (t,\cdot +h )- v_\perp^\varepsilon (t,\cdot )\|_{L_{\rm loc}^2(\R_+; L^6(\T^3))} \rightarrow 0$, as $|h|\rightarrow 0$, uniformly with respect to $\varepsilon$. 3) Using the uniform $L_{\rm loc}^\infty(\R_+;L^\kappa(\T^3))$--bound for $ \mathbbmsl{B}_\myparallel^\varepsilon $,  the uniform $L_{\rm loc}^2(\R_+;L^6(\T^3))$--bound for  $v^\varepsilon$, the uniform $L_{\rm loc}^2(\R_+;L^6\cap H^1(\T^3))$--bound for $B_\perp^\varepsilon$, and the uniform  $L_{\rm loc}^2(\R_+;L^q(\T^3))$--bound for $\rho^\varepsilon v_\myparallel^\varepsilon$, with  $q=6\gamma/(6+\gamma)$, we obtain from equation \eqref{eq:q-eps} and H\"older inequality
\[
\begin{array}{rl}
  \partial_t \mathbbmsl{B}_\myparallel^\varepsilon \in  L_{\rm loc}^{2}(\R_+; W^{-1,6\kappa/(6+\kappa)}(\T^3)) \! \! \! & + \, L_{\rm loc}^1(\R_+; W^{-1,3}(\T^3)) \smallskip \\
  \ & + \, L_{\rm loc}^{2}(\R_+; (W^{-1,q}+H^{-1})(\T^3)) \hookrightarrow  L_{\rm loc}^{1}(\R_+; W^{-1,1}(\T^3)) \, ,
\end{array}
\]
where the last continuous injection comes from Sobolev embeddings. Gathering points 1) to 3), we can apply  Lemma~\ref{lem:compactness} of Appendix~\ref{s:tools} with $g^\varepsilon = \mathbbmsl{ B}_\myparallel^\varepsilon$,  $h^\varepsilon = v_\perp^\varepsilon$, $p_1=\infty$, $q_1=2$ ($1/p_1+1/q_1 < 1$), $p_2=\kappa$, and $q_2=6$ ($1/p_2+1/q_2\leq 2/3+1/6 < 1$, for $\gamma\,, \kappa>3/2$), to obtain $ \nabla_\perp \cdot ( \mathbbmsl{B}_\myparallel^\varepsilon  v_\perp^\varepsilon )  \rightharpoonup \nabla_\perp \cdot (  \mathbbmsl{B}_\myparallel v_\perp)$ in $\mathcal D'(\R_+^* \times \T^3)$. In conclusion, the weak formulation \eqref{eq:q-eps} converges to
\begin{multline*}
 \mathbbl{c}
\int_\Omega dx \, \mathbbmsl{B}_{0\myparallel} \,  \psi_\myparallel(0) +  \mathbbl{c}
\int_0^\infty dt \int_\Omega dx \, \big( B_\myparallel \,  (\partial_t  + v_\perp \cdot \nabla_\perp ) \psi_\myparallel
\\
- v_\myparallel \,  B_\perp \cdot \nabla_\perp \psi_\myparallel
- v_\myparallel \, \partial_\myparallel  \psi_\myparallel
+ \eta_\perp B_{\myparallel} \, \Delta_\perp \psi_\myparallel + \eta_\myparallel B_{\myparallel } \, \Delta_\myparallel \psi_\myparallel
\big)
=0\, .
\end{multline*}
Knowing (by passing to the weak limit) that $ \nabla_\perp \cdot B_\perp = 0 $, we recover the first equation of \eqref{rmhd} with  the initial data prescribed in \eqref{inidatparal}.


\section{Asymptotic analysis in the whole space}
\label{s:AAWS}

This section is devoted to the proof of Theorem~\ref{TH-CV-R}. As in the periodic case, we first obtain some weak and strong compactness properties for the same sequences. Since in the whole case the density is only locally integrable in space, some of these compactness results need different proofs. 


\subsection{Compactness of $\rho^\varepsilon $ and  $\varrho^\varepsilon $ }
Here, we aim at proving the following lemma.
\begin{lemma}
  \label{lem:cv-rho-R}
  There exists a generic constant $C>0$, which may depend on $C_0$,  $a$, and $\gamma$ such that the sequences ${\rho^\varepsilon}$ and  $\varrho^\varepsilon:=(\rho^\varepsilon -1)/\varepsilon$ satisfy the following properties.
  \begin{align*}
    &\| \rho^\varepsilon \|_{ L_{\rm loc}^\infty(\R_+; L_{\rm loc }^\gamma(\R^3))} \leq C\,,
    \quad {\rm and} \quad (\rho^\varepsilon -1) \in L_{\rm loc}^\infty(\R_+; L_2^\gamma \cap H^{-\upalpha}(\R^3))\,, \quad \forall \,\gamma>1 \,, \quad \upalpha \geq 1/2\,,\\
    &\| \rho^\varepsilon -1\|_{ L_{\rm loc}^\infty(\R_+; L^\gamma(\R^3))} \leq  C \varepsilon^{2/\gamma}\,,
    \quad {\rm and} \quad  \| \rho^\varepsilon -1\|_{ L_{\rm loc}^\infty(\R_+; L^2(\R^3))} \leq  C \varepsilon\,,
     \quad \forall\, \gamma \geq 2\,, \\
    & \| \rho^\varepsilon -1\|_{ L_{\rm loc}^\infty(\R_+; L_{\rm loc}^\gamma(\R^3))} \leq  C \varepsilon\,, \quad {\rm for} \ {\rm all } \ \  1<\gamma <2\,,\\
    & \rho^\varepsilon \xrightarrow{\quad} 1 \ \ \mbox{ \rm in } \  L_{\rm loc}^\infty(\R_+; L_2^\gamma \cap  L_{\rm loc}^\gamma(\R^3))\!-\!\mbox{\rm strong}\,, \quad  \forall\, \gamma >1\,,\\
    &  \| \varrho^\varepsilon \|_{ L_{\rm loc}^\infty(\R_+; L_2^\kappa\cap L_{\rm loc }^\kappa\cap H^{-\upalpha}(\R^3))} \leq C\,, 
    \quad \kappa=\min\{2,\gamma\} \,, \quad \upalpha\geq 1/2\\ 
    & \varrho^\varepsilon \xrightharpoonup{\quad} \varrho\ \  \mbox{ \rm in } \  L_{\rm loc}^\infty(\R_+; L_{\rm loc }^\kappa \cap H^{-\upalpha}(\R^3))\!-\!\mbox{\rm weak--}\!\ast\,,  \quad \kappa=\min\{2,\gamma\} \,, \quad \upalpha\geq 1/2\,.
  \end{align*}
\end{lemma}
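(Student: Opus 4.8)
The plan is to run the same convexity-based scheme as in the periodic Lemma~\ref{lem:cv-rho}, now anchored at the constant state $1$ rather than at the fluctuating mean $\overline{\rho^\varepsilon}$, and to track carefully the powers of $\varepsilon$ in the two regimes $\gamma\geq 2$ and $1<\gamma<2$. The starting point is the energy inequality \eqref{nrj-ineq-mhdeps} with the potential $\Pi_3$ of \eqref{defPI-R}: combining it with the initial bound \eqref{ineqC0-R} and discarding the nonnegative kinetic, magnetic and dissipative contributions, one obtains, uniformly in $t\geq 0$,
\[
\int_{\R^3} dx\, \bigl( (\rho^\varepsilon)^\gamma - \gamma\rho^\varepsilon + \gamma - 1\bigr) \leq \frac{(\gamma-1)C_0}{a}\,\varepsilon^2\,.
\]
Writing $h(\rho)=\rho^\gamma-\gamma\rho+\gamma-1$, note that $h(1)=h'(1)=0$ and $h''(1)=\gamma(\gamma-1)>0$, so $h$ is a nonnegative coercive potential with a quadratic well at $\rho=1$.

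First I would invoke Lemma~\ref{lem:convex} with $\bar x=1$ to split the mass of $h(\rho^\varepsilon)$ into a near and a far part. On $\{|\rho^\varepsilon-1|\leq\delta\}$ the quadratic lower bound $h(\rho^\varepsilon)\geq c_\delta|\rho^\varepsilon-1|^2$ yields $\|(\rho^\varepsilon-1)\mathbbm{1}_{|\rho^\varepsilon-1|\leq\delta}\|_{L^\infty_{\rm loc}(\R_+;L^2)}\lesssim\varepsilon$, while on $\{|\rho^\varepsilon-1|>\delta\}$ the lower bound $h(\rho^\varepsilon)\geq\eta_\delta|\rho^\varepsilon-1|^\gamma$ yields $\|(\rho^\varepsilon-1)\mathbbm{1}_{|\rho^\varepsilon-1|>\delta}\|_{L^\infty_{\rm loc}(\R_+;L^\gamma)}\lesssim\varepsilon^{2/\gamma}$. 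This is exactly the $L_2^\gamma$-control of \eqref{def_orlicz}, and it gives the first line of the lemma directly. When $\gamma\geq 2$ one has moreover $|\rho-1|^2\lesssim h(\rho)$ globally (on bounded $\rho$-ranges $h$ is comparable to $|\rho-1|^2$, while for $|\rho-1|\geq 1$ one uses $|\rho-1|^\gamma\geq|\rho-1|^2$), which produces the global rate $\|\rho^\varepsilon-1\|_{L^2}\lesssim\varepsilon$; the $L^\gamma$ rate $\varepsilon^{2/\gamma}$ then dominates since $2/\gamma\leq 1$. When $1<\gamma<2$, the near part controls $\rho^\varepsilon-1$ in $L^2_{\rm loc}$ at rate $\varepsilon$, H\"older on compacts upgrades this to $L^\gamma_{\rm loc}$ at the same rate, and the far $L^\gamma$ rate $\varepsilon^{2/\gamma}\leq\varepsilon$ is absorbed, giving the stated $L^\gamma_{\rm loc}$ bound $C\varepsilon$.

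Next I would assemble the functional statements from these rates. The uniform $L^\gamma_{\rm loc}$ bound on $\rho^\varepsilon$ follows from $\rho^\varepsilon=1+(\rho^\varepsilon-1)$ together with the elementary embedding $L_2^\gamma\hookrightarrow L^\gamma_{\rm loc}$ (split at level $\delta$ and use H\"older on compacts for the small part). The negative-regularity bound $(\rho^\varepsilon-1)\in H^{-\upalpha}$, $\upalpha\geq 1/2$, comes from $L_2^\gamma\hookrightarrow L^2+L^{3/2}\hookrightarrow H^{-1/2}$: the small part lies in $L^2=H^0$, while the far part, having values above $\delta$, lies in every $L^p$ with $p\leq\gamma$, in particular in $L^{3/2}\hookrightarrow H^{-1/2}$ by the dual Sobolev embedding in dimension three; this is precisely where $\gamma>3/2$ is used. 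The strong convergence $\rho^\varepsilon\to 1$ in $L^\infty_{\rm loc}(\R_+;L_2^\gamma\cap L^\gamma_{\rm loc})$ is then immediate, since both norms are $O(\varepsilon^{\min\{1,2/\gamma\}})\to 0$.

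Finally, dividing the rate estimates by $\varepsilon$ gives the $\varrho^\varepsilon=(\rho^\varepsilon-1)/\varepsilon$ bounds. For $\gamma\geq 2$ ($\kappa=2$) the global $L^2$ rate yields $\|\varrho^\varepsilon\|_{L^\infty_{\rm loc}(\R_+;L^2)}\lesssim 1$, hence boundedness in $L_2^2\cap L^2_{\rm loc}\cap H^{-\upalpha}$. For $1<\gamma<2$ ($\kappa=\gamma$) the small part is $O(1)$ in $L^2$ and the far part is $O(\varepsilon^{2/\gamma-1})\to 0$ in $L^\gamma$, so $\varrho^\varepsilon$ is bounded in $L_2^\kappa\hookrightarrow L^\kappa_{\rm loc}\cap H^{-\upalpha}$ (again using $\kappa=\min\{2,\gamma\}>3/2$ for the negative Sobolev embedding). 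Banach--Alaoglu then extracts a subsequence with $\varrho^\varepsilon\rightharpoonup\varrho$ weak-$*$ in $L^\infty_{\rm loc}(\R_+;L^\kappa_{\rm loc}\cap H^{-\upalpha})$. The main obstacle is not any single estimate but the bookkeeping forced by the whole-space Orlicz framework: because $\rho^\varepsilon-1$ is only locally $L^\gamma$, every bound must be routed through the $L_2^\gamma$ splitting and then through the embedding $L_2^\gamma\hookrightarrow H^{-\upalpha}$, whose validity hinges on $\gamma>3/2$, and keeping the regimes $\gamma\geq 2$ and $1<\gamma<2$ separate throughout is what demands care.
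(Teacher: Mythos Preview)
Your proposal is correct and follows essentially the same convexity/Orlicz-splitting scheme as the paper's proof, hinging on Lemma~\ref{lem:convex} and the energy bound on $\Pi_3(\rho^\varepsilon)$. The only visible variation is that the paper first establishes $\rho^\varepsilon\in L^\infty_{\rm loc}(\R_+;L^\gamma_{\rm loc})$ via a direct mass-conservation argument with a compactly supported test function (to get $L^1_{\rm loc}$, then bootstrap), whereas you deduce it from $\rho^\varepsilon=1+(\rho^\varepsilon-1)$ and the $L_2^\gamma$ bound; the paper itself remarks that this shortcut works.
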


\begin{proof}
  We start with the first bound of the first assertion of Lemma~\ref{lem:cv-rho-R}. We first claim that $\rho^\varepsilon \in L_{\rm loc}^\infty(\R_+; L_{\rm loc}^\gamma(\R^3))$ if  $\rho^\varepsilon \in L_{\rm loc}^\infty(\R_+; L_{\rm loc}^1(\R^3))$. Indeed, using the convexity of the power function $\R^+\ni x\mapsto x^\gamma$ (with $\gamma >1$), and energy inequality \eqref{nrj-ineq-mhdeps}-\eqref{dissip-D} with the pressure term $\Pi_3$, there exists a constant $ c_0$, such that for any compact set $K\subset \R^3$, $0\leq \int_{K}dx \,\{ (\rho^\varepsilon)^\gamma - \gamma \rho^\varepsilon +\gamma -1 \}\leq  c_0$. Then, $ \int_{K}dx \,(\rho^\varepsilon)^\gamma \leq  c_0 + (\gamma-1)|K|+\gamma \int_{K}dx \,\rho^\varepsilon <\infty$, if  $\rho^\varepsilon \in L_{\rm loc}^\infty(\R_+; L_{\rm loc}^1(\R^3))$. It remains to prove this last property. For this, we consider a test function $\varphi \in \mathscr{C}_c^\infty(\R^3)$, such that $\varphi \geq 0$, and $\varphi \equiv 1$ on a subset $ K $ of its support $ S $. From the mass conservation law and the energy estimate, we obtain
\begin{align*}
  \frac{d}{dt} \int_{S} dx \, \rho^\varepsilon  \varphi  =
  \int_S dx\, \rho^\varepsilon v^\varepsilon \cdot \nabla \varphi 
   \leq \| 2 \nabla \sqrt{\varphi}\|_{L^\infty(\R^3)}\bigg(\int_{\R^3}dx\, \rho^\varepsilon |v^\varepsilon|^2\bigg)
  \bigg(\int_{S}dx\, \rho^\varepsilon \varphi \bigg) \leq \mathfrak c_0
  \bigg(\int_{S} dx\,\rho^\varepsilon \varphi\bigg) \,,
\end{align*}
where the constant $\mathfrak c_0$ depends on $C_0$ and $\varphi$. This inequality leads to $ \int_{K} dx\, |\rho^\varepsilon| \leq \int_{S} dx\, \rho^\varepsilon \varphi \leq \exp(\mathfrak c_0 t)\int_{S} dx\, \rho_0^\varepsilon \varphi $, which shows that $\rho^\varepsilon \in L_{\rm loc}^\infty(\R_+; L_{\rm loc}^1(\R^3))$. It can also be shown from the second and third statements of  Lemma~\ref{lem:cv-rho-R}. Now, from the uniform bound \eqref{ineqC0-R} and energy inequality \eqref{nrj-ineq-mhdeps}-\eqref{dissip-D} with the pressure term $\Pi_3$ defined by \eqref{defPI-R}, we obtain $\Pi_3(\rho^\varepsilon) \in L_{\rm loc}^\infty(\R_+;L^1(\R^3))$ uniformly with respect to $\varepsilon$. From this bound and Lemma~\ref{lem:ORC} (with $f=\rho^\varepsilon$ and $\bar f=1$), we obtain, for any $T\in (0,+\infty)$,
\begin{align}
  \label{eqn:bdze}
 &\sup_{t\in[0,T]}  \int_{\R^3}dx \,\{ |\rho^\varepsilon-1|^2
\mathbbm{1}_{\{|\rho^\varepsilon-1|\leq \delta\}} + |\rho^\varepsilon-1|^\gamma
\mathbbm{1}_{\{|\rho^\varepsilon-1|>\delta\}} \} 
= \sup_{t \in [0,T]}\int_{\R^3} dx\, \mathfrak Z_{2,\delta}^{\gamma,1}(\rho^\varepsilon) \nonumber \\
& \hspace{2.5cm} \leq \sup_{t \in [0,T]} \frac{1}{\kappa_1}\int_{\R^3} dx\, \Uppi_{1,\gamma}(\rho^\varepsilon)
= \sup_{t \in [0,T]} \frac{(\gamma-1)\varepsilon^2}{\kappa_1  a} \int_{\R^3} dx\, \Pi_3(\rho^\varepsilon)
\leq\frac{ C_0(\gamma-1)\varepsilon^2}{\kappa_1  a}\,.
\end{align}
This inequality implies the bound in $L_{\rm loc}^\infty(\R_+; L_2^\gamma(\R^3))$ in the second part of the first  statement of Lemma~\ref{lem:cv-rho-R}, and also the strong convergence of $\rho^\varepsilon$ to $1$ in $L_{\rm loc}^\infty(\R_+, L_2^\gamma(\R^3))$ in the fourth statement of Lemma~\ref{lem:cv-rho-R}. To complete the proof of the second part of the first  statement of Lemma~\ref{lem:cv-rho-R}, we have to show that $L_2^\gamma(\R^3) \hookrightarrow H^{-\upalpha}(\R^3)$, with $\upalpha \geq 1/2$. This embedding is obvious for $\gamma=2$. For $\gamma \neq 2$, from the definition of the space  $L_2^\gamma(\R^3)$, the density $\rho^\varepsilon(t)-1$ can be splitted into parts $d_1(t)^\varepsilon:=(\rho^\varepsilon(t) -1) \mathbbm{1}_{\{|{\rho^\varepsilon(t)-1}|\leq \delta\}} \in L^2(\R^3)$  and $d_2^\varepsilon(t):=(\rho^\varepsilon(t) -1) \mathbbm{1}_{\{|{\rho^\varepsilon(t)-1}|> \delta\}} \in L^\gamma(\R^3)$. The part $d_1^\varepsilon(t)$ is obviously in $H^{-\upalpha}(\R^3)$ with $\upalpha \geq 1/2$.  For the part $d_2^\varepsilon(t)$ we proceed as follows. By Sobolev embeddings we have $H^\upalpha(\R^3)  \hookrightarrow L^{\gamma'}(\R^3)$ with $1/\gamma +1/\gamma'=1$, and $\upalpha>1/2\, $ since $\gamma>3/2$. By duality we then obtain $d_2^\varepsilon(t) \in L^\gamma(\R^3)  \hookrightarrow H^{-\upalpha}(\R^3)$. Therefore $(\rho^\varepsilon(t) -1) \in  H^{-\upalpha}(\R^3)$, with $\upalpha \geq 1/2$. We continue with the second assertion of  Lemma~\ref{lem:cv-rho-R}. Estimate \eqref{eqn:bdze} and the inequality $|\rho^\varepsilon -1|^2 \geq |\rho^\varepsilon-1|^\gamma$, for $\gamma\geq 2$ and $|\rho^\varepsilon-1| \leq \delta <1$, imply the first part of the second statement of Lemma~\ref{lem:cv-rho-R}. The second part of this second statement appeals to Lemma~\ref{lem:convex}. Indeed, from this lemma, for $\gamma \geq 2$, we obtain $|\rho^\varepsilon -1|^2 \leq \varepsilon^2(\gamma-1)/(\nu_1  a)\Pi_3(\rho^\varepsilon)$, which gives $\| \rho^\varepsilon -1\|_{ L_{\rm loc}^\infty(\R_+; L^2(\R^3))} \leq  \varepsilon \sqrt{C_0 (\gamma-1)/(\nu_1  a)}$. We continue with the third assertion of Lemma~\ref{lem:cv-rho-R}. Using Cauchy--Schwarz inequality and estimate \eqref{eqn:bdze}, we obtain, for any compact set $K\subset \R^3$, and any $T\in (0,+\infty)$,
\begin{align*}
 \sup_{t\in [0,T]} \|\rho^\varepsilon(t) -1\|_{L^\gamma(K)}^\gamma
  & \leq |K|^{1-\gamma/2}  \sup_{t\in [0,T]} \bigg(\int_K dx\, |\rho^\varepsilon -1|^2
  \mathbbm{1}_{\{|{\rho^\varepsilon-1}|\leq \delta\}} \bigg)^{\gamma/2} \\
  & \quad + \sup_{t\in [0,T]} \int_K dx\, |\rho^\varepsilon -1|^\gamma
\mathbbm{1}_{\{|{\rho^\varepsilon-1}|> \delta\}} \\
& \leq C(|K|, C_0,  a,  \gamma) (\varepsilon^\gamma + \varepsilon^2) \,,
\end{align*}
which justifies the third assertion of Lemma~\ref{lem:cv-rho-R}. Then, the strong convergence of $\rho^\varepsilon$ to $1$ in $L_{\rm loc}^\infty(\R_+, L_{\rm loc}^\gamma(\R^3))$ in the fourth statement of Lemma~\ref{lem:cv-rho-R}, is obtained from the first part of the second statement of Lemma~\ref{lem:cv-rho-R}, and the third statement of Lemma~\ref{lem:cv-rho-R}. We continue with the fifth statement of Lemma~\ref{lem:cv-rho-R}. The uniform bound in $L_{\rm loc}^\infty(\R_+; L_{\rm loc }^\kappa(\R^3))$ for $\varrho^\varepsilon$ comes from the second part of the second statement of Lemma~\ref{lem:cv-rho-R}, and the third statement of Lemma~\ref{lem:cv-rho-R}. For the uniform  bound of  $\varrho^\varepsilon$ in $L_{\rm loc}^\infty(\R_+; L_2^\kappa(\R^3))$, we distinguish two cases according to the value of $\gamma$. For $\gamma \geq 2$, since $\kappa=2$, the second part of the second statement of Lemma~\ref{lem:cv-rho-R} implies the fifth one. For $1<\gamma <2$, estimate \eqref{eqn:bdze} leads to
\[
\sup_{t\in[0,T]}  \int_{\R^3}dx \, \Big\{ \Big|\frac{\rho^\varepsilon-1}{\varepsilon}\Big|^2
\mathbbm{1}_{\{|\frac{\rho^\varepsilon-1}{\varepsilon}|\leq \frac{ \delta}{\varepsilon}\}} + 
\varepsilon^{\gamma-2}\Big|\frac{\rho^\varepsilon-1}{\varepsilon}\Big|^\gamma
\mathbbm{1}_{\{|\frac{\rho^\varepsilon-1}{\varepsilon}|> \frac{ \delta}{\varepsilon}\}}
\Big\} \leq \frac{C_0(\gamma-1)}{\kappa_1 a}\,,
\]
for any $T\in (0,+\infty)$. This last  estimate and  inequality $\varepsilon^{\gamma-2}>1$, imply the bound of $\varrho^\varepsilon$ in $L_{\rm loc}^\infty(\R_+; L_2^\kappa(\R^3))$. To complete the fifth assertion of Lemma~\ref{lem:cv-rho-R}, we observe, as above, that we have the embedding $L_2^\kappa(\R^3) \hookrightarrow H^{-\upalpha}(\R^3)$, with $\upalpha \geq 1/2$ and $\kappa=\min\{2,\gamma\}$. Finally, the sixth statement of Lemma~\ref{lem:cv-rho-R} is obtained from the fifth assertion and weak compactness properties. This ends the proof of Lemma~\ref{lem:cv-rho-R}.
\end{proof}  


\subsection{Compactness of $B^\varepsilon $ }
Here, we aim at proving the following lemma.
\begin{lemma}
  \label{lem:cv-Bperp-R}
  The sequence ${B^\varepsilon}$ satisfies the following properties.
  \begin{align*}
    & B^\varepsilon \xrightharpoonup{\quad} B \ \ \mbox{ \rm in } \  L_{\rm loc}^2(\R_+;L^6\cap H^1(\R^3))\!-\!\mbox{\rm weak} \cap  L_{\rm loc}^\infty(\R_+ ;L^2(\R^3))\!-\!\mbox{\rm weak--}\ast\,,\\
    & \nabla_\varepsilon \cdot B^\varepsilon   \xrightharpoonup{\quad}\nabla_\perp \cdot B_\perp =0 \ \ \mbox{ \rm in } \  L_{\rm loc}^2(\R_+; L^2(\R^3))\!-\!\mbox{\rm weak}\,,\\
    &  B_\perp^\varepsilon \xrightarrow{\quad} B_\perp \ \ \mbox{ \rm in } \  L_{\rm loc}^r(\R_+; L_{\rm loc}^2(\R^3))\!-\!\mbox{\rm strong}\,, \quad 1\leq r <\infty\,. 
  \end{align*}
\end{lemma}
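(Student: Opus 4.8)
The plan is to follow the three-step structure of the periodic Lemma~\ref{lem:cv-Bperp}, the only genuine novelty being that in the whole space the compactness at infinity is lost, so strong convergence can only be \emph{local} in space and must be recovered through spatial truncation and the Rellich--Kondrachov embedding.

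\emph{First two assertions.} These are obtained exactly as in the torus case. From the energy inequality \eqref{nrj-ineq-mhdeps}-\eqref{dissip-D} with the pressure term $\Pi_3$ of \eqref{defPI-R}, the sequence $B^\varepsilon$ is bounded, uniformly in $\varepsilon$, in $L_{\rm loc}^\infty(\R_+; L^2(\R^3))$ and, through the dissipation $\eta_\perp^\varepsilon|\nabla_\perp B^\varepsilon|^2 + \eta_\myparallel^\varepsilon|\partial_\myparallel B^\varepsilon|^2$ together with the normalizations \eqref{resis-norm}, in $L_{\rm loc}^2(\R_+; H^1(\R^3))$. The Sobolev embedding $H^1(\R^3)\hookrightarrow L^6(\R^3)$ then yields the bound in $L_{\rm loc}^2(\R_+; L^6(\R^3))$, and weak compactness furnishes the first line. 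For the second line, the constraint $\nabla_\varepsilon\cdot B^\varepsilon = 0$ reads $\nabla_\perp\cdot B_\perp^\varepsilon = -\varepsilon\,\partial_\myparallel B_\myparallel^\varepsilon$ in $\mathcal D'(\R^3)$, so that $\|\nabla_\perp\cdot B_\perp^\varepsilon\|_{L_{\rm loc}^2(\R_+;L^2(\R^3))}\le \varepsilon\,\|\partial_\myparallel B_\myparallel^\varepsilon\|_{L_{\rm loc}^2(\R_+;L^2(\R^3))} = \mathcal O(\varepsilon)$; this quantity converges weakly to $\nabla_\perp\cdot B_\perp$ and simultaneously vanishes, whence $\nabla_\perp\cdot B_\perp = 0$.

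\emph{Third assertion: local strong convergence.} Here I cannot invoke Lemma~\ref{lem:ALL} directly, because $H^1(\R^3)\hookrightarrow L^2(\R^3)$ is not compact. I would fix an arbitrary cutoff $\chi\in\mathscr C_c^\infty(\R^3)$ and work with $\chi B_\perp^\varepsilon$. The product rule $\nabla(\chi B_\perp^\varepsilon) = \chi\nabla B_\perp^\varepsilon + (\nabla\chi)\, B_\perp^\varepsilon$ shows, using the first two assertions, that $\chi B_\perp^\varepsilon$ is bounded in $L_{\rm loc}^\infty(\R_+; L^2(\R^3))\cap L_{\rm loc}^2(\R_+; H^1(\R^3))$, with support in a fixed compact set. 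To control $\partial_t(\chi B_\perp^\varepsilon)$ I use equation \eqref{eq:Bperp-eps}: as in the periodic case there is no singular $1/\varepsilon$ term; the diffusive contributions are bounded in $L_{\rm loc}^2(\R_+;H^{-1}(\R^3))$, while the bilinear terms in $B^\varepsilon$ and $v^\varepsilon$ are bounded, by H\"older's inequality and the bound $v^\varepsilon\in L_{\rm loc}^2(\R_+;L^6(\R^3))$ (coming from the energy inequality through $\dot H^1(\R^3)\hookrightarrow L^6(\R^3)$), in $L_{\rm loc}^1(\R_+;L^3(\R^3))$. After multiplication by $\chi$ these products have compact support, so there the chain $L^3\hookrightarrow L^2\hookrightarrow H^{-1}$ applies and, after one spatial derivative, $\partial_t(\chi B_\perp^\varepsilon)\in L_{\rm loc}^1(\R_+;H^{-1}(\R^3))$; the commutator terms generated by $\nabla\chi$ are of lower order and bounded in the same space.

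\emph{Conclusion.} With these three bounds I apply Lemma~\ref{lem:ALL} to the sequence $\chi B_\perp^\varepsilon$, with $\mathfrak B_0 = H^1(\R^3)$, $\mathfrak B = L^2(\R^3)$, $\mathfrak B_1 = H^{-1}(\R^3)$, all restricted to functions supported in the fixed compact set so that $\mathfrak B_0\hookrightarrow\mathfrak B$ is compact by Rellich--Kondrachov, and $p=r$, $q=\infty$, to deduce that $\chi B_\perp^\varepsilon$ converges strongly in $L_{\rm loc}^r(\R_+;L^2(\R^3))$ for every $1\le r<\infty$. Since $\chi$ is an arbitrary compactly supported cutoff, this is precisely the announced convergence $B_\perp^\varepsilon\to B_\perp$ in $L_{\rm loc}^r(\R_+;L_{\rm loc}^2(\R^3))$. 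I expect the only delicate point to be the careful treatment of the localization in the time-derivative estimate, namely verifying that truncating the $L^3$-valued bilinear terms and absorbing the $\nabla\chi$ commutators indeed lands $\partial_t(\chi B_\perp^\varepsilon)$ in $L_{\rm loc}^1(\R_+;H^{-1}(\R^3))$; everything else is a direct transcription of the periodic argument.
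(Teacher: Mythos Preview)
Your proposal is correct and takes essentially the same approach as the paper, whose proof reads in full: ``The proof of Lemma~\ref{lem:cv-Bperp-R} is similar to the proof of Lemma~\ref{lem:cv-Bperp}.'' You correctly identify and make explicit the spatial localization via a cutoff $\chi$ that is needed in the whole-space case (since $H^1(\R^3)\hookrightarrow L^2(\R^3)$ is not compact), a step the paper leaves implicit in the word ``similar''.
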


\begin{proof}
The proof of Lemma~\ref{lem:cv-Bperp-R} is similar to the proof of Lemma~\ref{lem:cv-Bperp}.
\end{proof}
  

\subsection{Compactness of $v^\varepsilon $ and $\rho^\varepsilon v^\varepsilon $}
Here, we aim at proving the following Lemma.

\begin{lemma}
  \label{lem:cv-vperp-R} Assume $\gamma > 3/2$. Let $\mathfrak s:=\max\{1/2,3/\gamma-1\} \in [1/2,1)$. The sequences ${v^\varepsilon}$ and $\rho^\varepsilon v^\varepsilon $ satisfy the following properties.
  \begin{align*}
    & v^\varepsilon \xrightharpoonup{\quad} v \ \ \mbox{ \rm in } \  L_{\rm loc}^2(\R_+; L^6\cap H^1(\R^3))\!-\!\mbox{\rm weak}\,,\\
    & \nabla_\varepsilon \cdot v^\varepsilon   \xrightharpoonup{\quad}\nabla_\perp \cdot v_\perp =0 \ \ \mbox{ \rm in } \  L_{\rm loc}^2(\R_+; L^2(\R^3))\!-\!\mbox{\rm weak}\,,\\
    & \Q_\perp v_\perp^\varepsilon \xrightharpoonup{\quad} 0 \ \ \mbox{ \rm in } \  L_{\rm loc}^2(\R_+; L^6\cap H^{1}(\R^3))\!-\!\mbox{\rm weak}\,, \\
    & \P_\perp v_\perp^\varepsilon \xrightarrow{\quad} \P_\perp v_\perp=  v_\perp \ \ \mbox{ \rm in } \  L_{\rm loc}^2(\R_+; L_{\rm loc}^p\cap H_{\rm loc}^{s}(\R^3))\!-\!\mbox{\rm strong}\,, \quad 1\leq p <6\,, \quad 0\leq s <1\,, \\
    & \rho^\varepsilon v^\varepsilon  \xrightharpoonup {\quad} v
    \ \ \mbox{ \rm in } \  L_{\rm loc}^2(\R_+; L_{\rm loc}^q \cap H^{-\upsigma}(\R^3))\!-\!\mbox{\rm weak}\,, \quad \forall\, \upsigma \geq \mathfrak s\,, \quad  q={6\gamma}/({6+\gamma})\,,\\
    & \rho^\varepsilon v^\varepsilon-v^\varepsilon  \xrightarrow{\quad} 0
    \ \ \mbox{ \rm in } \  L_{\rm loc}^2(\R_+; L_{\rm loc}^q(\R^3))\!-\!\mbox{\rm strong}\,, \quad   q={6\gamma}/({6+\gamma})\,, \\
    & \P_\perp (\rho^\varepsilon v_\perp^\varepsilon)  \xrightarrow{\quad} \P_\perp v_\perp=  v_\perp
    \ \ \mbox{ \rm in } \  L_{\rm loc}^2(\R_+; L_{\rm loc}^q(\R^3))\!-\!\mbox{\rm strong}\,, \quad   q={6\gamma}/({6+\gamma})\,.
  \end{align*}
\end{lemma}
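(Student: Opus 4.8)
The plan is to transcribe the architecture of the proof of Lemma~\ref{lem:cv-vperp}, adapting each step to the fact that on $\R^3$ the density is controlled only locally in space and that the Poincar\'e--Wirtinger inequality is no longer available. I would first collect the uniform bounds coming from the energy inequality \eqref{nrj-ineq-mhdeps}: $\nabla v^\varepsilon$ bounded in $L_{\rm loc}^2(\R_+; L^2(\R^3))$ and $\sqrt{\rho^\varepsilon}\,v^\varepsilon$ bounded in $L_{\rm loc}^\infty(\R_+; L^2(\R^3))$. The homogeneous Sobolev embedding $\dot H^1(\R^3)\hookrightarrow L^6(\R^3)$ then yields, uniformly in $\varepsilon$, a bound for $v^\varepsilon$ in $L_{\rm loc}^2(\R_+; L^6(\R^3))$.

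The main obstacle, and the genuinely new point compared with the torus, is to upgrade this to a uniform $L^2(\R^3)$ bound without a mean value to subtract. Here I would exploit the Orlicz structure furnished by Lemma~\ref{lem:cv-rho-R}, namely $\rho^\varepsilon-1\in L_2^\gamma(\R^3)$ with $\rho^\varepsilon\to 1$ strongly. Writing $|v^\varepsilon|^2=\rho^\varepsilon|v^\varepsilon|^2+(1-\rho^\varepsilon)|v^\varepsilon|^2$, the first term is bounded by the energy. For the second, I cut at a level $\delta$: on $\{|1-\rho^\varepsilon|\le\delta\}$ one has $\int(1-\rho^\varepsilon)|v^\varepsilon|^2\le\delta\|v^\varepsilon\|_{L^2}^2$, to be absorbed; on $\{|1-\rho^\varepsilon|>\delta\}$, where $1-\rho^\varepsilon$ is globally controlled in $L^\gamma(\R^3)$, H\"older together with the interpolation of $L^{2\gamma/(\gamma-1)}(\R^3)$ between $L^2$ and $L^6$ (legitimate since $2\gamma/(\gamma-1)\in(2,6]$ for $\gamma\ge 3/2$) and Young's inequality bound the integral by $\tfrac14\|v^\varepsilon\|_{L^2}^2+C\|v^\varepsilon\|_{L^6}^2$. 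Choosing $\delta$ small enough to absorb $\delta\|v^\varepsilon\|_{L^2}^2$ and $\tfrac14\|v^\varepsilon\|_{L^2}^2$ into the left-hand side and integrating in time over any $[0,T]$ delivers the uniform $L^2$ bound, hence $v^\varepsilon$ bounded in $L_{\rm loc}^2(\R_+; H^1(\R^3))$ and the first statement by weak compactness.

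Granted these bounds, the remaining weak statements follow as in the periodic case. The second line comes from passing to the limit in the weak mass equation \eqref{eq:rho-eps}, writing $\rho^\varepsilon v^\varepsilon=(\rho^\varepsilon-1)v^\varepsilon+v^\varepsilon$ and using that the first summand vanishes strongly in $L_{\rm loc}^2(\R_+; L_{\rm loc}^q(\R^3))$ with $1/q=1/\gamma+1/6$ (H\"older plus Lemma~\ref{lem:cv-rho-R}); the limit relation $\nabla_\perp\cdot v_\perp=0$ forces $v_\perp=\P_\perp v_\perp$, whence the third line $\Q_\perp v_\perp^\varepsilon\rightharpoonup\Q_\perp v_\perp=0$ by continuity of $\Q_\perp$. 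The fifth line is the uniform bound $\rho^\varepsilon v^\varepsilon\in L_{\rm loc}^2(\R_+; L_{\rm loc}^q(\R^3))$ (H\"older), the weak limit being identified through the strong convergence of $\rho^\varepsilon$ and the weak convergence of $v^\varepsilon$, and the $H^{-\upsigma}$ assertion following from the Orlicz decomposition of $\rho^\varepsilon-1$ together with the Sobolev duality $L^q(\R^3)\hookrightarrow H^{-\mathfrak s}(\R^3)$; the sixth line is the single H\"older estimate $\|\rho^\varepsilon v^\varepsilon-v^\varepsilon\|_{L_{\rm loc}^2 L_{\rm loc}^q}\le\|\rho^\varepsilon-1\|_{L_{\rm loc}^\infty L_{\rm loc}^\gamma}\,\|v^\varepsilon\|_{L_{\rm loc}^2 L^6}$.

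The core is the fourth (strong) statement, which I would obtain by the localized version of the periodic argument. Applying $\P_\perp$ to the momentum equation \eqref{eq:vperp-eps} gives the analogue of \eqref{eqn:ptPpvp}, from which one reads a uniform bound for $\partial_t(\P_\perp(\rho^\varepsilon v_\perp^\varepsilon))$ in $L_{\rm loc}^1(\R_+;(W^{-\delta-1,1}+H^{-1}+L^2)(\R^3))$ via $\P_\perp(L^1)\hookrightarrow W^{-\delta,1}$ and the energy estimate; together with the weak convergence of $\P_\perp(\rho^\varepsilon v_\perp^\varepsilon)$ and the spatial equicontinuity of $\P_\perp v_\perp^\varepsilon$, Lemma~\ref{lem:compactness} yields $\P_\perp(\rho^\varepsilon v_\perp^\varepsilon)\cdot\P_\perp v_\perp^\varepsilon\rightharpoonup|v_\perp|^2$ in $\mathcal D'$. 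Because the density bounds are only local, I would run the $\limsup$ argument against a spatial cutoff $\chi\in\mathscr{C}_c^\infty(\R^3)$, the correction term $\int\chi\,\P_\perp v_\perp^\varepsilon\cdot\P_\perp((1-\rho^\varepsilon)v_\perp^\varepsilon)$ vanishing by the sixth statement and H\"older (since $1/6+1/q<1$), which gives $\P_\perp v_\perp^\varepsilon\to v_\perp$ strongly in $L_{\rm loc}^2(\R_+;L_{\rm loc}^2(\R^3))$; the iterated Gagliardo--Nirenberg interpolations \eqref{eqn:GNCSI-1}--\eqref{eqn:GNCSI-2}, applied on compact sets, then upgrade this to $L_{\rm loc}^p$ for $1\le p<6$ and $H_{\rm loc}^s$ for $0\le s<1$. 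Finally the seventh line follows by the triangle inequality from the fourth and sixth lines and the continuity of $\P_\perp$. The single delicate adaptation is thus the $L^2(\R^3)$ absorption of the second paragraph; everything else is a spatially localized rewriting of the periodic proof.
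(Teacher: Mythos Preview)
Your proposal is correct and follows essentially the same architecture as the paper's proof. The only noticeable variation is in the $L^2(\R^3)$ absorption step: the paper decomposes $v^\varepsilon=v^\varepsilon\mathbbm{1}_{\{|\rho^\varepsilon-1|\le\delta\}}+v^\varepsilon\mathbbm{1}_{\{|\rho^\varepsilon-1|>\delta\}}$, bounds the first piece by $(1-\delta)^{-1}\|\sqrt{\rho^\varepsilon}v^\varepsilon\|_{L^2}^2$, and for the second piece uses that $\|(\rho^\varepsilon-1)\mathbbm{1}_{\{|\rho^\varepsilon-1|>\delta\}}\|_{L^\gamma}\lesssim\varepsilon^{2/\gamma}$, so that the absorption is achieved by taking $\varepsilon$ small; you instead write $|v^\varepsilon|^2=\rho^\varepsilon|v^\varepsilon|^2+(1-\rho^\varepsilon)|v^\varepsilon|^2$ and absorb via a fixed $\delta$ and the weighted Young inequality, using only the uniform boundedness (not the smallness) of the $L^\gamma$ norm on $\{|\rho^\varepsilon-1|>\delta\}$. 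Both are valid; the paper's route additionally yields the quantitative byproduct $\varepsilon^{-\upeta}v^\varepsilon\mathbbm{1}_{\{|\rho^\varepsilon-1|>\delta\}}\in L_{\rm loc}^2(\R_+;L^2(\R^3))$ for $0\le\upeta\le 2/\gamma$, which is recorded but not needed for the lemma itself. For the remaining statements the paper simply refers back to the periodic proof, exactly as you do.
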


\begin{proof}
  We start with the first statement of Lemma~\ref{lem:cv-vperp-R}. The proof of the first statement of  Lemma~\ref{lem:cv-vperp-R} is similar to the one of the first statement of  Lemma~\ref{lem:cv-vperp} except for the bound $ L_{\rm loc}^2(\R_+; L^2(\R^3))$. We consider the decomposition $v^\varepsilon=v_1^\varepsilon + v_2^\varepsilon$, where $v_1^\varepsilon := v^\varepsilon \mathbbm{1}_{\{|\rho^\varepsilon -1| \leq \delta \}}$, and $v_2^\varepsilon := v^\varepsilon \mathbbm{1}_{\{|\rho^\varepsilon -1| > \delta \}}$. For $v_1^\varepsilon$, since $|\rho^\varepsilon -1| \leq \delta$, we obtain $1-\delta \leq |\rho^\varepsilon|$ and thus
\begin{equation}
\|v_1^\varepsilon\|_{L_{\rm loc}^\infty(\R^+;L^2(\R^3))} \leq (1-\delta)^{-1/2}\|\rho^\varepsilon |v^\varepsilon|^2\|_{L_{\rm loc}^\infty(\R^+;L^1(\R^3))}^{1/2}
\leq \sqrt{C_0/(1-\delta)}\,,
\label{eqn:HGN-1}
\end{equation}
where we have used energy inequality \eqref{nrj-ineq-mhdeps}-\eqref{dissip-D} with the pressure term $\Pi_3$ defined by \eqref{defPI-R}.
For $v_2^\varepsilon$, using in order, H\"older inequality ($1/\gamma + 1/\gamma'=1$), estimate \eqref{eqn:bdze}, Gagliardo--Nirenberg interpolation inequality ($1/(2\gamma') = \uptheta/2 +  (1-\uptheta)(1/2-1/3) $, i.e.,  $\uptheta = 1-3/(2\gamma) \in (0,1)$, since $3/2 <\gamma <\infty$), and Young inequality ($ab \leq \uptheta a^{1/\uptheta} + (1-\uptheta) b^{1/(1-\uptheta)}$), we obtain
\begin{align}
  \int_{\R^3}dx \, |v_2^\varepsilon|^2 & \leq \delta^{-1} \int_{\R^3}dx \, |v^\varepsilon|^2
  |\rho^\varepsilon -1|\mathbbm{1}_{\{|\rho^\varepsilon -1| > \delta \}} \nonumber \\
  &\leq \delta^{-1}\|(\rho^\varepsilon -1)\mathbbm{1}_{\{|\rho^\varepsilon -1| > \delta \}} \|_{L^{\gamma}(\R^3)}\| v^\varepsilon\|_{L^{2\gamma'}(\R^3)}^2\nonumber \\
  & \leq \delta^{-1}\bigg(\frac{C_0(\gamma-1)}{\kappa_1  a}\bigg)^{1/\gamma}\varepsilon^{2/\gamma}\|v^\varepsilon\|_{L^{2\gamma'}(\R^3)}^2  \nonumber \\
  & \leq \delta^{-1}\bigg(\frac{C_0(\gamma-1)}{\kappa_1  a}\bigg)^{1/\gamma}\varepsilon^{2/\gamma} \|v^\varepsilon\|_{L^{2}(\R^3)}^{2\uptheta}
  \|\nabla v^\varepsilon\|_{L^{2}(\R^3)}^{2(1-\uptheta)} \nonumber \\
  & \leq \delta^{-1}\bigg(\frac{C_0(\gamma-1)}{\kappa_1  a}\bigg)^{1/\gamma}\varepsilon^{2/\gamma} \big( \uptheta\|v^\varepsilon\|_{L^{2}(\R^3)}^{2}
  +(1-\uptheta)\|\nabla v^\varepsilon\|_{L^{2}(\R^3)}^{2}\big)\,.
  \label{eqn:HGN-2}
\end{align}
From energy inequality \eqref{nrj-ineq-mhdeps}-\eqref{dissip-D} with the pressure term $\Pi_3$, we obtain $\nabla v^\varepsilon \in L_{\rm loc}^2(\R_+;L^2(\R^3))$,
uniformly with respect to $\varepsilon$. Using this last bound and \eqref{eqn:HGN-1}, a local time integration of estimate \eqref{eqn:HGN-2} implies that there exist two constants $K_0$ and $K_1$ (depending on $C_0$, $\gamma$, $\kappa_1$, $a$ and $\delta$) such that
\begin{equation}
\|v^\varepsilon\|_{L_{\rm loc}^2(\R^+;L^2(\R^3))}^2 \leq K_0 + K_1\varepsilon^{2/\gamma}  \|v^\varepsilon\|_{L_{\rm loc}^2(\R^+;L^2(\R^3))}^2\,. 
\label{eqn:HGN-3}
\end{equation}
For $\varepsilon$ small enough, this last inequality implies $v^\varepsilon \in L_{\rm loc}^2(\R_+;L^2(\R^3))$ and thus $v^\varepsilon \in L_{\rm loc}^2(\R_+;L^6\cap H^1(\R^3))$. We note that estimates \eqref{eqn:HGN-2}-\eqref{eqn:HGN-3} also imply $\varepsilon^{-\upeta} v_2^\varepsilon \in L_{\rm loc}^2(\R_+;L^2(\R^3))$, uniformly with respect to $\varepsilon$, for  $0\leq \upeta \leq 2/3$ (since $\gamma >3/2$). We continue with the other statements of Lemma~\ref{lem:cv-vperp-R}. Using Lemma~\ref{lem:cv-rho-R} and energy inequality \eqref{nrj-ineq-mhdeps}-\eqref{dissip-D} with the pressure term $\Pi_3$ defined by \eqref{defPI-R}, the proof of statements two to seven of Lemma~\ref{lem:cv-vperp-R} is similar to the proof of their counterparts of Lemma~\ref{lem:cv-vperp} for periodic domains.
\end{proof}

We continue with an auxiliary lemma, which will be useful to pass to the limit in the term $ \rho^\varepsilon v_\perp^\varepsilon \otimes v_\perp^\varepsilon$. Contrary to the periodic case, we are not able to get the strong convergence \eqref{eqn:GPR} for $U^\varepsilon$. Indeed, we can still prove locally  strong compactness in space-time for $\, \mathcal U^\varepsilon$ (using an Aubin--Lions theorem and estimates \eqref{eqlem:AUXR3}-\eqref{eqlem:AUXR4} below), but returning to $U^\varepsilon$, via the group of isometry  $\mathcal S$, such local strong compactness seems unaccessible since $ \mathcal S$ is non local. To recover some strong compactness for $U^\varepsilon$ we consider a truncated version of $U^\varepsilon$ and use the uniform integrability in space of such truncated sequences as well as the energy inequality and the strong convergence of $\rho^\varepsilon$.
\begin{lemma}
  \label{lem:auxilR}
  Assume $\gamma >3/2$. Let us define  $U^\varepsilon:={}^t(\phi^\varepsilon, {}^t\Phi^\varepsilon)$, where $\phi^\varepsilon := b \varrho^\varepsilon + B_\myparallel^\varepsilon,\,$ and $\ \Phi^\varepsilon:=\Q_\perp(\rho^\varepsilon v_\perp^\varepsilon),\,$  with $\ \varrho^\varepsilon:= (\rho^\varepsilon-1)/\varepsilon\, $. We also define $\, \mathcal U^\varepsilon := \mathcal S (-t/\varepsilon)  U^\varepsilon$, where the group of isometry $\{ \mathcal S(\tau)\,; \tau \in \R \}$ is the same as the one defined in Section~\ref{ss:cv-rvperp-P} with now $\Omega=\R^3$, except that we substitute $c$ for  $c^\varepsilon$. Finally, we recall that $\kappa = \min\{2,\gamma\}$, and we set $\upkappa= \max\{1/2,3/(2\gamma)\} \in [1/2,1)$ and $\mathfrak s:=\max\{1/2,3/\gamma -1 \} \in [1/2,1)$. Then,
\begin{itemize}
  \item[1.] We have the following uniform (with respect to $\varepsilon$) bounds
    \begin{align}
      & \phi^\varepsilon \in  L_{\rm loc}^\infty(\R_+; H^{-\upalpha}(\R^3))\,, \quad \upalpha \geq \tfrac12\,, \label{eqlem:AUXR1} \\
      & \rho^\varepsilon v^\varepsilon  \in  L_{\rm loc}^\infty\big(\R_+; \big(L^2+L^{2\gamma/(\gamma+1)}\big)\cap L_{\rm loc}^{2\gamma/(\gamma +1)} \cap H^{-\upbeta}(\R^3)\big)\,,
      \quad \upbeta \geq \tfrac{3}{2\gamma}\,.   \label{eqlem:AUXR2} \\
       & \varrho^\varepsilon v^\varepsilon  \in  L_{\rm loc}^2\big(\R_+; \big(L^{3/2}+L^{6\kappa/(6+\kappa)}\big)\cap L_{\rm loc}^{6\kappa/(6+\kappa)} \cap H^{-\varkappa}(\R^3)\big)\,,
      \quad \varkappa \geq \mathfrak s \,.   \label{eqlem:AUXR2bis}
    \end{align}
  \item[2.] There exists a constant $C$, independent of $\varepsilon$, such that
  \begin{align}
    & \| \, \mathcal U^\varepsilon\|_{L_{\rm loc}^\infty(\R_+; H^{-\sigma}(\R^3))} \leq C\,, \quad  \sigma \geq \upkappa\,, \label{eqlem:AUXR3} \\
      &  \| \partial_t\, \mathcal U^\varepsilon\|_{L_{\rm loc}^2(\R_+; H^{-r}(\R^3))} \leq C\,, \quad r> \big(\tfrac{5}{2}\big)^+\,,   \label{eqlem:AUXR4} 
  \end{align}
\item[3.] There exists a function $\,\widetilde{\mathcal U}^\varepsilon:={}^t(\widetilde{\psi}^\varepsilon, {}^t\widetilde{\Psi}^\varepsilon) \in L_{\rm loc}^\infty(\R_+;L^2(\R^3;\R^3))$,
  a constant  $C$, independent of $\varepsilon$, and a function $\upomega : \R_+ \mapsto \R_+$, continuous in the neighborhood of zero, with $\, \upomega(\varepsilon) \rightarrow 0$, as $\varepsilon \rightarrow 0_+$, such that,  for $\sigma \geq \upkappa$,  
 \begin{align}
   &  \| \,\widetilde{\mathcal U}^\varepsilon\|_{L_{\rm loc}^\infty(\R_+;L^2(\R^3)} \leq C\,, \label{eqlem:AUXR5} \\
   & \| \, \mathcal U^\varepsilon - \,\widetilde{\mathcal U}^\varepsilon\|_{L_{\rm loc}^\infty(\R_+;H^{-\sigma}(\R^3))} \lesssim \upomega(\varepsilon)\,,   \label{eqlem:AUXR6} \\
   &  \| U^\varepsilon - \mathcal S (t/\varepsilon)\,\widetilde{\mathcal U}^\varepsilon \|_{L_{\rm loc}^\infty(\R_+;H^{-\sigma}(\R^3))} \lesssim \upomega(\varepsilon)\,,  \label{eqlem:AUXR7}\\
   & \| \Q_\perp v_\perp^\varepsilon - \mathcal S_2 (t/\varepsilon)\,\widetilde{\mathcal U}^\varepsilon \|_{L_{\rm loc}^2(\R_+;H^{-\sigma}(\R^3))} \lesssim \upomega(\varepsilon) + \varepsilon\,.  \label{eqlem:AUXR8}
  \end{align}
\end{itemize}
\end{lemma}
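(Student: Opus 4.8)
The plan is to handle the three points in turn, leaning on the compactness already established in Lemmas~\ref{lem:cv-rho-R}, \ref{lem:cv-Bperp-R} and \ref{lem:cv-vperp-R}, on the energy inequality \eqref{nrj-ineq-mhdeps}-\eqref{dissip-D} with the pressure term $\Pi_3$, and on the fact that the group $\mathcal S$ from Section~\ref{ss:cv-rvperp-P} (now with the constant $c$ in place of $c^\varepsilon$) is an isometry on every $H^\alpha(\R^3)$. For Point~1, \eqref{eqlem:AUXR1} is immediate: $\varrho^\varepsilon$ is bounded in $L_{\rm loc}^\infty(\R_+;L_2^\kappa\cap H^{-\upalpha})$ and $B_\myparallel^\varepsilon$ in $L_{\rm loc}^\infty(\R_+;L^2)\hookrightarrow L_{\rm loc}^\infty(\R_+;H^{-\upalpha})$, so $\phi^\varepsilon=b\varrho^\varepsilon+B_\myparallel^\varepsilon$ inherits the $H^{-\upalpha}$ bound. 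For \eqref{eqlem:AUXR2} and \eqref{eqlem:AUXR2bis} the key device is the factorisation $\rho^\varepsilon v^\varepsilon=\sqrt{\rho^\varepsilon}\,(\sqrt{\rho^\varepsilon}v^\varepsilon)$ together with the splitting between the region where $\rho^\varepsilon$ is close to $1$ (momentum controlled in $L^2$ by $\sqrt{\rho^\varepsilon}v^\varepsilon$, uniformly in time) and the region where $\rho^\varepsilon$ is large (use $\sqrt{\rho^\varepsilon}\in L^{2\gamma}_{\rm loc}$ and H\"older to land in $L^{2\gamma/(\gamma+1)}$); the same bookkeeping with $\varrho^\varepsilon\in L_2^\kappa$ and $v^\varepsilon\in L^2_{\rm loc}(\R_+;L^6)$ yields \eqref{eqlem:AUXR2bis}. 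The negative-Sobolev embeddings with thresholds $\upbeta\geq 3/(2\gamma)$ and $\varkappa\geq\mathfrak s$ then follow by the duality arguments already used in Lemmas~\ref{lem:cv-rho-R} and \ref{lem:cv-vperp-R}.

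For Point~2, since $\mathcal S$ is an isometry, \eqref{eqlem:AUXR3} reduces to the boundedness of $U^\varepsilon={}^t(\phi^\varepsilon,{}^t\Q_\perp(\rho^\varepsilon v_\perp^\varepsilon))$ in $H^{-\upkappa}$, which is exactly Point~1 (with $\upkappa=\max\{1/2,3/(2\gamma)\}$ the larger of the two exponents) plus the $H^\alpha$-continuity of $\Q_\perp$. For \eqref{eqlem:AUXR4} I would reproduce Step~1 of Lemma~\ref{lem:UG}: writing $\partial_t U^\varepsilon-\tfrac1\varepsilon\mathcal L U^\varepsilon=F^\varepsilon$ with $F^\varepsilon$ as in \eqref{eqn:Feps}, one gets $\partial_t\,\mathcal U^\varepsilon=\mathcal S(-t/\varepsilon)F^\varepsilon$, and each term of $F^\varepsilon$ is estimated in $H^{-r}$, $r>5/2$, by H\"older and the energy bounds; the isometry transfers the bound to $\partial_t\,\mathcal U^\varepsilon$.

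Point~3 is where the whole-space difficulty is concentrated, and it is the step I expect to be the main obstacle. The periodic argument extracted a strong limit $\mathcal U$ of $\mathcal U^\varepsilon$ and reconstructed $U^\varepsilon$ via $\mathcal S$; here, although an Aubin--Lions argument fed by \eqref{eqlem:AUXR3}-\eqref{eqlem:AUXR4} still gives local space-time compactness of $\mathcal U^\varepsilon$, the non-locality of $\mathcal S$ forbids pushing that compactness back onto $U^\varepsilon$. The remedy is to retain an $\varepsilon$-dependent, $L^2$-regular \emph{truncated} profile. I would set $\widetilde U_R^\varepsilon:={}^t\big(\phi_R^\varepsilon,\,{}^t\Q_\perp(\rho^\varepsilon v_\perp^\varepsilon\,\mathbbm 1_{\rho^\varepsilon\leq R})\big)$ with $\phi_R^\varepsilon:=(b\varrho^\varepsilon+B_\myparallel^\varepsilon)\mathbbm 1_{\rho^\varepsilon\leq R}$, at a fixed finite level $R$, and $\widetilde{\mathcal U}^\varepsilon:=\mathcal S(-t/\varepsilon)\widetilde U_R^\varepsilon$. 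On $\{\rho^\varepsilon\leq R\}$ the bound $\sqrt{\rho^\varepsilon}\leq\sqrt R$ combined with $\sqrt{\rho^\varepsilon}v^\varepsilon,B_\myparallel^\varepsilon\in L_{\rm loc}^\infty(\R_+;L^2)$ and the truncated density estimate \eqref{eqn:bdze} (which controls $\varrho^\varepsilon\mathbbm 1_{\rho^\varepsilon\leq R}$ in $L^2$ exactly as in the periodic \eqref{eqn:varrhoeps-est}) gives the uniform bound \eqref{eqlem:AUXR5}, with a constant $C=C(R)$, after applying the isometry.

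Finally, the three error estimates collapse, through the identities $\mathcal U^\varepsilon-\widetilde{\mathcal U}^\varepsilon=\mathcal S(-t/\varepsilon)(U^\varepsilon-\widetilde U_R^\varepsilon)$ and $\mathcal S(t/\varepsilon)\widetilde{\mathcal U}^\varepsilon=\widetilde U_R^\varepsilon$ together with the isometry, to controlling $U^\varepsilon-\widetilde U_R^\varepsilon$ in $H^{-\sigma}$. Its first component $(b\varrho^\varepsilon+B_\myparallel^\varepsilon)\mathbbm 1_{\rho^\varepsilon>R}$ and the truncation tail $\Q_\perp(\rho^\varepsilon v_\perp^\varepsilon\mathbbm 1_{\rho^\varepsilon>R})$ of the second are made small, uniformly on compact time intervals, by the strong convergence $\rho^\varepsilon\to 1$ and the uniform integrability of $\varrho^\varepsilon$, $B^\varepsilon$ and $(\rho^\varepsilon)^\gamma$ furnished by the De la Vall\'ee Poussin criterion; this produces the modulus $\upomega(\varepsilon)$ in \eqref{eqlem:AUXR6}-\eqref{eqlem:AUXR7}. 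The extra $+\varepsilon$ in \eqref{eqlem:AUXR8} comes from replacing momentum by velocity, namely $\Q_\perp v_\perp^\varepsilon-\Q_\perp(\rho^\varepsilon v_\perp^\varepsilon)=\Q_\perp((1-\rho^\varepsilon)v_\perp^\varepsilon)=-\varepsilon\,\Q_\perp(\varrho^\varepsilon v_\perp^\varepsilon)$, which is $O(\varepsilon)$ in $H^{-\varkappa}$ by \eqref{eqlem:AUXR2bis}. The delicate part throughout is to keep every smallness estimate uniform in $\varepsilon$ and in time while simultaneously keeping the $L^2$ constant in \eqref{eqlem:AUXR5} finite: this forces the dichotomy in the choice of $R$ inherited from Lemma~\ref{lem:cv-rho-R} (bounded for $\gamma<2$, unbounded for $\gamma\geq 2$) and requires the tail control to be quantified through \eqref{eqn:bdze} rather than through the merely weak compactness available in the unbounded domain.
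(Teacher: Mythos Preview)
Your proposal is correct and follows essentially the same route as the paper: the same splitting for $\rho^\varepsilon v^\varepsilon$ and $\varrho^\varepsilon v^\varepsilon$ in Point~1, the same $\partial_t\,\mathcal U^\varepsilon=\mathcal S(-t/\varepsilon)F^\varepsilon$ reduction in Point~2, and in Point~3 the same truncated profile $\widetilde{\mathcal U}^\varepsilon=\mathcal S(-t/\varepsilon)\widetilde U_R^\varepsilon$ with tails controlled via De~la~Vall\'ee Poussin uniform integrability and the strong convergence $\rho^\varepsilon\to 1$. Two minor remarks: your derivation of \eqref{eqlem:AUXR8} via $(1-\rho^\varepsilon)v_\perp^\varepsilon=-\varepsilon\,\varrho^\varepsilon v_\perp^\varepsilon$ and \eqref{eqlem:AUXR2bis} is in fact slightly cleaner than the paper's direct re-splitting of $(\rho^\varepsilon-1)v_\perp^\varepsilon$; and your closing comment about a ``dichotomy in the choice of $R$'' is unnecessary here---in the whole-space proof the truncation level is simply a fixed finite $R=1+\delta$ for every $\gamma>3/2$, the case distinction on $\gamma$ only reappearing when quantifying $\|(\rho^\varepsilon-1)\mathbbm 1_{|\rho^\varepsilon-1|>\delta}\|_{L^\gamma}$.
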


\begin{proof}
  We start with the point 1 of Lemma~\ref{lem:auxilR}, beginning with \eqref{eqlem:AUXR1}. Using the fifth statement of Lemma~\ref{lem:cv-rho-R} for an estimate of $\varrho^\varepsilon$,  the first statement of Lemma~\ref{lem:cv-Bperp-R} for an estimate of $B_\myparallel^\varepsilon $, the embedding $(L^\kappa + L^2)(\R^2) \hookrightarrow H^{-\upalpha}$, with $\alpha \geq 1/2$ and $\kappa >3/2$, and the definition $\phi^\varepsilon := b \varrho^\varepsilon + B_\myparallel^\varepsilon $,  we obtain \eqref{eqlem:AUXR1}. We continue with \eqref{eqlem:AUXR2}, by recasting $\rho^\varepsilon v^\varepsilon$ as
\begin{equation}
 \label{eqn:AUXR1} 
 \rho^\varepsilon v^\varepsilon =(\sqrt{\rho^\varepsilon} v^\varepsilon) \sqrt{\rho^\varepsilon}
 \mathbbm{1}_{\{|\rho^\varepsilon-1|\leq \delta\}} + (\sqrt{\rho^\varepsilon} v^\varepsilon)
 \frac{\sqrt{\rho^\varepsilon}}{\sqrt{|\rho^\varepsilon-1|}}\sqrt{|\rho^\varepsilon-1|}
 \mathbbm{1}_{\{|\rho^\varepsilon-1|> \delta\}}. 
\end{equation}
The first term of the right-hand side of \eqref{eqn:AUXR1} is the product of the function $\sqrt{\rho^\varepsilon} v^\varepsilon \in L_{\rm loc}^\infty(\R_+;L^2(\R^3))$  and the  function $\rho^\varepsilon\mathbbm{1}_{\{|\rho^\varepsilon-1|\leq \delta\}} \in L_{\rm loc}^\infty(\R_+;L^\infty(\R^3))$, thus using H\"older inequality this product is in $ L_{\rm loc}^\infty(\R_+;L^2(\R^3))$. The second term of the right-hand side of \eqref{eqn:AUXR1} is the triple product of the function $\sqrt{\rho^\varepsilon }v^\varepsilon \in L_{\rm loc}^\infty(\R_+;L^2(\R^3))$,  the  function $ \sqrt{\rho^\varepsilon}/\sqrt{|\rho^\varepsilon-1|}\mathbbm{1}_{\{|\rho^\varepsilon-1|> \delta\}}  \in L_{\rm loc}^\infty(\R_+;L^\infty(\R^3))$, and the function $\sqrt{|\rho^\varepsilon-1|} \mathbbm{1}_{\{|\rho^\varepsilon-1|> \delta\}} \in  L_{\rm loc}^\infty(\R_+;L^{2\gamma}(\R^3))$, thus, using H\"older inequality, this triple product is in $L_{\rm loc}^\infty(\R_+;L^{2\gamma/(\gamma +1)}(\R^3))$. Using the Sobolev embeddings $\dot H^\upbeta(\R^3) \hookrightarrow \{L^{2\gamma/(\gamma-1)}(\R^3)\,,\ L^2(\R^3)\}$ (with $\gamma>3/2$), for $\upbeta\geq 3/(2\gamma)$, by duality we obtain $\{ L^2(\R^3)\,,  L^{2\gamma/(\gamma+1)}(\R^3) \} \hookrightarrow H^{-\upbeta}(\R^3)$; hence \eqref{eqlem:AUXR2}. We continue with the proof of  \eqref{eqlem:AUXR2bis}. We first consider the decomposition $\varrho^\varepsilon v^\varepsilon=\varrho^\varepsilon v_1^\varepsilon +\varrho^\varepsilon v_2^\varepsilon$, with $ v_1^\varepsilon$ and  $v_2^\varepsilon$ defined as in the proof of Lemma~\ref{lem:cv-vperp-R}. From  $\varrho^\varepsilon \mathbbm{1}_{\{|\rho^\varepsilon-1|\leq \delta\}} \in L_{\rm loc}^\infty(\R_+;L^2(\R^3))$ (fifth statement of Lemma~\ref{lem:cv-rho-R}) and $v^\varepsilon \in L_{\rm loc}^2(\R_+;L^6(\R^3))$, using H\"older inequality, we obtain $\varrho^\varepsilon v_1^\varepsilon \in L_{\rm loc}^2(\R_+;L^{3/2}(\R^3))$. From  $\varrho^\varepsilon \mathbbm{1}_{\{|\rho^\varepsilon-1|> \delta\}} \in L_{\rm loc}^\infty(\R_+;L^\kappa(\R^3))$ (fifth statement of Lemma~\ref{lem:cv-rho-R}) and $v^\varepsilon \in L_{\rm loc}^2(\R_+;L^{6}(\R^3))$, using H\"older inequality, we obtain $\varrho^\varepsilon v_2^\varepsilon \in L_{\rm loc}^2(\R_+;L^{\mathfrak q}(\R^3))$, with $\mathfrak q=6\kappa/(6+\kappa)$. Moreover, using the first statement of Lemma~\ref{lem:cv-rho-R} and H\"older inequality, we obtain $\varrho^\varepsilon v^\varepsilon \in L_{\rm loc}^2(\R_+;L_{\rm loc}^{\mathfrak q}(\R^3))$. It remains to show the $H^{-\varkappa}$-bound in \eqref{eqlem:AUXR2bis}. Since $\varrho^\varepsilon v_1^\varepsilon \in L_{\rm loc}^2(\R_+;L^{3/2}(\R^3))$, using the Sobolev embedding $L^{3/2}(\R^3)\hookrightarrow H^{-\upalpha}(\R^3)$, with $\upalpha\geq 1/2$, we obtain $\varrho^\varepsilon v_1^\varepsilon \in L_{\rm loc}^2(\R_+;H^{-\upalpha}(\R^3))$. Since $\varrho^\varepsilon v_2^\varepsilon \in L_{\rm loc}^2(\R_+;L^{\mathfrak q}(\R^3))$, using the Sobolev embedding $L^{\mathfrak q}(\R^3)\hookrightarrow H^{- \tilde{\mathfrak s}}$ with $\tilde{\mathfrak s} \geq (3/\kappa)-1 $ while $ (3/\kappa)-1 \leq \mathfrak s $, we get $\varrho^\varepsilon v_2^\varepsilon \in L_{\rm loc}^2(\R_+;H^{-\tilde{\mathfrak s}}(\R^3))$. Combining this two last results, we obtain  $\varrho^\varepsilon v^\varepsilon \in L_{\rm loc}^2(\R_+;H^{-\varkappa}(\R^3))$, with $\varkappa \geq \mathfrak s$; hence \eqref{eqlem:AUXR2bis}.\\
We continue with the point 2 of Lemma~\ref{lem:auxilR}, starting with \eqref{eqlem:AUXR3}. Using \eqref{eqlem:AUXR1}-\eqref{eqlem:AUXR2} and since the group $\mathcal S $ (resp. the operator $\Q_\perp$) is an isometry (resp. continuous) in $H^\alpha(\R^3)$ with $\alpha\in \R$, we obtain \eqref{eqlem:AUXR3}. For \eqref{eqlem:AUXR4}, observe that $\partial_t \, \mathcal U^\varepsilon = \mathcal S(-t/\varepsilon) F^\varepsilon$, with $F^\varepsilon$ defined by
\begin{equation}
    \label{eqn:Feps-R}
    F^\varepsilon:=
    \left (\begin{aligned}
     F_1^\varepsilon\\ F_2^\varepsilon
  \end{aligned}
    \right) =
    \left\{
  \begin{aligned}
    F_1^\varepsilon =& -b\partial_\myparallel (\rho^\varepsilon v_\myparallel^\varepsilon) +
    \nabla_\perp \cdot \Q_\perp(\varrho^\varepsilon v_\perp^\varepsilon) -B_\myparallel^\varepsilon  \nabla_\varepsilon \cdot v^\varepsilon \\
      & - (v^\varepsilon\cdot\nabla_\varepsilon)B_\myparallel^\varepsilon 
    +(B^\varepsilon\cdot\nabla_\varepsilon)v_\myparallel^\varepsilon + (\eta_\perp^\varepsilon\Delta_\perp+\, \eta_\myparallel^\varepsilon\Delta_\myparallel)B_\myparallel^\varepsilon\,,\\
     F_2^\varepsilon =& -\Q_\perp\nabla_\varepsilon \cdot (\rho^\varepsilon v_\perp^\varepsilon \otimes v^\varepsilon)
    -(\gamma-1)\nabla_\perp\Pi_3(\rho^\varepsilon) -\tfrac12 \nabla_\perp(|B^\varepsilon|^2) + \partial_\myparallel \Q_\perp B_\perp^\varepsilon \\
    & + \Q_\perp\nabla_\varepsilon \cdot (B_\perp^\varepsilon \otimes B^\varepsilon) + \mu_\perp^\varepsilon \nabla_\perp(\nabla_\perp \cdot v_\perp^\varepsilon)
    + \mu_\myparallel^\varepsilon \Delta_\myparallel \Q_\perp v_\perp^\varepsilon  +\lambda^\varepsilon\nabla_\perp(\nabla_\varepsilon \cdot v^\varepsilon)\,.
  \end{aligned}
  \right.
  \end{equation}
Using estimates of point 1 in Lemma~\ref{lem:auxilR} and following Step 1 in the proof  Lemma~\ref{lem:UG}, we obtain from \eqref{eqn:Feps-R} and $\upkappa > \mathfrak s$ ($\gamma >3/2$), $F_1^\varepsilon \in  L_{\rm loc}^{2}(\R_+; (H^{-\upkappa-1}+ W^{-1,3/2} + H^{-1})(\R^3)) \hookrightarrow  L_{\rm loc}^{2}(\R_+; H^{-\upkappa-1}(\R^3))$, and  $F_2^\varepsilon \in  L_{\rm loc}^{2}(\R_+; (W^{-1-\delta,1}+W^{-1,1}+H^{-1}+L^2 )(\R^3))  \hookrightarrow  L_{\rm loc}^{2}(\R_+; H^{-r}(\R^3)) $, with $r>5/2+\delta $, for any $\delta >0$. Therefore, using the isometry $\mathcal S $ in $H^\alpha(\R^3)$ for any $ \alpha\in \R$, we obtain \eqref{eqlem:AUXR4}.\\
We continue with the point 3 of Lemma~\ref{lem:auxilR}, starting with \eqref{eqlem:AUXR5}. For any $\delta >0$, we define $\widetilde{\psi}^\varepsilon := \mathcal S_1(-t/\varepsilon)[\phi^\varepsilon \mathbbm{1}_{\{\rho^\varepsilon\leq 1+\delta\}}]$ and $\widetilde{\Psi}^\varepsilon := \mathcal S_2(-t/\varepsilon)\Q_\perp (\rho^\varepsilon v_\perp^\varepsilon \mathbbm{1}_{\{\rho^\varepsilon\leq 1+\delta\}})$. Clearly, from the uniform bounds above and the isometry $\mathcal S$ in  $H^\alpha(\R^3)$ for any $ \alpha\in \R$, we obtain $ \widetilde{\psi}^\varepsilon$, $\widetilde{\Psi}^\varepsilon\in   L_{\rm loc}^\infty(\R_+;L^2(\R^3))$. We continue with the proof of \eqref{eqlem:AUXR6}. Since $\varrho^\varepsilon \in L_{\rm loc }^\infty(\R_+; L_2^\kappa(\R^3))$ and $ B_\myparallel^\varepsilon \in L_{\rm loc }^\infty(\R_+; L^2(\R^3))$, using the De la Vall\'ee Poussin criterion, we obtain that $\phi^\varepsilon$ is spatially uniformly integrable in $L^{3/2}(\R^3)$, uniformly in time on any compact time interval. Then, from the fourth statement of Lemma~\ref{lem:cv-rho-R}, we obtain $\| \phi^\varepsilon\mathbbm{1}_{\{\rho^\varepsilon>1+\delta\}}\|_{L_{\rm loc }^\infty(\R_+; L^{3/2}(\R^3))} \rightarrow 0$, as $\varepsilon \rightarrow 0$. Therefore, using the isometry $\mathcal S$, we obtain the first part of \eqref{eqlem:AUXR6}, that is $\| \psi^\varepsilon - \widetilde{\psi }^\varepsilon\|_{L_{\rm loc}^\infty(\R_+;H^{-\sigma}(\R^3))} \rightarrow 0$, as $\varepsilon \rightarrow 0$. Since $\rho^\varepsilon v_\perp^\varepsilon \mathbbm{1}_{\{\rho^\varepsilon> 1+\delta\}} = \sqrt{\rho^\varepsilon} v_\perp^\varepsilon  \sqrt{\rho^\varepsilon} \mathbbm{1}_{\{\rho^\varepsilon> 1+\delta\}}$, using $\rho^\varepsilon \rightarrow 1$ in $L_{\rm loc}^\infty(\R_+; L_2^\gamma(\R^3))\!-\!\mbox{\rm strong}$ (fourth statement of Lemma~\ref{lem:cv-rho-R}), the uniform bound $\|\rho^\varepsilon |v^\varepsilon|^2 \|_{L_{\rm loc }^\infty(\R_+; L^1(\R^3))} \leq C<\infty$, and H\"older inequality, we obtain $\rho^\varepsilon v_\perp^\varepsilon \mathbbm{1}_{\{\rho^\varepsilon > 1+\delta\}} \rightarrow 0$ in $L_{\rm loc }^\infty(\R_+; L^{2\gamma/(\gamma+1)}(\R^3))$, as $\varepsilon \rightarrow 0$. Since the group $\mathcal S $ (resp. the operator $\Q_\perp$) is an isometry (resp. continuous) in $H^\alpha(\R^3)$ for any $\alpha\in \R$, using the embedding $ L^{2\gamma/(\gamma+1)}(\R^3) \hookrightarrow H^{-\sigma}(\R^3)$ (see above in the proof), we obtain the second part of \eqref{eqlem:AUXR6}, that is $\| \Psi^\varepsilon - \widetilde{\Psi }^\varepsilon\|_{L_{\rm loc}^\infty(\R_+;H^{-\sigma}(\R^3))} \rightarrow 0$, as $\varepsilon \rightarrow 0$. Still using the isometry  $\mathcal S $, we deduce  estimate \eqref{eqlem:AUXR7} from \eqref{eqlem:AUXR6}. It remains to prove \eqref{eqlem:AUXR8}. Using the continuity of $\Q_\perp$ and \eqref{eqlem:AUXR7}, we obtain $\| \Q_\perp v_\perp^\varepsilon - \mathcal S_2 (t/\varepsilon)\,\widetilde{\mathcal U}^\varepsilon \|_{L_{\rm loc}^2(\R_+;H^{-\sigma}(\R^3))} \leq \| (\rho^\varepsilon -1) v_\perp^\varepsilon \|_{L_{\rm loc}^2(\R_+;H^{-\sigma}(\R^3))} + \upomega(\varepsilon)$. We split $(\rho^\varepsilon -1) v_\perp^\varepsilon$ into the part $d_1^\varepsilon:=(\rho^\varepsilon -1)v_\perp^\varepsilon \mathbbm{1}_{\{|{\rho^\varepsilon-1}|\leq \delta\}}$ and the part  $d_2^\varepsilon:=(\rho^\varepsilon -1)v_\perp^\varepsilon \mathbbm{1}_{\{|{\rho^\varepsilon-1}|> \delta\}}$. Since $L^{3/2}(\R^3) \hookrightarrow H^{-1/2}(\R^3)$, using H\"older inequality and estimate \eqref{eqn:bdze}, we obtain $\| d_1^\varepsilon\|_{L_{\rm loc}^2(\R_+;H^{-\sigma}(\R^3))} \leq \| d_1^\varepsilon\|_{L_{\rm loc}^2(\R_+;H^{-1/2}(\R^3))} \leq \| (\rho^\varepsilon -1)  \mathbbm{1}_{\{|\rho^\varepsilon-1| \leq \delta\}}\|_{L_{\rm loc}^\infty(\R_+;L^2(\R^3))}\|  v_\perp^\varepsilon \|_{L_{\rm loc}^2(\R_+;L^6(\R^3))} \lesssim \varepsilon$. For the part $d_2^\varepsilon$, we distinguish two cases according to the value of $\gamma$. For $\gamma \geq 2$, following the same proof as for $d_1^\varepsilon$, we obtain $\| d_2^\varepsilon\|_{L_{\rm loc}^2(\R_+;H^{-\sigma}(\R^3))} \lesssim \varepsilon$. For $3/2<\gamma < 2$, we have $\upkappa=3/(2\gamma) > \mathfrak s =3/\gamma-1$ (since $\gamma >3/2$). Then, using the Sobolev embeddings $ L^{6\gamma/(6+\gamma)}(\R^3)  \hookrightarrow H^{-\mathfrak s}(\R^3)$, and estimate \eqref{eqn:bdze}, we obtain $\| d_2^\varepsilon\|_{L_{\rm loc}^2(\R_+;H^{-\sigma}(\R^3))} \leq \| d_2^\varepsilon\|_{L_{\rm loc}^2(\R_+;H^{-\upkappa}(\R^3))} \leq \| d_2^\varepsilon\|_{L_{\rm loc}^2(\R_+;H^{-\mathfrak s}(\R^3))} \leq \| (\rho^\varepsilon -1)  \mathbbm{1}_{\{|\rho^\varepsilon-1| > \delta\}}\|_{L_{\rm loc}^\infty(\R_+;L^\gamma(\R^3))} \|  v_\perp^\varepsilon \|_{L_{\rm loc}^2(\R_+;L^6(\R^3))} \lesssim \varepsilon^{2/\gamma} \leq \varepsilon$. This ends the proof of Lemma~\ref{lem:auxilR}.
\end{proof}


\subsection{Passage to the limit in the compressible MHD equations}
Here, we justify the passage to the limit in the weak formulation \eqref{initial-data-ws}-\eqref{eq:zero-divB} of the MHD equations for the whole space. Using Lemmas~\ref{lem:cv-Bperp-R}~and~\ref{lem:cv-vperp-R}, the passage to the limit in equation \eqref{eq:Bperp-eps} for $B_\perp^\varepsilon$ follows the same proof as the one of the periodic case described in Section~\ref{ss:cv-Bperp-P}. Using Lemmas~\ref{lem:cv-rho-R},~\ref{lem:cv-Bperp-R}~and~\ref{lem:cv-vperp-R}, and following the sames lines as the ones of Section~\ref{ss:cv-rvperp-P} for the periodic case, we can pass to the limit in almost all terms of equation \eqref{eq:vperp-eps} for $\rho^\varepsilon v_\perp^\varepsilon$. Indeed, the only difference with the proof of Section~\ref{ss:cv-rvperp-P} is the justification of the limit $\varepsilon\rightarrow 0$ for the term $ \rho^\varepsilon v_\perp^\varepsilon \otimes v_\perp^\varepsilon$ that we detail below in Lemma~\ref{lem:cv-rvpvp-R}. In particular, the pressure term $\mathfrak p^\varepsilon = p^\varepsilon /\varepsilon^2 + B_\myparallel^\varepsilon/\varepsilon + |B^\varepsilon|^2/2$, rewritten as $\mathfrak p^\varepsilon =\phi^\varepsilon/\varepsilon +  a /\varepsilon^2+ \uppi_2^\varepsilon\,$, with $\, \uppi_2^\varepsilon = (\gamma-1)\Pi_3(\rho^\varepsilon) + |B^\varepsilon|^2/2$, converges weakly to  $\uppi_1 + \uppi_2$ in  $\mathcal D'(\R_+^*\times \R^3)$. More precisely,  $\phi^\varepsilon/\varepsilon \rightharpoonup  \delta_0(t) \otimes\uppi_0  +\uppi_1$ in $H^{-1}(\R_+, H^{-r}(\R^3))$--weak, with $r>(5/2)^+$, and $\uppi_2^\varepsilon \rightharpoonup \uppi_2$ in $ L_{\rm loc}^\infty(\R_+;L^1(\R^3))$--weak$-*$, while the constant term $ba /\varepsilon^2$  disappears by spatial integration in \eqref{eq:vperp-eps}. As in the periodic case, the term $\delta_0(t)\otimes \uppi_0 $ cancels the irrotational part  $\Q_\perp u_{0 \perp }$ of the limit term $u_{0 \perp }$, so that  the limit initial condition is $\P_\perp u_{0 \perp }=\P_\perp v_{0 \perp }$. We also obtain $\phi^\varepsilon \rightharpoonup \phi=b \varrho + B_\myparallel =0$ in $H^{-1}(\R_+, H^{-r}(\R^3))$--weak, and the relation $b \varrho + B_\myparallel =0$ holds for a.e. $(t,x) \in ]0,+\infty[\times \R^3$, since $b \varrho + B_\myparallel \in L_{\rm loc}^\infty(\R_+;(L_{\rm loc}^\kappa+L^2)(\R^3))$. Using Lemmas~\ref{lem:cv-rho-R},~\ref{lem:cv-Bperp-R}~and~\ref{lem:cv-vperp-R}, the passage to limit in equations \eqref{eq:vparal-eps} and  \eqref{eq:q-eps}, for respectively  $\rho^\varepsilon v_\myparallel^\varepsilon$ and $\mathbbmsl{B}^\varepsilon$, is justified in a similar way as the one described in Sections~\ref{ss:cv-rvpara-P}~and~\ref{ss:cv-Bpara-P} for the periodic case. Finally, to conclude the proof in the case of the whole space, we use the following lemma, which is the counterpart of Lemma~\ref{lem:RST} with a different proof since we do not have the strong convergence \eqref{eqn:GPR}.

\begin{lemma}
\label{lem:cv-rvpvp-R} There exists a distribution $\uppi_3 \in \mathcal D'(\R_+^* \times \R^3)$, such that
  \begin{equation*}
    \nabla_\perp \cdot (\rho^\varepsilon v_\perp^\varepsilon \otimes v_\perp^\varepsilon) \xrightharpoonup{\quad}
    \nabla_\perp \cdot ( v_\perp \otimes v_\perp)  \ + \ \nabla_\perp \uppi_3 \ \ \mbox{ in } \
    \mathcal{D}'(\R_+^*\times \R^3)\,.
   \end{equation*}
\end{lemma}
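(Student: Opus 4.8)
The plan is to transpose the decomposition \eqref{eq:RST-2} from the proof of Lemma~\ref{lem:RST} to the whole space, the essential change being that the \emph{exact} filtered profile $\Upphi^\varepsilon=\mathcal S_2(t/\varepsilon)\,\mathcal U$ (which relied on the strong convergence \eqref{eqn:GPR}, now unavailable) must be replaced by the \emph{approximate} filtered profile $\widetilde{\Upphi}^\varepsilon:=\mathcal S_2(t/\varepsilon)\,\widetilde{\mathcal U}^\varepsilon$ supplied by Lemma~\ref{lem:auxilR}. First I would write, $\forall\,\psi_\perp\in\mathscr{C}_c^\infty(\R_+\times\R^3;\R^2)$,
\[
\nabla_\perp\cdot(\rho^\varepsilon v_\perp^\varepsilon\otimes v_\perp^\varepsilon)=\sum_{i=1}^5\Gamma_{1i}^\varepsilon,
\]
with $\Gamma_{1i}^\varepsilon$ as in \eqref{eq:RST-2} but with $\widetilde{\Upphi}^\varepsilon$ in place of $\Upphi^\varepsilon$. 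The first four terms are handled as in the periodic case: $\Gamma_{11}^\varepsilon\rightharpoonup\nabla_\perp\cdot(v_\perp\otimes v_\perp)$ from the local strong convergences of $\P_\perp v_\perp^\varepsilon$ and $\P_\perp(\rho^\varepsilon v_\perp^\varepsilon)$ (fourth and seventh statements of Lemma~\ref{lem:cv-vperp-R}) against the weak bounds, while $\Gamma_{12}^\varepsilon,\Gamma_{13}^\varepsilon,\Gamma_{14}^\varepsilon\rightharpoonup 0$ using the weak convergence of $\Q_\perp v_\perp^\varepsilon$ together with the strong error control $\|\Q_\perp v_\perp^\varepsilon-\widetilde{\Upphi}^\varepsilon\|_{L_{\rm loc}^2(\R_+;H^{-\sigma}(\R^3))}\lesssim\upomega(\varepsilon)+\varepsilon$ from \eqref{eqlem:AUXR8}; as for $\Gamma_{14}^\varepsilon$ in Lemma~\ref{lem:RST}, I would insert a fixed (uniform-in-$\varepsilon$) mollification of $\widetilde{\mathcal U}^\varepsilon$ to make the products meaningful, the uniform bound \eqref{eqlem:AUXR5} ensuring the mollification error is controlled independently of $\varepsilon$.

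The whole difficulty sits in the quadratic self-interaction $\Gamma_{15}^\varepsilon=\nabla_\perp\cdot(\widetilde{\Upphi}^\varepsilon\otimes\widetilde{\Upphi}^\varepsilon)$, for which the periodic Fourier-series argument is \emph{not} available: in the continuous spectrum the resonance set $\{|k_\perp|=|l_\perp|\}$ has measure zero, so the splitting into $S_1^\varepsilon$ and $S_2^\varepsilon$ has no analogue. Instead, I would exploit a structural cancellation coming directly from the wave group \eqref{eqn:first-order-linear-wave-eq}. Because $\widetilde{\mathcal U}^\varepsilon$ inherits from the $\Q_\perp$ in its definition the property $\P_\perp\widetilde{\Psi}^\varepsilon=0$, and $\P_\perp\mathcal S_2(\tau)$ is $\tau$-independent, one has $\P_\perp\widetilde{\Upphi}^\varepsilon=0$, hence $\widetilde{\Upphi}^\varepsilon=\nabla_\perp\widetilde{\upvarphi}^\varepsilon$ with $\widetilde{\upvarphi}^\varepsilon:=\Delta_\perp^{-1}\nabla_\perp\cdot\widetilde{\Upphi}^\varepsilon$, and the elementary identity
\[
\nabla_\perp\cdot(\nabla_\perp\widetilde{\upvarphi}^\varepsilon\otimes\nabla_\perp\widetilde{\upvarphi}^\varepsilon)=\nabla_\perp\Bigl(\tfrac12|\nabla_\perp\widetilde{\upvarphi}^\varepsilon|^2\Bigr)+(\nabla_\perp\cdot\widetilde{\Upphi}^\varepsilon)\,\widetilde{\Upphi}^\varepsilon
\]
isolates a genuine perpendicular gradient plus a remainder. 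To show the remainder is itself asymptotically a perpendicular gradient, I would test $\Gamma_{15}^\varepsilon$ against a \emph{transverse-divergence-free} field $\psi_\perp$ (so the first term drops out) and use that $\widetilde{\mathfrak U}^\varepsilon=\mathcal S(t/\varepsilon)\,\widetilde{\mathcal U}^\varepsilon$ solves $\partial_t\widetilde{\mathfrak U}^\varepsilon=\tfrac1\varepsilon\mathcal L\,\widetilde{\mathfrak U}^\varepsilon+\mathcal S(t/\varepsilon)\partial_t\widetilde{\mathcal U}^\varepsilon$, which yields
\[
\nabla_\perp\cdot\widetilde{\Upphi}^\varepsilon=-\tfrac{\varepsilon}{c^\varepsilon}\bigl(\partial_t\widetilde{\upphi}^\varepsilon-\mathcal S_1(t/\varepsilon)\partial_t\widetilde{\mathcal U}^\varepsilon\bigr),\qquad\partial_t\widetilde{\Upphi}^\varepsilon=-\tfrac1\varepsilon\nabla_\perp\widetilde{\upphi}^\varepsilon+\mathcal S_2(t/\varepsilon)\partial_t\widetilde{\mathcal U}^\varepsilon.
\]
Integrating by parts in time in $\langle(\nabla_\perp\cdot\widetilde{\Upphi}^\varepsilon)\,\widetilde{\Upphi}^\varepsilon,\psi_\perp\rangle$ and substituting the second relation produces, at leading order, $\tfrac{1}{2c^\varepsilon}\int|\widetilde{\upphi}^\varepsilon|^2\,\nabla_\perp\cdot\psi_\perp=0$, the two singular $1/\varepsilon$ factors compensating exactly; every other contribution carries an explicit power of $\varepsilon$ (in particular the source $\partial_t\widetilde{\mathcal U}^\varepsilon$, bounded only in $H^{-r}$ by \eqref{eqlem:AUXR4}, always appears multiplied by $\varepsilon$) and hence vanishes. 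Thus $\langle\Gamma_{15}^\varepsilon,\psi_\perp\rangle\to0$ for every transverse-solenoidal $\psi_\perp$, which means precisely that $\Gamma_{15}^\varepsilon$ converges in $\mathcal D'(\R_+^*\times\R^3)$ to a perpendicular gradient $\nabla_\perp\uppi_3$; combined with the limits of $\Gamma_{11}^\varepsilon,\ldots,\Gamma_{14}^\varepsilon$ this gives the stated convergence.

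The main obstacle is to make the time integration by parts rigorous: the profiles $\widetilde{\upphi}^\varepsilon$ and $\widetilde{\Upphi}^\varepsilon$ are controlled only in $L_{\rm loc}^\infty(\R_+;L^2(\R^3))$ by \eqref{eqlem:AUXR5}, so the products $\widetilde{\upphi}^\varepsilon\widetilde{\Upphi}^\varepsilon$ and $|\widetilde{\upphi}^\varepsilon|^2$ are merely $L^1$ in space, while $\partial_t\widetilde{\Upphi}^\varepsilon$ and $\nabla_\perp\cdot\widetilde{\Upphi}^\varepsilon$ are a priori distributions mixing the singular factor $1/\varepsilon$ with the low-regularity term $\partial_t\widetilde{\mathcal U}^\varepsilon$. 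I expect to circumvent this exactly as for $\Gamma_{14}^\varepsilon$, by running the computation on a uniformly (in $\varepsilon$) mollified profile $\widetilde{\mathcal U}^\varepsilon_\eta$ — where all the manipulations above are licit and the $H^\alpha$-isometry property of $\mathcal S$ preserves the needed bounds — and then passing to $\eta\to0$ through \eqref{eqlem:AUXR5}. The conceptual heart, and the genuine departure from Lemma~\ref{lem:RST}, is that here the pressure $\uppi_3$ is generated by this algebraic and temporal cancellation rather than by an explicit resonant mode sum.
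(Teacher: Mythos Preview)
Your overall architecture matches the paper's: reduce to the self-interaction of the filtered profile, test against a transverse-divergence-free $\psi_\perp$, regularize, and exploit the wave-group structure to gain a power of $\varepsilon$. The paper in fact uses the coarser three-term decomposition \eqref{eqn:RTT-R:1} rather than the five-term one from Lemma~\ref{lem:RST}, but this is cosmetic; your $\Gamma_{11}^\varepsilon$--$\Gamma_{14}^\varepsilon$ are absorbed into the paper's chain $\Gamma^\varepsilon\to\Gamma_1^\varepsilon\to\cdots\to\Gamma_5^\varepsilon$ with the same error controls \eqref{eqlem:AUXR7}--\eqref{eqlem:AUXR8}.

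The substantive difference is how you handle $\Gamma_{15}^\varepsilon$. The paper carries out the computation in Fourier space: after regularizing to a smooth $\mathcal V^\varepsilon$, it writes $\nabla_\perp\cdot(\Upphi^\varepsilon\otimes\Upphi^\varepsilon)$ explicitly as in \eqref{eqn:divuu-R}, identifies a pure gradient plus a remainder carrying the algebraic factor $(|\xi_\perp|^2-|\zeta_\perp|^2)$, expands the trigonometric products, and integrates by parts in time against the beat frequencies $(|\xi_\perp|\pm|\zeta_\perp|)/\varepsilon$ to gain the $\varepsilon$ (this also forces a low-frequency truncation $|\xi_\perp|\geq\delta$). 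Your physical-space route---the identity $\nabla_\perp\cdot(\nabla_\perp\upvarphi\otimes\nabla_\perp\upvarphi)=\nabla_\perp(\tfrac12|\nabla_\perp\upvarphi|^2)+(\Delta_\perp\upvarphi)\nabla_\perp\upvarphi$ combined with the group relations $c\,\nabla_\perp\cdot\Upphi=-\varepsilon\partial_t\upphi+\cdots$ and $\partial_t\Upphi=-\tfrac1\varepsilon\nabla_\perp\upphi+\cdots$---is the \emph{same} cancellation rewritten without Fourier transform; it is cleaner and sidesteps the low-frequency truncation. Both approaches are standard in the low-Mach literature.

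One imprecision to fix: estimate \eqref{eqlem:AUXR4} controls $\partial_t\mathcal U^\varepsilon$, not $\partial_t\widetilde{\mathcal U}^\varepsilon$, so the ``source'' in your wave relations is not directly bounded. This is exactly why a \emph{time} mollification (on top of the space mollification you mention) is needed---after $\mathcal J_{1,\upeta}\mathcal J_{3,\updelta}$ the time derivative is trivially controlled by $C_{\upeta,\updelta}\|\widetilde{\mathcal U}^\varepsilon\|_{L^\infty(L^2)}$, and your $O(\varepsilon)$ remainder terms vanish for fixed $(\upeta,\updelta)$. The time-mollification error, however, requires the uniform time-continuity estimate \eqref{eqn:RTT-R:9}, which the paper derives by combining \eqref{eqlem:AUXR4} (for $\mathcal U^\varepsilon$) with \eqref{eqlem:AUXR6} after space-smoothing; you should invoke the same mechanism rather than \eqref{eqlem:AUXR5} alone. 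With that amendment your argument goes through.
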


\begin{proof}
  First, we observe the following decomposition
  \begin{equation}
    \label{eqn:RTT-R:1}
    \nabla_\perp \cdot (\rho^\varepsilon v_\perp^\varepsilon \otimes v_\perp^\varepsilon)
    = \nabla_\perp \cdot (\rho^\varepsilon v_\perp^\varepsilon \otimes \P_\perp v_\perp^\varepsilon)
    +  \nabla_\perp \cdot (\P_\perp(\rho^\varepsilon v_\perp^\varepsilon) \otimes \Q_\perp v_\perp^\varepsilon)
    + \nabla_\perp \cdot (\Q_\perp(\rho^\varepsilon v_\perp^\varepsilon) \otimes \Q_\perp v_\perp^\varepsilon)\,.
  \end{equation}
  Using the fourth and fifth statements of  Lemma~\ref{lem:cv-vperp-R},  for the first term of the right-hand side of \eqref{eqn:RTT-R:1}, we obtain $\nabla_\perp \cdot (\rho^\varepsilon v_\perp^\varepsilon \otimes v_\perp^\varepsilon) \rightharpoonup \nabla_\perp \cdot ( v_\perp \otimes v_\perp)$ in $\mathcal D'(\R_+^* \times \R^3)$. Using the third and seventh statements of  Lemma~\ref{lem:cv-vperp-R}, for the second term of the right-hand side of \eqref{eqn:RTT-R:1}, we obtain $\nabla_\perp \cdot (\P_\perp(\rho^\varepsilon v_\perp^\varepsilon) \otimes \Q_\perp v_\perp^\varepsilon) \rightharpoonup 0 $ in $\mathcal D'(\R_+^* \times \R^3)$. It remains to show that we have $\nabla_\perp \cdot (\Q_\perp(\rho^\varepsilon v_\perp^\varepsilon) \otimes \Q_\perp v_\perp^\varepsilon) \rightharpoonup \nabla_\perp \pi_3 $ in  $\mathcal D'(\R_+^* \times \R^3)$, or equivalently, that, for any $\psi_\perp \in \mathscr{C}_c^\infty(\R_+\times \R^3)$ such that $\nabla_\perp\cdot  \psi_\perp =0$, we have
  \begin{equation}
    \label{eqn:RTT-R:2}
    \lim_{\varepsilon \rightarrow 0} \Gamma^\varepsilon := \lim_{\varepsilon \rightarrow 0}  \int_{\R_+} dt \int_{\R^3}dx\, {}^t(\Q_\perp(\rho^\varepsilon v_\perp^\varepsilon))
        [D_\perp \psi_\perp] \Q_\perp v_\perp^\varepsilon =0\,.
  \end{equation}
  Since $\Q_\perp v_\perp^\varepsilon$ is bounded in $L_{\rm loc}^2(\R_+;H^1(\R^3))$  uniformly with respect to $\varepsilon$, Using \eqref{eqlem:AUXR7} with $\sigma\in [\upkappa,1]$, we obtain
  \begin{equation}
    \label{eqn:RTT-R:3}
    |\Gamma^\varepsilon -\Gamma_1^\varepsilon| \lesssim \upomega(\varepsilon)\,, 
    \quad \mbox{ with } \quad
    \Gamma_1^\varepsilon:=  \int_{\R_+} dt \int_{\R^3}dx\,{}^t(\mathcal S_2(t/\varepsilon)\, \widetilde{\mathcal U}^\varepsilon) 
          [D_\perp \psi_\perp]  \Q_\perp v_\perp^\varepsilon\,.
  \end{equation}   
  Using the isometry $\mathcal S $ in $H^\alpha(\R^3)$ for any $ \alpha\in \R$, estimate \eqref{eq:MolA}, bound \eqref{eqlem:AUXR5}, and the fact that the group $\mathcal S$ and the mollification operator $\mathcal J_{3,\updelta}$ commute, we obtain, for any $\upmu \in [0,1]$ and $\updelta \in (0,1)$, 
  \begin{align}
    \| \mathcal S_2(t/\varepsilon) \mathcal J_{3,\updelta}^2 \,\widetilde{\mathcal U}^\varepsilon -
    \mathcal S_2(t/\varepsilon) \,\widetilde{\mathcal U}^\varepsilon \|_{L_{\rm loc}^\infty(\R_+;H^{-\upmu}(\R^3))} &\leq
    \| \mathcal  J_{3,\updelta} \mathcal S_2(t/\varepsilon)  \,\widetilde{\mathcal U}^\varepsilon -
    \mathcal S_2(t/\varepsilon) \,\widetilde{\mathcal U}^\varepsilon \|_{L_{\rm loc}^\infty(\R_+;H^{-\upmu}(\R^3))}  \nonumber \\
    & \hspace{-2.cm }\quad + \quad \|  \mathcal J_{3,\updelta} \mathcal S_2(t/\varepsilon) \mathcal J_{3,\updelta} \,\widetilde{\mathcal U}^\varepsilon -
    \mathcal S_2(t/\varepsilon) \mathcal J_{3,\updelta} \,\widetilde{\mathcal U}^\varepsilon \|_{L_{\rm loc}^\infty(\R_+;H^{-\upmu}(\R^3))} 
    \nonumber \\
    &\hspace{-5cm } \lesssim  \updelta^{\upmu}(1+\| \chi_\updelta\|_{L^1(\R^3)})\| \,\widetilde{\mathcal U}^\varepsilon \|_{L_{\rm loc}^\infty(\R_+;L^2(\R^3))}
    \lesssim  \updelta^{\upmu}\,. \label{eqn:RTT-R:4}
  \end{align}
  Since $\Q_\perp v_\perp^\varepsilon$ is bounded in $L_{\rm loc}^2(\R_+;H^1(\R^3))$  uniformly with respect to $\varepsilon$,  using \eqref{eqn:RTT-R:4}, we obtain, for any $\upmu \in (0,1]$,
  \begin{equation}
    \label{eqn:RTT-R:5}
    |\Gamma_1^\varepsilon- \Gamma_2^\varepsilon| \lesssim \updelta^\upmu\,, \quad \mbox{ with } \quad
    \Gamma_2^\varepsilon:=  \int_{\R_+} dt \int_{\R^3}dx\, {}^t(\mathcal S_2(t/\varepsilon) \mathcal J_{3,\updelta}^2 \,\widetilde{\mathcal U}^\varepsilon)[ D_\perp \psi_\perp]  \Q_\perp v_\perp^\varepsilon\,.
  \end{equation}
  Since for any $\updelta >0$, and any $\upnu \geq 0$, $\mathcal S_2(t/\varepsilon) \mathcal J_{3,\updelta}^2 \,\widetilde{\mathcal U}^\varepsilon \in L_{\rm loc}^\infty(\R_+;H^{\upnu}(\R^3))$, there exists a constant   $C_\updelta$  such that $\| \mathcal S_2(t/\varepsilon) \mathcal J_{3,\updelta}^2 \,\widetilde{\mathcal U}^\varepsilon\|_{ L_{\rm loc}^\infty(\R_+;H^{\sigma}(\R^3))} \leq C_\updelta$, where $C_\updelta$ explodes as $\updelta \rightarrow 0$. Then, using \eqref{eqlem:AUXR8}, we obtain
  \begin{equation}
    \label{eqn:RTT-R:7}
    |\Gamma_2^\varepsilon- \Gamma_3^\varepsilon| \lesssim C_\updelta (\upomega(\varepsilon)+\varepsilon)\,, \quad \mbox{ with } \quad
    \Gamma_3^\varepsilon:=  \int_{\R_+} dt \int_{\R^3}dx\, {}^t(\mathcal S_2(t/\varepsilon) \mathcal J_{3,\updelta}^2 \,\widetilde{\mathcal U}^\varepsilon)[ D_\perp \psi_\perp] \mathcal S_2(t/\varepsilon) \,\widetilde{\mathcal U}^\varepsilon \,.
    \end{equation}
  We now claim that
  \begin{equation}
    \label{eqn:RTT-R:8}
    |\Gamma_3^\varepsilon- \Gamma_4^\varepsilon| \lesssim \updelta\,, \quad \mbox{ with } \quad
    \Gamma_4^\varepsilon:=  \int_{\R_+} dt \int_{\R^3}dx\, {}^t(\mathcal S_2(t/\varepsilon) \mathcal J_{3,\updelta} \,\widetilde{\mathcal U}^\varepsilon)[D_\perp \psi_\perp] \mathcal S_2(t/\varepsilon) \mathcal J_{3,\updelta} \,\widetilde{\mathcal U}^\varepsilon \,.
    \end{equation}
  Indeed, using in order $[\mathcal S\, ,\, \mathcal J_{3,\updelta}]=0$ (where $[\,\cdot\, ,\, \cdot\,]$ is the commutator), $L^1\ast L^2 \subset L^2$, Cauchy--Scwharz inequality, $\| x \chi(x)\|_{L^1(\R^3)}\leq C <\infty$,  estimate \eqref{eq:MolA} (with $\sigma=s=0$), the isometry $\mathcal S $ in $H^\alpha(\R^3)$ for any $ \alpha\in \R$, and bound \eqref{eqlem:AUXR5}, we obtain
  \begin{align*}
    \Gamma_3^\varepsilon- \Gamma_4^\varepsilon &=
    \int_{\R_+} dt \int_{\R^3}dx\, {}^t(\mathcal S_2(t/\varepsilon) \mathcal J_{3,\updelta} \,\widetilde{\mathcal U}^\varepsilon) 
        [\mathcal J_{3,\updelta} \,,\, D_\perp \psi_\perp] \mathcal S_2(t/\varepsilon)\,\widetilde{\mathcal U}^\varepsilon \\
        &= \int_{\R_+} dt \int_{\R^3}dx\, {}^t(\mathcal S_2(t/\varepsilon) \mathcal J_{3,\updelta} \,\widetilde{\mathcal U}^\varepsilon)(x) 
        \int_{\R^3}dy\, \big(D_\perp \psi_\perp(y)-D_\perp \psi_\perp(x)\big)\chi_\updelta(x-y) \mathcal S_2(t/\varepsilon)\,\widetilde{\mathcal U}^\varepsilon(y)\\
        &\leq \updelta \|D_\perp^2 \psi_\perp\|_{L^\infty(\R_+\times \R^3)}
        \| \mathcal S_2(t/\varepsilon) \mathcal J_{3,\updelta} \,\widetilde{\mathcal U}^\varepsilon \|_{L_{\rm loc}^\infty(\R_+; L^2(\R^3))}
        \| |\tfrac{x}{\updelta}\chi_\updelta| \ast (\mathcal S_2(t/\varepsilon)\,\widetilde{\mathcal U}^\varepsilon) \|_{L_{\rm loc}^\infty(\R_+; L^2(\R^3))}\\
        &\lesssim   \updelta
        \| \mathcal S_2(t/\varepsilon) \,\widetilde{\mathcal U}^\varepsilon \|_{L_{\rm loc}^\infty(\R_+; L^2(\R^3))}^2
        \lesssim \updelta \| \,\widetilde{\mathcal U}^\varepsilon \|_{L_{\rm loc}^\infty(\R_+; L^2(\R^3))}^2 \lesssim \updelta \,.
  \end{align*}
  We now claim that there exists a constant $\widetilde{C}_\updelta$, which explodes as $\updelta \rightarrow 0$, such that for all $\upnu \geq 0$, 
  \begin{equation}
    \label{eqn:RTT-R:9}
    \Upgamma^\varepsilon:= \|  \mathcal J_{3,\updelta} \,\widetilde{\mathcal U}^\varepsilon (t_1) -
    \mathcal J_{3,\updelta} \,\widetilde{\mathcal U}^\varepsilon (t_2)
    \|_{H^\upnu(\R^3)} \leq \widetilde{C}_\updelta \big( |t_1-t_2| + \upomega(\varepsilon) \big)
  \end{equation}
  Indeed, using \eqref{eqlem:AUXR4} and \eqref{eqlem:AUXR6}, we obtain
  \begin{align*}
    \Upgamma^\varepsilon & \leq \| \chi_\updelta \|_{H^{\upnu+r}}
    \| {\mathcal U}^\varepsilon (t_1)  - {\mathcal U}^\varepsilon (t_2)  \|_{L_{\rm loc}^\infty(\R_+; H^{-r}(\R^3))}
    + 2  \| \chi_\updelta \|_{H^{\upnu+\sigma}}  \| \widetilde{\mathcal U}^\varepsilon (t_1)  - {\mathcal U}^\varepsilon (t_2)  \|_{L_{\rm loc}^\infty(\R_+; H^{-\sigma}(\R^3))} \\
    & \leq \widetilde{C}_\updelta \bigg(\int_{t_1}^{t_2} d\tau\, \|\partial_t {\mathcal U}^\varepsilon (\tau) \|_{ H^{-r}(\R^3)} + \upomega(\varepsilon) \bigg)
    \leq \widetilde{C}_\updelta \big( |t_1-t_2| + \upomega(\varepsilon) \big)\,.
  \end{align*}
  Time continuity estimate \eqref{eqn:RTT-R:9} allows us to replace the term $\mathcal S (t/\varepsilon)  \mathcal J_{3,\updelta} \,\widetilde{\mathcal U}^\varepsilon $ by its time regularization $ \mathcal J_{1,\upeta}  \mathcal S_2(t/\varepsilon)  \mathcal J_{3,\updelta} \,\widetilde{\mathcal U}^\varepsilon $ (with $\upeta >0$) since the error is controlled as
  \begin{equation}
    \label{eqn:RTT-R:10}
    \| \mathcal S(t/\varepsilon) \mathcal J_{1,\upeta} \mathcal J_{3,\updelta} \,\widetilde{\mathcal U}^\varepsilon  -
    \mathcal S(t/\varepsilon) \mathcal J_{3,\updelta} \,\widetilde{\mathcal U}^\varepsilon
    \|_{L_{\rm loc}^\infty(\R_+; H^{\upnu}(\R^3))} \leq \widetilde{C}_\updelta \big( \upeta + \upomega(\varepsilon) \big)\,,
  \end{equation}
  for all $\upnu \geq 0$. Using \eqref{eqn:RTT-R:10}, we then obtain
  \begin{equation}
    \label{eqn:RTT-R:11}
    |\Gamma_4^\varepsilon -\Gamma_5^\varepsilon| \lesssim \widetilde{C}_\updelta \big( \upeta + \upomega(\varepsilon) \big)\,, 
    \ \mbox{ with } \
    \Gamma_5^\varepsilon:=   \int_{\R_+} dt \int_{\R^3}dx\, {}^t(\mathcal S_2(t/\varepsilon) \mathcal J_{1,\upeta} \mathcal J_{3,\updelta} \,\widetilde{\mathcal U}^\varepsilon)[D_\perp \psi_\perp] \mathcal S_2(t/\varepsilon) \mathcal J_{1,\upeta} \mathcal J_{3,\updelta} \,\widetilde{\mathcal U}^\varepsilon\,. 
  \end{equation}   
  Therefore, by gathering estimates \eqref{eqn:RTT-R:5}, \eqref{eqn:RTT-R:7}, \eqref{eqn:RTT-R:8} and \eqref{eqn:RTT-R:11}, and by first taking the limit $\varepsilon \rightarrow 0$, then the limit $\upeta \rightarrow 0$, and finally the limit $\updelta \rightarrow 0$, we observe that the proof of Lemma~\ref{lem:cv-rvpvp-R} is complete, if we  prove $\Gamma_5^\varepsilon \rightarrow 0$, as $\varepsilon \rightarrow 0$. This is the matter of the rest of the proof.

In other words, we just have to show $\lim_{\varepsilon \rightarrow 0}\Gamma_5^\varepsilon  =0$, when $ \mathcal J_{1,\upeta} \mathcal J_{3,\updelta} \,\widetilde{\mathcal U}^\varepsilon$ is replaced by a smooth version of $\widetilde{\mathcal U}^\varepsilon$, that we denote by ${\mathcal V}^\varepsilon$ (to simplify the notation) with $ {\mathcal V}^\varepsilon \in \mathscr{C}^m(\R_+; H^\upnu(\R^3))$ for any $m\geq 0$ and any $\upnu \geq 0$ (not uniformly with respect to $\upeta$ and $\updelta$). As in the proof of Lemma~\ref{lem:RST}, following the spirit of the proof of the convergence result in the part III of \cite{LM98}, we introduce $\mathfrak U^\varepsilon \equiv {}^t(\upphi^\varepsilon, {}^t \Upphi^\varepsilon) := {}^t(\mathcal S_1(t/\varepsilon) \, \mathcal V^\varepsilon,{}^t\mathcal S_2(t/\varepsilon) \, \mathcal V^\varepsilon) = \mathcal S(t/\varepsilon) \, \mathcal V^\varepsilon$, with $\, \mathcal V^\varepsilon = {}^t(\psi^\varepsilon,{}^t \Psi^\varepsilon)\in \mathscr{C}^m(\R_+; H^\upnu(\R^3))$, and we compute explicity $\nabla_\perp \cdot( \Upphi^\varepsilon \otimes  \Upphi^\varepsilon)$ via Fourier transform. Since $\P_\perp\Psi^\varepsilon=0$, then $\Psi^\varepsilon =\nabla_\perp \uppsi^\varepsilon$, with $\uppsi^\varepsilon = \Delta_\perp^{-1}\nabla_\perp \cdot \Psi^\varepsilon$. This and commutation between $\mathcal S$ and $\P_\perp$ imply $\P_\perp\Upphi^\varepsilon=0 $, so that $\Upphi^\varepsilon =\nabla_\perp \upvarphi^\varepsilon$, with $\upvarphi^\varepsilon = \Delta_\perp^{-1}\nabla_\perp \cdot \Upphi^\varepsilon$.  
We introduce the Fourier decompositions
\[
\psi^\varepsilon=\frac{1}{(2\pi)^3}\int_{\R^3}d\xi\, e^{{\rm i}\xi\cdot x} \,\hat \psi^\varepsilon(t,\xi) \,, \quad
\Psi^\varepsilon=\frac{{\rm i}}{(2\pi)^3}\int_{\R^3}d\xi\,  e^{{\rm i}\xi \cdot x}\, \hat \uppsi^\varepsilon(t,\xi) \xi_\perp \,, 
\]
and denote by $\hat{\mathfrak U}^\varepsilon={}^t(\hat \upphi^\varepsilon, {}^t \hat \Upphi^\varepsilon = {\rm i} \,{}^t  \xi_\perp \hat \upvarphi^\varepsilon)$ the Fourier transform of ${\mathfrak U}^\varepsilon \equiv {}^t(\upphi^\varepsilon, {}^t \Upphi^\varepsilon=\nabla_\perp \upvarphi^\varepsilon)$. Inserting  the Fourier decomposition  of $\mathfrak U^\varepsilon $ in the linear equation $\partial_t\,\mathfrak U^\varepsilon = \mathcal L\, \mathfrak U^\varepsilon/\varepsilon $, we are  led to solve linear second-order ODEs in time for the Fourier coefficients $\hat\upphi^\varepsilon(t)$ and  $\hat \upvarphi^\varepsilon(t)$, with the inital conditions $\hat{\mathfrak U} ^\varepsilon(0)= \, \hat{\mathcal V}^\varepsilon(t)$  and $\partial_t\,\hat {\mathfrak U}^\varepsilon(0)= \hat{\mathcal L} \, \hat{\mathcal V}^\varepsilon(t)/\varepsilon $, where $\hat{\mathcal L} = {\rm i}\, {}^t(-c \, \xi_\perp \cdot\, ,\, {}^t \xi_\perp)$. Solving these linear ODEs, we obtain for $\Upphi^\varepsilon$,
  \begin{equation*}
    \label{Upphieps-R}
    \Upphi^\varepsilon= \nabla_\perp \upvarphi^\varepsilon = \frac{1}{(2\pi)^3}
    \int_{\R^3}d\xi\, e^{{\rm i}\xi\cdot x} \, \frac{{\rm i}\xi_\perp}{|\xi_\perp|} \, \Big \{
   \hat{\mathfrak m}^\varepsilon(t,\xi)\cos\big(\sqrt{c}|\xi_\perp|\tfrac t \varepsilon\big) -
    \tfrac{1}{\sqrt{c}} \hat\psi(t,\xi)\sin\big(\sqrt{c}|\xi_\perp|\tfrac t \varepsilon\big)
    \Big \}\,,
  \end{equation*}  
  where we have introduced $ \hat{\mathfrak m}^\varepsilon=\hat\uppsi^\varepsilon |\xi_\perp| $ (with
  ${\mathfrak m}^\varepsilon \in \mathscr{C}^m(\R_+; H^\upnu(\R^3))$, $\forall \,(m,\upnu)\geq 0$) to symmetrize the expressions.
  We then obtain
  \begin{equation}
    \label{eqn:upup-R}
    \Upphi^\varepsilon\otimes \Upphi^\varepsilon
    = -\frac{1}{2(2\pi)^6} \int_{\R^3}d\xi \int_{\R^3}d\zeta\, e^{{\rm i}(\xi+\zeta)\cdot x} \, \theta^\varepsilon(t,\xi)  \theta^\varepsilon(t,\zeta)
    ( \xi_\perp \otimes \zeta_\perp +\zeta_\perp \otimes \xi_\perp)\,, 
  \end{equation}
  with
  $
  \theta^\varepsilon(t,\xi)= \big(\mathfrak m^\varepsilon(t,\xi)/|\xi_\perp|\big) \cos(\sqrt{c}|\xi_\perp| t /\varepsilon) - \big({\psi(t,\xi)}/({\sqrt{c}|\xi_\perp|})\big)
  \sin (\sqrt{c}|\xi_\perp| t /\varepsilon)
  $. Then, we obtain 
  \begin{align}
    \label{eqn:divuu-R}
    \nabla_\perp \cdot (\Upphi^\varepsilon\otimes \Upphi^\varepsilon)
    &= -\frac{{\rm i}}{4(2\pi)^6} \int_{\R^3}d\xi \int_{\R^3}d\zeta\, e^{{\rm i}(\xi+\zeta)\cdot x} \, (\xi_\perp + \zeta_\perp)|\xi_\perp + \zeta_\perp|^2
    \theta^\varepsilon(t,\xi)  \theta^\varepsilon(t,\zeta) \nonumber \\
    & \hspace{-10pt}+\frac{{\rm i}}{4(2\pi)^6} \int_{\R^3}d\xi \int_{\R^3}d\zeta\, e^{{\rm i}(\xi+\zeta)\cdot x} \, (\xi_\perp - \zeta_\perp)(|\xi_\perp|^2 - |\zeta_\perp|^2)
    \theta^\varepsilon(t,\xi)  \theta^\varepsilon(t,\zeta)\,. 
  \end{align}
  The first term of the right-hand side of \eqref{eqn:divuu-R}, is a gradient and thus its contribution in $\Gamma_5^\varepsilon$ is null since $\nabla_\perp \cdot \psi_\perp=0$. In fact, following estimates below, we can show that this term converges in $\mathcal D'(\R_+^* \times \R^3)$ to the pressure term $\nabla_\perp \uppi_3$, as $\varepsilon \rightarrow 0$. Then, it remains to show that  the second term of the right-hand side of \eqref{eqn:divuu-R} vanishes in $\mathcal D'(\R_+^*\times \R^3)$, as  $\varepsilon \rightarrow 0$. In fact, because of the presence of the factor $1/|\xi_\perp|$ in the definition of $\theta^\varepsilon$ and since, contrary to the periodic case, we now have $\hat \psi (t,0) \neq 0$, we have to consider a truncated version of $\Upphi^\varepsilon\otimes \Upphi^\varepsilon$ around low frequences ($\xi_\perp \simeq 0$). For this, for any $\delta \in (0,1)$, we consider $\psi_\delta^\varepsilon$,  $\uppsi_\delta^\varepsilon$, $\mathfrak  m_\delta^\varepsilon$, and $\Upphi_\delta^\varepsilon$ defined by inverse Fourier transforms of $\hat \psi_\delta^\varepsilon:=\hat \psi^\varepsilon \mathbbm{1}_{\{|\xi_\perp|\geq \delta\}}$, $ \hat \uppsi_\delta^\varepsilon=\hat \uppsi^\varepsilon \mathbbm{1}_{\{|\xi_\perp|\geq \delta\}}$, $\hat{\mathfrak  m}_\delta^\varepsilon:=|\xi_\perp| \hat\uppsi_\delta^\varepsilon$ and by $\Upphi_\delta^\varepsilon:=\mathcal S_2(t/\varepsilon)\,{}^t(\psi_\delta^\varepsilon,\,{}^t\nabla_\perp \uppsi_\delta^\varepsilon)$. Using Cauchy--Schwarz inequality, we then obtain the error term
  \begin{align*}
    &\| \Upphi^\varepsilon\otimes \Upphi^\varepsilon - \Upphi_\delta^\varepsilon \otimes \Upphi_\delta^\varepsilon\|_{L_{\rm loc}^\infty(\R_+;L^\infty(\R^3))} \\
    & \quad  \lesssim \int_{\R^3}d\xi \int_{\R^3}d\zeta\, (|\hat{\mathfrak  m}^\varepsilon(t,\xi)| + |\hat \psi^\varepsilon(t,\xi)|)
    \mathbbm{1}_{\{|\xi_\perp|<\delta\}} \, (|\hat{\mathfrak  m}^\varepsilon(t,\zeta)| + |\hat \psi^\varepsilon(t,\zeta)|) \frac{(1+|\zeta|^2)^{\upnu/2}}{(1+|\zeta|^2)^{\upnu/2}}\\
    & \quad  \lesssim \delta^{3/2}\big(\| {\mathfrak  m}^\varepsilon\|_{L_{\rm loc}^\infty(\R_+;L^2(\R^3))} +
    \|\psi^\varepsilon\|_{L_{\rm loc}^\infty(\R_+;L^2(\R^3))}\big)  \bigg(\int_{\R^3}d\zeta\,
    (|\hat{\mathfrak  m}^\varepsilon(t,\zeta)|^2 + |\hat \psi^\varepsilon(t,\zeta)|^2) (1+|\zeta|^2)^\upnu\bigg)^{1/2} \\
    & \quad  \lesssim \delta^{3/2} \big(\| {\mathfrak  m}^\varepsilon \|_{L_{\rm loc}^\infty(\R_+;H^\upnu(\R^3))}^2 +
    \|\psi^\varepsilon\|_{L_{\rm loc}^\infty(\R_+;H^\upnu(\R^3))}^2\big) \lesssim  \delta^{3/2}\,,
  \end{align*}
  for any $\upnu > 3/2$. Therefore, we just have to show the claim for $\Upphi_\delta^\varepsilon$ for each $\delta \in (0,1)$. This is equivalent to assuming that  $\hat \psi^\varepsilon$ and   $\hat{\mathfrak  m}^\varepsilon$ vanish in $\xi_\perp$ on a ball $\mathscr{B}_\delta$ of radius $\delta$, independently  of $(t,\xi_\myparallel,\varepsilon)$. With this assumption, we just need to estimate the second term of the right-hand side of  \eqref{eqn:divuu-R} that  we decompose as follows, 
  \begin{align*}
    \label{eqn:tfe1}
    &\int_{\R^3}d\xi \int_{\R^3}d\zeta\, e^{{\rm i}(\xi+\zeta)\cdot x} \, (\xi_\perp - \zeta_\perp)(|\xi_\perp|^2 - |\zeta_\perp|^2)
    \theta^\varepsilon(t,\xi)  \theta^\varepsilon(t,\zeta) \nonumber\\
    & \qquad \qquad  = \frac12\int_{\R^3}d\xi \int_{\R^3}d\zeta\, e^{{\rm i}(\xi+\zeta)\cdot x} \, (\xi_\perp - \zeta_\perp)(|\xi_\perp|^2 - |\zeta_\perp|^2) \bigg \{ \nonumber \\
    & \qquad \qquad \qquad  \qquad  \qquad  \Big [  \cos\big(\sqrt{c}(|\xi_\perp|+|\zeta_\perp|) \tfrac{t}{\varepsilon}\big) +  \cos\big(\sqrt{c}(|\xi_\perp|-|\zeta_\perp|) \tfrac{t}{\varepsilon}\big) \Big] \frac{\hat{\mathfrak  m}^\varepsilon(t,\xi) \hat{\mathfrak  m}^\varepsilon(t,\zeta) }{|\xi_\perp||\zeta_\perp|} \nonumber\\
    & \qquad \qquad \qquad  \qquad  \ \ \,  -\Big [  \sin\big(\sqrt{c}(|\xi_\perp|+|\zeta_\perp|) \tfrac{t}{\varepsilon}\big) +  \sin\big(\sqrt{c}(|\xi_\perp|-|\zeta_\perp|) \tfrac{t}{\varepsilon}\big) \Big]
    \frac{\hat \psi^\varepsilon(t,\xi) \hat{\mathfrak  m}^\varepsilon(t,\zeta) }{\sqrt{c}|\xi_\perp||\zeta_\perp|} \nonumber\\
    & \qquad \qquad \qquad  \qquad  \ \ \,  -\Big [  \sin\big(\sqrt{c}(|\xi_\perp|+|\zeta_\perp|) \tfrac{t}{\varepsilon}\big) -  \sin\big(\sqrt{c}(|\xi_\perp|-|\zeta_\perp|) \tfrac{t}{\varepsilon}\big) \Big]
    \frac{\hat{\mathfrak  m}^\varepsilon(t,\xi) \hat \psi^\varepsilon(t,\zeta) }{\sqrt{c}|\xi_\perp||\zeta_\perp|} \nonumber\\
     & \qquad \qquad \qquad  \qquad  \ \ \,   -\Big [  \cos\big(\sqrt{c}(|\xi_\perp|+|\zeta_\perp|) \tfrac{t}{\varepsilon}\big) -  \cos\big(\sqrt{c}(|\xi_\perp|-|\zeta_\perp|) \tfrac{t}{\varepsilon}\big) \Big]
    \frac{\hat \psi^\varepsilon(t,\xi) \hat \psi^\varepsilon(t,\zeta) }{c|\xi_\perp||\zeta_\perp|}
    \bigg\}\,.
  \end{align*}
  The eight terms in the right-hand side of the previous equation can be estimated in a similar way, hence we only treat one them, for instance,
  \begin{multline*}
    \int_{\R^3}d\xi \int_{\R^3}d\zeta\, e^{{\rm i}(\xi+\zeta)\cdot x} \, (\xi_\perp - \zeta_\perp)(|\xi_\perp|^2 - |\zeta_\perp|^2)
    \sin\big(\sqrt{c}(|\xi_\perp|-|\zeta_\perp|) \tfrac{t}{\varepsilon}\big)
    \frac{\hat \psi^\varepsilon(t,\xi) \hat{\mathfrak  m}^\varepsilon(t,\zeta) }{\sqrt{c}|\xi_\perp||\zeta_\perp|}\\ =
    \partial_t \mathcal T_1^\varepsilon  +  \mathcal T_2^\varepsilon\,,
  \end{multline*}
  where,
  \begin{equation*}
    \mathcal T_1^\varepsilon = -\varepsilon
    \int_{\R^3}d\xi \int_{\R^3}d\zeta\, e^{{\rm i}(\xi+\zeta)\cdot x} \, (\xi_\perp - \zeta_\perp)(|\xi_\perp| + |\zeta_\perp|)
    \cos\big(\sqrt{c}(|\xi_\perp|-|\zeta_\perp|) \tfrac{t}{\varepsilon}\big)
      \frac{\hat \psi^\varepsilon(t,\xi) \hat{\mathfrak  m}^\varepsilon(t,\zeta) }{c|\xi_\perp||\zeta_\perp|}\,,
  \end{equation*}
  and  
  \begin{equation*}
    \mathcal T_2^\varepsilon = \varepsilon
    \int_{\R^3}d\xi \int_{\R^3}d\zeta\, e^{{\rm i}(\xi+\zeta)\cdot x} \, (\xi_\perp - \zeta_\perp)(|\xi_\perp| + |\zeta_\perp|)
    \cos\big(\sqrt{c}(|\xi_\perp|-|\zeta_\perp|) \tfrac{t}{\varepsilon}\big)
     \frac{\partial}{\partial t}\bigg( \frac{\hat \psi^\varepsilon(t,\xi) \hat{\mathfrak  m}^\varepsilon(t,\zeta) }{c|\xi_\perp||\zeta_\perp|}\bigg)\,,
  \end{equation*}
  The proof of Lemma~\ref{lem:cv-rvpvp-R} will be complete if we prove that $ \|\mathcal T_1^\varepsilon \|_{L_{\rm loc}^\infty(\R_+;L^\infty(\R^3))}$ and $ \|\mathcal T_2^\varepsilon \|_{L_{\rm loc}^\infty(\R_+;L^\infty(\R^3))}$ vanish as $\varepsilon \rightarrow 0$. Indeed, for any $\upnu > 3/2 $, using Cauchy--Schwarz inequality, we obtain, for any $T\in(0,+\infty)$,
   \begin{align*}
    \|\mathcal T_1^\varepsilon \|_{L_{\rm loc}^\infty(\R_+;L^\infty(\R^3))}& \lesssim \varepsilon \sup_{t\in [0,T]}
    \int_{\R^3}d\xi \int_{\R^3}d\zeta\,(|\xi_\perp|^2 + |\zeta_\perp|^2)
    \frac{|\hat \psi^\varepsilon(t,\xi)| }{|\xi_\perp|}
    \frac{|\hat{\mathfrak  m}^\varepsilon(t,\zeta)| }{|\zeta_\perp|} \nonumber\\
    & \lesssim \varepsilon \sup_{t\in [0,T]}
    \bigg(\int_{\R^3}d\xi \,   \frac{1}{(1+|\xi_\perp|^2)^{\upnu/2}}(1+|\xi_\perp|^2)^{(\upnu+2)/2}|\hat \psi^\varepsilon(t,\xi)| \bigg) \nonumber\\
    & \qquad \qquad \ \, \bigg(\int_{\R^3}d\zeta \,   \frac{1}{(1+|\zeta_\perp|^2)^{\upnu/2}}(1+|\zeta_\perp|^2)^{(\upnu+2)/2}|\hat{\mathfrak  m}^\varepsilon(t,\zeta)| \bigg) \nonumber\\
    & \lesssim \varepsilon \|\psi^\varepsilon \|_{L_{\rm loc}^\infty(\R_+;H^{\nu+2}(\R^3))}\|{\mathfrak  m}^\varepsilon \|_{L_{\rm loc}^\infty(\R_+;H^{\nu+2}(\R^3))} \lesssim \varepsilon \,,
  \end{align*}
  and similarly,
  \begin{align*}
    \|\mathcal T_2^\varepsilon \|_{L_{\rm loc}^\infty(\R_+;L^\infty(\R^3))}& \lesssim
    \varepsilon \big( \|\partial_t\psi^\varepsilon \|_{L_{\rm loc}^\infty(\R_+;H^{\nu+2}(\R^3))}\|{\mathfrak  m}^\varepsilon \|_{L_{\rm loc}^\infty(\R_+;H^{\nu+2}(\R^3))} \\
   &   \qquad + \|\psi^\varepsilon \|_{L_{\rm loc}^\infty(\R_+;H^{\nu+2}(\R^3))}\|\partial_t{\mathfrak  m}^\varepsilon \|_{L_{\rm loc}^\infty(\R_+;H^{\nu+2}(\R^3))}
    \lesssim \varepsilon\,.
  \end{align*}
 \end{proof}  



\appendix
\section{Toolbox}
\label{s:tools}
This section collects lemmas frequently used throughout the text. We start with  inequalities for the first-order Taylor expansion of the power function $x\mapsto x^\gamma$, with $\gamma>1$. These inequalities, mentioned in \cite{LM98}, can be seen as ``generalized'' strong convexity properties of the power function on the non-negative half-line.
\begin{lemma}[Convexity properties of the power function]
  \label{lem:convex}
  Let $\bar{x}>0$ and $\gamma >1$ be fixed positive real numbers. For all $ R \in ]\bar{x}, + \infty [ $, there exist positive constants $\nu_i$, $i=1,2,3$,
    depending on $\gamma$, $R$ and $\bar{x}$, such that
  \begin{equation*}
    \begin{aligned}
      x^\gamma  - \gamma x\bar{x}^{\gamma -1} &+ (\gamma-1)\bar{x}^\gamma \\
      & = x^\gamma -\bar{x}^\gamma - \gamma \bar{x}^{\gamma -1}(x-\bar{x}) \geq\\
     & 
    \end{aligned}  
    \left \{
    \begin{aligned}
      \nu_1 \, |x-\bar{x}|^2, & \ \ \ 0 \leq x , \ \  \gamma \geq 2\,, \\
      \nu_2 \, |x-\bar{x}|^2, & \ \ \  0\leq x \leq R,  \ \  1<\gamma < 2\,, \\
      \nu_3 \, |x-\bar{x}|^\gamma, & \ \  \ R < x , \ \   1<\gamma < 2\,.
    \end{aligned}  
    \right.
  \end{equation*}  
\end{lemma}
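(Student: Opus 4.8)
The plan is to recognize the left-hand side as the second-order Taylor remainder of the power function and to reduce every estimate to a single continuity-plus-compactness argument. Set $G(x) := x^\gamma - \bar{x}^\gamma - \gamma\bar{x}^{\gamma-1}(x-\bar{x})$ on $[0,+\infty)$, which is exactly the quantity appearing in the statement. First I would record that $G(\bar{x}) = 0$, that $G'(x) = \gamma(x^{\gamma-1} - \bar{x}^{\gamma-1})$ so $G'(\bar{x}) = 0$, and that $G''(x) = \gamma(\gamma-1)x^{\gamma-2} > 0$ for $x > 0$. Strict convexity of $x \mapsto x^\gamma$ then says that $G(x)$, being the gap between the graph and its tangent line at $\bar{x}$, satisfies $G(x) > 0$ for every $x \in [0,+\infty)\setminus\{\bar{x}\}$; at the endpoint one checks directly $G(0) = (\gamma-1)\bar{x}^\gamma > 0$.

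The unified device is to divide $G$ by a suitable power of $|x-\bar{x}|$ and to extract a positive infimum. Consider first $q(x) := G(x)/(x-\bar{x})^2$. Taylor's formula at $\bar{x}$ gives $G(x) = \tfrac12 G''(\bar{x})(x-\bar{x})^2 + o((x-\bar{x})^2)$, so $q$ extends continuously across $\bar{x}$ with value $q(\bar{x}) = \tfrac12\gamma(\gamma-1)\bar{x}^{\gamma-2} > 0$; combined with the positivity of $G$ away from $\bar{x}$, this makes $q$ continuous and strictly positive on all of $[0,+\infty)$. For $\gamma \geq 2$ I would note that $q(x) \sim x^{\gamma-2}$ as $x \to +\infty$, hence $q(x) \to +\infty$ (and $q \equiv 1$ in the borderline case $\gamma = 2$, where $G(x) = (x-\bar{x})^2$ identically); a continuous positive function on $[0,+\infty)$ tending to $+\infty$ has a strictly positive infimum, which furnishes $\nu_1$. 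For $1 < \gamma < 2$ the same $q$ is continuous and positive on the compact interval $[0,R]$ and therefore attains a strictly positive minimum $\nu_2$ there; this settles the second case.

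It remains to treat $x > R$ when $1 < \gamma < 2$, and this is the only place where genuine care is needed: since now $q(x) \sim x^{\gamma-2} \to 0$, the quadratic bound cannot persist, which is precisely why the exponent must be lowered to $\gamma$ and the threshold $R$ introduced. I would set $\tilde{q}(x) := G(x)/(x-\bar{x})^\gamma$ on $[R,+\infty)$, where $x - \bar{x} > 0$ so no singularity occurs and $\tilde{q} > 0$ by positivity of $G$. The crux is the behavior at infinity: factoring out $x^\gamma$ gives $G(x)/x^\gamma \to 1$ (using $\gamma > 1$, so the term $\gamma\bar{x}^{\gamma-1}(x-\bar{x})/x^\gamma = O(x^{1-\gamma}) \to 0$) and $(x-\bar{x})^\gamma/x^\gamma \to 1$, whence $\tilde{q}(x) \to 1$ as $x \to +\infty$. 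A continuous, strictly positive function on $[R,+\infty)$ with a strictly positive limit at infinity is bounded below by a strictly positive constant $\nu_3$, and this closes the last case.

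The argument is essentially elementary calculus; the only conceptual obstacle is matching the comparison exponent to the growth of $G$ — quadratic near $\bar{x}$ and on bounded sets, but only $\gamma$-homogeneous at infinity when $\gamma < 2$ — which is what dictates both the split into three regimes and the truncation at $R$.
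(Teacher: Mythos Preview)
Your proof is correct and follows essentially the same approach as the paper, which merely sketches the argument by invoking convexity on $\mathbb{R}_+$ and strong convexity on compact subsets of $\mathbb{R}_+^*$ before leaving the details to the reader. Your continuity-plus-compactness treatment of the quotients $G(x)/(x-\bar{x})^2$ and $G(x)/(x-\bar{x})^\gamma$ is exactly the kind of elementary detail the paper omits.
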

\begin{proof} Knowing that $\gamma>1$, the power function $\R^+ \ni x\mapsto x^\gamma \in \R^+$ is convex  on $\R^+$, and strongly convex on any compact set of $\R_+^* $. The details are left to the reader.
\end{proof}

Recall that the Orlicz space $L_2^\gamma(\R^3)$ is defined by \eqref{def_orlicz}, or see
Appendix~A in \cite{Lio98}.

\begin{lemma}[A criterion for  belonging to the Orlicz spaces $L_2^\gamma(\Omega)$]
  \label{lem:ORC}
  Let $\gamma >1$, $\bar f > 0$ and $\delta >0$ be fixed positive real numbers. Let $ f\in L_{\rm loc}^\gamma(\R^3;\R_+)$ be given. Define
  \[\begin{array}{ll}
    \Uppi_{ \bar f, \gamma } :  L_{\rm loc}^\gamma(\R^3;\R_+) \rightarrow L_{\rm loc}^1(\R^3;\R_+) \,, \quad & \Uppi_{\bar f,\gamma}(f) := f^\gamma - \gamma \bar{f}^{\gamma -1}f +(\gamma -1) {\bar f}^{\gamma} \,, \smallskip \\
    \mathfrak Z_{2,\delta}^{\gamma, \bar f}:   L_{\rm loc}^\gamma (\R^3;\R_+) \rightarrow L_{\rm loc}^1(\R^3;\R_+) \,, \quad & \mathfrak Z_{2,\delta}^{\gamma, \bar f}(f) := |f-\bar f|^2\mathbbm{1}_{\{|f-\bar f|\leq \delta \}} + |f-\bar f|^\gamma\mathbbm{1}_{\{|f-\bar f|> \delta \}} \, .
    \end{array} \]
There exist two constants $\kappa_1$ and $\kappa_2$, depending on $(\bar f, \gamma, \delta)$, such that
    \[
    \kappa_1\, \mathfrak Z_{2,\delta}^{\gamma,\bar f}(f) \leq  \Uppi_{\bar f,\gamma}(f) \leq  \kappa_2\, \mathfrak Z_{2,\delta}^{\gamma, \bar f}(f)\,. 
    \]
  It follows that $\Uppi_{\bar f,\gamma}(f) \in L^1(\R^2)$ if and only if $(f-\bar f) \in L_2^\gamma(\R^3)$.
\end{lemma}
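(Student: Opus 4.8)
The plan is to reduce everything to a single pointwise comparison in the scalar variable $t=f(x)\geq 0$, i.e.\ to prove $\kappa_1\,\mathfrak{Z}_{2,\delta}^{\gamma,\bar f}(t)\leq \Uppi_{\bar f,\gamma}(t)\leq \kappa_2\,\mathfrak{Z}_{2,\delta}^{\gamma,\bar f}(t)$ for every $t\geq 0$, with $\kappa_1,\kappa_2$ depending only on $(\bar f,\gamma,\delta)$. Once this is in hand, the final $L^1$ equivalence follows at once by integrating in $x$ and reading off $\int_{\R^3}\mathfrak{Z}_{2,\delta}^{\gamma,\bar f}(f)\,dx=\|(f-\bar f)\mathbbm{1}_{\{|f-\bar f|\leq\delta\}}\|_{L^2(\R^3)}^2+\|(f-\bar f)\mathbbm{1}_{\{|f-\bar f|>\delta\}}\|_{L^\gamma(\R^3)}^\gamma$, which is finite precisely when $f-\bar f\in L_2^\gamma(\R^3)$ by the very definition \eqref{def_orlicz}.

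To establish the pointwise estimate I would study the ratio $h(t):=\Uppi_{\bar f,\gamma}(t)/\mathfrak{Z}_{2,\delta}^{\gamma,\bar f}(t)$ on $[0,\infty)\setminus\{\bar f\}$ and show $0<\inf h\leq\sup h<\infty$, taking $\kappa_1=\inf h$ and $\kappa_2=\sup h$. The point is that $\Uppi_{\bar f,\gamma}(t)$ is exactly the first-order Taylor remainder of $t\mapsto t^\gamma$ at $\bar f$, so by Lemma~\ref{lem:convex} it is nonnegative and vanishes only at $t=\bar f$; since $\mathfrak{Z}_{2,\delta}^{\gamma,\bar f}(t)$ is likewise continuous and strictly positive for $t\neq\bar f$, the ratio $h$ is continuous and positive on any bounded interval $[0,R]$ away from $t=\bar f$, with at worst two bounded jump discontinuities at $t=\bar f\pm\delta$ arising from the switch in the definition of $\mathfrak{Z}$ (the jump factor being the harmless $\delta^{2-\gamma}$ for fixed $\delta>0$).

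The two endpoint regimes pin down the extension of this analysis. Near $t=\bar f$ one has $\mathfrak{Z}_{2,\delta}^{\gamma,\bar f}(t)=(t-\bar f)^2$, while a second-order Taylor expansion gives $\Uppi_{\bar f,\gamma}(t)=\tfrac{\gamma(\gamma-1)}{2}\bar f^{\gamma-2}(t-\bar f)^2+o((t-\bar f)^2)$, so $h$ extends continuously to $\bar f$ with the strictly positive value $\tfrac{\gamma(\gamma-1)}{2}\bar f^{\gamma-2}$. As $t\to+\infty$, both $\Uppi_{\bar f,\gamma}(t)$ and $\mathfrak{Z}_{2,\delta}^{\gamma,\bar f}(t)=(t-\bar f)^\gamma$ are asymptotic to $t^\gamma$, whence $h(t)\to 1$. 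Choosing $R$ large enough that this asymptotic regime is entered, $h$ is then bounded above and below on $[R,\infty)$ and, by the compactness argument of the previous paragraph, also on the compact complement, which together yields finite positive values for $\kappa_1$ and $\kappa_2$.

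The only delicate bookkeeping, which I regard as the main obstacle, is keeping the lower bound from degenerating across the three natural zones: the quadratic zone $|t-\bar f|\leq\delta$, the bounded far zone $\delta<|t-\bar f|$ with $t\leq R$, and the unbounded zone $t>R$. Here the case distinction $\gamma\geq 2$ versus $1<\gamma<2$ in Lemma~\ref{lem:convex} is exactly what supplies the lower bound $\Uppi_{\bar f,\gamma}(t)\gtrsim|t-\bar f|^\gamma$ in the unbounded zone, matching the $|t-\bar f|^\gamma$ growth of $\mathfrak{Z}$, whereas on the bounded zone mere continuity and positivity suffice. No genuinely hard estimate enters; the whole content is the matching of the quadratic-near/$\gamma$-power-far profiles of the two functions.
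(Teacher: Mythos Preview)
Your proposal is correct and follows essentially the same approach as the paper, which merely sketches the argument by pointing to Lemma~\ref{lem:convex}, a Taylor expansion, and the (strong) convexity of the power function, leaving details to the reader. Your ratio argument with the three zones (quadratic near $\bar f$, compact bounded, and $t\to\infty$) is precisely the fleshing-out of that sketch.
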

\begin{proof}  
The proof is long but straightforward. It mainly relies on Lemma~\ref{lem:convex}, a Taylor formula with integral remainder, and on the convexity (resp. strong convexity) of $ x\mapsto x^\gamma $ on $\R^+$ (resp. on  any compact set  of $\R^+$). The details are left to the reader. Note that
this result is similar to the more general Lemma~5.3 in \cite{Lio98} to which we can also refer the reader for a  proof.
\end{proof}

\begin{lemma}[A space-time compactness lemma of Simon \cite{Sim86}]
  \label{lem:ALL}
  Let $\mathfrak B_0 \Subset \mathfrak B \subset \mathfrak B_1$  be Banach spaces (the embedding $\mathfrak B_0  \Subset \mathfrak B$ is compact and the embedding $\mathfrak B \subset \mathfrak B_1$ is continuous).
  Let $I$ be a compact interval. Fix $ q $ with $1<q \leq \infty$. Let $ f_\varepsilon : I \rightarrow \mathfrak B $ be a family of functions indexed by $ \varepsilon $ in a directed set\footnote{Since a directed set \cite{Bor98,Kel75} is  countable or uncountable, the one-parameter family of functions $\{f\}_{\varepsilon \in J}$, called sequences (respectively subsequences) by abuse of language, must be understood as generalized sequences (respectively subsequences) such as nets (respectively subnets) in the sense of Moore--Smith (see, e.g., Chapter 4, Sections 11 and 12 in \cite{Wil70})  or filters (respectively, finer filters) in the sense of Cartan (see, e.g., Chapter 1, Section 6 in \cite{NB89}).} $ J $. Thus, for all $ t \in I $, we have $ f_\varepsilon (t) \in \mathfrak B $. We assume that $\{f_\varepsilon\}_{\varepsilon \in J}$ is bounded uniformly with respect to $ \varepsilon $ in $L^q(I; \mathfrak B) \cap L^1(I; \mathfrak B_0)$
  and that $\{\partial_t f_\varepsilon \}_{\varepsilon \in J}$ is  bounded uniformly with respect to $\varepsilon$ in $L^1(I; \mathfrak B_1)$. Then  $\{f_\varepsilon \}_{\varepsilon \in J}$ is relatively compact in
  $L^p(I; \mathfrak B)$ for all $ p $ with $1\leq p <q$.
\end{lemma}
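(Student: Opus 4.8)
The plan is to deduce the statement from the classical Fréchet--Kolmogorov characterisation of relatively compact subsets of Bochner spaces, combined with an Ehrling-type interpolation inequality and a final interpolation in the Lebesgue exponent that exploits the uniform $L^q$ bound. The crucial reduction is to the case $p=1$: once we know that $\{f_\varepsilon\}_{\varepsilon\in J}$ is relatively compact in $L^1(I;\mathfrak B)$ and bounded in $L^q(I;\mathfrak B)$, the full statement for $1\le p<q$ follows from the scalar interpolation inequality $\|g\|_{L^p(I;\mathfrak B)}\le \|g\|_{L^1(I;\mathfrak B)}^{\theta}\,\|g\|_{L^q(I;\mathfrak B)}^{1-\theta}$ with $1/p=\theta+(1-\theta)/q$, obtained by applying the usual $L^p$ interpolation to the scalar function $t\mapsto \|g(t)\|_{\mathfrak B}$. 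Indeed, given any net in $\{f_\varepsilon\}$, the $L^1$ relative compactness yields a subnet converging in $L^1(I;\mathfrak B)$ to some limit $f$, which lies in $L^q(I;\mathfrak B)$ by the uniform bound; the interpolation inequality applied to the differences then upgrades this to convergence in $L^p(I;\mathfrak B)$, so every net admits an $L^p$-convergent subnet, which is exactly relative compactness in $L^p(I;\mathfrak B)$.

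The key analytic ingredient is the interpolation (Ehrling) inequality: since $\mathfrak B_0\Subset\mathfrak B\subset\mathfrak B_1$, for every $\eta>0$ there is a constant $C_\eta$ such that $\|v\|_{\mathfrak B}\le \eta\,\|v\|_{\mathfrak B_0}+C_\eta\,\|v\|_{\mathfrak B_1}$ for all $v\in\mathfrak B_0$. I would prove this by contradiction: a normalised sequence with $\|v_n\|_{\mathfrak B}=1$ violating the inequality for some fixed $\eta$ would be bounded in $\mathfrak B_0$, hence convergent in $\mathfrak B$ along a subsequence by the compact embedding $\mathfrak B_0\Subset\mathfrak B$, while simultaneously tending to $0$ in $\mathfrak B_1$; the continuity of $\mathfrak B\subset\mathfrak B_1$ forces the $\mathfrak B$-limit to vanish, contradicting $\|v\|_{\mathfrak B}=1$.

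For the $L^1$ relative compactness I would invoke the Fréchet--Kolmogorov criterion for Bochner spaces (Theorem~1 in \cite{Sim86}), which requires two conditions. First, for every compact subinterval $[t_1,t_2]\subset I$ the set of averages $\{\int_{t_1}^{t_2} f_\varepsilon(t)\,dt\}_{\varepsilon\in J}$ must be relatively compact in $\mathfrak B$; this is immediate, since $\|\int_{t_1}^{t_2} f_\varepsilon\,dt\|_{\mathfrak B_0}\le \|f_\varepsilon\|_{L^1(I;\mathfrak B_0)}\le C$ shows these integrals lie in a bounded subset of $\mathfrak B_0$, which is relatively compact in $\mathfrak B$ by compactness of the embedding. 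Second, the time translates must be equicontinuous in $L^1(I;\mathfrak B)$, uniformly in $\varepsilon$, i.e. $\sup_{\varepsilon}\|\tau_h f_\varepsilon-f_\varepsilon\|_{L^1(I_h;\mathfrak B)}\to 0$ as $h\to0$, where $\tau_h f(t)=f(t+h)$ and $I_h=\{t\in I:\ t+h\in I\}$. Here I would apply the Ehrling inequality pointwise to $v=f_\varepsilon(t+h)-f_\varepsilon(t)$ and integrate in time, obtaining $\|\tau_h f_\varepsilon-f_\varepsilon\|_{L^1(I_h;\mathfrak B)}\le \eta\,\|\tau_h f_\varepsilon-f_\varepsilon\|_{L^1(I_h;\mathfrak B_0)}+C_\eta\,\|\tau_h f_\varepsilon-f_\varepsilon\|_{L^1(I_h;\mathfrak B_1)}$. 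The first term is $\le 2\eta\,\|f_\varepsilon\|_{L^1(I;\mathfrak B_0)}\le 2\eta C$, uniformly in $\varepsilon$ and $h$; the second is controlled by the fundamental theorem of calculus in $\mathfrak B_1$, which (using $\partial_t f_\varepsilon\in L^1(I;\mathfrak B_1)$) gives $\|\tau_h f_\varepsilon-f_\varepsilon\|_{L^1(I_h;\mathfrak B_1)}\le |h|\,\|\partial_t f_\varepsilon\|_{L^1(I;\mathfrak B_1)}\le C|h|$. Letting first $h\to0$ and then $\eta\to0$ yields the uniform equicontinuity, and the criterion then delivers relative compactness in $L^1(I;\mathfrak B)$.

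I expect the main obstacle to be the exponent bookkeeping rather than any deep difficulty: the $\mathfrak B_0$-bound is available only in $L^1$ in time, so the translation equicontinuity can be established only in $L^1(I;\mathfrak B)$, and it is precisely this mismatch that forces the two-stage argument (compactness at $p=1$, then interpolation up to $p<q$ via the $L^q$ bound). Some care is also needed with the generalised-sequence (net) formalism emphasised in the footnote: since $\mathfrak B$ may be non-separable, \emph{relative compactness} should be read as compactness of the closure, equivalently as existence of convergent subnets, and the Fréchet--Kolmogorov criterion must be understood as a total-boundedness statement, so that completeness of $L^p(I;\mathfrak B)$ closes the argument. None of the remaining verifications consumes more than Hölder's inequality and the two stated uniform bounds.
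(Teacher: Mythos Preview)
Your proof is correct and is essentially the standard argument from Simon's paper \cite{Sim86}. Note, however, that the paper does not give its own proof of this lemma: it is merely recalled in the Appendix as a tool, with attribution to \cite{Sim86}, so there is no ``paper's proof'' to compare against beyond the original reference, whose strategy you have reproduced faithfully (Ehrling interpolation, Fr\'echet--Kolmogorov in $L^1(I;\mathfrak B)$, then Lebesgue-exponent interpolation up to $p<q$).
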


We continue with the following space-time compactness lemma established and used in \cite{Lio98}
for the proof of existence of global weak solutions to the compressible Navier--Stokes equations.  
\begin{lemma}[A space-time compactness lemma of P.-L. Lions \cite{Lio98}]
  \label{lem:compactness} Let $\Omega$ be $\T^N$ or $\R^N$ or an open set of  $\R^N$. Let $ J $ be a directed set.
  Let $\{g_\varepsilon\}_{\varepsilon \in J}$, and  $\{h_\varepsilon \}_{\varepsilon \in J}$ be sequences converging weakly to $g$ and $h$, respectively in $L_{\rm loc}^{p_1}(\R_+; L^{p_2}(\Omega))$
  and  $L_{\rm loc}^{q_1}(\R_+; L^{q_2}(\Omega))$, where $1\leq p_1,\, p_2 \leq \infty$ and
  \[
  \frac{1}{p_1} +  \frac{1}{q_1} = 1\,, \quad  \frac{1}{p_2} +  \frac{1}{q_2} = 1\,.
  \]
  Above, weak convergences are weak$-*$ convergences whenever some of the exponents are infinite. 
  Assume, in addition, that
  \begin{align*}
    &\partial_t g_\varepsilon  \mbox{ is bounded in } L_{\rm loc}^1(\R_+; W^{-\alpha,1}(\Omega))  \mbox{ for some } \alpha\geq 0\,, \mbox{ independent of } \varepsilon \,, \\
    & \|h_\varepsilon (t,\cdot) -h_\varepsilon(t,\cdot+ \xi)\|_{L_{\rm loc}^{q_1}(\R_+; L^{q_2}(\Omega))} \xrightarrow{\quad} 0 \ \mbox{as } \ |\xi| \rightarrow 0\,, \mbox{ uniformly in } \varepsilon \,.
  \end{align*}
  Then, the sequence $\{g_\varepsilon h_\varepsilon \}_{\varepsilon\in J}$ converges to $gh$ in the sense of distributions in $\R_+^* \times \Omega$.
\end{lemma}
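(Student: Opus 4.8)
The plan is to exploit the transversality of the two compactness mechanisms at work: $\{g_\varepsilon\}$ is compact in \emph{time} (thanks to the uniform bound on $\partial_t g_\varepsilon$) but merely weakly convergent in space, whereas $\{h_\varepsilon\}$ is equicontinuous in \emph{space} but merely weakly convergent in time. Since the product of two weak limits need not be the weak limit of the products, the idea is to regularize $g_\varepsilon$ in the spatial variable so that it becomes strongly compact, to pair the regularized object against the weakly convergent $h_\varepsilon$, and to control the regularization error \emph{uniformly in $\varepsilon$} by means of the spatial equicontinuity of $h_\varepsilon$. It suffices to prove, for every fixed $\varphi \in \mathscr{C}_c^\infty(\R_+^* \times \Omega)$, that $\int g_\varepsilon h_\varepsilon \varphi \to \int g h \varphi$; I fix such a $\varphi$, supported in $[0,T]\times K$ with $K\Subset\Omega$.

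First I would introduce a spatial mollifier: let $\chi_\delta(x)=\delta^{-N}\chi(x/\delta)$ with $\chi$ even, and let $S_\delta$ denote convolution in $x$ only. Using the self-adjointness of $S_\delta$, I decompose
\[
\int g_\varepsilon h_\varepsilon\varphi = \int (S_\delta g_\varepsilon)\,h_\varepsilon\varphi + \int g_\varepsilon\,(I-S_\delta)(h_\varepsilon\varphi).
\]
For the error term, $\|(I-S_\delta)f\|_{L^{q_2}}\le\sup_{|\xi|\le\delta}\|f-f(\cdot-\xi)\|_{L^{q_2}}$, so the assumed spatial equicontinuity of $h_\varepsilon$, transferred to $h_\varepsilon\varphi$ (the extra contribution $h_\varepsilon(\varphi-\varphi(\cdot-\xi))$ being handled by the $L^{q_1}(L^{q_2})$ bound on $h_\varepsilon$ and the Lipschitz bound on $\varphi$), yields $\sup_\varepsilon\|(I-S_\delta)(h_\varepsilon\varphi)\|_{L^{q_1}(L^{q_2})}\le\omega(\delta)$ with $\omega(\delta)\to0$. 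Since $\{g_\varepsilon\}$ is bounded in $L^{p_1}(L^{p_2})$, Hölder's inequality bounds the error term by $C\,\omega(\delta)$, uniformly in $\varepsilon$.

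Next, for $\delta$ fixed I would establish the strong compactness in time of $S_\delta g_\varepsilon$. Spatial mollification gains arbitrary spatial smoothness quantitatively in $\delta$, so $S_\delta g_\varepsilon$ is bounded in $L^{p_1}(0,T;W^{1,p_2}(K))$, while $\partial_t S_\delta g_\varepsilon = S_\delta\partial_t g_\varepsilon$ is bounded in $L^1(0,T;L^{p_2}(K))$ because $S_\delta$ maps $W^{-\alpha,1}$ continuously into every $W^{m,p}_{\rm loc}$ and commutes with $\partial_t$. Since $W^{1,p_2}(K)\Subset L^{p_2}(K)$, the Simon compactness result (Lemma~\ref{lem:ALL}, valid for directed sets) gives relative compactness of $\{S_\delta g_\varepsilon\}$ in $L^{p}(0,T;L^{p_2}(K))$; uniqueness of the weak limit forces the strong limit to be $S_\delta g$, so the whole net converges. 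Pairing this strong convergence (in the predual) against $h_\varepsilon\rightharpoonup h$ weakly in $L^{q_1}(L^{q_2})$ gives $\int(S_\delta g_\varepsilon)h_\varepsilon\varphi\to\int(S_\delta g)h\varphi$ at each fixed $\delta$.

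Finally I would let $\delta\to0$: since $S_\delta g\to g$ strongly in $L^{p_1}(L^{p_2})$ and $h\varphi\in L^{q_1}(L^{q_2})$, one has $\int(S_\delta g)h\varphi\to\int gh\varphi$. Combining this with the $\varepsilon$-limit of the main term and the uniform-in-$\varepsilon$ bound $C\,\omega(\delta)$ on the error, a diagonal argument (send $\varepsilon\to0$ first at fixed $\delta$, then $\delta\to0$) closes the proof. The main obstacle is the matching of Lebesgue exponents in the strong–weak pairing of the main term: Simon's lemma delivers strong convergence only in $L^{p}$ for $p<p_1$, so one must upgrade it — using that for fixed $\delta$ the kernel estimate $|S_\delta g_\varepsilon(t,x)|\le\|\chi_\delta\|_{L^{p_2'}}\|g_\varepsilon(t)\|_{L^{p_2}}$ makes $S_\delta g_\varepsilon$ uniformly bounded in $L^{p_1}(0,T;L^\infty(K))$, whence the strong convergence can be promoted to the full exponent $p_1$ — and one must read all convergences as weak-$\ast$ in the endpoint cases where $p_1,p_2,q_1$ or $q_2$ equals $1$ or $\infty$.
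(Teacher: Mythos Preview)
The paper does not prove this lemma at all --- it simply cites it as Lemma~5.1 of \cite{Lio98} and records the intuition (Aubin--Lions in time, Kolmogorov--Riesz--Fr\'echet in space). Your argument is precisely the standard proof from Lions's book: mollify $g_\varepsilon$ in space, throw the $(I-S_\delta)$ onto $h_\varepsilon\varphi$ where the uniform spatial equicontinuity kills it, and use the time-derivative bound to obtain strong compactness of $S_\delta g_\varepsilon$ for the weak--strong pairing.

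One point deserves more care. Your final ``upgrade'' to the full exponent $p_1$ is correct but not for the reason you give: the kernel estimate $|S_\delta g_\varepsilon(t,x)|\le \|\chi_\delta\|_{L^{p_2'}}\|g_\varepsilon(t)\|_{L^{p_2}}$ only yields a bound in $L^{p_1}(0,T;L^\infty(K))$, and boundedness in $L^{p_1}$ together with strong convergence in $L^p$ for $p<p_1$ does \emph{not} in general imply strong convergence in $L^{p_1}$ (think of time-concentrating bumps). What actually closes the argument is that the bound on $\partial_t S_\delta g_\varepsilon$ in $L^1(0,T;C(\bar K))$, combined with the $L^{p_1}(0,T;C(\bar K))$ bound, forces $S_\delta g_\varepsilon$ into $L^\infty(0,T;C(\bar K))$ with equicontinuity in $t$; Arzel\`a--Ascoli (pointwise precompactness coming from the $C^1(\bar K)$ bound) then gives compactness in $C([0,T]\times \bar K)$, which is more than enough to pair against $h_\varepsilon\varphi$ bounded in $L^{q_1}(L^{q_2})\hookrightarrow L^1([0,T]\times K)$.
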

\begin{proof}  
  This is Lemma~5.1 of \cite{Lio98}. The assumption in time is reminiscent to the Aubin--Lions
  theorem. The assumption in space is reminiscent to the Kolmogorov--Riesz--Fr\'echet criterion
  (e.g., see Theorem~4.26 in \cite{Bre11}) for compactness ("$L^p$-versions" of the Ascoli--Arzel\`a theorem).
\end{proof}

We end with results about mollifiers.
\begin{lemma}[Mollification operators]
  \label{lem:Mollif}
  Let $\chi: \R^d \mapsto \R_+$ be a non-negative function belonging to $\mathscr{C}_c^\infty(\R^d;\R_+)$, and with total mass one. For any $\upeta \in (0,1)$,
  define $\chi_\upeta(\cdot)=\upeta^{-d}\chi(\cdot/\upeta)$. The familly of non-negative functions of mass one $\{\chi_\upeta \}_{\upeta>0}$ are called a family of mollifiers,
  while the operator $ \mathcal J_{d,\upeta} : \mathcal D'(\R^d) \mapsto \mathscr{C}^\infty(\R^d)$,
  defined as $\mathcal J_{d,\upeta} f= \chi_\upeta\ast f $, for any distribution $f$, is called a mollification operator. The mollification
  operator $\mathcal J_{d,\upeta}$ has the following approximation property.  For all  $f\in H^s(\R^d)$, with  $s \in\R$ and any $\sigma \in \R$ such that $0\leq  s-\sigma \leq 1$, we have
  the following approximation error estimate
  \begin{equation}
      \label{eq:MolA}
      \| \mathcal J_{d,\upeta} f -f\|_{H^\sigma(\R^d)} \lesssim\upeta^{s-\sigma}\| f\|_{H^s(\R^d)}\,.
 \end{equation}   
\end{lemma}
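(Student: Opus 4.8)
The statement to establish is the approximation estimate \eqref{eq:MolA}, the remaining assertions being mere definitions. The plan is to pass to the Fourier side and reduce everything to a uniform pointwise estimate on the Fourier multiplier of the error operator $\mathcal J_{d,\upeta} - {\rm I}$. Since $\mathcal J_{d,\upeta} f = \chi_\upeta \ast f$, and the scaling $\chi_\upeta(\cdot) = \upeta^{-d}\chi(\cdot/\upeta)$ gives $\widehat{\chi_\upeta}(\xi) = \hat\chi(\upeta\xi)$, the convolution theorem yields $\widehat{\mathcal J_{d,\upeta} f}(\xi) = \hat\chi(\upeta\xi)\,\hat f(\xi)$. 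By Plancherel's theorem it follows that
\[
\| \mathcal J_{d,\upeta} f - f\|_{H^\sigma(\R^d)}^2 = \int_{\R^d} (1+|\xi|^2)^\sigma \, |\hat\chi(\upeta\xi) - 1|^2 \, |\hat f(\xi)|^2 \, d\xi \,,
\]
so the whole matter is to control $|\hat\chi(\upeta\xi) - 1|$ uniformly in $\upeta$ and $\xi$.

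The key is the elementary pointwise multiplier bound $|\hat\chi(\upeta\xi) - 1| \lesssim \min\{\upeta|\xi|,\,1\}$. In the regime $\upeta|\xi| \leq 1$ this follows from the normalisation $\hat\chi(0) = \int_{\R^d}\chi\,dx = 1$ (total mass one) together with the smoothness of $\hat\chi$: since $\chi \in \mathscr{C}_c^\infty(\R^d)$, its transform $\hat\chi$ is a Schwartz function, hence Lipschitz, so $|\hat\chi(\upeta\xi) - \hat\chi(0)| \lesssim \upeta|\xi|$. In the regime $\upeta|\xi| \geq 1$ one bounds directly $|\hat\chi(\upeta\xi) - 1| \leq \|\hat\chi\|_{L^\infty(\R^d)} + 1 \lesssim 1$. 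Since the shift $m := s - \sigma$ lies in $[0,1]$, one has $\min\{\upeta|\xi|,1\} \leq (\upeta|\xi|)^{m}$ in both regimes (because $(\upeta|\xi|)^m \geq \upeta|\xi|$ when $\upeta|\xi|\leq 1$ and $(\upeta|\xi|)^m \geq 1$ when $\upeta|\xi|\geq 1$), whence
\[
|\hat\chi(\upeta\xi) - 1|^2 \lesssim (\upeta|\xi|)^{2m} = \upeta^{2(s-\sigma)}\, |\xi|^{2(s-\sigma)} \leq \upeta^{2(s-\sigma)}\, (1+|\xi|^2)^{s-\sigma} \,.
\]

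Plugging this into the Plancherel identity, the factor $(1+|\xi|^2)^\sigma (1+|\xi|^2)^{s-\sigma} = (1+|\xi|^2)^s$ recombines exactly into the $H^s$ weight; pulling out the constant $\upeta^{2(s-\sigma)}$ then leaves $\| \mathcal J_{d,\upeta} f - f\|_{H^\sigma(\R^d)}^2 \lesssim \upeta^{2(s-\sigma)} \int_{\R^d}(1+|\xi|^2)^s |\hat f(\xi)|^2 \, d\xi = \upeta^{2(s-\sigma)} \|f\|_{H^s(\R^d)}^2$, which is \eqref{eq:MolA} after taking square roots. There is no genuine obstacle here, the result being classical; the only point deserving attention is that the bound on the multiplier must be uniform across \emph{all} frequencies, and that the hypothesis $0 \leq s - \sigma \leq 1$ is exactly what makes the scale-invariant estimate $\min\{\upeta|\xi|,1\} \leq (\upeta|\xi|)^{s-\sigma}$ valid. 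This interpolates between the $L^2$-boundedness of $\mathcal J_{d,\upeta}$ (the endpoint $s = \sigma$) and the first-order consistency of order $\upeta$ (the endpoint $s - \sigma = 1$).
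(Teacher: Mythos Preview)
Your proof is correct and follows essentially the same route as the paper: pass to the Fourier side via Plancherel, then control the multiplier $|\hat\chi(\upeta\xi)-1|$ using smoothness at the origin ($\hat\chi(0)=1$) and global boundedness. Your write-up is in fact more explicit than the paper's, which factors out $\upeta^{2(s-\sigma)}$ and invokes smoothness of $\hat\chi$ at the origin to bound the ratio $|\hat\chi(\upeta\xi)-1|/(\upeta|\xi|)^{s-\sigma}$ without spelling out the two-regime argument you give.
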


\begin{proof}  The property $\mathcal J_{d,\upeta} f= \chi_\upeta\ast f \in \mathscr{C}^\infty(\R^d)$ for any $\upeta >0$ and any $f\in \mathcal D'(\R^d)$ comes from the following  convolution property $\mathcal D'(\R^d) \ast \mathcal D(\R^d) \subset  \mathscr{C}^\infty(\R^d)$. For the proof of \eqref{eq:MolA}, observe
\begin{align*}  
 \| \mathcal J_{d,\upeta} f -f\|_{H^\sigma(\R^d)}^2& =\int_{\R^d} d\xi\, (1+|\xi |^2)^{\sigma} \, |\widehat{\chi}(\upeta \xi)-1|^2 \, |\hat f(\xi)|^2 \\
&\leq \eta^{2(s-\sigma)} \int_{\R^d} d\xi\,\frac{|\widehat{\chi}(\upeta \xi)-1|^2}{(\upeta \,  \vert \xi \vert)^{2(s-\sigma)}} \, (1+|\xi |^2)^{s} \, |\hat f(\xi)|^2\,.
\end{align*}
Using the Lebesgue dominated convergence theorem, 
this last estimate leads to \eqref{eq:MolA}, since $\widehat \chi$ is smooth at the origin with $\widehat \chi(0)=1$.
\end{proof}


\begin{thebibliography}{9}
{\small

%

%

\bibitem{BC1}
  \newblock  N.~Besse, C.~Cheverry,
  \newblock {The equations of extended magnetohydrodynamics},
  \newblock to appear in SIAM J. Math. Anal.



\bibitem{BC2}
  \newblock  N.~Besse, C.~Cheverry,
  \newblock {Asymptotic analysis of extended magnetohydrodynamics
},
  \newblock preprint, hal-05009504.


%
  
\bibitem{BNS98}
  \newblock  A.~Bhattacharjee, C.S.~Ng, S.R.~Spangler,
  \newblock {Weakly compressible magnetohydrodynamic turbulence in the solar wind and the interstellar medium},
  \newblock ApJ {\bf 494} (2012) 409--418.

  
\bibitem{Bis93}
  \newblock D.~Biskamp,
  \newblock {nonlinear magnetohydrodynamics},
  \newblock Cambridge University Press, 1993.


 \bibitem{NB89}
  \newblock N. Bourbaki,
  \newblock {Elements of mathematics, General Topology},
  \newblock Springer 1989.


 \bibitem{Bor98}
  \newblock R.S.~Borden,
  \newblock {A course in advanced calculus},
  \newblock Dover Publications, 1998. 

  
\bibitem{Bre11}
  \newblock H.~Brezis,
  \newblock {Functional analysis, Sobolev spaces and partial differential equations},
  \newblock Springer, 2011.


 \bibitem{BM18}
  \newblock H.~Brezis, P.~Mironescu,
  \newblock {Gagliardo--Nirenberg inequalities and non-inequalities: the full story},
  \newblock Ann. Inst. H. Poincar\'e Anal. Non Lin\'eaire {\bf 35} (2018) 1355--1376.


 \bibitem{BM19}
  \newblock H.~Brezis, P.~Mironescu,
  \newblock {Where Sobolev interacts with Gagliardo--Nirenberg},
  \newblock J. Funct. Anal. {\bf 277} (2019) 2839--2864.


\bibitem{CH08}
  \newblock O.~Czarny, G.~Huysmans,
  \newblock {B\'ezier surfaces and finite elements for MHD simulations},
  \newblock {J. Comput. Phys.} {\bf 227} (2008) 7423--7445.

  
\bibitem{DG99}
  \newblock B.~Desjardins, E.~Grenier,
  \newblock {Low Mach number limit of viscous compressible flows in the whole space},
  \newblock {Proc. R. Soc. Lond. A} {\bf 455} (1999) 2271--2279.


\bibitem{DS12}
  \newblock B.~Despr\'es, R.~Sart,
  \newblock {Reduced resistive MHD in Tokamaks with general density},
  \newblock {ESAIM: M2AN} {\bf 46} (2012) 1081--1106.

  
\bibitem{DA83}
  \newblock J.F.~Drake, T.M.~Antonsen,
  \newblock {Nonlinear reduced fluid equations for toroidal plasmas},
  \newblock {Phys. Fluids} {\bf 27} (1983) 898--908.


\bibitem{DL72}
  \newblock G.~Duvaut, J.-L.~Lions,
  \newblock {In\'equations en thermo\'elasticit\'e et magn\'eto-hydrodynamique},
  \newblock {Arch. Rational Mech. Anal.} {\bf 46} (1972) 241--279.


  
\bibitem{FNP01}
  \newblock E.~Feireisl, A.~Novotn\'y, H.~Petzeltov\'a,
  \newblock {On the existence of globally defined weak solutions to the Navier--Stokes equations},
  \newblock {J. Math. Fluid. Mech.} {\bf 3} (2001) 358--392.
  

\bibitem{FL07}
  \newblock I.~Fonseca, G.~Leoni,
  \newblock {Modern methods in the calcul of variations in $L^p$ spaces},
  \newblock Springer 2007.
  

\bibitem{GV95}
  \newblock J.~Ginibre, G.~Velo,
  \newblock {Generalized Strichartz inequalities for the wave equation},
  \newblock {J. Funct. Anal.} {\bf 133} (1995) 50--68.
  

\bibitem{GP04}
  \newblock H.P.~Goedbloed, S.~Poedts,
  \newblock {Principles of magnetohydrodynamics},
  \newblock Cambridge University Press 2004.
  

\bibitem{Gra14}
  \newblock L.~Grafakos,
  \newblock {Classical Fourier Analysis},
  \newblock Springer 2014.
  
  
\bibitem{Gre97}
  \newblock E.~Grenier,
  \newblock {Oscillatory perturbations of the Navier--Stokes equations},
  \newblock {J. Math. Pures Appl.} {\bf 76} (1997) 477--498.
  
  
\bibitem{Gui15}
  \newblock H.M.~Guillard,
  \newblock {The mathematical theory of reduced MHD models for fusion plasmas},
  \newblock {INRIA Research Report} No 8715  (2015).
  

\bibitem{Haz83}
  \newblock R.D.~Hazeltine,
  \newblock {Reduced magnetohydrodynamics and the Hasegawa-Mima equation},
  \newblock {Phys. Fluids} {\bf 26} (1983) 3242--3245.
  
  
\bibitem{HW10}
  \newblock X.~Hu, D.~Wang,
  \newblock {Global existence and large-time behaviour of solutions to the three-dimensional equations of compressible magnetohydrodynamic flows},
  \newblock {Arch. Rational Mech. Anal.} {\bf 197} (2010) 203--238.
  

\bibitem{IMS83}
  \newblock R.~Izzo, D.A.~Monticello, H.R.~Strauss, W.~Park, J.~Manickam, R.~Grimm, J.~DeLucia,
  \newblock {Reduced equations for internal kinks in tokamaks},
  \newblock {Phys. Fluids} {\bf 26} (1983) 3066--3069.
  

\bibitem{JSX19}
  \newblock Q.~Ju, S.~Schochet, X.~Xu, 
  \newblock {Singular limits of the equations of compressible ideal magneto-hydrodynamics in a domain with boundaries},
  \newblock Asymptot. Anal. {\bf 113} (2019) 137--165.

  
\bibitem{KP74}
  \newblock B.B.~Kadomtsev, O.P.~Pogutse,
  \newblock {Nonlinear helical perturbations of a plasma in the tokamak},
  \newblock {Sov. Phys.-JETP} {\bf 38} (1974) 283--290.
  

\bibitem{KT98}
  \newblock M.~Keel, T.~Tao,
  \newblock {Endpoint Strichartz estimates},
  \newblock {Amer. J. Math.} {\bf 120} (1998) 955--980.


\bibitem{Kel75}
  \newblock J.L.~Kelley, 
  \newblock {General topology},
  \newblock Springer, 1975.  
  
  
\bibitem{KW97}
  \newblock R~Kinney, J.C.~McWilliams,
  \newblock {Magnetohydrodynamic equations under anisotropic conditions},
  \newblock {J. Plasma Physics} {\bf 57} (1997) 73--82.
  

  
\bibitem{KM81}
  \newblock S.~Klainerman, A.~Majda,
  \newblock {Singular limits of quasilinear hyperbolic systems with large parameters and the incompressible limit of compressible fluids},
  \newblock {Commun. Pure Appl. Math.} {\bf 34} (1981) 481--524.
  

\bibitem{KHC98}
  \newblock S.E.~Kruger, C.C.~Hegna, J.D.~Callen,
  \newblock {Generalized reduced magnetohydrodynamic equations},
  \newblock {Phys. Plasmas} {\bf 5} (1998) 4169--4182.
  

\bibitem{Lio98}
  \newblock P.-L.~Lions, 
  \newblock {Mathematical topics in fluid dynamics. Vol.2: compressible models},
  \newblock  Oxford University Press, 1998.
  

\bibitem{LM98}
  \newblock P.-L.~Lions, N.~Masmoudi,
  \newblock {Incompressible limit for a viscous compressible fluid},
  \newblock {J. Math. Pures Appl.} {\bf 77} (1998) 585--627.

  
\bibitem{Maj84}
  \newblock A.~Majda,
  \newblock {Compressible fluid flow and systems of conservation laws in several space variables},
  \newblock Springer, 1984.
  

\bibitem{NLB12}
  \newblock C.S.~Ng, L.~Lin, A.~Bhattacharjee,
  \newblock {High-Lundquist number scaling in three-dimensional simulations of Parker's model of coronal heating},
  \newblock ApJ {\bf 747}:109 (2012) 10pp.

  
\bibitem{OBH15}
  \newblock S.~Orain, M.~B\'ecoulet, G.T.A. Hujmans, G.~Dif-Pradalier, M.~Hoelzl, J.~Morales, X.~Garbet, E.~Nardon, S.~Pamela, C.~Passeron, G.~Latu, P.~Cahyna, 
  \newblock {Resistive reduced MHD modeling of multi-edge-localized-mode cycles in tokamak X-point plasmas},
  \newblock {Phys. Rev. Lett.} {\bf 114} (2015) 035001.


  
\bibitem{OMD17}
  \newblock S.~Oughton, W.H.~Matthaeus, P.~Dmitruk,
  \newblock {Reduced MHD in astrophysical applications: two-dimensional or three-dimensional?},
  \newblock ApJ {\bf 839}:2 (2017) 13pp.
  

\bibitem{RHJ12}
  \newblock P.~Ricci, F.D.~Halpern, S.~Jolliet, J.~Loizu, A.~Mosetto, A.~Fasoli, I.~Furno, C.~Theiler,
  \newblock {Simulation of plasmas turbulence in scrape-off layer conditions: the gbs code, simulations results and code validation},
  \newblock {Plasma Phys. Control. Fusion} {\bf 54} (2012) 124047.


\bibitem{RB96}
  \newblock P.L.~Roe, D.S.~Balsara,
  \newblock {Notes on the eigensystem of magnetohydrodynamics},
  \newblock {SIAM J. Appl. Math.} {\bf 56} (2012) 57--67.

  
\bibitem{SCD09}
  \newblock A.A.~Schekochihin, S.C.~Cowley, W.~Dorland, G.W.~Hammett, G.G.~Howes, E.~Quataert, T.~Tatsuno,
  \newblock {Astrophysical gyrokinetics: kinetic and fluid turbulent cascades in magnetized weakly collisional plasmas},
  \newblock ApJS {\bf 182} (2009) 310--377.
  

\bibitem{ST83}
  \newblock M.~Sermange, R.~Temam, 
  \newblock {Some mathematical questions related to the MHD equations},
  \newblock {Comm. Pure Appl. Math.} {\bf 36} (1983) 635--664.
  
  
\bibitem{Sch94}
  \newblock S.~Schochet,
  \newblock {Fast singular limits of hyperbolic PDEs},
  \newblock {J. Differential Equations } {\bf 114} (1994) 476--512.


\bibitem{Sim86}
  \newblock J.~Simon,
  \newblock {Compact sets in the space $L^p(0,T;B)$ },
  \newblock {Ann. Mat. Purea Appl. } {\bf 146} (1986) 65--96.
  
 
\bibitem{Str76}
  \newblock H.R.~Strauss,
  \newblock {Nonlinear, three-dimensional magnetohydrodynamics of non circular tokamaks},
  \newblock {Phys. Fluids} {\bf 19} (1976) 134--140.

   
\bibitem{Str77}
  \newblock H.R.~Strauss,
  \newblock {Dynamics of high $\beta$ tokamaks},
  \newblock {Phys. Fluids} {\bf 20} (1977) 1354--1360.


  \bibitem{Wil70}
  \newblock S. Willard,
  \newblock {General topology},
  \newblock Addison--Wesley Publications, 1970.

 
\bibitem{ZM92}
  \newblock G.P.~Zank, W.H. Matthaeus,
  \newblock {The equations of reduced magnetohydrodynamics},
  \newblock {J. Plasma Physics} {\bf 48} (1992) 85--100.

   
\bibitem{ZM93}
  \newblock G.P.~Zank, W.H. Matthaeus,
  \newblock {Nearly incompressible fluids. II: magnetohydrodynamics, turbulence, and waves},
  \newblock {J. Plasma Physics} {\bf 48} (1992) 85--100.
      
  
  
}
\end{thebibliography}
\end{document}